
\documentclass{aic}

\aicAUTHORdetails{%
  title = {Canonical Decompositions of 3-Connected Graphs}, 
  author = {Johannes Carmesin and Jan Kurkofka},
  plaintextauthor = {Johannes Carmesin, Jan Kurkofka},
    %
    %
    %
    %
    %
   %
  keywords = {3-connected, decomposition, canonical, wheel, 4-connected, 3-separation, tri-separation},
}   

\aicEDITORdetails{%
   year={2025},
   number={7},
   received={11 April 2024},   
   published={12 September 2025},  
   doi={10.19086/aic.2025.7},      
}   

\input{1Preamble}

\begin{document}

\begin{frontmatter}[classification=text]

\title{Canonical Decompositions\\of 3-Connected Graphs} 

\author[jc]{Johannes Carmesin\thanks{Funded by EPSRC, grant number EP/T016221/1}}
\author[jk]{Jan Kurkofka\thanks{Funded by EPSRC, grant number EP/T016221/1}}

\begin{abstract}
We offer a new structural basis for the theory of 3-connected
graphs, providing a unique decomposition of every such graph into
parts that are either quasi 4-connected, wheels, or
thickened $K_{3,m}$'s. Our construction is explicit,
canonical, and has the following applications:  we obtain a new theorem characterising all finite Cayley graphs
as either essentially 4-connected, cycles, or complete graphs on at
most four vertices, and we provide an automatic proof of Tutte's wheel theorem.
\end{abstract}
\end{frontmatter}

\clearpage
\tableofcontents
\newpage

\newcommand{\thechapter}{0}

\section*{Introduction}
\addcontentsline{toc}{section}{Introduction}

A tried and tested approach to a fair share of problems in structural and topological graph theory -- such as the two-paths problem~\cite{GMIX,GMXIII,GMXVI} or Kuratowski's theorem~\cite{ThomassenKura} -- is to first solve the problem for 4-connected\footnote{A graph $G$ is \emph{$k$-connected}, for a~$k\in\Nbb$, if $G$ has more than $k$ vertices and deleting fewer than $k$ vertices from~$G$ does not disconnect~$G$.} graphs.
Then, in an intermediate step, the solutions for the 4-connected graphs are extended to the 3-connected graphs, by drawing from a theory of 3-connected graphs that has been established to this end.
Finally, the solutions for the 3-connected graphs are extended to all graphs, in a systematic way by employing decompositions of general graphs along their cutvertices and 2-separators.

The intermediate step of this strategy seems curious: why should the step from 4-connected to 3-connected require an entirely different treatment than the systematic step from 3-connected to the general case?
Indeed, the intermediate step carries the implicit believe that it is not possible to decompose 3-connected graphs along 3-separators in a way that is on a par with the renowned decompositions along separators of size at most two.
Our main result offers a solution to this long-standing hindrance.
To explain this, we start by giving a brief overview of the renowned decompositions along low-order separators.

Graphs trivially decompose into their components, which either are $1$-connected or consist of isolated vertices.
The $1$-connected graphs are easily decomposed further, along their cutvertices, into subgraphs that either are $2$-connected or $K_2$'s which stem from bridges.

When decomposing 2-connected graphs further, however, things begin to get more complicated.
Indeed, a 2-separator -- a set of two vertices such that deleting the two vertices disconnects the graph --  may separate the vertices of another 2-separator.
Then if we choose one of them to decompose the graph by cutting at the 2-separator, we loose the other.
In particular, it is not possible to decompose a 2-connected graph simply by cutting at all its 2-separators.
An illustrative example for this are the 2-separators of a cycle.

There is an elegant way to resolve this problem.
If two 2-separators are compatible with each other, in the sense that they do not cut through each other, then we say that these 2-separators are \emph{nested} with each other.
Let us call a 2-separator \emph{totally-nested} if it is nested with every 2-separator of the graph.
The solution is that every 2-connected graph decomposes into 3-connected graphs, cycles and~$K_2$'s, by cutting precisely at its totally-nested 2-separators. 
Tutte~\cite{TutteGrTh} found this decomposition first, but with a different description. 
The description via total nestedness was discovered later by Cunningham and Edmonds \cite{cunningham_edmonds_1980}.

The~obvious guess how the solution might extend to 3-separators of 3-connected graphs is this: every 3-connected graph decomposes into 4-connected graphs, wheels\footnote{Every $k$-connected graph $G$ can be made $(k+1)$-connected by adding one more vertex and joining it to all original vertices. From this perspective, wheels are the 3-connected analogues of cycles.} and~$K_3$'s, by cutting precisely at its totally-nested 3-separators.
This guess turns out to be wrong, as the following three examples demonstrate.\medskip

\noindent\begin{minipage}{0.2\textwidth}
\includegraphics[width=\linewidth]{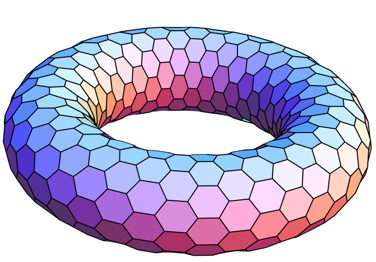}
\end{minipage}%
\hfill%
\begin{minipage}{0.75\textwidth}
Let $G$ be a toroidal hex-grid as depicted on the left~\cite{ToroidalHexGrid}, and note that $G$ is 3-connected.
The neighbourhoods of the vertices of $G$ are precisely the 3-separators of~$G$, so no 3-separator of~$G$ is totally-nested.
However, $G$ is neither $4$-connected nor a wheel.
But we will see that $G$ is `quasi 4-connected', as no 3-separator cuts off more from~$G$ than just one vertex.
\end{minipage}\medskip

\noindent\begin{minipage}{0.78\textwidth}
$3\times k$ grids as depicted on the right, slightly extended to make them 3-connected, have no totally-nested 3-separators; yet they are neither 4-connected nor wheels.
\end{minipage}
\hfill%
\begin{minipage}{0.2\textwidth}
\includegraphics[width=\linewidth]{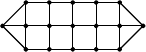}
\end{minipage}\medskip

\noindent\begin{minipage}{0.2\textwidth}
\includegraphics[width=\linewidth]{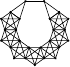}
\end{minipage}%
\hfill%
\begin{minipage}{0.75\textwidth}
Let $G$ be the graph on the left.
Every 3-separator of~$G$ consists of one of the two top vertices of degree three, and two vertices in the intersection of two neighbouring~$K_5$'s; or it is the neighbourhood of either degree-three vertex.
Hence $G$ has no totally-nested 3-separators.
This remains true if we replace the $K_5$'s in~$G$ with arbitrary 3-connected graphs.
Thus, $G$ represents a class of counterexamples that is as complex as the class of 3-connected graphs.
\end{minipage}
\medskip

\noindent We resolve these problems with a twofold approach:
\begin{enumerate}
    \item We relax the notion of 4-connectivity to that of \emph{quasi 4-connectivity}.
    We learned about this idea from Grohe's work~\cite{grohe2016quasi}.
    \item We introduce the new notion of a \emph{tri-separation}, which we use instead of 3-separators. The key difference is that tri-separations may use edges in addition to vertices to separate the graph.
\end{enumerate}

A \emph{mixed-separation} of a graph~$G$ is a pair~$(A,B)$ such that $A\cup B=V(G)$ and both $A\sm B$ and $B\sm A$ are nonempty.
We refer to $A$ and $B$ as the \emph{sides} of the mixed-separation.
The \emph{separator} of $(A,B)$ is the disjoint union of the vertex set $A\cap B$ and the edge set $E(A\sm B,B\sm A)$.
If the separator of $(A,B)$ has size three, we call $(A,B)$ a \emph{mixed 3-separation}.
Mixed-separations can be viewed as separations of the barycentric subdivision of~$G$.

\begin{tridfn}[Tri-separation]
A~\emph{tri-separation} of a graph $G$ is a mixed 3-separation $(A,B)$ of~$G$ such 
that every vertex in $A\cap B$ has at least two neighbours in both $G[A]$~and~$G[B]$.
\end{tridfn}

Two mixed-separations $(A,B)$ and $(C,D)$ of~$G$ are \emph{nested} if, after possibly switching the name $A$ with $B$ or the name $C$ with $D$, we have $A\se C$ and $B\supseteq D$.
A~tri-separation of~$G$ is \emph{totally-nested} if it is nested with every tri-separation of~$G$.
A~tri-separation $(A,B)$ of a 3-connected graph~$G$ is \emph{nontrivial} if both $G[A]$ and $G[B]$ contain a cycle; otherwise it is \emph{trivial}.
So $(A,B)$ is trivial if and only if $A$ and $B$ are the sides of a 3-edge-cut with a side of size one (\autoref{trivial}).
The tri-separations that we will use to decompose $G$ are the totally-nested nontrivial tri-separations of~$G$.\medskip

\noindent\begin{minipage}{0.2\textwidth}
\includegraphics[width=\linewidth]{ToroidalHexGrid.png}
\end{minipage}%
\hfill%
\begin{minipage}{0.75\textwidth}
Every vertex of the toroidal hex-grid~$G$ forms the singleton side of a trivial tri-separation.
Since there are no other tri-separations, all these tri-separations are totally-nested -- but they are trivial.
While $G$ is not 4-connected, it is \emph{quasi 4-connected}: $G$ is 3-connected, has more than four vertices, and every 3-separation of~$G$ (a mixed 3-separation whose separator consists of vertices only) has a side of size at most four.
\end{minipage}\bigskip

\noindent\begin{minipage}{0.78\textwidth}
The coloured 3-edge-cuts determine nontrivial tri-separations of the slightly extended $3\times k$ grid, and these are precisely the totally-nested nontrivial tri-separations.
\end{minipage}
\hfill%
\begin{minipage}{0.2\textwidth}
\includegraphics[width=\linewidth]{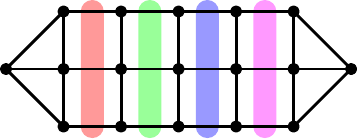}
\end{minipage}\medskip

\noindent\begin{minipage}{0.2\textwidth}
\includegraphics[width=\linewidth]{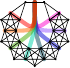}
\end{minipage}%
\hfill%
\begin{minipage}{0.75\textwidth}
Every nontrivial tri-separation of the graph on the left has a separator that consists of the top edge together with two vertices in a coloured set.
As these tri-separations are pairwise nested, they are precisely the totally-nested nontrivial tri-separations.
\end{minipage}\medskip

\noindent\begin{minipage}{\jaf{0.44}{0.44}{0.49}\textwidth}
Wheels have no totally-nested tri-separations.
\end{minipage}
\begin{minipage}{0.1\textwidth}
\includegraphics[height=3\baselineskip]{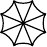}
\end{minipage}\medskip

Given a 3-connected graph $G$ and a set $N$ of pairwise nested tri-separations, we can say which parts we obtain by decomposing~$G$ along~$N$.
Roughly speaking, these are maximal subgraphs of~$G$ that lie on the same side of every tri-separation in~$N$, with some edges added to represent external connectivity in~$G$.
We call the resulting minors of~$G$ the \emph{torsos} of~$N$, as they generalise the well known torsos of tree-decompositions from the theory of graph minors.
We consider two types of torsos in this paper: compressed torsos and expanded torsos. What we call torso here will be called compressed torso later.
See~\autoref{sec:basicsForDecomp} for details; in particular \autoref{univ_3sepr}.

While the block-cutvertex decomposition and the Tutte-decomposition can be described in terms of tree-decompositions, this is no longer evident for the tri-separation decomposition as the adhesion-sets of tree-decompositions cannot contain edges by definition.
To overcome this, we introduce a notion of \emph{mixed-tree-decomposition} that allows both vertices and edges in the adhesion-sets.
Mixed-tree-decompositions sit in between tree-decompositions (that only allow vertices in adhesion-sets) and Wollan's tree-cut decompositions~\cite{WollanExcludedImmersion} (that only allow edges in adhesion-sets).
See \autoref{sec:TLD} for details.
Every mixed-tree-decomposition of a graph $G$ can be viewed as a tree-decomposition of the barycentric subdivision of~$G$.

According to the 2-separator theorem, some of the building blocks for 2-connected graphs are~$K_2$'s.
The analogue of these building blocks for 3-connected graphs turn out to be~\emph{thickened $K_{3,m}$'s} with $m\ge 0$: these are obtained from $K_{3,m}$ by adding edges to its left class of size three to turn it into a triangle.

\begin{main}\label{mainIntro}
Let $G$ be a 3-connected graph. 
Let $\Tcal(G)$ denote the mixed-tree-decomposition of~$G$ that is uniquely determined by the set $N(G)$ of all totally-nested nontrivial tri-separations of~$G$.
Each torso~$\tau$ of $\Tcal(G)$ is a minor of~$G$ and satisfies one of the following:
\begin{enumerate}
    \item $\tau$ is quasi 4-connected;
    \item $\tau$ is a wheel;
    \item $\tau$ is a thickened $K_{3,m}$ or $G=K_{3,m}$ with $m\geq 0$.
\end{enumerate}
\end{main}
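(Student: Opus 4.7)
The plan is to reduce the theorem to a local structural statement: each torso $\tau$ of $N$, viewed as a standalone 3-connected graph, has no nontrivial tri-separation that is totally-nested within $\tau$ itself. A classification of 3-connected graphs that lack such a totally-nested nontrivial tri-separation then yields exactly the three cases of the conclusion.

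I would start by proving the minor claim. Each virtual edge of $\tau$ corresponds to a tri-separation $(A,B)\in N$ that bounds $\tau$ on one side; using 3-connectivity of $G$, I would exhibit three internally disjoint connections in $G[B]$ between the three elements of the separator (vertices in $A\cap B$ together with endpoints of the crossing edges), and then contract suitable subtrees inside $B\sm A$ to realise the virtual edges as minor edges. The pairwise nestedness of $N$ organises the torsos into a tree-like arrangement, which ensures that these contractions can be performed simultaneously for all torsos and all virtual edges without conflict.

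For the trichotomy, I would argue by contradiction. Suppose $\tau$ is a torso that is neither quasi 4-connected, nor a wheel, nor a thickened $K_{3,m}$. The goal is to exhibit a nontrivial tri-separation $(A',B')$ of $\tau$ that is nested with every other tri-separation of $\tau$. One would then show that $(A',B')$ lifts to a nontrivial tri-separation $(A,B)$ of $G$ by extending $A'$ and $B'$ through the torsos on the appropriate side of each bounding tri-separation of $\tau$; the condition that every vertex of $A\cap B$ has at least two neighbours on each side lifts from $\tau$ to $G$ because the extensions attach to the separator only through the cross-connections already counted. Since this lift is confined to the region of $G$ corresponding to $\tau$, it is automatically nested with every element of $N$, hence totally-nested in $G$, and must therefore lie in $N$ itself — contradicting that $\tau$ is a single torso.

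The main obstacle is the classification lemma: a 3-connected graph $H$ has no totally-nested nontrivial tri-separation if and only if $H$ is quasi 4-connected, a wheel, or a thickened $K_{3,m}$ (including $H = K_{3,m}$). The forward direction is delicate. If $H$ is not quasi 4-connected, there is a 3-separation with both sides of size at least five, which with small adjustments can be upgraded to a nontrivial tri-separation. Among all nontrivial tri-separations one would attempt to uncross, showing that either a totally-nested one emerges, or the family is so rigidly interlocked that $H$ is forced into one of the exceptional shapes: if the crossing tri-separations all share a common vertex acting as a universal hub, $H$ turns out to be a wheel; if they share two common vertices with one varying vertex each, $H$ is a thickened $K_{3,m}$. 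Handling the edge-based separator elements permitted by tri-separations, and controlling the uncrossing under these mixed separators, is where the argument will require the most care.
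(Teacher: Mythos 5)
Your reduction rests on two claims, and both are false. First, that each torso $\tau$ of $N$, viewed as a standalone 3-connected graph, has no totally-nested nontrivial tri-separation. But a thickened $K_{3,m}$ with $m\ge 2$ already has such a tri-separation --- for instance $(\{a,b,c,u_1\},\ V\sm\{u_1\})$, where $\{a,b,c\}$ is the triangle and $u_1$ lies in the independent class --- and by the paper's \autoref{eg:K3m} such graphs do arise as torsos of $N$, so your premise is contradicted by the very objects you are trying to classify. Second, the classification you assert --- that 3-connected graphs without a totally-nested nontrivial tri-separation are exactly the quasi 4-connected ones, the wheels, and the thickened $K_{3,m}$'s --- is also wrong. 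The correct statement is the \nameref{Angry} (\autoref{Angry}), which gives \emph{internally} 4-connected (strictly stronger than quasi 4-connected) and plain $K_{3,m}$ rather than thickened $K_{3,m}$; indeed $K_5^-$, the thickened $K_{3,2}$, is quasi 4-connected yet has a totally-nested nontrivial tri-separation, as does the torso in \autoref{eg100}.

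The lifting step would not help even if the classification stood. Being nested with every element of $N$ is much weaker than being totally-nested in $G$: $N$ is by definition the set of totally-nested tri-separations, so a lift of a tri-separation of $\tau$ can perfectly well be crossed by a non-totally-nested tri-separation of $G$ that has no counterpart inside $\tau$. The paper's route is structurally different: it does not apply the Angry Theorem to each torso, but fixes a splitting star $\sigma$ of $N$ and classifies its torso according to whether $\sigma$ is interlaced lightly, heavily, or not at all by a strong nontrivial tri-separation of $G$ (\autoref{univ_3sepr}). The Angry Theorem enters only as an ingredient, essentially handling the special case $N=\emptyset$, while the heavy and light cases are analysed via an apex-decomposition around the single vertex in which two crossing tri-separators meet and via the star of tri-separations at a 3-separator whose removal leaves at least four components, respectively.
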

We emphasise that the sets~$N(G)$ and mixed-tree-decompositions $\Tcal(G)$ are obviously canonical, meaning that they commute with graph-isomorphisms: $N(\varphi(G))=\varphi(N(G))$ and $\Tcal(\varphi(G))=\varphi(\Tcal(G))$ for all graph-isomorphisms~$\varphi\colon G\to G'$.
Our proof of \autoref{mainIntro} offers additional structural insights which can be used to refine \autoref{mainIntro}; see \autoref{univ_3sepr}.
All graphs in this paper are finite or infinite unless stated otherwise; in particular, \autoref{mainIntro} includes infinite 3-connected graphs.

\subsection*{Applications}
We provide the following applications of our work.
It is well known that all Cayley graphs of finite groups are either 3-connected, cycles, or complete graphs on at most two vertices~\cite{infiniteSPQR}.
By heavily exploiting the fact that our decomposition of 3-connected graphs is canonical, we can refine this fact:

\begin{mcor}\label{vx-trans}
Every vertex-transitive finite connected graph~$G$ either is essentially 4-connected, a cycle, or a complete graph on at most four vertices.
\end{mcor}
We give the precise definition of `essentially 4-connected' in \autoref{sec:Cor2}; the main difference to `quasi 4-connected' is that we allow 3-edge-cuts that have a side which is equal to a triangle.
A classical tool in geometric group theory from the textbook of Godsil and Royle \cite[Theorem 3.4.2]{AlgebraicGT} states that every finite vertex-transitive $d$-regular graph is $k$-connected for $k:=\lceil \frac{2}{3}(d+1)\rceil$.
\autoref{vx-trans} strengthens this tool in a special case.

Another application comes in the form of an automatic proof of Tutte's wheel theorem~\cite{TutteWheel}. 
In 
\jaf{
the upcoming work~\cite{conny_aug},
}{%
the upcoming work~\cite{conny_aug},
}{%
upcoming work,
}
\autoref{mainIntro} will be used to construct an FPT algorithm for connectivity augmentation from 0 to 4, and the property of total nestedness is crucial for that; see \autoref{sec:conc}.

\subsection*{When canonicity and an explicit description matter}

Recall that the tri-separation decomposition of \autoref{mainIntro} is canonical and is explicitly described so that it is uniquely determined for every 3-connected graph.
These two of its aspects are \emph{absolutely crucial} for a number of its applications:
\begin{enumerate}
    \item For vertex-transitive graphs, such as Cayley graphs, exploiting the combination of canonicity and total-nestedness makes up the entire proof of \autoref{vx-trans}.
    Just recently, this combination has also been exploited when using the Tutte-decomposition in the proof of a low-order Stallings-type theorem for finite nilpotent groups~\cite{Cayley2sep}.
    An obvious next step in this direction would be to exploit this combination with the tri-separation decomposition.
    \item For Connectivity Augmentation, canonicity and access to an explicit description are key~\cite{conny_aug}.
    \item Total-nestedness is incredibly desirable in Parallel Computing, the foundation of every supercomputer. Splitting the workload of finding the decomposition is a lot easier when all the partial solutions, which would come in the form of sets of already found totally-nested tri-separations, can always be combined without conflict.
    \item Recently, coverings as known from Topology have been employed to systematically construct \emph{graph-decompositions}, tree-decompositions where the tree may be any arbitrary graph, by applying classical theorems about tree-decompostions to the covering of a graph, and then folding the tree-decomposition to a graph-decomposition~\cite{GraphDec}.
    For the Tutte-decomposition, it is known how to achieve this directly without employing coverings~\cite{loc2sepr}.
    For the covering approach, canonicity is paramount: it makes the entire construction work.
    For the direct approach, the description of the Tutte-decomposition via total-nestedness is key.
    The construction from~\cite{GraphDec} can be generalised so that it can be applied to the tri-separation decomposition; we would be excited to see this happen.
    \item Finally, as the Tutte-decomposition is canonical and explicit, we believe that any decomposition result that claims to generalise Tutte should be both canonical and explicit.
\end{enumerate}

Grohe showed in pioneering work that every 3-connected graph has a tree-decomposition of adhesion~3 into torsos that are quasi 4-connected, $K_4$ or~$K_3$~\cite{grohe2016quasi}.
Grohe's decomposition is exciting and has indeed quite a few applications, however, they do not include (1)--(5).
Since our decomposition in particular satisfies~(5), we regard it as an analogue of Tutte's decomposition.

\subsection*{More related work}

It would be most exciting to try to extend our work to separators of larger size, see  \autoref{sec:conc} for an open question in this direction.
Just recently, our work has been extended to separators of size~4 in~\cite{Tutte4con}.
Our work complements existing work on decomposing graphs along separators of arbitrary size and the corresponding theory of tangles~\cite{albrechtsen2023refining,CarmesinToTshort,CDHH13CanonicalAlg,CDHH13CanonicalParts,confing,CG14:isolatingblocks,Entanglements,ASS,TreeSets,StructuralASS,ProfilesNew,FiniteSplinters,InfiniteSplinters,RefiningToT,GroheTangles3,ComputingTangles,jacobs2023efficiently,GMX}.
A~fair share of the work on 3-connected graphs studies which substructures are `\mbox{(in-)}essential' to 3-connectivity (in the sense that their contraction or deletion preserves 3-connectivity); this includes the work of Ando, Enomoto and Saito~\cite{ando1987contractible} and of Kriesell~\cite{KriesellNonEdges,KriesellContractibleSubgraph,KriesellAlmostAll,KriesellTriangleFree,kriesell2008number,KriesellVertexSuppression}.
The structure of 3-separations in matroids is a well-studied topic; a fundamental result in this area is the decomposition result of Oxley, Semple and Whittle~\cite{Structure3sepsMatroids}. It would be most natural to extend our ideas to matroids and this problem is discussed in \autoref{sec:conc}.
Also see~\cite{4connectedMatroids}.
Hopes for a generalisation to directed graphs are fuelled by recent work of Bowler, Gut, Hatzel, Kawarabayashi, Muzi and Reich~\cite{Flo}.
Our work is related to recent work of Esperet, Giocanti and Legrand-Duchesne~\cite{Ugo}, who employ Grohe's techniques for 3-connected graphs to prove a general decomposition result of infinite graphs without an infinite clique-minor, and our result might provide an alternative perspective. 
There is also a connection to twin-width, see for example the recent work of Heinrich and Raßmann~\cite{TwinWidth}.

\subsection*{Organisation of the paper}
An overview of the proof of \autoref{mainIntro} is given in \autoref{sec:OV}. 
The remainder of the paper consists of four chapters.
Each chapter will feature its own comprehensive overview.
In the first chapter, we introduce and prove the \nameref{Angry} (\ref{Angry}); this classifies the 3-connected graphs in which all nontrivial tri-separations are crossed (hence the name), and it will be the key ingredient of the proof of \autoref{mainIntro} as it deals with the special case $N(G)=\emptyset$.
\autoref{vx-trans} can already be derived from the \nameref{Angry}, so the first chapter also includes the proof of \autoref{vx-trans}.
In the second chapter, we prove \autoref{mainIntro}.
The third chapter provides an outlook.
Finally, the fourth chapter offers a proof of the 2-separator theorem, but phrased in the language of this paper and with a structural strengthening added, for the sake of convenience and completeness.

\section{Overview of the proof}\label{sec:OV}

Let $G$ be a 3-connected graph, and let $N$ denote the set of totally-nested nontrivial tri-separations of~$G$.
By adapting standard arguments, we will find a unique mixed-tree-decomposition $\Tcal=(T,(V_t)_{t\in V(T)})$ of~$G$ such that the edges of $T$ induce precisely the tri-separations in~$N$.
Let $\tau$ be an arbitrary torso of~$N$, which can also be viewed as a torso of bag $V_t$ of the mixed-tree-decomposition~$\Tcal$.
It is routine to verify that $\tau$ is 3-connected or a~$K_3$, and that $\tau$ is a minor of~$G$.
So it remains to show that $\tau$ either is quasi 4-connected, a wheel, a thickened $K_{3,m}$ or $G=K_{3,m}$ with $m\ge 0$.

Our approach is to link these three outcomes to the structure of the tri-separations of~$G$ that `interlace' the torso~$\tau$, as follows.
Let $(A,B)$ be a nontrivial tri-separation of~$G$.
Roughly speaking, we say that $(A,B)$ \emph{interlaces} $\tau$ if $\tau$ has vertices in $A\sm B$ and in $B\sm A$, so $(A,B)$ `cuts through'~$\tau$.
If additionally $G[A\sm B]$ or $G[B\sm A]$ is connected, then we say that $(A,B)$ interlaces $\tau$ \emph{heavily}.
Else if both $G[A\sm B]$ and $G[B\sm A]$ have at least two components, then we say that $(A,B)$ interlaces $\tau$ \emph{lightly}.
This allows for the following structural strengthening of \autoref{mainIntro}:
\begin{enumerate}
    \item if $\tau$ is not interlaced, then $\tau$ is quasi 4-connected or a $K_4$ or~$K_3$;
    \item if $\tau$ is heavily interlaced, then $\tau$ is a wheel;
    \item if $\tau$ is lightly interlaced, then $\tau$ is a thickened~$K_{3,m}$ or $G=K_{3,m}$.
\end{enumerate}

\vspace{.3cm}

For the proof of (1), suppose that $\tau$ is not interlaced.
Let us assume for a contradiction that $\tau$ is neither quasi 4-connected, nor a $K_4$ nor~$K_3$.
Then $\tau$ has a 3-separation $(A,B)$ with two sides of size at least five.
In a 3-page technical argument, we `lift' $(A,B)$ from $\tau$ to a tri-separation of~$G$ that interlaces~$\tau$ -- a contradiction.

The proof of (2) is a bit more tricky and will be explained below. 

For the proof of~(3), suppose that $\tau$ is lightly interlaced by a tri-separation~$(A,B)$.
Then we first note that the separator $S$ of $(A,B)$ consists of three vertices, and that $G\sm S$ has at least four components.
The four components ensure that $S$ is `4-connected' in~$G$, as every two vertices in $S$ can be linked by four internally vertex-disjoint paths in $G$ through the four components.
The 4-connectivity of $S$ can then be used to show that every component $C$ of $G\sm S$ determines a totally-nested tri-separation of~$G$, with $C$ on one side, whose separator consists of vertices of $S$ or edges that join $C$ to~$S$.
The nontrivial ones amongst the totally-nested tri-separations determined by the components of $G\sm S$ are precisely the ones that bound the torso~$\tau$ (i.e., the ones that are induced by the edges of $T$ at~$t$).
If $G[S]$ is edgeless and all components of $G\sm S$ have size one, then $G=K_{3,m}$ where $m$ is equal to the number of components of~$G\sm S$.
Otherwise, a brief analysis shows that $\tau$ is a thickened~$K_{3,m}$, where $m$ is equal to the number of components $C$ of $G\sm S$ such that $|C|=1$ and $G[S]$ is edgeless.

\begin{figure}[ht]
    \centering
    \includegraphics[height=10\baselineskip]{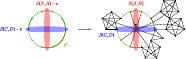}
    \caption{The graph $G$ (on the right) can be constructed from a cycle $O$ (on the left) by replacing its bold edges by 2-connected graphs and adding the vertex $v$ together with its incident edges.     
    Here $S(A,B)$ and $S(C,D)$ denote the separators of $(A,B)$ and $(C,D)$, respectively. They intersect in the vertex~$v$. The cycle $O$ is highlighted in green.}
    \label{fig:Overview2sep}
\end{figure}

\subsection*{Proof of (2)} 
Assume we are given a tri-separation $(A,B)$ of~$G$ that interlaces $\tau$ heavily. 
Since $(A,B)$ is not in~$N$, it is \emph{crossed} by a nontrivial tri-separation~$(C,D)$ of~$G$, meaning that $(A,B)$ and $(C,D)$ are not nested with each other. 
The first step is to extend the standard theory of crossing separations to our context of tri-separations.
From this we learn that the separators of $(A,B)$ and $(C,D)$ intersect in exactly one vertex; call it~$v$. 
The next step is to apply the 2-separator theorem to the graph $G-v$. 
This tells us that the graph $G-v$ can be obtained from a cycle $O$ by replacing some of its edges by 2-connected subgraphs of~$G$. 
We refer to these replaced edges of $O$ as \emph{bold}. 
The separators of $(A,B)$ and $(C,D)$ without $v$ alternate on the cycle~$O$, see \autoref{fig:Overview2sep}. 
Intuitively, one might hope that the vertex $v$ together with the two endvertices of a bold edge of~$O$ forms the separator of a totally-nested nontrivial tri-separation.
This is almost true, but some of the vertices in the separator might fail to have two neighbours on some side.
We resolve this issue by replacing these vertices with one of their incident edges.
The resulting tri-separation is referred to as the \emph{pseudo-reduction} at the bold edge.

A key challenge is to show that the pseudo-reductions at the bold edges of~$O$ are totally-nested.
We approach this challenge in a systematic way by showing that a connectivity property of the separator of a tri-separation implies total nestedness (we very much believe that this can be turned into a characterisation of total nestedness, but given the length of the paper, we do not attempt this here). 
For this approach to succeed, we need to verify that the separators of the pseudo-reductions satisfy this connectivity property. 
The connectivity property itself is a little technical, as we require different amounts of connectivity depending on whether edges or vertices are in the separator. 
For simplicity, let us assume that the separator consists of three vertices. 
Then our task is to find three internally vertex-disjoint paths between every pair of non-adjacent vertices in the separator avoiding the third vertex of the separator.
Between the two endvertices of a bold edge $e$ of~$O$ we find two paths in the 2-connected subgraph that is associated with the bold edge~$e$, and a third path follows the course of the path~$O-e$.
So it remains to construct three internally vertex-disjoint paths between an endvertex of the bold edge $e$ and~$v$. 
If $O$ is short, we need to consider a few cases, and if $O$ is long (length five suffices), then we study how the bold edges are distributed on~$O$. 
We identify five possible patterns that cover all cases and verify that three internally vertex-disjoint paths exist for each of the five patterns, see \autoref{fig:Overview2sep2}. 
Hence the pseudo-reductions at bold edges are totally-nested. 

\begin{figure}[ht]
    \centering
    \includegraphics[height=12\baselineskip]{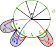}
    \caption{The graph $G$ together with the bold edges of~$O$. 
    For each bold edge of $O$ we indicate in grey its corresponding 2-connected replacement graph as given in the construction of $G$ from~$O$. 
    In this figure we refer to them as \emph{bags}. 
    The vertex $a$ is an endvertex of the bold edge $e$. Here the cycle $O$ has the \emph{pattern} \ty{btx} with regard to $e$ and~$a$: the first letter \ty{b} indicates that the edge $f_1$ on $O$ incident with $a$ aside from $e$ is bold, the second letter \ty{t} indicates that the edge $f_2$ after that on $O$ is not bold (\ty{t}imid), and the letter \ty{x} indicates that the edge $f_3$ after that can be arbitrary, bold or not. In our example, it is bold.
    For this pattern, there are three internally vertex-disjoint paths from $a$ to~$v$: the first path $P_1$ connects to $v$ from the bag at~$e$, the second path $P_2$ connects to $v$ from the bag at~$f_1$, and the final path $P_3$ uses a path disjoint from $P_2-a$ through the bag at~$f_1$, then traverses~$f_2$ and eventually connects to $v$ from the bag at~$f_3$.}
    \label{fig:Overview2sep2}
\end{figure}

Having shown that the pseudo-reductions at the bold edges of~$O$ are totally-nested, we know that they bound a torso~$\tau'$; this is not necessarily a torso of~$N$, but of the set of pseudo-reductions.
That is to say that $\tau'$ is the torso of a mixed-tree-decomposition which is coarser than~$\Tcal$, so $\tau'$ includes $\tau$, and we will later show that $\tau'=\tau$.
Using our knowledge of the structure of $G-v$ provided by~$O$, and 3-connectedness of~$G$, it is straightforward to show that $\tau'$ is a wheel.
So all that remains to show is that $\tau'$ is equal to~$\tau$.
For this, we have to show that the pseudo-reductions at the bold edges of~$O$ are precisely the tri-separations induced by the edges of $T$ at~$t$.
Equivalently, we have to show that no totally-nested nontrivial tri-separation of~$G$ interlaces~$\tau'$.
So we assume for a contradiction that some totally-nested nontrivial tri-separation $(U,W)$ of~$G$ interlaces~$\tau'$.
Roughly speaking, $(U,W)$ induces a mixed 2-separation of~$O$.
This induced mixed 2-separation cuts $O$ into two intervals.
We carefully select a vertex or non-bold edge from each interval, and then add $v$ to obtain the separator of a mixed 3-separation $(E,F)$ of~$G$.
Then $(E,F)$ crosses $(U,W)$ by standard arguments, and with a bit of extra work we turn $(E,F)$ into a tri-separation that still crosses $(U,W)$, yielding a contradiction.
Hence $\tau'$ is equal to~$\tau$.
As we have already shown that $\tau'$ is a wheel, the overview of the proof of~(2) is complete.

\clearpage
\renewcommand{\thechapter}{1}
\phantomsection
\noindent{\huge{\textbf{Chapter \thechapter \\ \\An angry theorem for tri-separations}}}
\addcontentsline{toc}{section}{\thechapter\ An angry theorem for tri-separations}

\setcounter{section}{0}

\section{Overview of this chapter}

This chapter provides the key ingredient for the proof of \autoref{mainIntro}, which we call the \nameref{Angry}.
Essentially, it states that \autoref{mainIntro} holds in the special case where~$N(G)$, the set of all totally-nested non-trivial tri-separations of~$G$, is empty;
that is: if every nontrivial tri-separation of~$G$ is crossed (hence the name `angry'), then $G$ is either quasi 4-connected, a wheel or a $K_{3,m}$ for some~$m\ge 3$.
The \nameref{Angry} extends to 3-connectivity the \nameref{Angry2Sep} (\ref{Angry2Sep}) of Tutte, which characterises the 2-connected graphs in which every every 2-separation is crossed by some other 2-separation: these are precisely the 3-connected graphs and the cycles.
We recommend having an early glance at \autoref{sec:2SeparatorTheorem} to see how the \nameref{Angry2Sep} fits into the proof of Tutte's decomposition for 2-connectivity.

\subsection[Statement of the Angry Tri-Separation Theorem]{Statement of the \nameref{Angry}}

Here we give all the necessary definitions to then state the \nameref{Angry}~(\ref{Angry}).
An (oriented) \emph{mixed-separation} of a graph~$G$ is an ordered pair $(A,B)$ such that $A\cup 
B=V(G)$ and both $A\sm B$ and $B\sm A$ are non-empty.
We call $A$~and~$B$ the \emph{sides} of~$(A,B)$.
The~\emph{separator} of $(A,B)$ is the disjoint union of the vertex set $A\cap B$ and the edge set 
$E(A\setminus B,B\setminus A)$.
We~denote the separator of $(A,B)$ by~$S(A,B)$.
The~\emph{order} of $(A,B)$ is the size $\vert S(A,B)\vert$ of its separator.
A~mixed-separation of order~$k$ for $k\in\Nbb$ is called a \emph{mixed $k$-separation} for short.
The~separator of a mixed $k$-separation is a \emph{mixed $k$-separator}.
A~mixed-separation $(A,B)$ of~$G$ with no edges in its separator is called a \emph{separation} of~$G$.
Separations of order $k$ are called \emph{$k$-separations} and their separators are called 
\emph{$k$-separators}.

A mixed 3-separation $(A,B)$~of~$G$ is \emph{nontrivial} if both $G[A]$ and $G[B]$ include a cycle.

\begin{dfn}[Tri-separation]
A~\emph{tri-separation} of a graph $G$ is a mixed-separation $(A,B)$ of $G$ of order three such 
that every vertex in $A\cap B$ has at least two neighbours in both $G[A]$~and~$G[B]$.
The separator of a tri-separation of~$G$ is a \emph{tri-separator} of~$G$.
A~tri-separation is \emph{strong} if every vertex in its separator has degree at least four.
\end{dfn}

\begin{figure}[ht]
\centering
\includegraphics[height=2.5\baselineskip]{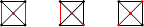}
\caption{Separators of nontrivial tri-separations of the 4-wheel.}
\label{fig:4wheel}
\end{figure}

\begin{eg}\label{WheelExample}
Let $G$ be a wheel with rim $O$ of length at least four.
Let $v$ denote the centre of~$G$.
Adding $v$ to any mixed 2-separator of~$O$ yields the separator of a nontrivial tri-separation of~$G$.
In fact, every nontrivial tri-separation of~$G$ can be obtained in this way.

If $G$ is a 3-wheel, aka~$K_4$, then every separator of a nontrivial tri-separation of~$G$ consists of two vertices and one edge.
\end{eg}

Similarly as is common for separations~\cite{DiestelBookCurrent}, we define a partial ordering on the mixed-separations of any graph by letting $(A,B)\le (C,D)$ if and only if $A\se C$ and $B\supseteq D$.
Two mixed-separations $(A,B)$ and~$(C,D)$ are \emph{nested}~if, after possibly switching the name $A$ with~$B$ or the name $C$ with~$D$, we have $A\se C$ and~$B\supseteq D$.
If two mixed-separations are not nested, they \emph{cross}.
A set of mixed-separations is \emph{nested} if its elements are pairwise nested.

\begin{dfn}[Totally nested]
A tri-separation of~$G$ is \emph{totally nested} if it is nested with every tri-separation of~$G$.
\end{dfn}

\begin{eg}
Every nontrivial tri-separation in the 4-wheel, as found in~\autoref{WheelExample}, is crossed by another nontrivial tri-separation; see~\autoref{fig:4wheel}.
By contrast, every 3-cut with both sides of size at least two in a 3-connected graph determines a totally-nested nontrivial tri-separation (we will see this in \autoref{3cutsAreNested}).
\end{eg}

A graph~$G$ is \emph{internally 4-connected} if it is 3-connected, every 3-separation of~$G$ has a side that induces a claw, and $G\notin\{ K_4,K_{3,3}\}$.
Internally 4-connected graphs are quasi 4-connected.

\begin{thm}[Angry Tri-Separation Theorem]\label{Angry}
For every 3-connected graph~$G$, exactly one of the following is true:
\begin{enumerate}
    \item $G$ has a totally-nested nontrivial tri-separation;
    \item $G$ is a wheel or a $K_{3,m}$ for some~$m\ge 3$;
    \item $G$ is internally 4-connected.
\end{enumerate}
\end{thm}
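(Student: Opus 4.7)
The plan is to prove the theorem in two halves: mutual exclusivity of the three alternatives, and exhaustiveness. Exclusivity is checked directly: every two nontrivial tri-separations of a wheel cross by \autoref{WheelExample}, and an analogous verification works for $K_{3,m}$ with $m\ge 3$, so (2) rules out (1). Wheels with at least four rim vertices and $K_{3,m}$ with $m\ge 4$ admit 3-separations neither of whose sides is a claw, and the exceptional cases $K_4$ and $K_{3,3}$ are excluded from (3) by definition, so (2) rules out (3). Finally, internal 4-connectedness forbids nontrivial 3-separations outright (every 3-separation has a claw side, which contains no cycle), and a short case analysis using the tri-separation degree condition handles the tri-separations whose separators contain an edge, so (3) rules out (1).

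For exhaustiveness, assume $G$ is 3-connected and has no totally-nested nontrivial tri-separation. I first dispose of the subcase that $G$ has no nontrivial tri-separation at all. Any 3-separation of $G$ whose two sides both contain a cycle can be promoted to a nontrivial tri-separation by a standard reduction that pushes under-connected separator vertices to the side they belong to; so under the hypothesis, every 3-separation has a claw side. Modulo the exceptions $K_4$ and $K_{3,3}$, which belong to (2), this places $G$ in (3).

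The substantive subcase is when $G$ admits a nontrivial tri-separation $(A,B)$, necessarily crossed by some other tri-separation $(C,D)$. My plan is to set up a tri-analogue of submodularity for mixed separators and combine it with the order-three bound and the tri-separation axiom (each vertex of $A\cap B$ has at least two neighbours on each side) to deduce a dichotomy for $S(A,B)\cap S(C,D)$: either it is a single vertex $v$, or the two separators coincide as a three-element vertex set. In the single-vertex case, applying the 2-separator theorem to $G-v$ expresses $G-v$ as a cycle $O$ some of whose edges are replaced by 2-connected blocks (the \emph{bold} edges), and the separators of $(A,B),(C,D)$ minus $v$ interleave on $O$. The crucial step is to show that no edge of $O$ can be bold: for each bold edge $e$ I would construct an explicit totally-nested nontrivial tri-separation of $G$ using $v$ together with the endpoints of $e$, replacing any endpoint failing the two-neighbour condition with an incident edge, contradicting the hypothesis. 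With no bold edges, $G-v$ is a cycle and $G$ is the associated wheel. The common-three-vertex subcase is handled in parallel: the separator $S$ disconnects $G$ into at least four components, each of which must be a single vertex by a minimality argument, and the tri-separation axiom plus 3-connectedness then force $G=K_{3,m}$ with $m$ equal to the number of components.

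The main obstacle I anticipate is the one-vertex-or-three-vertex dichotomy for crossing nontrivial tri-separations: mere submodularity does not suffice because some of the four corners $A\cap C, A\cap D, B\cap C, B\cap D$ may pick up edges rather than vertices in the combined separator, requiring delicate bookkeeping on how each edge in $S(A,B)\cup S(C,D)$ contributes to the corner-separators, together with use of the tri-separation degree condition to rule out degenerate corners. The remaining substantial technical content --- verifying total nestedness of the tri-separations constructed at bold edges --- is a connectivity argument producing three internally vertex-disjoint paths between any two non-adjacent vertices of the separator, which I would carry out by a pattern analysis on the arrangement of bold edges around $O$, analogous to the five-pattern analysis of \autoref{fig:Overview2sep2} in the overview of \autoref{mainIntro}.
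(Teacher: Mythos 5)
Your proposal is correct and follows essentially the same route as the paper's: the dichotomy for crossing tri-separations (centre a single vertex vs.\ coinciding three-vertex separators) is the paper's Crossing Lemma; the single-vertex branch via the 2-separator theorem applied to $G-v$, the pseudo-reduction construction at bold edges of the torso-cycle, and the pattern-based connectivity verification are all exactly the paper's machinery (\autoref{apex_exists}, \autoref{is_splitting_star}, and the Linking Lemma); and the three-vertex branch coincides with \autoref{conjunction} and \autoref{K3n lemma}. The one organisational difference is that the paper first uses \autoref{conjunction} to show every tri-separation is half-connected (since $G$ is not $K_{3,m}$) and thereby forces the single-vertex case of the Crossing Lemma up front, whereas you branch on the dichotomy after the fact; both are fine.
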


\subsection{Organisation of this chapter}

In \autoref{sec:TLD}, we explore nested sets of tri-separations from a mixed-tree-decomposition perspective.
In \autoref{sec:triseps}, we collect useful properties of tri-separations.
In \autoref{sec:cornerDias}, we introduce tools that allow us to systematically study how tri-separations can cross.
In \autoref{sec:Cor2}, we deduce \autoref{vx-trans} from the \nameref{Angry}.
In \autoref{sec:externalTricon}, we employ the tools from the previous section to find necessary conditions for when a tri-separation is totally nested.
In \autoref{sec:BackgroundOnTutte}, we recall the 2-separation-version of \autoref{mainIntro}, as we will need it in the proof of the \nameref{Angry}.
In \autoref{sec:ProofMain}, we will see why the 2-separation-version of \autoref{mainIntro} is helpful for finding totally-nested nontrivial tri-separations.
In \autoref{sec:specialAngry} and \autoref{sec:GeneralAngry}, we put together the tools developed in the previous sections and we complete the proof of the \nameref{Angry}, where the former deals with special cases and the latter solves the general case.

\section{Mixed-tree-decompositions}\label{sec:TLD}

It is well-known that there is a natural correspondence between nested sets of separations and tree-decompositions from the Theory of Graph Minors~\cite{GMX}.
This correspondence does not extend to \emph{mixed}-separations.
Hence it is not clear to us how the sets $N(G)$ in \autoref{mainIntro} could determine tree-decompositions.
However, we believe that this is not surprising, and there is a natural solution.
To explain this, we need a bit more background first.

Wollan introduced tree-cut decompositions to study the immersion-relation, an alternative to the graph-minor relation~\cite{WollanExcludedImmersion}.
His tree-cut decompositions naturally correspond to nested sets of edge-cuts.
Since tree-decompositions correspond to separations (with vertex-separators), and tree-cut decompositions correspond to edge-cuts, the two notions of decomposition are not more general than each other, yet they are closely related.
As mixed-separations generalise both separations and edge-cuts, they should correspond to a notion of tree-like decomposition that generalises both tree-decompositions and tree-cut decompositions.
Indeed, such a notion exists.

Let $G$ be a graph.
Let us call a pair $(T,(V_t)_{t\in T})$ of a tree $T$ and a family of vertex sets $V_t\se V(G)$ indexed by the nodes $t\in T$ a \emph{mixed-tree-decomposition} of~$G$ if it satisfies the following two conditions:
\begin{enumerate}[label=\textnormal{(M\arabic*)}]
    \item $V(G)=\bigcup_{t\in T}V_t$;
    \item\label{M2} the subgraph of $T$ induced by $\{\,t\in T:v\in V_t\,\}$ is connected for every vertex $v\in G$.
\end{enumerate}
We refer to the vertex sets $V_t$ as \emph{bags}.

The difference to tree-decompositions is that edges are not required to have both ends in some bag~$V_t$.
The differences to tree-cut decompositions are that, on the one hand, we allow bags associated to distinct nodes to intersect, and on the other hand, we additionally require~\ref{M2}.
It is straightforward to check that all tree-decompositions and all tree-cut decompositions are mixed-tree-decompositions.

Let $\Tcal:=(T,(V_t)_{t\in T})$ be a mixed-tree-decomposition of a graph~$G$.
Write $\vE(T):=\{\,(x,y):xy\in E(T)\,\}$ for the set of all possible directions of edges in~$T$.
We can define a map $\alpha_{\Tcal}$ with domain $\vE(T)$ that assigns to each $(t_1,t_2)\in\vE(T)$ the pair $(U_1,U_2)$, where $U_i$ is the union of all bags $V_t$ with $t$ contained in the component of $T-t_1 t_2$ that includes~$t_i$ (for $i=1,2$).
A set $M$ of mixed-separations is \emph{symmetric} if for every $(A,B)\in M$ it also contains $(B,A)$.
From the abstract theory of~\cite{TreeSets}, it follows that every nested symmetric set $M$ of mixed-separations of~$G$ uniquely determines (up to isomorphism) a mixed-tree-decomposition $\Tcal$ of~$G$ such that $\alpha_{\Tcal}$ is bijective with image equal to~$M$.
In particular, the sets $N(G)$ in \autoref{mainIntro} can be expressed through mixed-tree-decompositions~$\Tcal(G)$.
The torsos of $\Tcal(G)$ are defined indirectly as the compressed torsos of~$N(G)$, which will be introduced in \autoref{subsec:Torsos}. For an alternative but equivalent definition, see~\cite[§2]{Tutte4con}.

\section{Properties of tri-separations}\label{sec:triseps}

A cut of a graph~$G$ is \emph{atomic} if it is of the form $E(v,V(G)\sm\{v\})$ for some vertex~$v\in V(G)$.

\begin{lem}\label{trivial}
The following are equivalent for every tri-separation $(A,B)$ of a 3-connected graph~$G$:
 \begin{enumerate}
  \item $(A,B)$ is trivial;
  \item $A$ and $B$ are the two sides of an atomic cut.
 \end{enumerate}
\end{lem}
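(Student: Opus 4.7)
The plan is to unwind both definitions and observe that they describe exactly the same object. The argument is essentially a one-paragraph verification, but I would split it into the two implications for clarity.

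For the direction $(2)\Rightarrow(1)$, I would first note that the two sides of an atomic cut $E(v,V(G)\setminus\{v\})$ are the disjoint sets $\{v\}$ and $V(G)\setminus\{v\}$. After possibly relabelling, $A=\{v\}$ and $B=V(G)\setminus\{v\}$, so $A\cap B=\emptyset$ and $E(A\setminus B,\,B\setminus A)=E(v,V(G)\setminus\{v\})$. The separator of $(A,B)$ therefore consists of exactly $\deg(v)$ edges. Since $(A,B)$ is a tri-separation, this separator has size three, giving $\deg(v)=3$; hence the atomic cut is a 3-edge-cut one of whose sides is the singleton $\{v\}$, which is precisely the definition of a trivial tri-separation.

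For the direction $(1)\Rightarrow(2)$, suppose $(A,B)$ is trivial, so $A$ and $B$ are the sides of a 3-edge-cut $F$ with one side of size one; say $A=\{v\}$ without loss of generality. The definition of a mixed-separation requires $A\setminus B$ to be nonempty, which forces $v\notin B$ and therefore $A\cap B=\emptyset$. But then $A=\{v\}$ and $B=V(G)\setminus\{v\}$, and $F$ coincides with the atomic cut at $v$, of which $A$ and $B$ are the sides.

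The only point that deserves attention is that when $A$ and $B$ are called \emph{the sides of a cut}, they are the disjoint bipartition of $V(G)$ associated with that cut, not merely sets whose union is $V(G)$; once this is clear, each implication is a one-line verification. 3-connectivity plays no essential role beyond being the standing assumption under which the notion of a trivial tri-separation is defined, so I do not expect any real obstacle.
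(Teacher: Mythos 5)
Your proof does not establish the lemma, because you have worked from the wrong definition of \emph{trivial}. In the chapter where \autoref{trivial} lives, the operative definition is the one given just before \autoref{WheelExample}: a mixed 3-separation $(A,B)$ is \emph{nontrivial} if both $G[A]$ and $G[B]$ include a cycle, and hence trivial if at least one of $G[A]$, $G[B]$ is acyclic. You have instead used the provisional definition from the introduction (``$A$ and $B$ are the sides of a 3-edge-cut with a side of size one''), under which the lemma collapses into a near-tautology about two formulations of the same object. \autoref{trivial} exists precisely to reconcile these two definitions, so assuming the introduction's formulation as the hypothesis begs the question.

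With the correct definition, the direction $(2)\Rightarrow(1)$ is indeed immediate (a singleton side is acyclic), but $(1)\Rightarrow(2)$ requires genuine work: starting from ``$G[A]$ contains no cycle'' one must argue that $A$ is forced to be a singleton of degree three. The paper does this by observing that, in a 3-connected graph, no two separator edges can share an endvertex in $B$, so adding the separator edges to the acyclic $G[A]$ yields a tree $T$ with three leaves in $B$; $3$-connectivity then forbids $T$ from having any other leaves or degree-two vertices, forcing $T = K_{1,3}$ and hence $A = \{v\}$. None of this appears in your argument. You should restate the hypothesis in terms of acyclicity and supply the tree argument (or an equivalent one ruling out $|A|\ge 2$).
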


\begin{proof}
The implication (2)$\to$(1) is clear.
For (1)$\to$(2) suppose that $G[A]$ contains no cycle, say.
We first show that the side $B$ cannot have exactly two vertices.
Indeed, if $B$ has size two, then $G[B]$ has maximum degree at most one, and since $(A,B)$ is a tri-separation it follows that $A\cap B$ must be empty.
Thus, $A$ and $B$ are the two sides of a cut of size three.
Hence some vertex in $B$ has degree at most two in~$G$, which contradicts 3-connectivity. 
So $B$ does not have size two.
As we are done otherwise, from now on we assume that the side~$B$ contains at least three vertices. 

If two of the edges in $E(A\sm B, B\sm A)$ had the same 
endvertex in~$B$, this vertex would be in a 2-separator of~$G$. 
So as the graph~$G$ is 3-connected, no two edges in $E(A\sm B, B\sm A)$ can have the same 
endvertex in~$B$. 
Let~$T$ be the graph obtained from $G[A]$ by adding all the edges from the separator of $(A,B)$.
By the above, $T$ is a tree. 
The tree $T$ has three leaves in~$B$.
Since $G$ is 3-connected, $T$~has no other leaves and also no vertices of degree two.
Hence~$T$ is a $K_{1,3}$ and (2) follows.
\end{proof}

\begin{cor}\label{trivial_is_nested}
The trivial tri-separations of a 3-connected graph~$G$ are nested with all strong tri-separations of~$G$.
\end{cor}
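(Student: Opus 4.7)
The plan is to reduce to the atomic cut picture via \autoref{trivial} and then argue on the position of the degree-three vertex. By \autoref{trivial}, any trivial tri-separation $(A,B)$ of the 3-connected graph $G$ has $\{A,B\}=\{\{v\},\,V(G)\sm\{v\}\}$ for some vertex $v$; the three elements of the separator are the edges incident with $v$, so $v$ has degree exactly three in $G$. Without loss of generality, say $A=\{v\}$ and $B=V(G)\sm\{v\}$.

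Now let $(C,D)$ be any strong tri-separation. Since $C\cup D=V(G)$, the vertex $v$ lies in $C\sm D$, in $D\sm C$, or in $C\cap D$. The third possibility is ruled out immediately: any vertex of $C\cap D$ belongs to the separator $S(C,D)$, so by strongness it has degree at least four, contradicting $\degree_G(v)=3$. Hence $v$ lies in exactly one side, and after possibly swapping the names $C$ and $D$ we may assume $v\in C\sm D$. Then $A=\{v\}\se C$ and $D\se V(G)\sm\{v\}=B$, so $(A,B)\le(C,D)$ and the two tri-separations are nested.

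There is essentially no obstacle here; the only thing to check is the degree count for $v$, which is already visible in the proof of \autoref{trivial} (the tree $T$ constructed there is a $K_{1,3}$ rooted at the vertex corresponding to the singleton side). The argument does not extend to arbitrary tri-separations because a non-strong tri-separation may well have a degree-three vertex in its separator, which is exactly the situation the strongness hypothesis is designed to prevent.
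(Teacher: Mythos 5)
Your proof is correct and follows exactly the same route as the paper's: apply \autoref{trivial} to put the trivial tri-separation in the form $(\{v\},V(G)\sm\{v\})$ with $\degree_G(v)=3$, observe that strongness forbids $v\in C\cap D$, and read off the nesting inequality. The closing remark about why strongness is needed is a nice sanity check but not part of the proof proper.
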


\begin{proof}
Let any trivial tri-separation be given.
By \autoref{trivial}, it is of the form $(\{v\},V\sm\{v\})$, say. 
Then the vertex $v$ has degree three in $G$. Let 
$(C,D)$ be any strong tri-separation. 
Since $(C,D)$ is strong, the vertex $v$ is not in 
$C\cap D$. So it is in precisely one of $C\sm D$ and $D\sm C$, say in $C\sm D$.
Then $D\se V\sm\{v\}$, which gives $(\{v\},V\sm\{v\})\le (C,D)$.  
\end{proof}

\begin{lem}\label{independentEdges}
Let $G$ be a 3-connected graph, and let $(A,B)$ be a nontrivial mixed 3-separation of~$G$.
Then the edges in $S(A,B)$ form a matching between $A\sm B$ and $B\sm A$.
\end{lem}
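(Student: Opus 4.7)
The plan is to argue by contradiction. Suppose two distinct edges $e_1,e_2\in E(A\sm B,B\sm A)\subseteq S(A,B)$ share an endpoint $v$; by the symmetry between $A$ and $B$ we may assume $v\in A\sm B$. The aim is to extract from this a vertex cut of $G$ of size at most two, which would contradict 3-connectivity.

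First comes some bookkeeping. Set $k:=|A\cap B|$ and $m:=|E(A\sm B,B\sm A)|$, so $k+m=3$. Because at least two of the $m$ cut-edges collapse to the common vertex $v$, we need $m\ge 2$, hence $k\le 1$. Let $T\subseteq A\sm B$ consist of all endpoints in $A\sm B$ of edges of $E(A\sm B,B\sm A)$; since two of these edges pool their $A$-endpoints at $v$, $|T|\le m-1=2-k$. Consequently the vertex set $X:=(A\cap B)\cup T$ has size at most $k+(2-k)=2$.

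The crucial step is to verify that $X$ separates $A\sm B\sm T$ from $B\sm A$ in~$G$. This is immediate from the construction: any $G$-edge from $A\sm B\sm T$ to $B\sm A$ would belong to $S(A,B)$, and every element of $S(A,B)$ is either a vertex of $A\cap B$ (removed in $X$) or an edge of $E(A\sm B,B\sm A)$ whose $A\sm B$-endpoint sits in~$T$ (also removed).

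It remains to see that both purported sides of $X$ are non-empty. The set $B\sm A$ is non-empty by the definition of a mixed-separation. For $A\sm B\sm T$, the non-triviality hypothesis guarantees that $G[A]$ contains a cycle, so $|A|\ge 3$ and hence $|A\sm B|\ge 3-k$; combined with $|T|\le 2-k$ this forces $|A\sm B\sm T|\ge 1$. Thus $X$ is a vertex cut of order at most two in~$G$, contradicting 3-connectivity. The main subtlety I anticipate is purely notational: one has to keep the vertex-elements and edge-elements of $S(A,B)$ carefully distinguished throughout. Conceptually, the entire argument amounts to the observation that each shared endpoint among the cut-edges buys a slot in a vertex cut, and the cycle guaranteed by non-triviality is exactly what prevents the smaller side from collapsing to~$T$.
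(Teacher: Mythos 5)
Your argument is correct and mirrors the paper's own proof almost exactly: both assume for contradiction that two cut-edges share a vertex $v\in A\sm B$, then isolate a vertex set of size at most two (your $X=(A\cap B)\cup T$ coincides with the paper's $\{v,x\}$), observe that it separates $G$, and use the cycle in $G[A]$ guaranteed by nontriviality to certify that the $A$-side is nonempty after removing $X$. The only difference is cosmetic bookkeeping ($k$, $m$, $T$) rather than the paper's explicit choice of the element $x$.
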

\begin{proof}
Let us show that no two edges in $S(A,B)$ share an end.
For this, let us suppose for a contradiction that two edges $e,f\in S(A,B)$ share an endvertex $v\in A\sm B$, say.
Let $x$ be the remaining element of $S(A,B)$ besides $e,f$ if it is a vertex, and otherwise let $x$ denote the endvertex in~$A\sm B$ of the edge in $S(A,B)$ besides~$e,f$.
Let $O$ be a cycle in $G[A]$.
Since $O$ has at least three vertices, one of them is distinct from $v$ and $x$, and so is not in $B\cup\{v,x\}$.
Hence the pair $(A,B\cup\{v,x\})$ is a 2-separation of~$G$ with separator equal to~$\{v,x\}$.
This contradicts the fact that $G$ is 3-connected.
\end{proof}

\begin{lem}\label{forUniqueReduction}
    Let $(X_1,X_2)$ be a mixed 3-separation of a 3-connected graph~$G$.
    Then for every vertex $v$ in the separator of~$(X_1,X_2)$, exactly one of the following holds:
    \begin{enumerate}
        \item The vertex $v$ has two neighbours in both~$X_1$ and~$X_2$.
        \item There exists a unique index $i\in\{1,2\}$ such that $v$ has precisely one neighbour in~$X_i$ but two neighbours in~$X_{3-i}$.
        The neighbour of $v$ in $X_i$ lies in $X_i\sm X_{3-i}$, so $v$ has two neighbours in $X_{3-i}$ that lie in $X_{3-i}\sm X_i$.
    \end{enumerate}
\end{lem}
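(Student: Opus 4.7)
The plan is to parametrise by $a$, $b$, $c$, the numbers of neighbours of $v$ in $X_1\sm X_2$, $X_2\sm X_1$, and $(X_1\cap X_2)\sm\{v\}$, so that $\deg(v)=a+b+c\ge 3$ by 3-connectedness. Statement~(1) is equivalent to $a+c\ge 2$ and $b+c\ge 2$; statement~(2) with $i=1$ is equivalent to $a=1$, $c=0$ and $b\ge 2$ (and symmetrically for $i=2$). From these reformulations, mutual exclusivity of the two conclusions and uniqueness of~$i$ in~(2) are immediate from $\deg(v)\ge 3$.

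The crux is to establish $a\ge 1$; the bound $b\ge 1$ will follow by symmetry. Suppose for contradiction that $a=0$. Write $S_v:=X_1\cap X_2$ and $S_e:=E(X_1\sm X_2,X_2\sm X_1)$, so that $|S_v|+|S_e|=3$ and $|S_v|\ge 1$ (as $v\in S_v$). Let $Q\se X_1\sm X_2$ be the set of $X_1\sm X_2$-endpoints of the edges in~$S_e$, and set $W:=(S_v\sm\{v\})\cup Q$, so $|W|\le(|S_v|-1)+|S_e|=2$. Any edge of $G$ from $X_1\sm X_2\sm Q$ to $\{v\}\cup(X_2\sm X_1)$ would either land at~$v$ (impossible, as $a=0$) or lie in~$S_e$ (impossible, since its $X_1\sm X_2$-endpoint would be in~$Q$); hence $W$ is a vertex cut between these two sets. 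Since $v$ lies in the latter, 3-connectedness forces $X_1\sm X_2\se Q$.

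For any $u\in X_1\sm X_2$, using $u\not\sim v$, we obtain $\deg(u)\le(|X_1\sm X_2|-1)+(|S_v|-1)+|S_e|=|X_1\sm X_2|+1$, whence $|X_1\sm X_2|\ge 2$. Combined with $|X_1\sm X_2|=|Q|\le|S_e|$ and $|S_v|+|S_e|=3$ together with $|S_v|\ge 1$, this forces $|S_v|=1$, $|S_e|=2$, and $|X_1\sm X_2|=2$. Then the two edges of~$S_e$ must have distinct endpoints in $X_1\sm X_2$ (else $|Q|<2$), so each $u\in X_1\sm X_2$ has only one incident edge in~$S_e$ and, since $X_1\cap X_2=\{v\}$ with $u\not\sim v$, no neighbour at all in~$X_1\cap X_2$; this gives $\deg(u)\le 2$, the desired contradiction.

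With $a\ge 1$ and $b\ge 1$ in hand, the remainder is pure bookkeeping: either $a\ge 2$ and $b\ge 2$, giving (1); or, after possibly swapping $X_1$ and $X_2$, we have $a+c\le 1$, which with $a\ge 1$ forces $a=1$ and $c=0$, and then $a+b+c\ge 3$ yields $b\ge 2$, which is exactly~(2) with $i=1$. I expect the main obstacle to be the compressed case analysis at the end of paragraph~three, where one must juggle $|S_v|+|S_e|=3$, $v\in S_v$, $|X_1\sm X_2|\le|S_e|$, and the degree inequality simultaneously to rule out every small residual configuration; once that is settled, the dichotomy falls out from the degree equation $\deg(v)=a+b+c\ge 3$ alone.
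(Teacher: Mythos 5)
Your proof is correct and takes essentially the same route as the paper, which disposes of this lemma in one sentence by observing that $3$-connectedness forces $v$ to have degree at least three and a neighbour in each of $X_1\setminus X_2$ and $X_2\setminus X_1$; your cut argument is a full proof of that latter, tacitly assumed, fact. One small slip: in your final dichotomy the first alternative should read \lq $a+c\ge 2$ and $b+c\ge 2$\rq\ (your own reformulation of~(1)) rather than \lq $a\ge 2$ and $b\ge 2$\rq, since for instance $a=b=c=1$ satisfies~(1) but is not covered as written.
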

\begin{proof}
    Since $G$ is 3-connected, $v$ has neighbours in $X_1\sm X_2$ and in $X_2\sm X_1$, and $v$ has degree at least three.
    So if (1) fails, we have~(2).
\end{proof}

\begin{dfn}[Reduction]
Let $(X_1,X_2)$ be a mixed 3-separation of a 3-connected graph.
We obtain a tri-separation from $(X_1,X_2)$ by deleting vertices from $X_1$ or~$X_2$, as follows.
For every vertex $v\in X_1\cap X_2$ that has fewer than two neighbours in some side~$X_i$, the index $i=:i(v)$ is unique, $v$~has a unique neighbour $x(v)$ in~$X_i$ that lies in~$X_i\sm X_{3-i}$, and $v$ has two neighbours in $X_{3-i}\sm X_i$ by~\autoref{forUniqueReduction}.
For both $j\in\{1,2\}$ we obtain $X_j'$ from $X_j$ by deleting all vertices $v$ with $i(v)=j$.
Then $(X_1',X_2')$ is a tri-separation of~$G$.
Every vertex $v\in X_1\cap X_2$ that was removed from some side is not in the separator of $(X_1',X_2')$, but instead the edge $\{v,x(v)\}$ is in $S(X_1',X_2')$.
In this context, we say that $v$ was \emph{reduced} to the edge $\{v,x(v)\}$.
We call $(X_1',X_2')$ the \emph{reduction} of~$(X_1,X_2)$.
Note that the reduction $(X_1',X_2')$ is nontrivial if $(X_1,X_2)$ is nontrivial.
\end{dfn}

Let $(A,B)$ be a mixed 3-separation of a graph~$G$.
A~\emph{strengthening} of $(A,B)$ is a mixed 3-separation $(A',B')$ that is obtained from $(A,B)$ by deleting at once all vertices of $A\cap B$ from one of the sides that have degree three in $G$ and have a neighbour in $A\cap B$, then taking a reduction.
Note that every vertex deleted from $A\cap B$ is replaced by a unique edge in the separator, so the order of the separator stays the same.

\begin{obs}\label{strengthening-good}
    If $(A',B')$ is a strengthening of $(A,B)$, then $A\sm B\se A'\se A$ and $B\sm A\se B'\se B$. 
    All strengthenings are strong tri-separations.\qed
\end{obs}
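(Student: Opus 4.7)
The plan is to unfold the two-stage definition of the strengthening and argue separately for the set inclusions and for the degree bound. First I would verify the inclusions $A\sm B\se A'\se A$ and $B\sm A\se B'\se B$ by stepping through the two stages: the first stage removes certain vertices of $A\cap B$ from one side, producing an intermediate mixed 3-separation $(A^*,B^*)$ with $A^*\se A$ and $B^*\se B$ while leaving $A\sm B$ and $B\sm A$ untouched, since these two sets are disjoint from $A\cap B$; the reduction stage then does likewise on $(A^*,B^*)$, only moving vertices that are currently in $A^*\cap B^*\se A\cap B$. Because each stage only moves vertices that currently lie in the intersection to one of the sides, the sets $A\sm B$ and $B\sm A$ survive on their original sides throughout.

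For the strong tri-separation claim, the plan is a short pigeonhole argument. Take $v\in A'\cap B'$. Since the reduction returns a tri-separation, $v$ has at least two neighbours in $G[A']$ and at least two in $G[B']$. Writing $a$, $b$, $c$ for the numbers of neighbours of $v$ in $A'\sm B'$, $B'\sm A'$, $A'\cap B'$ respectively, the tri-separation inequalities read $a+c\ge 2$ and $b+c\ge 2$, while $\deg_G(v)=a+b+c$ because $V(G)=A'\cup B'$. If $\deg_G(v)$ were equal to $3$, summing the two inequalities would give $c\ge 1$, so $v$ would have a neighbour in $A'\cap B'\se A\cap B$. But then $v$ itself satisfies both conditions (degree three in $G$, and a neighbour in $A\cap B$) that mark it for removal from the intersection during the first stage of the strengthening, contradicting $v\in A'\cap B'\se A^*\cap B^*$. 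Since $G$ is 3-connected, $\deg_G(v)\ge 3$, so the only remaining option is $\deg_G(v)\ge 4$.

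The argument is essentially bookkeeping, so there is no serious obstacle. The one spot that requires a little care is the pigeonhole step, where one must correctly count that neighbours in $A'\cap B'$ contribute to both $|N(v)\cap A'|$ and $|N(v)\cap B'|$; this overlap is exactly what forces $v$ to have a common neighbour in $A\cap B$ whenever $\deg_G(v)=3$, and it is the whole engine of the degree bound.
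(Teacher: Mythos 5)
Your argument is correct, and it fills in exactly the reasoning that the paper leaves implicit (the statement is marked as an observation with a tombstone and no proof). The inclusion part is indeed just bookkeeping: both the first stage and the reduction only remove vertices that currently lie in the intersection from one side, so $A\sm B$ and $B\sm A$ are never touched and the sides only shrink. The pigeonhole step for the degree bound is the right observation and is carried out cleanly: a surviving separator vertex $v$ of $(A',B')$ has at least two neighbours in each side, and $\deg_G(v)=3$ would force a neighbour in $A'\cap B'\subseteq A\cap B$, which is precisely the condition that would have flagged $v$ for removal in the first stage of the strengthening (since $v\in A'\cap B'\subseteq A^*\cap B^*$ means $v$ survived that stage). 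Combined with $\deg_G(v)\geq 3$ from 3-connectedness, this gives $\deg_G(v)\geq 4$, as required.
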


\begin{lem}\label{reduceToStrong}
Let $(A,B)$ be a mixed 3-separation of a 3-connected graph~$G$.
For every edge $uv$ in~$G$ with both ends $u,v$ in the separator of $(A,B)$,
there is a strengthening $(A',B')$ of $(A,B)$ so that $u,v\in B'$.
\end{lem}
\begin{proof}
Given $(A,B)$, we obtain $A''$ from~$A$ by deleting all vertices that lie in $A\cap B$ and have degree three in~$G$ and have some neighbour in $A\cap B$, and we put $B'':=B$.
Then we let $(A',B')$ be the reduction of~$(A'',B'')$.
Suppose now that $uv$ is an edge with ends $u,v\in A\cap B$.
Then $u$ and $v$ have two neighbours in~$B=B''$, so $u,v\in B'$.
\end{proof}

\begin{prop}\label{tetraXtrisep}
For every 3-connected graph~$G$, the following assertions are equivalent:
\begin{enumerate}
    \item\label{ItemTetra} $G$ is internally 4-connected or~$G\in\{K_4,K_{3,3}\}$;
        \item\label{ItemTetra_neu} every 3-separation of $G$ is trivial or~$G=K_4$;
        \item\label{ItemTrivial1} all tri-separations of~$G$ are trivial or $G=K_4$;
    \item\label{ItemTrivial2} all strong tri-separations of~$G$ are trivial.
\end{enumerate}
\end{prop}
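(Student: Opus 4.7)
The plan is to prove the four-way equivalence by a cycle: $(1)\Leftrightarrow(2)$ directly from definitions, then $(2)\Rightarrow(3)$, then $(3)\Rightarrow(4)$, and finally $(4)\Rightarrow(2)$ to close the loop. For $(1)\Leftrightarrow(2)$, the graph $K_4$ has only four vertices and therefore admits no 3-separation at all, so (2) holds vacuously for it; for $K_{3,3}$, every 3-separation uses a bipartition class as separator, and the resulting four-vertex side consists of a single vertex together with its three neighbours — a claw; outside these two exceptional graphs, (1) and (2) simply restate the defining condition of internal 4-connectivity. The implication $(3)\Rightarrow(4)$ is immediate, since strong tri-separations form a subclass of tri-separations, and $K_4$ being 3-regular admits no strong tri-separation at all.

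For $(2)\Rightarrow(3)$, I let $(A,B)$ be a nontrivial tri-separation with $G\ne K_4$. If $S(A,B)$ contains no edges then $(A,B)$ is a 3-separation, and by (2) some side, say $A$, induces a claw; but each separator vertex then has only one neighbour in $A$, contradicting the tri-separation condition. Hence $S(A,B)$ contains at least one edge, and by \autoref{independentEdges} the edges of $S(A,B)$ form a matching between $A\setminus B$ and $B\setminus A$. I then ``unreduce'' these edges: for each edge $uv$ in the separator, I move one of its endpoints — chosen so that both sides of the resulting pair remain nonempty — into the intersection, obtaining a 3-separation of $G$. Applying (2) to this 3-separation produces a claw side, and translating the claw structure back across the unreductions pins down $S(A,B)$ as the three edges incident to a single degree-three vertex, so $(A,B)$ is trivial by \autoref{trivial}.

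For $(4)\Rightarrow(2)$, I argue contrapositively: assume some 3-separation $(A,B)$ of $G$ has no claw side (and $G\ne K_4$), and construct a nontrivial strong tri-separation to contradict (4). The strengthening supplied by \autoref{reduceToStrong} is itself a strong tri-separation, so if that strengthening is already nontrivial we are done. The delicate case is when the strengthening is trivial, so by \autoref{trivial} its small side is a single degree-three vertex $v$ with $A\setminus B = \{v\}$ via \autoref{strengthening-good}; then the failure of a claw side at $A$ must persist either as a separator vertex of $(A,B)$ of degree at least four, or as an extra edge between two vertices of $A\cap B$, or as the larger side $B$ containing strictly more than four vertices. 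Each such witness supplies a vertex $x$ with rich enough adjacency to support a fresh separator of the form $\{x, e_1, e_2\}$ with $x$ the sole vertex: I partition $V(G)\setminus\{x\}$ consistently with $(A,B)$, pick edges $e_1, e_2$ incident to the other two vertices of $S(A,B)$, and use \autoref{forUniqueReduction} to verify that $x$ retains at least two neighbours on each side. The main obstacle, and the heart of the proof, is making these choices so that the resulting strong tri-separation is genuinely nontrivial in every configuration; this is precisely where the failure of a claw side must be exploited in full, via a case analysis on which of the three witness types holds.
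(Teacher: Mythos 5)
Your proposed cycle — $(1)\Leftrightarrow(2)$ directly, then $(2)\Rightarrow(3)\Rightarrow(4)\Rightarrow(2)$ — is close to the paper's chain $(1)\Rightarrow(2)\Rightarrow(3)\Rightarrow(4)\Rightarrow(1)$, but two of your links hide real work. The direction $(2)\Rightarrow(1)$ is \emph{not} a restatement of definitions. You need the lemma that if $(A,B)$ is a 3-separation of a 3-connected graph and $G[A]$ contains no cycle, then $G[A]$ is a claw. This uses 3-connectedness essentially: every vertex of $A\setminus B$ has all its neighbours inside $A$ and hence degree at least three in $G[A]$, so no vertex of $A\setminus B$ is a leaf of the forest $G[A]$; all leaves therefore lie in the three-element set $A\cap B$, and a degree count then forces $G[A]$ to be a single tree with exactly three leaves and a unique branch vertex of degree three adjacent to them all, i.e.\ $K_{1,3}$. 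The paper avoids proving this by establishing only $(1)\Rightarrow(2)$ and closing its cycle through $(4)\Rightarrow(1)$; your plan closes the loop at $(4)\Rightarrow(2)$ and therefore genuinely needs both directions of $(1)\Leftrightarrow(2)$, so you cannot skip this lemma.

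The implication $(4)\Rightarrow(2)$ is also not secured. In your ``delicate case'' $A\setminus B=\{v\}$ with $\deg v=3$, the vertex $v$ is adjacent to all three vertices of $A\cap B$, so $G[A]$ is a claw exactly when $A\cap B$ is independent; the only possible witness to $G[A]$ not being a claw is thus an edge inside $A\cap B$, and a separator vertex of degree $\geq 4$ or $|B|>4$ do not help, so your trichotomy is wrong. More seriously, the separator $\{x,e_1,e_2\}$ is never actually constructed and you defer precisely the case analysis that would certify nontriviality — that \emph{is} the content of the implication. The paper's route here is cleaner and you should adopt it: once $A\cap B$ spans an edge $x_1x_2$, settle $|G|=5$ by inspection (the 4-wheel and $K_5^-$), and for $|G|\geq 6$ swap the roles of $A$ and $B$ so that $|A\setminus B|\geq 2$, then take the strengthening supplied by the ``Moreover'' clause of \autoref{reduceToStrong} so that $x_1,x_2\in B'$; this gives $|A'|\geq 2$ and $|B'|\geq 3$, hence nontriviality by \autoref{trivial}. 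Two smaller points. First, $K_4$ \emph{does} admit strong tri-separations: the trivial ones have separators consisting of three edges and no vertices, so the strongness condition holds vacuously; your conclusion in $(3)\Rightarrow(4)$ survives only because those are trivial, not because they are absent. Second, in $(2)\Rightarrow(3)$ the claw detour is unnecessary — the un-reduced 3-separation contains $A$ and $B$ as subsets of its two sides and hence inherits both of their cycles, giving a nontrivial 3-separation that contradicts $(2)$ immediately.
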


\begin{proof}
\ref{ItemTetra}$\rightarrow$\ref{ItemTetra_neu}.
If $G$ is internally 4-connected or $G=K_{3,3}$, then every 3-separation $(A,B)$ of $G$ has a side ($A$, say) that induces a claw; in particular, $G[A]$ contains no cycle, so $(A,B)$ is trivial.

$\neg$\ref{ItemTrivial1}$\to\neg$\ref{ItemTetra_neu}.
Let $(A,B)$ be a nontrivial tri-separation of $G$.
Recall that $A\sm B$ and $B\sm A$ are nonempty by the definition of mixed-separation.
For each edge in $S(A,B)$ we pick one of its endvertices and add it to both sides. 
We pick these endvertices so that we preserve that $A\sm B$ and $B\sm A$ are nonempty. 
This is possible as $G$ has at least five vertices. As this preserves nontriviality, we end up with a nontrivial 3-separation of $G$. 

Clearly \ref{ItemTrivial1}$\to $\ref{ItemTrivial2}.

$\neg$\ref{ItemTetra}$\to\neg$\ref{ItemTrivial2}.
As $G$ is not internally 4-connected and $G\notin\{K_4,K_{3,3}\}$, we may let $(A,B)$ be a 3-separation of~$G$ none of whose sides induces a claw.
Let~$X:=A\cap B$.
If $G$ had at most four vertices, then~$G$ would be a~$K_4$, contradicting our assumption, so we have~$|G|\ge 5$.

{\bf Case 1:} the induced subgraph $G[X]$ has no edges.
Then $|A\sm B|\geq 2$ and~$|B\sm A|\geq 2$.
By \autoref{reduceToStrong}, there is a strong tri-separation $(A',B')$ of~$(A,B)$ with $A\sm B\se A'$ and $B\sm A\se B'$.
Since $A\sm B$ and $B\sm A$ have size at least two, so have $A'$ and~$B'$.
Thus~$(A',B')$ is nontrivial by \autoref{trivial}.

{\bf Case 2:} the induced subgraph $G[X]$ contains an edge~$x_1 x_2$.
Recall that $|G|\ge 5$.
The only 3-connected graphs on exactly five vertices that have a 3-separator are the 4-wheel and
$K_5^-$ ($K_5$~minus one edge). The unique $3$-separator of
$K_5^-$ is a triangle and thus it is the separator of a nontrivial tri-separation.
The 4-wheel has two nontrivial strong tri-separations.
So it remains to consider the
case that $G$ has at least six vertices. 
By symmetry, we may assume that $|A\sm B|\geq 2$.
Since $x_1 x_2$ is an edge of~$G$ with both ends in $X=A\cap B$, we find a strong tri-separation $(A',B')$ of~$G$ with $A\sm B\se A'$ and $B\sm A\se B'$ such that $x_1,x_2\in B'$.
Hence, the side~$B'$ misses at most one vertex of~$B$.
As $|B|\geq 4$, this gives~$|B'|\geq 3$. 
We also have~$|A'|\geq |A\sm B|$, and $|A\sm B|\ge 2$ by assumption. Hence the strong tri-separation $(A',B')$ is nontrivial by \autoref{trivial}.
\end{proof}

\section{Nested or crossed: analysing corner diagrams}\label{sec:cornerDias}

 What can we say about two mixed-separations if they cross?
In this section we address this question by introducing corner diagrams for mixed-separations.

Let $(A,B)$ and $(C,D)$ be mixed-separations of a graph~$G$.
The following definitions all depend on the context that $(A,B)$ and $(C,D)$ are given. They are supported by \autoref{fig:CornerDiagram}, which is commonly referred to as a `corner diagram'.

\begin{figure}[ht]
\centering
\includegraphics[height=8\baselineskip]{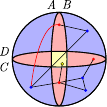}
\caption{$(A,B)$ and~$(C,D)$ cross. Corners are blue, links are red, the centre is yellow.}
\label{fig:CornerDiagram}
\end{figure}

The \emph{corner} for the pair $\{A,C\}$ is the vertex set $(A\sm B)\cap (C\sm D)$.
For each pair of sides, one from $(A,B)$ and one from $(C,D)$, we define its corner in the 
analogous way. Thus there are four corners in total. Two corners are \emph{adjacent} if their 
pairs share a side, otherwise they are \emph{opposite}. Note that there is a unique corner opposite 
of each corner and that each corner is the opposite of its opposite corner. 
Each corner has exactly 
two adjacent corners. 

\begin{eg}
The corner for $\{A,C\}$ is opposite to the corner for~$\{B,D\}$, and it is adjacent to the corner for~$\{A,D\}$ and to the corner for~$\{B,C\}$.
\end{eg}

An edge of $G$ is \emph{diagonal} if its endvertices lie in opposite corners. 
Note that an edge is diagonal if and only if it is contained in the separators of both separations $(A,B)$ and~$(C,D)$. 
The \emph{centre} consists of the diagonal edges together with the vertex set $A\cap B\cap C\cap D$. 

An edge $e$ in the separator of $(A,B)$ is \emph{in the edge-link for~$C$} if it is not 
diagonal and has an endvertex in one of the corners for~$C$; that is, in one of the corners for 
$\{A,C\}$ or~$\{B,C\}$.
The \emph{link for~$C$} is the union of the edge-link for~$C$ and the vertex set~$(A\cap B)\sm D$. 
In a slight abuse of notation we will sometimes say things like \lq a vertex of the link $(A\cap B)\sm D$\rq\ instead of the formally precise \lq a vertex of the link for~$C$\rq.
We define \lq the link for~$D$\rq\ as \lq the link for~$C$\rq\ with \lq$D$\rq\ in place of~\lq $C$\rq. 
Note that every edge in the separator of $(A,B)$ that is not diagonal lies in at most one of the edge-links for $C$ and~$D$. 
We define the links for the sides $A$ and $B$ of $(A,B)$ analogously with the 
separations \lq $(A,B)$\rq\ and \lq $(C,D)$\rq\ interchanged.
The link for a side $X$ is \emph{adjacent} to the two corners for the pairs that contain the side~$X$. 
Two links are \emph{adjacent} if there is a corner they are both adjacent~to. 
Every link is adjacent to all but one link; we refer to that link as its \emph{opposite} link.
Note that all four links are pairwise disjoint.

\begin{eg}
The link for~$C$ is adjacent to the two corners for the pairs $\{A,C\}$ and~$\{B,C\}$. 
It is adjacent to the links for $A$~and~$B$. 
It is opposite to the link for~$D$. 
\end{eg}

\begin{lem}\label{EdgesAdjacentLinks}
Let $(A,B)$ and $(C,D)$ be two mixed-separations of a graph~$G$.
Assume that the corner for $\{A,C\}$ is empty.
Let $L_A$ denote the link for~$A$ and let $L_C$ denote the link for~$C$.
Then either there is a vertex in $L_A\cup L_C$ or $L_A=\emptyset=L_C$.
\end{lem}
\begin{proof}
    Assume for a contradiction that $L_A\cup L_C\se E(G)$ but $L_A$ is nonempty.
    Let $e\in L_A$ be an edge.
    By the definition of link, $e$ has an endvertex in the corner for $\{A,C\}$ or in~$L_C$.
    The former is impossible as the corner for $\{A,C\}$ is empty by assumption.
    Hence $L_C$ contains an endvertex of~$e$, a contradiction.
\end{proof}

\begin{lem}\label{nestedviacorners}
Two mixed-separations $(A,B)$ and $(C,D)$ of a graph~$G$ are nested if and only if they admit a corner such that it and the two adjacent links are empty.
Thus, $(A,B)$ and $(C,D)$ cross as soon as two opposite links are nonempty.
\end{lem}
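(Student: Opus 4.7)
The plan is to reduce the statement to tracking set-theoretic inclusions between the four sides $A,B,C,D$: the vertex parts of corners and links are handled by direct bookkeeping, while the edge parts of the links succumb to a short diagonality argument.

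For the backward direction, assume without loss of generality that the corner for $\{A,D\}$, namely $(A\sm B)\cap(D\sm C)$, together with its two adjacent links (the link for $A$ and the link for $D$), are all empty. The vertex part $(A\cap B)\sm C$ of the link for $D$ being empty, combined with the emptiness of the corner, gives $A\se C$. Symmetrically, the empty vertex part $(C\cap D)\sm B$ of the link for $A$ together with the emptiness of the corner gives $D\se B$. Hence $(A,B)$ and $(C,D)$ are nested.

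For the forward direction, assume $A\se C$ and $D\se B$. Then $(A\sm B)\cap(D\sm C)=\emptyset$, and by the same inclusions the vertex parts $(A\cap B)\sm C$ and $(C\cap D)\sm B$ of the links for $D$ and $A$ are empty. The main obstacle, and arguably the only non-routine step, is to show that the \emph{edge} parts of these two links are empty. Here the key observation is: if an edge $uv\in S(A,B)$ were in the edge-link for $D$, then since the corner $\{A,D\}$ is empty we could assume $v\in (B\sm A)\cap(D\sm C)$, the corner $\{B,D\}$. Then $u\in A\sm B$; using $A\se C$ we get $u\in C$, and using $D\se B$ with $u\notin B$ we get $u\notin D$, so $u\in C\sm D$, placing $u$ in the corner $\{A,C\}$. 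But then $uv$ runs between opposite corners and is therefore diagonal, contradicting its membership in the edge-link. A symmetric argument applies to $S(C,D)$ and the edge-link for $A$.

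For the final assertion, I would observe that the two links adjacent to any corner automatically belong to different opposite pairs: in our case the link for $A$ lies in the opposite pair $\{A,B\}$, while the link for $D$ lies in the opposite pair $\{C,D\}$. So their simultaneous emptiness, which we have just derived from nestedness, forces at least one link in each of the two opposite pairs to be empty. This is exactly the contrapositive of the claim that two nonempty opposite links force the separations to cross.
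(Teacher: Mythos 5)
Your proof is correct and follows essentially the same line as the paper's (the paper's proof is a one-sentence assertion of the equivalence $A\se C\wedge B\supseteq D \iff$ emptiness of the corner for $\{A,D\}$ and its two adjacent links, which you simply unpack in full). The diagonality argument you give for the edge parts of the links is the right way to fill in the detail the paper leaves implicit, and your deduction of the final "Thus" clause from the fact that the two links adjacent to a corner come from different opposite pairs is exactly as intended.
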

\begin{proof}
For the forward implication, suppose that $(A,B)\le (C,D)$, say.
Let $L_A$ denote the link for~$A$, and let $V_A:=L_A\cap V(G)$.
Similarly, let $L_D$ denote the link for~$D$, and let $V_D:=L_D\cap V(G)$.
Let $X$ denote the corner for~$\{A,D\}$.
From $A\se C$ we get that $X$ and $V_D$ are empty.
From $B\supseteq D$ we get that $X$ and $V_A$ are empty.
Now \autoref{EdgesAdjacentLinks} (with `$\{A,D\}$' in place of `$\{A,C\}$') gives $L_A=\emptyset=L_D$, as desired.

For the backward implication, suppose that $X=L_A=L_D=\emptyset$, say.
In particular, $V_A$ and $V_D$ are empty.
From $X$ and $V_D$ being empty we get $A\se C$.
From $X$ and $V_A$ being empty we get $B\supseteq D$.
Hence $(A,B)\le (C,D)$.
\end{proof}

\autoref{nestedviacorners} offers an alternative definition of nestedness. 
We may and will use the two definitions interchangeably.

Suppose that we are given sides $X\in\{A,B\}$ and $Y\in\{C,D\}$.
The \emph{corner-separator} $L(X,Y)$ at the corner for $\{X,Y\}$ is the union of the two links 
adjacent to the corner for $\{X,Y\}$ together with the centre but without those diagonal edges that 
do not have an endvertex in the corner for~$\{X,Y\}$; see \autoref{fig:CornerSeparator} for a picture.
Two corner-separators are \emph{opposite} or \emph{adjacent} if their respective corners are.

\begin{figure}[ht]
\centering
\includegraphics[height=8\baselineskip]{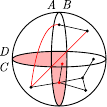}
\caption{$(A,B)$ and~$(C,D)$ cross. The corner separator $L(A,C)$ contains all red vertices and edges.}
\label{fig:CornerSeparator}
\end{figure}

An edge joining vertices in two opposite links is called a \emph{jumping edge}; see \autoref{fig:wickedK4}.
It is straightforward to check 
that jumping edges are the only edges in the separators $S(A,B)$ and $S(C,D)$ that are not in any 
links or the centre and thus not in any corner separators. 

As for separations, we get the following submodularity property:

\begin{lem}\label{submod}
For two mixed-separations $(A,B)$ and $(C,D)$ of a graph, we have
\[
  |L(A,C)|+|L(B,D)|\leq |S(A,B)|+|S(C,D)|.
\]
Moreover, if we have equality, there are no jumping edges, and every diagonal edge has its 
endvertices in the corners for $\{A,C\}$ and $\{B,D\}$.  
\end{lem}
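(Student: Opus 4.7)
The plan is a term-by-term accounting: every element (vertex or edge) that appears in any of the four sets contributes something to each side of the inequality, and I will show the per-element contribution to the right is never smaller than to the left. The two equality clauses should then drop out as the tightness conditions for the edge contributions.

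First I would handle vertices. Since $A\cup B=V(G)=C\cup D$, every vertex of $(A\cap B)\cup(C\cap D)$ lies in exactly one of the centre $A\cap B\cap C\cap D$ or one of the four link-vertex sets $(A\cap B)\sm D$, $(A\cap B)\sm C$, $(C\cap D)\sm B$, $(C\cap D)\sm A$. A centre vertex lies in both $S(A,B)$ and $S(C,D)$ and in both $L(A,C)$ and $L(B,D)$, so it contributes $2$ to each side. A link vertex lies in exactly one of $S(A,B)$, $S(C,D)$, and since each link is adjacent to only one of the two corners $\{A,C\}$, $\{B,D\}$, also in exactly one of $L(A,C)$, $L(B,D)$. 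Vertex contributions thus balance exactly.

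Next I would handle edges, splitting them into three types. A non-diagonal non-jumping edge lies in exactly one edge-link, hence in exactly one of $L(A,C)$, $L(B,D)$, and in exactly one of $S(A,B)$, $S(C,D)$, so it contributes $1$ to each side. A jumping edge, by definition, has both endpoints in opposite link-vertex sets and hence no endpoint in any corner; therefore it lies in no edge-link and not in the centre, contributing $0$ to the left but $1$ to the right. A diagonal edge lies in both $S(A,B)$ and $S(C,D)$, contributing $2$ to the right; its endpoints lie in two opposite corners, and the removal rule in the definition of the corner-separators makes such an edge contribute $2$ to the left if its endpoints are in corners $\{A,C\}$ and $\{B,D\}$, and $0$ if they are in corners $\{A,D\}$ and $\{B,C\}$.

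Summing all contributions gives the inequality, and equality forces each edge contribution to be tight, which is exactly the stated condition: no jumping edges, and every diagonal edge has endpoints in corners $\{A,C\}$ and $\{B,D\}$. The main obstacle is the case analysis for non-diagonal non-jumping edges, where each endpoint may lie in a corner or in a link-vertex set; the key consistency check is that an endpoint configuration which would place such an edge in \emph{both} the edge-link for $C$ and the edge-link for $D$ simultaneously would force its endpoints to lie in $C\sm D$ and $D\sm C$, making the edge diagonal and thus a contradiction. So each non-diagonal non-jumping edge really does lie in a unique edge-link, matching its single contribution to the right side.
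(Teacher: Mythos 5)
Your proof is correct and takes the same route as the paper, which dismisses this as a ``standard argument'' of checking that each vertex or edge counted in $|L(A,C)|+|L(B,D)|$ is counted in $|S(A,B)|+|S(C,D)|$ with at least the same multiplicity; you simply spell out that per-element accounting in full. In particular your consistency check that a non-diagonal edge of a separator cannot lie in both edge-links (else it would be diagonal) correctly identifies where the strictness can arise, and your vertex and diagonal-edge cases balance exactly as the paper's definitions dictate.
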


\begin{proof}
This is a standard argument. We just check for each vertex or edge counted in $|L(A,C)|+|L(B,D)|$ 
that it is counted in $|S(A,B)|+|S(C,D)|$ with the same or greater multiplicity.
\end{proof}

\begin{lem}\label{magicLemmaStep1}
Let $G$ be a 3-connected graph.
Let $(A,B)$ and $(C,D)$ be two mixed 3-separations of~$G$ that cross so that two opposite corner-separators have size three.
Then either
\begin{enumerate}
    \item all links have the same size~$\ell$, for some $\ell\in\{0,1\}$; or
    \item two adjacent links have size $i$ and the other two links have size~$3-i$, for some $i\in\{1,2\}$.
\end{enumerate}
\end{lem}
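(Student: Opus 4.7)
The plan is to combine the submodular inequality of \autoref{submod} with 3-connectivity. Without loss of generality assume that the two opposite corner-separators of size~$3$ are $L(A,C)$ and $L(B,D)$. Then $|L(A,C)|+|L(B,D)|=6=|S(A,B)|+|S(C,D)|$, so \autoref{submod} is tight. Its ``moreover'' part yields that there are no jumping edges and that every diagonal edge has its endvertices in the corners $\{A,C\}$ and $\{B,D\}$.

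Writing $\ell_X$ for the size of the link for $X$ (with $\ell_X^v,\ell_X^e$ its vertex- and edge-parts), $c=|A\cap B\cap C\cap D|$, and $d$ for the number of diagonal edges, the tightness unpacks all four cardinalities cleanly as
\[
|L(A,C)|=\ell_A+\ell_C+c+d,\qquad |L(B,D)|=\ell_B+\ell_D+c+d,
\]
\[
|S(A,B)|=\ell_C+\ell_D+c+d,\qquad |S(C,D)|=\ell_A+\ell_B+c+d,
\]
all equal to~$3$. Subtracting in pairs gives $\ell_A=\ell_D$, $\ell_B=\ell_C$, and $\ell_A+\ell_B+c+d=3$, so the link-size vector has the form $(\ell_A,\ell_B,\ell_B,\ell_A)$ with $\ell_A+\ell_B\leq 3$.

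It remains to use 3-connectivity. Because no diagonal edge touches the corner $\{A,D\}$, a vertex in this corner reaches $V\setminus(A\cap D)$ only through a vertex of $A\cap D$ that is not in the corner itself (namely, a vertex of the link for $A$, a vertex of the link for $D$, or a centre vertex) or through an edge of the edge-link for $A$ or for $D$. Deleting those $\ell_A^v+\ell_D^v+c$ intermediary vertices together with one outside endpoint per relevant edge-link edge yields a vertex separator of $G$ of size at most $\ell_A+\ell_D+c$. Since $V\setminus(A\cap D)$ is nonempty (as $(A,B)$ is a separation), 3-connectivity forces $\ell_A+\ell_D+c\geq 3$ whenever the corner $\{A,D\}$ is nonempty; the symmetric statement holds for the corner $\{B,C\}$.

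The case analysis is then short. If $\ell_A=\ell_D=0$, then the corner $\{A,D\}$ must be nonempty---otherwise \autoref{nestedviacorners} would make $(A,B)$ and $(C,D)$ nested---so $c\geq 3$; combined with $\ell_A+\ell_B+c+d=3$ this forces $\ell_B=d=0$ and $c=3$, giving case~(1) with $\ell=0$. A symmetric argument on the corner $\{B,C\}$ handles $\ell_B=\ell_C=0$. Otherwise $\ell_A,\ell_B\geq 1$, and $\ell_A+\ell_B\leq 3$ leaves only $\ell_A=\ell_B=1$ (case~(1) with $\ell=1$) or $\{\ell_A,\ell_B\}=\{1,2\}$ (case~(2)). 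The main obstacle I anticipate is the separator bound in paragraph~3: the edges out of the corner $\{A,D\}$ split into several types according to the location of their second endpoint (the corner $\{A,C\}$, the corner $\{B,D\}$, a vertex of the link for $B$, or a vertex of the link for $C$), and one must check that each such edge admits a blocking vertex outside $A\cap D$.
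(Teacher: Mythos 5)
Your proof is correct and follows the paper's route: tight submodularity together with the `moreover' clause of \autoref{submod} gives the four identities, pairwise subtraction gives $\ell_A=\ell_D$ and $\ell_B=\ell_C$, and 3-connectivity settles the case of two empty adjacent links. Regarding the concern you flag in paragraph~3: the location check you worry about is in fact fine -- the other endvertex of a relevant edge-link edge lands in the corner $\{A,C\}$, the corner $\{B,D\}$, a link-vertex for $B$, or a link-vertex for $C$ (diagonal and jumping edges being excluded by the tightness), and all four places lie outside $A\cap D$. The subtler point you do \emph{not} flag is that one must also see that some vertex of $V(G)\sm(A\cap D)$ survives the deletion, which is not automatic once the deleted set contains outside endpoints of edge-link edges (these may sit in $B\sm A$). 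Luckily you only invoke the bound when $\ell_A=\ell_D=0$: then the edge-links for $A$ and $D$ are empty, the deleted set is just the $c$ centre vertices in $A\cap B\cap C\cap D\se A\cap D$, so $B\sm A$ is untouched and 3-connectivity gives $c\ge 3$. That specialisation is exactly the paper's step (with $a=d=0$ the corner-separator $L(A,D)$ is a vertex set of size the centre, hence $\ge 3$), so it would be cleaner to state the 3-connectivity bound only in that case rather than in the generality of paragraph~3.
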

\begin{proof}
Let $a,b,c,d$ denote the sizes of the links for $A,B,C,D$, respectively.
Let $x$ denote the size of the centre.
Without loss of generality, the separators at the corners for $\{A,C\}$ and $\{B,D\}$ have size three.
By \autoref{submod}, every diagonal edge has its endvertices in the corners for $\{A,C\}$ and $\{B,D\}$.
Hence $a+c+x=3$ and $b+d+x=3$.
Since $(A,B)$ and $(C,D)$ have order three, we further have $c+d+x=3$ and $a+b+x=3$.
Considering the two equations that contain~$a$, we find that $b=c$.
Considering the two equations that contain~$c$, we find that $a=d$.
Without loss of generality, $a=d\le b=c$.

Suppose first that $a,d=0$.
Then, since $(A,B)$ and $(C,D)$ cross, the corner for~$\{A,D\}$ must be nonempty.
As $G$ is 3-connected, it follows that the centre has size~$x=3$.
Hence we can read from the equations that $b,c=0$, giving outcome~(1).

Otherwise $a,d=1$, since $a,d\ge 2$ would imply $b,c\le 1<a,d$.
Hence $b,c\le 3-a=2$.
But also $1=a,d\le b,c$.
So $b,c$ take the same value in~$\{1,2\}$, giving outcome (1) or~(2).
\end{proof}

\begin{cor}\label{magicLemmaStep2}
If two tri-separations of a 3-connected graph cross so that two opposite corner-separators have size three, then all links have the same size~$\ell$, for some $\ell\in\{0,1\}$.
\end{cor}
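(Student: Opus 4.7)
The plan is to derive \autoref{magicLemmaStep2} from \autoref{magicLemmaStep1} by ruling out outcome~(2) of that lemma using the additional fact that our mixed 3-separations are tri-separations. I would argue by contradiction. Suppose outcome~(2) holds, so two adjacent links --- say those for $A$ and $D$ --- have size one and the other two links, for $B$ and $C$, have size two. From the proof of \autoref{magicLemmaStep1} we inherit equality in the submodular inequality of \autoref{submod}, and the numerics $c+d+x=3=c+d$ force $x=0$; so there are no diagonal edges, no jumping edges, and the centre is empty. In particular, the corner-separator $L(A,D)$ equals the union of the two size-1 links, hence has exactly two elements, each either a vertex or an edge.

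The next step is to turn $L(A,D)$ into a vertex cut of~$G$ of size at most two. For every edge element of $L(A,D)$ I would replace it by one of its endpoints, chosen in $A\sm B$, producing a vertex set~$T$ with $|T|\le 2$. Using the absence of jumping and diagonal edges, I would verify that every edge out of the corner for $\{A,D\}$ that is not internal to that corner is either an element of $L(A,D)$ or is incident to a vertex of $L(A,D)$: edges to corner $\{B,C\}$ would have to be diagonal; edges to corners $\{A,C\}$ and $\{B,D\}$ lie in the edge-links for $A$ and $D$ respectively; and edges from the corner to vertex parts of the links for $C$ or $B$ lie again in the edge-links for $A$ or $D$ respectively (they cannot be jumping because jumping edges join opposite link-vertex parts). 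Hence $G-T$ has no edges between the remains of the corner $\{A,D\}$ and the rest. Provided the corner for $\{A,D\}$ keeps at least one vertex outside~$T$, this produces a cut of size at most two, contradicting 3-connectedness of~$G$.

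The main obstacle is the residual situation in which every vertex of the corner for $\{A,D\}$ lies inside~$T$ (possibly because that corner is already empty). Here my plan is to invoke the tri-separation property directly. If the link for~$A$ contains a vertex $u_A$, then $u_A\in C\cap D$ and the tri-separation property of $(C,D)$ requires $u_A$ to have at least two neighbours in~$D$. However, with corner $\{A,D\}$ contributing nothing and with no jumping or diagonal edges, a short case check shows that the only possible neighbour of $u_A$ in~$D$ is the unique vertex, if present, of the link for~$D$ vertex part --- which gives at most one such neighbour, a contradiction. A symmetric argument using the tri-separation property of $(A,B)$ handles the case in which the link for~$D$ is a vertex and the link for~$A$ is an edge. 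Finally, when both size-1 links are edges, the non-diagonal condition combined with the vanishing of the link-vertex parts on the $A$- and $D$-side forces each of the two edges to have an endpoint in the corner for $\{A,D\}$, so that corner is automatically non-empty and the cut argument closes the proof without any tri-separation appeal.
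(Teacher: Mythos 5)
Your plan tracks the paper's up to the use of \autoref{magicLemmaStep1}, but your treatment of the size-two corner separator $L(A,D)$ opens a gap that the paper avoids. The paper deduces directly that the corner for $\{A,D\}$ is \emph{empty}: otherwise $L(A,D)$ would be the separator of a mixed 2-separation of~$G$, which a 3-connected graph does not admit (no 2-vertex-cut, no 2-edge-cut, and no vertex-plus-edge cut of size two, since $G-v$ is 2-connected and hence 2-edge-connected for every vertex $v$). With the corner empty, neither adjacent link can be an edge --- a non-diagonal edge in the edge-link for $A$ or for $D$ would then need a corner endvertex on the $\{A,D\}$ side once the opposite link's vertex part has size zero --- so one link contains a vertex $v$, and your tri-separation argument takes over.

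Your detour through the vertex set $T$ does not recover this. The cut $G-T$ needs a vertex of the corner $\{A,D\}$ \emph{outside} $T$, and in the both-edges residual subcase you only establish that the corner is nonempty. That is not enough: the $A\sm B$-end of the $S(A,B)$-link edge is forced to lie in the corner (the link-for-$A$ vertex part is empty), and the $A\sm B$-end of the $S(C,D)$-link edge is also forced into the corner when its other end lies in $A\cap B$, so $T$ can coincide with the corner and $G-T$ then separates nothing. The subcase is in fact vacuous --- one can finish by observing that a vertex of a corner equal to $T$ would have degree at most two --- but that step is missing from your argument, and it is cleaner to use the paper's ``corner is empty'' observation, which dispatches the subcase outright. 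As a smaller point, in your case where the link for $A$ is the vertex $u_A$, the possible $D$-neighbour of $u_A$ need not be a link-for-$D$ vertex; it can instead lie in corner $\{A,D\}$ or in corner $\{B,D\}$ (via the edge-link for $D$). The bound of at most one such neighbour still holds, because the link for $D$ has size one, but the case check as stated is incomplete.
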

\begin{proof}
Suppose for a contradiction that this fails.
Then two adjacent links have size one, and the other two links have size two, by \autoref{magicLemmaStep1}.
See \autoref{fig:magicLemmaStep2}.
As any two opposite links have sizes one and two but are contained in a common separator of size three, the centre must be empty.
Let $X$ denote the corner whose adjacent links have size one.
Denote these two links by~$L_1$ and~$L_2$.
As the corner-separator for~$X$ has size at most two, but $G$ is 3-connected, the corner~$X$ is empty.
By \autoref{EdgesAdjacentLinks}, there is a vertex $v$ in $L_1$ or~$L_2$.
Since $L_1$ and $L_2$ have size one and the corner~$X$ is empty, it follows that the vertex $v$ has at most one neighbour in one of the sides of the two crossing tri-separations.
This contradicts the definition of tri-separation.
\end{proof}

\begin{figure}[ht]
    \centering
    \includegraphics[height=6\baselineskip]{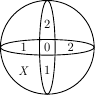}
    \caption{The situation in the proof of \autoref{magicLemmaStep2}}
    \label{fig:magicLemmaStep2}
\end{figure}

\begin{lem}\label{noDiagonalEdges}
If two tri-separations $(A,B)$ and $(C,D)$ of a 3-connected graph~$G$ cross so that two opposite corner-separators have size three and all links have the same size $\ell\in\{0,1\}$, then there are no diagonal edges.
\end{lem}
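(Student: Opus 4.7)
The plan is to assume for contradiction that a diagonal edge exists, and then derive a contradiction in each of the two cases $\ell=0$ and $\ell=1$. Without loss of generality, the corner-separators $L(A,C)$ and $L(B,D)$ have size three. Since $|L(A,C)|+|L(B,D)|=6=|S(A,B)|+|S(C,D)|$, submodularity (\autoref{submod}) holds with equality, so there are no jumping edges and every diagonal has endpoints in the corners for $\{A,C\}$ and $\{B,D\}$. Writing $d$ for the number of diagonals and $x_v:=|A\cap B\cap C\cap D|$, direct counting gives $2\ell+x_v+d=3$ and $|L(A,D)|=|L(B,C)|=2\ell+x_v=3-d$. Suppose for contradiction $d\ge 1$, so $|L(A,D)|\le 2$.

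The next step is to argue that both the corner for $\{A,D\}$ and the corner for $\{B,C\}$ are empty. To this end I would check that $L(A,D)$ acts as a mixed separator of $G$ between the $\{A,D\}$-corner and the rest, by classifying each potential outgoing edge from a vertex $y$ in the $\{A,D\}$-corner and confirming it lies in $L(A,D)$: regular edges go to the vertex parts of links $A$, $D$ or into the centre; separator edges fall into edge-link~$A$ or edge-link~$D$; and diagonals to the $\{B,C\}$-corner or jumping edges are excluded by the consequences of equality in submodularity. Since $G$ is 3-connected and the $\{A,C\}$- and $\{B,D\}$-corners each contain an endpoint of a diagonal, the complement of the $\{A,D\}$-corner is nonempty, so the corner itself must be empty; symmetrically so is the corner for $\{B,C\}$.

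In case $\ell=0$, the links $L_A$ and $L_D$ are empty. Together with the empty $\{A,D\}$-corner this means that the corner and its two adjacent links are all empty, so by \autoref{nestedviacorners} the tri-separations $(A,B)$ and $(C,D)$ are nested, contradicting the crossing assumption.

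In case $\ell=1$ we have $d=1$ and $x_v=0$. I first observe that at most one of $L_A$, $L_D$ is an edge: if $L_A$ is the edge $e_A\in S(C,D)$ then its endpoint in a corner for $A$ must lie in $\{A,C\}$ (the other $A$-corner being empty), and its second endpoint can lie neither in $\{A,D\}$ (empty) nor in $\{B,D\}$ (else $e_A$ would be a diagonal of the excluded type), forcing it into the vertex part of $L_D$; so $L_D$ is a vertex. By swapping roles if necessary I may assume $L_A=\{a\}$ is a vertex with $a\in (C\cap D)\sm B$. By tri-separation of $(C,D)$, $a$ has at least two neighbours in $D$; but these can only come from the $\{A,D\}$-corner (empty), the $\{B,D\}$-corner (only via an edge in edge-link~$D$), the vertex part of $L_D$ (regular edges), the vertex part of $L_B$ (forbidden jumping edges), or the centre (empty), giving in total at most $|L_D|=1$ neighbours, a contradiction. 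The main obstacle is the second paragraph: proving that $L(A,D)$ really does separate the $\{A,D\}$-corner from the rest requires a careful edge-by-edge check, and the arguments in the later paragraphs are short once this separator property is established.
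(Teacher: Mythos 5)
Your proof is correct and follows essentially the same route as the paper: both invoke equality in submodularity to force the diagonal's endpoints into the corners for $\{A,C\}$ and $\{B,D\}$ and exclude jumping edges, then use the resulting size bound $|L(A,D)|\le 2$ together with $3$-connectivity to conclude that the corners for $\{A,D\}$ and $\{B,C\}$ are empty, and finally derive a contradiction via \autoref{nestedviacorners} when $\ell=0$ and via the tri-separation condition when $\ell=1$. The only real deviation is cosmetic and concerns the $\ell=1$ endgame: you first argue that the two links adjacent to the empty corner cannot both be edges (an edge in $L_A$ would force its second endpoint into the vertex part of $L_D$), hence one of them is a vertex, and then show this vertex has too few neighbours on one side; the paper instead argues that no link can contain a vertex (for the same tri-separation reason), so all four links are edges, and then gets the contradiction from the requirement that each such edge must have an end in an empty corner. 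The same two ingredients — vertices in links violate the tri-separation condition, and non-diagonal link-edges must end in an empty corner — underlie both versions.
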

\begin{proof}
Without loss of generality, the separators at the corners for $\{A,C\}$ and $\{B,D\}$ have size three.
Suppose for a contradiction that there is a diagonal edge~$uv$.
By \autoref{submod}, the ends $u$ and $v$ lie in the corners for $\{A,C\}$ and $\{B,D\}$, respectively say.

\begin{sublem}\label{FeelsFamiliar}
All links and the centre have size one.
\end{sublem}
\begin{cproof}
By \autoref{submod}, there are no jumping edges.
So the separator $S(A,B)$ is partitioned into two opposite links and the centre, and so is the separator $S(C,D)$.
As both separators have size three, to show that all links and the centre have size one it suffices to show that all links have size~$\ell=1$.
Suppose for a contradiction that all links are empty, i.e.\ that~$\ell=0$.
Then the centre has size three.
But since the centre contains the diagonal edge $uv$, the separators at the corners for~$\{A,D\}$ and $\{B,C\}$ have size two. 
Then, since $G$ is 3-connected, the corners for $\{A,D\}$ and $\{B,C\}$ are empty (as otherwise the separator at such a corner is a mixed 2-separator of $G$ that separates a vertex in the corner from $u$ and~$v$, contradicting 3-connectivity).
Since all links are empty as well, it follows that $(A,B)$ and $(C,D)$ are nested, contradicting our assumption that they cross.
\end{cproof}\medskip

By \autoref{FeelsFamiliar}, all links and the centre have size one.
So the centre only consists of the diagonal edge~$uv$.
Hence the separators at the two corners for $\{A,D\}$ and $\{B,C\}$ have size two.
As $G$ is 3-connected, the two corners for $\{A,D\}$ and $\{B,C\}$ are empty (as otherwise the separator at such a corner is a mixed 2-separator of $G$ that separates a vertex in the corner from $u$ and~$v$).
It follows that all four links contain no vertices, since any vertex in a link would fail to have two neighbours in some side of $(A,B)$ or $(C,D)$, contradicting that $(A,B)$ and $(C,D)$ are tri-separations.
Hence all four links contain edges, and only edges.
But then each of these edges must have an end in the corner for $\{A,D\}$ or $\{B,C\}$, contradicting that these corners are empty.
\end{proof}

\begin{figure}[ht]
    \centering
    \includegraphics[height=8\baselineskip]{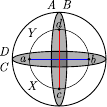}
    \caption{Two tri-separations of a $K_4$ cross with two jumping edges (red and blue)}
    \label{fig:wickedK4}
\end{figure}

\begin{lem}\label{magicLemmaStep3}
If two nontrivial tri-separations $(A,B)$ and $(C,D)$ of a 3-connected graph~$G$ cross so that no two opposite corner-separators have size three, then~$G=K_4$.
\end{lem}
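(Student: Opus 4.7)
My plan is to combine \autoref{submod} with $3$-connectivity and the tri-separation degree condition to force $|V(G)|=4$, whence $G=K_4$ is the unique $3$-connected graph on four vertices.

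\emph{First}, I will argue that in each opposite pair of corners, at least one corner is empty; in particular, there are no diagonal edges. Indeed, if both corners in an opposite pair were nonempty then the two corresponding corner-separators would each separate two nonempty vertex sets in the $3$-connected graph~$G$. Since $\kappa(G)\leq\lambda(G)$ implies that no mixed separator of size at most~$2$ can disconnect $G$ into two nonempty vertex sets, both corner-separators would have size at least~$3$; combined with \autoref{submod}, both would be forced to equal~$3$, contradicting the hypothesis.

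\emph{Second}, I will exploit the resulting identities. With no diagonal edges, $|L(A,C)|+|L(B,D)|=6-(j_1+j_2)$, where $j_1,j_2$ count the jumping edges in $S(A,B)$ and $S(C,D)$ respectively, and the analogous identity holds for $|L(A,D)|+|L(B,C)|$. The hypothesis then forces at least one corner-separator in each opposite pair to have size at most~$2$. After possibly relabelling $A\leftrightarrow B$ or $C\leftrightarrow D$ and swapping the roles of $(A,B)$ and $(C,D)$, I may assume that $L(A,C)$ and $L(A,D)$---the two corner-separators adjacent to side~$A$---both have size at most~$2$.

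\emph{Third}, I will carry out a short case analysis on which corners are nonempty (at most two, and if two then adjacent). In each case, the size constraints $|S(A,B)|=|S(C,D)|=3$ combined with the bounds $|L(A,C)|,|L(A,D)|\leq 2$ pin down the sizes of the links, the centre, and any nonempty corners to finitely many patterns. The tri-separation condition---each vertex of $A\cap B$ has at least two neighbours in $G[A]$ and in $G[B]$, and likewise for $C\cap D$---fails for every pattern with $|V(G)|\geq 5$, leaving only the $K_4$-pattern of \autoref{fig:wickedK4}: four vertices, one in each link, with exactly one jumping edge in each of $S(A,B)$ and $S(C,D)$. Thus $|V(G)|=4$, and being $3$-connected, $G=K_4$.

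The main obstacle is the third step, whose case analysis requires patient verification that the tri-separation degree condition fails as soon as any link is too large or any centre vertex is present; in every such failing configuration, some vertex of $A\cap B$ or $C\cap D$ cannot muster two neighbours on one of its required sides.
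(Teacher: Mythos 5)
Your first two steps are correct and parallel the paper's own opening move: by \autoref{submod} and the hypothesis, in each opposite pair at least one corner-separator has size at most two, and (since the opposite pairs meet pairwise in adjacency) after relabelling one may choose two \emph{adjacent} such corner-separators; and since a mixed separator of size at most two cannot disconnect a $3$-connected graph into two nonempty parts, the two corresponding corners are empty.

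The gap is in the third step. You assert that the tri-separation degree condition, together with the size constraints, rules out every configuration with $|V(G)|\geq 5$. That is false, because \emph{nontriviality} of $(A,B)$ and $(C,D)$ is an essential hypothesis that your sketch never invokes. Concretely, let $G$ be the $3$-connected prism on $\{x_1,x_2,x_3,y_1,y_2,y_3\}$ with triangles $x_1x_2x_3$ and $y_1y_2y_3$ and matching edges $x_iy_i$. The \emph{trivial} tri-separation $(\{y_2\},V\sm\{y_2\})$ crosses the nontrivial tri-separation $(\{y_1,y_2,y_3\},\{x_1,x_2,x_3,y_2,y_3\})$ (with separator $\{y_2,y_3,y_1x_1\}$); the four corner-separators have sizes $2,2,3,3$, so no opposite pair is $(3,3)$; the degree condition holds on both separators; yet $G\neq K_4$. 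So an argument that uses only $3$-connectivity, the order bounds, and the degree condition would prove a false statement. Furthermore, even after emptying the two adjacent corners, the other two corners $(B\sm A)\cap(C\sm D)$ and $(B\sm A)\cap(D\sm C)$ are not a priori bounded in size, so your constraints do not ``pin down \dots\ any nonempty corners to finitely many patterns''; one must actively show those corners are empty. The paper resolves both issues at once: nontriviality of $(A,B)$ gives a cycle in $G[A]$, which the two small corner-separators force to be a triangle $acd$ with $a$ in the link for $A$ and $c,d$ in the links for $C,D$; then $cd$ is a jumping edge, as is an edge $ab$ forced by $a$ having degree at least three, and a second application of \autoref{submod} bounds the remaining two corner-separators by two, emptying their corners; nontriviality of $(B,A)$ then produces the triangle $bcd$ and $G=K_4$. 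That chain of deductions, not the degree condition alone, is the content your third step is missing.
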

\begin{proof}
This proof is supported by \autoref{fig:wickedK4}.
Since no two opposite corner-separators have size three, we find two adjacent corners whose separators have size at most two by \autoref{submod}.
Say these are the corners $X$ and $Y$ for $\{A,C\}$ and $\{A,D\}$, respectively.
As $G$ is 3-connected, both corners~$X$ and $Y$ must be empty.
Since $(A,B)$ is a tri-separation, it is also a mixed-separation, and by the definition of mixed-separation we have that $A\sm B$ contains some vertex~$a$.
Since $X$ and $Y$ are empty, the vertex~$a$ lies in the link for~$A$.
As $G$ is 3-connected, $a$ has degree at least three.
Since $X$ and $Y$ are empty, and since the corner-separators at~$X$ and $Y$ have size at most two, some edge incident to~$a$ is a jumping edge.
Let $b$ denote the other endvertex of this jumping edge, so $b$ lies in the link for~$B$.

Since $(A,B)$ is nontrivial, there is a cycle $O$ included in~$G[A]$.
As $X$ and $Y$ are empty, the vertices of this cycle lie in the two corner-separators at~$X$ and~$Y$.
The two corner-separators share the vertex~$a$, and have size at most two, hence $O$ must be a triangle which contains~$a$ and whose other two vertices $c,d$ lie in the links for~$C$ and~$D$, respectively.
Thus $cd$ is another jumping edge.
Therefore, the separators at the two corners besides $X$ and~$Y$ have size at most two.
By symmetry, we find that the two corners besides $X$ and $Y$ are empty, and that $bcd$ is a triangle.
Hence~$G=K_4$.
\end{proof}

A mixed-separation $(A,B)$ of a graph~$G$ is \emph{half-connected} if $G[A\sm B]$
or $G[B\sm A]$ is connected.

\begin{lem}\label{crossing implies centre not three}
Let $(A,B)$ and $(C,D)$ be crossing mixed 3-separations of a graph~$G$.
If $(A,B)$ is half-connected, then the centre cannot have size three.
\end{lem}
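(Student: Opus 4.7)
The plan is to argue by contradiction: assume the centre $Z$ has size three. The first observation is that $Z\subseteq S(A,B)$ and $Z\subseteq S(C,D)$. Indeed, any vertex of $Z$ lies in $A\cap B\cap C\cap D$, so in both separator vertex-parts, and any diagonal edge lies in $E(A\setminus B,B\setminus A)\cap E(C\setminus D,D\setminus C)$ by definition. Since $|S(A,B)|=|S(C,D)|=3=|Z|$, both separators must coincide with $Z$. From this rigidity I read off two structural consequences: $A\cap B=A\cap B\cap C\cap D=C\cap D$ as vertex sets, and every edge of $S(A,B)$ (equivalently $S(C,D)$) is diagonal.

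The next step is to use this to empty out all four links. The link for $C$, for example, has vertex-part $(A\cap B)\setminus D$, which is empty because $A\cap B=C\cap D\subseteq D$, and has edge-part contained in the edge-link for $C$, which excludes diagonal edges; since all of $S(A,B)$ is diagonal, no edges survive. Symmetrically the links for $A$, $B$, $D$ are empty. Now I invoke \autoref{nestedviacorners}: $(A,B)$ and $(C,D)$ cross, so no corner together with its two adjacent links is empty; since all links are empty, no corner can be empty, and in particular the corners for $\{A,C\}$ and $\{A,D\}$ are both nonempty.

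The finishing move is to exploit half-connectedness. Without loss of generality $G[A\setminus B]$ is connected. Because $A\cap B=C\cap D$, we have $(A\setminus B)\cap C\cap D=\emptyset$, so $A\setminus B$ is partitioned into the two nonempty corners for $\{A,C\}$ and $\{A,D\}$. Connectedness yields an edge $e$ of $G[A\setminus B]$ joining these two corners; but then one end of $e$ lies in $C\setminus D$ and the other in $D\setminus C$, so $e\in S(C,D)=Z$ must be diagonal. This contradicts that the two corners for $\{A,C\}$ and $\{A,D\}$ are adjacent rather than opposite, completing the proof.

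I do not expect a real obstacle here: the single non-routine input is the observation that $|Z|=3$ collapses both separators onto $Z$ and thereby identifies $A\cap B$ with $C\cap D$ and forces every boundary edge to jump between the pair of opposite corners $\{A,C\}$--$\{B,D\}$ (or, symmetrically, $\{A,D\}$--$\{B,C\}$, in which case the same argument works with the adjacent corner pair swapped). Once this is on the table, half-connectedness of $(A,B)$ instantly produces an edge between two adjacent corners, which is impossible.
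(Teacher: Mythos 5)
Your proof is correct and takes essentially the same approach as the paper's: both deduce that a size-three centre forces all four links to be empty and then exploit connectedness of $G[A\setminus B]$ to derive a contradiction (the paper concludes that one corner in $A\setminus B$ must be empty and then invokes nestedness; you conclude that connectedness would force an edge between the two adjacent corners of $A\setminus B$, which would have to be diagonal yet cannot be). You simply spell out the intermediate bookkeeping — that $S(A,B)=S(C,D)=Z$, hence $A\cap B=C\cap D$ and all separator edges are diagonal — which the paper leaves implicit.
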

\begin{proof}
Without loss of generality, $G[A\sm B]$ is connected.
Assume for a contradiction that the centre has size three.
Then all links are empty.
As $G[A\sm B]$ is non-empty, we know that at least one of the two corners included in $A\sm B$ is 
non-empty.
But since $G[A\sm B]$ is connected, the other of the two corners must be empty, contradicting that 
$(A,B)$ and $(C,D)$ are crossing.
\end{proof}

\begin{lem}[Crossing Lemma]\label{dasBesteLemma}
Let $G$ be a 3-connected graph other than~$K_4$.
Let $(A,B)$ and $(C,D)$ be two nontrivial tri-separations of~$G$ that cross.
Then exactly one of the following holds:
\begin{enumerate}
    \item\label{11case} all links have size one and the centre consists of a single vertex;
    \item\label{3case} all links are empty and the centre consists of three vertices.
\end{enumerate}
In particular, there are no jumping edges.
Moreover, if $(A,B)$ or $(C,D)$ is half-connected, then \ref{11case} holds.
\end{lem}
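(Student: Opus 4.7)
The plan is to assemble the Crossing Lemma as a clean corollary of the machinery developed in this section, chaining together \autoref{magicLemmaStep3}, \autoref{magicLemmaStep2}, \autoref{noDiagonalEdges}, \autoref{submod} and \autoref{crossing implies centre not three}. First, since $G\neq K_4$, \autoref{magicLemmaStep3} rules out the possibility that no two opposite corner-separators have size three, so some pair of opposite corner-separators, say $L(A,C)$ and $L(B,D)$, has size three. Given this, \autoref{magicLemmaStep2} immediately yields that all four links have a common size $\ell\in\{0,1\}$, and \autoref{noDiagonalEdges} shows that there are in fact no diagonal edges at all.

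The next step is to turn this structural information into the concrete numerical dichotomy. With no diagonal edges, every corner-separator is simply the union of its two adjacent links together with the (now purely vertex) centre, so writing $c$ for the number of centre vertices, we read off $|L(A,C)|=2\ell+c=3$. If $\ell=1$ this forces $c=1$, giving outcome \ref{11case}; if $\ell=0$ it forces $c=3$, giving outcome \ref{3case}. These outcomes are mutually exclusive since $\ell$ cannot be both $0$ and $1$, so exactly one of them holds. The ``no jumping edges'' clause is then a free consequence of the equality case of \autoref{submod}: we have $|L(A,C)|+|L(B,D)|=6=|S(A,B)|+|S(C,D)|$, hence equality holds throughout and no jumping edges can exist.

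Finally, the half-connected refinement follows directly: if $(A,B)$ or $(C,D)$ is half-connected, then \autoref{crossing implies centre not three} forbids a centre of size three, which rules out outcome \ref{3case} and leaves only outcome \ref{11case}. The only real bookkeeping hazard is making sure that the size-three conclusion is not tied to the specific opposite pair $\{L(A,C),L(B,D)\}$ selected at the start; but under the no-diagonal-edges regime with all four links equal, all four corner-separators share the common size $2\ell+c$, so the argument is insensitive to this initial choice. Hence the entire proof reduces to stringing together the already-established lemmas with a short arithmetic bookkeeping step, and I expect no substantial obstacle beyond that.
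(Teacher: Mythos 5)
Your proposal is correct and follows essentially the same route as the paper's own proof: invoke \autoref{magicLemmaStep3} to get two opposite corner-separators of size three, then \autoref{magicLemmaStep2} for the uniform link size $\ell\in\{0,1\}$, then \autoref{noDiagonalEdges}, then read off the centre size, invoke the equality case of \autoref{submod} for the absence of jumping edges, and finish the half-connected clause with \autoref{crossing implies centre not three}. The only cosmetic difference is that you derive the centre size from the equation $|L(A,C)|=2\ell+c=3$, whereas the paper reads it off from $|S(A,B)|=2\ell+c=3$; these are the same arithmetic, and your closing remark correctly observes that once diagonal edges are gone and all four links have equal size, all four corner-separators share the size $2\ell+c$, so the initial choice of opposite pair is immaterial.
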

\begin{proof}
Since $G\neq K_4$, it follows from \autoref{magicLemmaStep3} that two opposite corner-separators have size three.
Then all links have the same size $\ell\in\{0,1\}$ by \autoref{magicLemmaStep2}.
By \autoref{submod}, there are no jumping edges.
By \autoref{noDiagonalEdges}, there are no diagonal edges either.
Hence the centre contains no edges, and the size of the centre is determined by~$\ell$.
If $\ell=0$, then the centre has size three; if $\ell=1$, then the centre has size one.

The `Moreover' part follows from \autoref{crossing implies centre not three}.
\end{proof}

\begin{cor}\label{3cutsAreNested}
Let $G$ be a 3-connected graph.
Let $A$ and $B$ be the sides of a non-atomic 3-cut of~$G$.
Then $(A,B)$ is a totally-nested nontrivial tri-separation of~$G$.
\end{cor}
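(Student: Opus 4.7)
The plan is to verify that $(A,B)$ itself is a strong nontrivial tri-separation, then apply \autoref{expendable} to reduce total nestedness to the question of nestedness with every strong nontrivial tri-separation of~$G$, and finally to derive a contradiction from the \nameref{dasBesteLemma}~(\ref{dasBesteLemma}) exploiting that $A\cap B=\emptyset$.

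For the first step, since $A$ and $B$ are the sides of an edge cut, we have $A\cap B=\emptyset$, and $S(A,B)$ consists of exactly the three edges of the cut. Hence $(A,B)$ is a mixed 3-separation whose defining vertex condition for being a tri-separation holds vacuously; and since its separator contains no vertices, $(A,B)$ is automatically strong. Non-triviality follows from non-atomicity of the cut via \autoref{trivial}.

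The substantive step is to show that $(A,B)$ is nested with every strong nontrivial tri-separation of~$G$. Suppose, for a contradiction, that some such $(C,D)$ crosses~$(A,B)$. I would first rule out $G=K_4$: any bipartition of $V(K_4)$ with both sides of size at least two has four edges across, so $K_4$ admits no non-atomic 3-cut. Thus $G\neq K_4$, and the \nameref{dasBesteLemma}~(\ref{dasBesteLemma}) applies to the pair $(A,B),(C,D)$. In either of its two outcomes, the centre of the corresponding corner diagram must contain at least one vertex (a single vertex in outcome~(1), three vertices in outcome~(2); the ``In particular'' clause rules out diagonal edges, so these elements are genuinely vertices rather than edges). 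But the vertex part of the centre lies in $A\cap B\cap C\cap D\subseteq A\cap B=\emptyset$, a contradiction. An application of \autoref{expendable} then concludes the proof.

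The only mildly delicate point is the bookkeeping in the final step: one must confirm that in outcome~(1) of the Crossing Lemma the singleton centre is indeed a vertex (not an edge), which is exactly what the absence of diagonal edges guarantees. Everything else is a direct invocation of previously established lemmas, so no new technical obstacle is expected.
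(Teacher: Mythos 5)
Your proof is correct and follows essentially the same route as the paper: verify that $(A,B)$ is strong, invoke \autoref{trivial} for non-triviality and \autoref{expendable} to reduce to nestedness with strong nontrivial tri-separations, rule out $G=K_4$, and conclude via the \nameref{dasBesteLemma}~(\ref{dasBesteLemma}) using $A\cap B=\emptyset$. One small slip worth noting: the ``In particular'' clause of the Crossing Lemma asserts the absence of \emph{jumping} edges, not diagonal edges, but this does not affect your argument, since outcomes~(1) and~(2) already state directly that the centre consists of one or three \emph{vertices}.
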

\begin{proof}
By \autoref{trivial}, $(A,B)$ is nontrivial.
Since $S(A,B)$ consists of edges, $(A,B)$ is strong.
By \autoref{trivial_is_nested}, $(A,B)$ is nested with all trivial tri-separations of~$G$.
By the \nameref{dasBesteLemma} (\ref{dasBesteLemma}), $(A,B)$ is nested with all nontrivial tri-separations of~$G$.
\end{proof}

\begin{lem}\label{orangeLemma}
Let $G$ be a 3-connected graph.
If a strong nontrivial tri-separation $(A,B)$ of~$G$ is crossed by a tri-separation of~$G$, then $(A,B)$ is also crossed by a tri-separation of~$G$ that is strong.
\end{lem}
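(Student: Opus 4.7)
The plan is to use the Crossing Lemma to analyse the structure of any tri-separation crossing~$(A,B)$, and then either to take this tri-separation unchanged or to produce a strong replacement as a carefully chosen strengthening.

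Let $(C,D)$ be a tri-separation crossing~$(A,B)$. Since every vertex of $A\cap B$ has degree at least four, we have $G\neq K_4$; and by \autoref{trivial_is_nested}, $(C,D)$ is necessarily nontrivial, lest it be nested with the strong~$(A,B)$. Hence the hypotheses of \autoref{dasBesteLemma} are met. That lemma leaves two cases. If all links are empty and the centre has size three, then the bounds $|A\cap B|,|C\cap D|\le 3$ force both $A\cap B$ and $C\cap D$ to equal the centre, and so the strength of $(A,B)$ transfers to~$(C,D)$, which is already the required strong tri-separation.

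Assume therefore that the centre is a single vertex $v\in A\cap B$ of degree at least four and that all four links have size one. The only candidates for degree-three vertices in $C\cap D$ are the link-for-$A$ vertex~$x$ and the link-for-$B$ vertex~$y$, when those links are vertices rather than edges. I would let $(C',D')$ be a strengthening of~$(C,D)$ via \autoref{reduceToStrong}, exploiting the per-vertex freedom in choosing the side from which each of $x$ and $y$ is deleted. By \autoref{strengthening-good}, $(C',D')$ is a strong tri-separation with $C\setminus D\se C'\se C$ and $D\setminus C\se D'\se D$, so the four corners of the diagram of $(A,B)$ versus $(C',D')$ only grow relative to those of the diagram of $(A,B)$ versus~$(C,D)$. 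By \autoref{nestedviacorners} it then suffices to ensure that in the new diagram both the link-for-$C$ and the link-for-$D$ are non-empty, since every corner is adjacent to exactly one of these two links. If either old link was a vertex of $A\cap B$, it is automatically preserved because $D'\se D$ and $C'\se C$; if instead it was an edge of~$S(A,B)$, then it remains non-diagonal in the new diagram unless an endvertex of that edge that lies in $C\cap D$ (necessarily $x$ or~$y$) is sent by the strengthening to the side of~$(C,D)$ opposite to the edge's other endvertex.

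The main obstacle will be to verify that the per-vertex choices demanded of the strengthening are jointly satisfiable, and the key structural input is \autoref{independentEdges}: the edges of $S(A,B)$ form a matching between $A\setminus B$ and~$B\setminus A$, so each of $x,y$ is an endvertex of at most one edge of~$S(A,B)$, and therefore is subject to at most one preferred side of~$(C,D)$. These preferences can then be realised simultaneously. Lastly, if the reduction step of the strengthening is additionally triggered by $v$'s neighbour counts failing on one side, then $v$ simply moves onto the $C$- or $D$-side and lands in the vertex part of the link-for-$C$ or link-for-$D$, which only reinforces the non-emptiness we are after; and since $v\in A\cap B$ is not an endvertex of any edge of~$S(A,B)$, the reduction edge incident to $v$ cannot be diagonal and so does not interfere with either link.
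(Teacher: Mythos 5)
Your approach is correct but takes a genuinely different route from the paper, and the comparison is instructive. The paper's proof processes the at most two degree-three vertices of~$S(C,D)$ one at a time: having found such a vertex~$u$ (necessarily in a link), it locates a second neighbour~$w$ of the centre~$v$ inside~$A$, assumes WLOG that~$w\in C$, deletes~$u$ from~$C$, and proves two internal claims (that $v$ retains three neighbours in~$C$, and that the new edge $uc$ avoids the link for~$C$) to certify that the resulting separation still crosses $(A,B)$ with the same diagram shape. You instead invoke a strengthening in a single step and track only the two links of~$S(A,B)$, correctly isolating \autoref{independentEdges} as the source of joint satisfiability of the per-vertex side constraints; this sidesteps the paper's two side claims and shifts attention from the whole corner diagram to just two of its links. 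Two points deserve more care in your version. First, to keep a preserved edge inside the edge-link for~$C'$ (or~$D'$) you need not only non-diagonality but also that it retains an endvertex in a corner for that side; this does hold because corners only grow, but you should say so. Second, your remark that a possible reduction of~$v$ ``only reinforces'' non-emptiness is a little too quick: if $v$ were pushed into a link that already carried an element, that link would have size at least two while its opposite has size one, which the Crossing Lemma forbids for crossing nontrivial tri-separations. Either one argues (as your set-up entitles you to) that the crossing follows anyway from non-emptiness and hence that $v$ is in fact never reduced, or one proves it directly as the paper does in \autoref{wHasThreeNeighboursInC}. Finally, your opening ``$G\ne K_4$ because every vertex of~$A\cap B$ has degree at least four'' has a small loophole when $A\cap B$ is empty; the paper's phrasing, that $K_4$ has no strong nontrivial tri-separation, is cleaner.
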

\begin{proof}
Suppose that $(C,D)$ is a tri-separation of~$G$ that crosses~$(A,B)$.
If $(C,D)$ is strong, we are done, so we may assume that some vertex $u$ of~$G$ of degree three lies in the separator of~$(C,D)$.
Since $(A,B)$ is a strong tri-separation, the vertex $u$ cannot lie in the centre, so $u$ lies in a link, say it lies in the link for~$A$.
As a $K_4$ has no strong nontrivial tri-separation, $G$ is not a~$K_4$.
By \autoref{trivial_is_nested}, $(C,D)$ is nontrivial.
Hence we may apply the \nameref{dasBesteLemma} (\ref{dasBesteLemma}) to find that the existence of~$u$ implies that all links have size one while the centre consists of a single vertex, and that there are no jumping edges.
Since $(C,D)$ is a tri-separation and $u$ has degree three, $u$ has a neighbour $v$ in~$C\cap D$.
As there are no jumping edges, the neighbour~$v$ of~$u$ can only lie in the centre.
Since $(A,B)$ is a tri-separation, $v$ has a neighbour $w$ in~$A$ besides~$u$.
By symmetry~$w\in C$.

\begin{sublem}\label{wHasThreeNeighboursInC}
The vertex $v$ has at least three neighbours in~$C$.
\end{sublem}

\begin{cproof}
If the corner $\{B,C\}$ is nonempty, then as $G$ is 3-connected the corner contains a neighbour of~$v$, and together with $u$ and $w$ we have found three neighbours of~$v$ in~$C$. Thus assume that the corner  $\{B,C\}$ is empty. 
By \autoref{EdgesAdjacentLinks}, at least one adjacent link contains a vertex~$y$.
Since the corner for $\{B,C\}$ is empty but $y$ has two neighbours either in $B$ or in~$C$ (depending on which tri-separator $S(A,B)$ or $S(C,D)$ contains~$y$), it follows that $y$ is adjacent to~$v$.
If $y$ is in the link for $B$, then $u$, $y$ and $w$ are three distinct neighbours of $v$ in $C$, and we were done.
So assume that $y$ is in the separator of the strong tri-separation $(A,B)$. Thus $y$ has degree at least four. And since the corner $\{B,C\}$ is empty, the vertex $y$ must have one of its neighbours outside the mixed 3-separator $S(C,D)$ in the corner $\{A,C\}$, and this nonempty corner contains a neighbour of $v$ by 3-connectivity, which is different from $u$ and~$y$. 
\end{cproof}
\medskip

Let $c$ denote the unique neighbour of~$u$ in $C\sm D$.

\begin{sublem}\label{uHasNeighbourInCorner}
The edge $uc$ does not lie in the link for~$C$.
\end{sublem}
\begin{cproof}
Suppose for a contradiction that $uc$ lies in the link for~$C$.
Then $w$ must lie in the corner for $\{A,C\}$.
In particular, the corner for $\{A,C\}$ is nonempty.
So $\{u,v\}$ is a 2-separator, a contradiction to 3-connectivity.
\end{cproof}\medskip

Let $C':=C-u$ and $D':=D$.
Then the separator of $(C',D')$ arises from the separator of $(C,D)$ by replacing the vertex $u$ with the edge~$uc$.
The only vertex in the separator of $(C,D)$ that might lose a neighbour when moving to $(C',D')$ is the vertex~$v$, which loses its neighbour $u$ in~$C'$.
However, \autoref{wHasThreeNeighboursInC} ensures that $v$ has two neighbours in~$C'$.
Hence $(C',D')$ is a tri-separation, and it has fewer vertices of degree three in its separator than~$(C,D)$.
Moreover, $(C',D')$ crosses $(A,B)$ with the same links and centre as for~$(C,D)$, with just one exception: the link for~$A$, which consisted of~$u$ for~$(C,D)$, consists of the edge $uc$ for~$(C',D')$.
By iterating at most two times, we obtain a strong tri-separation that crosses~$(A,B)$.
\end{proof}

\begin{lem}\label{notStrongImpliesCrossed}
If a mixed 3-separation of a 3-connected graph is not strong, then it is crossed by a trivial tri-separation.
\end{lem}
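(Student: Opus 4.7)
The plan is to produce the crossing witness directly from the hypothesis, so the argument is very short. Since $(A,B)$ is not strong, its separator contains a vertex $v \in A\cap B$ with $\deg_G(v) \le 3$; combined with the 3-connectivity of $G$, this pins down $\deg_G(v) = 3$ exactly. Here I am reading ``not strong'' as ``the separator contains a vertex of degree strictly less than four,'' the natural extension of the strong tri-separation definition to arbitrary mixed 3-separations; any other reading seems to make the lemma false, as one can cook up 3-connected examples in which a high-degree vertex sits alone in $A\cap B$ and no trivial tri-separation crosses $(A,B)$.

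Next, I would nominate $(\{v\}, V(G)\sm\{v\})$ as the trivial tri-separation we are after. Its separator is the atomic cut $E(v, V(G)\sm\{v\})$, which consists of precisely three edges, so this is a mixed 3-separation of $G$. Because $\{v\} \cap (V(G)\sm\{v\}) = \emptyset$, there is no vertex in the intersection on which to verify the ``two neighbours in each side'' clause; so it is vacuously a tri-separation, and by the easy direction (2)$\to$(1) of \autoref{trivial} it is in fact trivial.

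Finally, I would check that $(\{v\}, V(G)\sm\{v\})$ and $(A,B)$ are not nested. If they were, then after possibly swapping the two sides of one of them we would obtain either $\{v\} \se A$ paired with $B \se V(G)\sm\{v\}$ (forcing $v \notin B$) or $\{v\} \se B$ paired with $A \se V(G)\sm\{v\}$ (forcing $v \notin A$); each conclusion contradicts $v \in A \cap B$. Hence the two mixed-separations cross, as required.

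No real obstacle presents itself once the hypothesis is correctly interpreted: the entire proof is definition-chasing, with 3-connectivity used only to upgrade $\deg_G(v) \le 3$ to $\deg_G(v) = 3$ so that the atomic cut at $v$ has order exactly three.
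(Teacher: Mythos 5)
Your argument is essentially correct and takes a more direct route than the paper's. The paper first uses 3-connectivity to establish that the low-degree vertex $v$ has neighbours in both $A\setminus B$ and $B\setminus A$, and then concludes (implicitly via the corner/link framework of \autoref{nestedviacorners}) that the trivial tri-separation with $\{v\}$ as one side crosses $(A,B)$: the edges from $v$ to these two neighbours lie in opposite links. You bypass the neighbour argument entirely and instead observe that every nestedness configuration forces $v\notin A$ or $v\notin B$, contradicting $v\in A\cap B$. This is cleaner and uses 3-connectivity only to pin $\deg_G(v)=3$, so that the atomic cut at $v$ really does have order three. Your reading of ``not strong'' is the right one and matches how the paper uses the word for mixed 3-separations.

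There is, however, a small gap in the nestedness check: you verify only two of the four configurations permitted by the definition, namely $(\{v\},V(G)\setminus\{v\})\le(A,B)$ and $(\{v\},V(G)\setminus\{v\})\le(B,A)$. The other two, $(A,B)\le(\{v\},V(G)\setminus\{v\})$ and $(B,A)\le(\{v\},V(G)\setminus\{v\})$, force $A\subseteq\{v\}$ or $B\subseteq\{v\}$ and are not the inclusions you wrote down, so the phrase ``we would obtain either\ldots or\ldots'' does not quite cover all cases. These two leftovers are also easy to dismiss: if $A\subseteq\{v\}$, then since $v\in B$ we get $A\setminus B\subseteq\{v\}\setminus B=\emptyset$, contradicting that $(A,B)$ is a mixed-separation (and symmetrically for $B\subseteq\{v\}$). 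Once you add these two lines, the proof is complete.
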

\begin{proof}
Let $(A,B)$ be a mixed 3-separation of a 3-connected graph~$G$, and let $v\in A\cap B$ be a vertex of degree three.
Since $A\sm B$ and $B\sm A$ are nonempty, and since $G$ is 3-connected, the vertex $v$ must have neighbours in $A\sm B$ and in $B\sm A$.
Hence the trivial tri-separation with $\{v\}$ as one side crosses $(A,B)$.
\end{proof}

\begin{prop}\label{expendable}
Let $G$ be a 3-connected graph, and let $(A,B)$ be a nontrivial tri-separation of~$G$.
Then the following assertions are equivalent:
\begin{enumerate}
    \item $(A,B)$ is totally nested;
    \item $(A,B)$ is strong and nested with every strong nontrivial tri-separation of~$G$.
\end{enumerate}
\end{prop}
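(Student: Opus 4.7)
The plan is to verify the two implications separately, drawing mainly on \autoref{notStrongImpliesCrossed}, \autoref{trivial_is_nested}, and \autoref{orangeLemma}. The whole argument should be short once these tools are in place.

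For (1)$\to$(2), nestedness with every strong nontrivial tri-separation is immediate from the definition of total nestedness, so the only content is that $(A,B)$ must be strong. Assume for a contradiction that some vertex $v\in A\cap B$ has degree three. Then \autoref{notStrongImpliesCrossed} produces a trivial tri-separation (namely $(\{v\},V(G)\sm\{v\})$) that crosses $(A,B)$, contradicting~(1).

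For (2)$\to$(1), I would let $(C,D)$ be an arbitrary tri-separation of~$G$ and verify nestedness with $(A,B)$ in three cases. If $(C,D)$ is trivial, then since $(A,B)$ is strong by~(2), \autoref{trivial_is_nested} directly yields nestedness. If $(C,D)$ is nontrivial and strong, nestedness holds by the hypothesis in~(2). The remaining case is when $(C,D)$ is nontrivial but not strong; here I would argue by contradiction. If $(A,B)$ crossed $(C,D)$, then because $(A,B)$ is strong and nontrivial, \autoref{orangeLemma} would supply a strong tri-separation $(C',D')$ that also crosses~$(A,B)$.

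The main (small) subtlety is to verify that this $(C',D')$ is nontrivial, since the hypothesis of~(2) only controls strong \emph{nontrivial} tri-separations. This is handled by the contrapositive of \autoref{trivial_is_nested}: a trivial tri-separation cannot cross the strong tri-separation $(A,B)$, so $(C',D')$ must be nontrivial. This then yields a strong nontrivial tri-separation crossing $(A,B)$, contradicting~(2) and completing the proof.
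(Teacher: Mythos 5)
Your proof is correct and follows essentially the same route as the paper: both directions rest on \autoref{notStrongImpliesCrossed}, \autoref{trivial_is_nested}, and \autoref{orangeLemma}, and your three-case split for (2)$\to$(1) unpacks the paper's terser contradiction argument (crossed $\Rightarrow$ may assume strong by \autoref{orangeLemma} $\Rightarrow$ nontrivial since trivial cannot cross strong by \autoref{trivial_is_nested} $\Rightarrow$ contradiction) without changing the underlying ideas.
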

\begin{proof}
(1)$\to$(2).
We only have to show that $(A,B)$ is strong.
This follows from \autoref{notStrongImpliesCrossed}.

(2)$\to$(1).
Suppose for a contradiction that $(A,B)$ is crossed by a tri-separation $(C,D)$ of~$G$.
By \autoref{orangeLemma}, we may assume that~$(C,D)$ is strong.
Since $(A,B)$ is strong, $(C,D)$ is nontrivial by \autoref{trivial_is_nested}.
This contradicts~(2).
\end{proof}

\section[Proof of Corollary 2]{Proof of \autoref{vx-trans}}\label{sec:Cor2}

Before we prove the \nameref{Angry}, let us see how it implies \autoref{vx-trans}.
A~graph~$G$ is \emph{essentially 4-connected} if it is 3-connected, every nontrivial strong tri-separation has three edges in its separator and the subgraph induced by one side is equal to a triangle, and $G\neq K_4$.
A graph $G$ is \emph{vertex-transitive} if the automorphism group of~$G$ acts transitively on its vertex set~$V(G)$.

\begin{proof}[Proof of \autoref{vx-trans}]
Let $G$ be a vertex-transitive finite connected graph.
We have to show that $G$ either is essentially 4-connected, a cycle, or a complete graph on at most four vertices.
By \jaf{\cite[Theorem~3]{infiniteSPQR}}{\autoref{CayleyCase2}}{\autoref{CayleyCase2}}, $G$ is a cycle, $K_2$, $K_1$ or 3-connected. 
Since we are done otherwise, let us assume that $G$ is 3-connected.
By the \nameref{Angry} (\ref{Angry}), $G$~is internally 4-connected, a $K_{3,m}$ with $m\geq 3$, a wheel, or $G$ has a totally-nested nontrivial tri-separation. 
If $G$ is internally 4-connected, then $G\notin \{K_4,K_{3,3}\}$ by definition, and all strong tri-separations of~$G$ are trivial by \autoref{tetraXtrisep}; in particular, $G$ is essentially 4-connected.
If $G$ is a $K_{3,m}$ for some~$m\ge 3$, then $m=3$ since $G$ is vertex-transitive, and $G=K_{3,3}$ is essentially 4-connected since all its strong tri-separations are trivial.
If $G$ is a wheel, then $G$ can only be a $K_4$ by vertex-transitivity, and $K_4$ is a possible outcome.

As we are done otherwise, we may assume that $G$ has a totally-nested nontrivial tri-separation $(A,B)$.
Every automorphism $\varphi$ of~$G$ takes $(A,B)$ to~$(\varphi(A),\varphi(B))$.
Let $O$ denote the union of the orbits of $(A,B)$ and $(B,A)$ under the automorphism group of~$G$.
As $G$ is finite, we may let $(U,W)$ be $\le$-minimal in~$O$; so $(U,W)\le (C,D)$ or $(U,W)\le (D,C)$ for all $(C,D)\in O$ as $(U,W)$ is totally-nested.

\begin{sublem}\label{Cayley:threeCut}
The separator of $(U,W)$ consists of three edges.
\end{sublem}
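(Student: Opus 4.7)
The plan is to argue by contradiction. Suppose that $S(U,W)$ is not a set of three edges; since $(U,W)$ is a mixed $3$-separation and $S(U,W)$ decomposes as the disjoint union of the vertex set $U\cap W$ and the edge set $E(U\sm W,W\sm U)$, its separator must then contain at least one vertex $v\in U\cap W$. I will use vertex-transitivity to produce an automorphism that moves $v$ outside~$W$, and then derive a contradiction from the fact that the image of $(U,W)$ under this automorphism is forced to be nested with $(U,W)$ in a specific direction.

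First I would note that $U\sm W$ is nonempty because $(U,W)$ is a mixed-separation, so I may pick some $u\in U\sm W$. By vertex-transitivity, there is an automorphism $\varphi$ of~$G$ with $\varphi(v)=u$; in particular $\varphi(v)\notin W$. Set $(U',W'):=(\varphi(U),\varphi(W))$. Since $O$ is a union of orbits of the automorphism group of~$G$ and $(U,W)\in O$, the image $(U',W')$ also lies in~$O$. Moreover, $v\in U\cap W$ gives $\varphi(v)\in U'\cap W'$, so $\varphi(v)$ is a vertex in $S(U',W')$.

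Next I would apply the property established just before the claim: for every $(C,D)\in O$, either $(U,W)\le(C,D)$ or $(U,W)\le(D,C)$. Taking $(C,D):=(U',W')$, in the first case $W'\se W$, so $\varphi(v)\in W'\se W$, contradicting $\varphi(v)\notin W$; in the second case $U'\se W$, so $\varphi(v)\in U'\se W$, again a contradiction. Hence $S(U,W)$ contains no vertex, and being of order three, it must consist of three edges.

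The main obstacle, which is honestly mild once the setup from the preceding lines is in hand, is to keep track of the fact that $O$ is closed under the involution $(C,D)\leftrightarrow(D,C)$: this is exactly what allowed the $\le$-minimality of $(U,W)$ to yield a \emph{directional} nesting against every automorphic image, rather than just unordered nestedness. Once that closure is exploited, the vertex-transitivity argument that shoves $v$ out of $W$ is essentially a one-liner.
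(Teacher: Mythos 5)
Your proof is correct and follows essentially the same approach as the paper: use vertex-transitivity to send the putative vertex $v\in S(U,W)$ to some $u\in U\sm W$, then use that $(U,W)$ is $\le$-below the image (or its flip) in $O$ to force $u\in W$, a contradiction. The only cosmetic difference is that you invoke the $\le$-minimality of $(U,W)$ indirectly (through the property established just before the claim) and get a direct contradiction, whereas the paper phrases the contradiction as a violation of minimality via strict inequality; the underlying argument is the same.
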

\begin{cproof}
Suppose for a contradiction that there is a vertex $v\in U\cap W$.
Since $(U,W)$ is a mixed-separation, there is a vertex $u\in U\sm W$.
Let $\varphi\in\text{Aut}(G)$ send $v$ to~$u$.
Since $(U,W)$ is a tri-separation and $\varphi$ is an automorphism, the pair $(\varphi(U),\varphi(W))$ is also a tri-separation.
Since $(U,W)$ is totally-nested, it is in particular nested with $(\varphi(U),\varphi(W))$.
We cannot have $(\varphi(U),\varphi(W))\le (W,U)$ or $(\varphi(W),\varphi(U))\le (W,U)$ since $u\in \varphi(U)\cap\varphi(W)$ and $u\in U\sm W$.
Hence the only two possibilities for $(U,W)$ and $(\varphi(U),\varphi(W))$ to be nested are $(\varphi(U),\varphi(W))\le (U,W)$ or $(\varphi(W),\varphi(U))\le (U,W)$.
In either case the inequality is strict since $u$ does not lie in $S(U,W)$ but does so after the application of~$\varphi$. This contradicts the choice of~$(U,W)$.
\end{cproof}\medskip

\begin{sublem}\label{Cayley:TriangleSide}
$G[U]=K_3$.
\end{sublem}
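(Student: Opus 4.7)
\begin{cproof}
The plan is to first pin down $|U|=3$ using the matching structure of $S(U,W)$ together with the $\le$-minimality of $(U,W)$ in the orbit $O$, and then to verify $G[U]=K_3$ by a short degree count.

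That $|U|\ge 3$ is immediate. Since $S(U,W)$ consists of three edges only, we have $U\cap W=\emptyset$, so \autoref{independentEdges} applies and tells us that the three edges of $S(U,W)$ form a matching between $U$ and $W$; their three distinct endpoints in $U$ witness $|U|\ge 3$.

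The main obstacle is ruling out $|U|\ge 4$. If this failed, some vertex $v\in U$ would be incident to no edge of $S(U,W)$ and hence have all its neighbours in $U$. I would pick any $u\in U$ that \emph{is} incident to such an edge and choose $\varphi\in\text{Aut}(G)$ with $\varphi(v)=u$ by vertex-transitivity. As $(\varphi(U),\varphi(W))\in O$, the $\le$-minimality of $(U,W)$ would then force $(U,W)\le (\varphi(U),\varphi(W))$ or $(U,W)\le (\varphi(W),\varphi(U))$. In the former case $\varphi(U)=U$ by cardinality, so $\varphi$ preserves the partition $(U,W)$; but then $v$ and $\varphi(v)=u$ should have the same number of neighbours in $W$, contradicting that the former has zero while the latter has at least one. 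In the latter case $\varphi(U)\se W$, contradicting $\varphi(v)=u\in U$. So $|U|=3$.

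Finally, write $U=\{u_1,u_2,u_3\}$ and let $d$ denote the common vertex-degree of $G$ (provided by vertex-transitivity). The matching structure forces each $u_i$ to have exactly one neighbour in $W$, so $u_i$ has $d-1$ neighbours in $U\setminus\{u_i\}$; since the latter set has size two, $d-1\le 2$, and combined with $d\ge 3$ from 3-connectedness this yields $d=3$. Hence each $u_i$ is adjacent to both other vertices of $U$, and $G[U]=K_3$.
\end{cproof}
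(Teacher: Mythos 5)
Your proof is correct and, in its essence, takes the same route as the paper, which for this claim only says ``analogue to the proof of Claim \ref{Cayley:threeCut}'' and leaves the details to the reader. Like the paper, you derive that every vertex of~$U$ lies on an edge of the matching $S(U,W)$ by moving a hypothetical uncovered vertex onto a covered one via an automorphism and contradicting the $\le$-minimality of $(U,W)$; together with \autoref{independentEdges} this yields $|U|=3$. Your case analysis (using $U\cap W=\emptyset$ and cardinality to force $\varphi(U)=U$, then comparing neighbour counts in~$W$; and in the other case deriving $\varphi(U)\subseteq W$) is a faithful filling-in of what the paper leaves implicit.

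The only genuine (but minor) divergence is in the last step. The paper's cited hypotheses (3-connectedness, nontriviality of $(U,W)$, and $S(U,W)$ being a matching of three edges) already finish it: with $|U|=3$ and exactly one edge from each $u_i$ into~$W$, 3-connectedness gives each $u_i$ at least two neighbours inside~$U$, a 2-element set, so $G[U]=K_3$ (alternatively, nontriviality gives a cycle in $G[U]$, which must be the triangle). You instead invoke regularity from vertex-transitivity to compute the common degree $d=3$ before concluding; this is valid but slightly heavier than necessary, since the degree bound $d\le 3$ is not needed once you observe that each $u_i$ must have two neighbours in a two-element set.
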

\begin{cproof}
Since $G$ is 3-connected, since $(U,W)$ is nontrivial and since $S(U,W)$ consists of three edges by \autoref{Cayley:threeCut}, it suffices to show that every vertex in $U$ is incident with an edge in~$S(U,W)$.
The proof is analogous to the proof of \autoref{Cayley:threeCut}.
\end{cproof}\medskip

By \autoref{Cayley:threeCut} and \autoref{Cayley:TriangleSide}, every tri-separation in~$O$ has three edges in its separator and a side that induces a triangle.
As $(A,B)$ was chosen arbitrarily, every totally-nested nontrivial tri-separation has three edges in its separator and a side that induces a triangle.
Hence to show that every nontrivial strong tri-separation also has three edges in its separator and a side that induces a triangle, it suffices to show that

\begin{sublem}\label{Cayley:TotallyNested}
Every nontrivial strong tri-separation of~$G$ is totally-nested.
\end{sublem}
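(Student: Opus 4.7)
The plan is to deduce from vertex-transitivity, combined with the structural information on the orbit~$O$ already recorded in \autoref{Cayley:threeCut} and \autoref{Cayley:TriangleSide}, that $G$ is 3-regular. Once this is established, every strong tri-separation of $G$ is forced to have its separator consist entirely of edges, and the result follows from \autoref{3cutsAreNested}.

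Concretely, I would first combine \autoref{Cayley:threeCut}, \autoref{Cayley:TriangleSide} and \autoref{independentEdges}: the three edges of~$S(U,W)$ form a matching between the triangle~$G[U]=K_3$ and~$W$, and every vertex of~$U$ is incident to one of these matching edges. Hence each vertex of~$U$ has two neighbours inside the triangle and exactly one neighbour in~$W$, so has degree three in~$G$. Vertex-transitivity of~$G$ then propagates this degree to every vertex of~$G$, so $G$ is 3-regular.

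Now let $(P,Q)$ be any strong tri-separation of~$G$. Then the vertex part $P\cap Q$ of its separator must be empty: any vertex there would have degree at least four by the strong condition, contradicting 3-regularity. Hence $S(P,Q)$ consists of three edges only, and $P,Q$ are the two sides of a 3-edge cut of~$G$. If $(P,Q)$ is additionally nontrivial, then by \autoref{trivial} the pair $P,Q$ cannot be the sides of an atomic cut, so the underlying 3-edge cut is non-atomic. An application of \autoref{3cutsAreNested} then immediately yields that $(P,Q)$ is totally-nested, as desired. No step of this argument should present a real obstacle; the main structural work has already been carried out in the previous two claims, and the rest is a direct invocation of earlier lemmas.
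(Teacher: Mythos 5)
Your proposal is correct and follows essentially the same route as the paper: establish that $G$ is 3-regular (from the structure of $(U,W)$ together with vertex-transitivity), deduce that any strong tri-separation must therefore have a separator consisting of three edges, and conclude with \autoref{3cutsAreNested}. You spell out the vertex-transitivity step and the non-atomicity check via \autoref{trivial} a bit more explicitly than the paper does, but the argument is the same.
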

\begin{cproof}
Since $S(U,W)$ consists of three edges (which share no ends by \autoref{independentEdges}) and $G[U]=K_3$, all vertices in $U$ have degree three.
Hence all vertices of $G$ have degree three.
Let $(C,D)$ be an arbitrary nontrivial strong tri-separation of~$G$.
As $(C,D)$ is strong, the separator of~$(C,D)$ consists of three edges.
Then $(C,D)$ is totally-nested by \autoref{3cutsAreNested}.
\end{cproof}\medskip

Combining \autoref{Cayley:threeCut}, \autoref{Cayley:TriangleSide} and \autoref{Cayley:TotallyNested} yields that $G$ is essentially 4-connected.
\end{proof}

\begin{oprob}
Can \autoref{vx-trans} be used to simplify existing characterisations of classes of finite Cayley graphs (like characterisations of the finite Cayley graphs that embed in the torus or some other surface, as in or similar to~\cite{Proulx,Tucker1,Tucker2})?
\end{oprob}

Another area where our ideas might turn out to be fruitful is in the study of infinite planar Cayley graphs, see \cite{Geo_plane_cubic,georgakopoulos2020planar,georgakopoulos2019planarI,georgakopoulos2023planarII}.

\section{Understanding nestedness through connectivity}\label{sec:externalTricon}

In this section, we provide sufficient conditions for when a tri-separation is totally nested.
Let $v$ be a vertex of a graph $G$. We say that a vertex $w$ of $G$ is \emph{$v$-free} if it is not 
adjacent to $v$ or if it has degree at most three; that is, a vertex is not $v$-free if it is adjacent 
to $v$ and has degree at least four.

Given a mixed 3-separator $\{x_1,x_2,x_3\}$ of~$G$, we say that $\{x_1,x_2,x_3\}$ is \emph{externally
tri-connected around} a vertex $x_i$ with $i\in \Zbb_3$ if one of the following holds:

\begin{enumerate}
\item[(:)] The pair $\{x_{i+1},x_{i+2}\}$ consists of two vertices and these vertices are adjacent 
or joined by three internally disjoint paths in $G-x_i$.
\item[($\dotminus$)] The pair $\{x_{i+1},x_{i+2}\}$ consists of one vertex~$x$ (say) and one 
edge~$e$ (say) such that $e$ has an $x_i$-free endvertex~$y$ for which there are two internally disjoint $x$--$y$ paths in $G-x_i-e$.
\item[($=$)] The pair $\{x_{i+1},x_{i+2}\}$ consists of two edges which have $x_i$-free endvertices $y_{i+1}$ and~$y_{i+2}$, respectively, such that there are two internally disjoint $y_{i+1}$--$y_{i+2}$ paths in \mbox{$G-x_1-x_2-x_3$}.
\end{enumerate}

We say that a mixed 3-separator $\{x_1,x_2,x_3\}$ is \emph{externally tri-connected} if 
$\{x_1,x_2,x_3\}$ is externally tri-connected around each vertex 
$x_i\in\{x_1,x_2,x_3\}$.
We say that a mixed 3-separation is \emph{externally tri-connected} if its separator is 
externally tri-connected.

\begin{eg}
A mixed-separator that consists of three edges or that induces a clique is externally tri-connected. 
\end{eg}

\begin{figure}[ht]
    \centering
    \includegraphics[height=8\baselineskip]{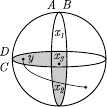}
    \caption{The situation excluded by \autoref{vfreeInCorner}}
    \label{fig:vfreeInCorner}
\end{figure}

For a depiction of the situation excluded by our next lemma, see \autoref{fig:vfreeInCorner}.

\begin{lem}\label{vfreeInCorner}
Let $G$ be a 3-connected graph and $(A,B)$ a half-connected tri-separation of~$G$.
Denote the separator of $(A,B)$ by $\{x_1,x_2,x_3\}$.
Assume that $x_2$ is an edge with an $x_3$-free endvertex~$y$.
If~$(A,B)$ is crossed by a strong tri-separation of~$G$ so that $x_3$ lies in the centre, then $y$ cannot lie in a link.
\end{lem}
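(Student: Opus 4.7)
The plan is to argue by contradiction: suppose $y$ lies in a link, and show that $\{y,x_3\}$ becomes a 2-separator of~$G$, contradicting 3-connectivity. First I set up the crossing configuration. Both $(A,B)$ and~$(C,D)$ are nontrivial (else by \autoref{trivial} their separators would consist of three edges, contradicting the fact that~$x_3$ is a vertex of both), and $G\neq K_4$ (else $(C,D)$ could not be strong, since every vertex of~$K_4$ has degree~$3$). So the \nameref{dasBesteLemma}~(\ref{dasBesteLemma}) applies, and since $(A,B)$ is half-connected its moreover-clause forces Case~(1): all four links have size~$1$, the centre equals~$\{x_3\}$, there are no jumping edges, and by \autoref{noDiagonalEdges} no diagonal edges either.

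Next I pin down $y$ and the other endvertex~$y'$ of~$x_2$. By symmetry I may assume that $y$ lies in the link for~$A$, so this link equals~$\{y\}$. Since $x_2$ is not a jumping edge, $y'$ does not lie in the opposite link for~$B$; by the $C$--$D$ symmetry I may further assume that $y'$ lies in the corner for~$\{B,C\}$. Then $x_2$ lies in the edge-link for~$C$, so the link for~$C$ is~$\{x_2\}$ and its vertex part is empty. From $y\in C\cap D$ and $(C,D)$ strong I get that $y$ has degree at least~$4$, and combined with $y$ being $x_3$-free this forces $y$ not to be adjacent to~$x_3$. Write~$v:=x_3$.

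The core of the argument is to show that the corner for~$\{A,C\}$ is nonempty and isolated from the rest of~$G-\{y,v\}$. For nonemptiness, since $(C,D)$ is a tri-separation $y$ has at least two neighbours in~$G[C]$; its possible neighbours in~$C\cap D$ are confined to~$v$ and the vertex part of link~$B$, but $y\not\sim v$ and the only $S(A,B)$-edge incident with~$y$ is~$x_2$, whose far end $y'$ lies in the corner for~$\{B,C\}$ and not in link~$B$. So the neighbours of~$y$ in~$G[C]$ are $y'$ together with vertices in the corner for~$\{A,C\}$, which thereby contains at least one such neighbour. For isolation, take any $a$ in the corner for~$\{A,C\}$. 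Neighbours of~$a$ in the corner for~$\{A,D\}$ or in the vertex part of link~$D$ would yield an edge of~$S(C,D)$ with endvertex in a corner for~$A$, placing it in the edge-link for~$A$---but that edge-link is empty, since link~$A$ is the single vertex~$y$. Neighbours in~$B\setminus A$ would require a second $S(A,B)$-edge at~$a$, forbidden by the matching property (\autoref{independentEdges}) since neither $x_2$ nor $x_1$ has~$a$ as endvertex. Hence the only neighbours of~$a$ outside the corner for~$\{A,C\}$ lie in~$\{y,v\}$, so removing these two vertices disconnects the corner for~$\{A,C\}$ from the rest of~$G-\{y,v\}$, which is nonempty as $y'\in B\setminus A$ lies outside. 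Thus $\{y,v\}$ is a 2-separator, contradicting 3-connectivity.

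The main obstacle I expect is the bookkeeping in the isolation step: one must work through each region adjacent to the corner for~$\{A,C\}$ (the other three corners, all four links, the centre, and the far side of~$(A,B)$) and rule out a neighbour there. These cases collapse once one observes that the edge-link for~$A$ is empty (forbidding any $S(C,D)$-edge with endvertex in a corner for~$A$) and that the matching property of~$S(A,B)$ blocks access to~$B\setminus A$; under this bookkeeping the two sub-cases according to whether~$x_1$ is a vertex or an edge behave uniformly.
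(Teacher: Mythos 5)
Your proof is correct and follows essentially the same route as the paper's: both rest on the observation that the corner for $\{A,C\}$ and the vertex $y$ are jointly separated from the rest of $G$ by $\{y,x_3\}$, and that $y$'s two required neighbours in $G[C]$ live only in $\{y'\}\cup(\text{corner }\{A,C\})\cup\{x_3\}$. You just run the implication in the opposite direction (strong plus $x_3$-free forces $y\not\sim x_3$, hence the corner is nonempty, hence a $2$-separator), and spell out the matching/edge-link bookkeeping for the separator claim that the paper leaves implicit.
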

\begin{proof}
Let $(C,D)$ be a strong tri-separation of~$G$ that crosses $(A,B)$ so that $x_3$ is in the centre.
Since $K_4$ has no strong tri-separation, $G$ cannot be a~$K_4$.
By the \nameref{dasBesteLemma} (\ref{dasBesteLemma}) and since $(A,B)$ is half-connected, $x_3$~is a vertex and the only element of the centre, all links have size one, and there are no jumping edges.
Without loss of generality, $x_2$ lies in the link for~$C$, and $y\in A\sm B$.
If $y$ lies in the corner for $\{A,C\}$, then we are done.
Otherwise, $y$~must lie in the link for~$A$.
The corner for $\{A,C\}$ must be empty, since otherwise $\{y,x_3\}$ would be a 2-separator of~$G$, contradicting 3-connectivity.
As $y\in S(C,D)$ must have two neighbours in $G[C]$, and since there are no jumping edges, it follows that $yx_3$ must be an edge in~$G$.
But $y\in S(C,D)$ also means that $y$ cannot have degree three as $(C,D)$ is strong, and since $y$ is $x_3$-free this means that the edge $yx_3$ must not be present in~$G$, a contradiction.
\end{proof}

\begin{figure}[ht]
    \centering
    \includegraphics[height=8\baselineskip]{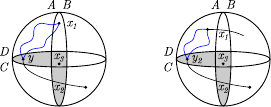}
    \caption{The situation in the second and third case of the proof of \autoref{extTriconcentre}}
    \label{fig:is_nested_dotminus}
\end{figure}

\begin{lem}\label{extTriconcentre}
Let $G$ be a 3-connected graph and $(A,B)$ a half-connected tri-separation of~$G$.
If~$S(A,B)$ is externally tri-connected around some vertex in $S(A,B)$,
then no strong tri-separation of~$G$ can cross $(A,B)$ so that this vertex is in the centre.
\end{lem}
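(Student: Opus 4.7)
The plan is to suppose for contradiction that a strong tri-separation $(C,D)$ of~$G$ crosses $(A,B)$ with the externally-tri-connected-around vertex, which I relabel~$x_3$, in the centre, and then to derive a contradiction from each of the three alternatives~(:), ($\dotminus$), ($=$) that can witness external tri-connectedness around~$x_3$.

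First I would invoke the \nameref{dasBesteLemma}~(\ref{dasBesteLemma}). Because $(A,B)$ is half-connected and $(C,D)$ is strong (so $G\neq K_4$ and $(C,D)$ is nontrivial by \autoref{trivial_is_nested}), case~\ref{11case} of that lemma applies: each of the four links has size one, the centre equals $\{x_3\}$, and there are no diagonal and no jumping edges. Hence $\{x_1,x_2\}=S(A,B)\setminus\{x_3\}$ distributes one per link; after relabelling, $x_1$ lies in the link for~$C$ and $x_2$ in the link for~$D$, while $S(C,D)\setminus\{x_3\}=\{s_A,s_B\}$ has $s_A$ in the link for~$A$ and $s_B$ in the link for~$B$. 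Crucially, the vertex part of~$A\cap B$ equals the vertex part of~$S(A,B)$ and $E(A\setminus B,B\setminus A)$ equals its edge part, so any path that crosses between $A\setminus B$ and $B\setminus A$ must use an element of~$S(A,B)$; an analogous remark applies to~$S(C,D)$.

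For condition~(:), the vertices $x_1,x_2$ cannot be adjacent, for otherwise $x_1x_2$ would join the opposite links for $C$~and~$D$ and thus be a jumping edge. Hence there must be three internally disjoint $x_1$--$x_2$ paths in $G-x_3$. But $\{s_A,s_B\}$ still separates $x_1\in C\setminus D$ from $x_2\in D\setminus C$ in $G-x_3$, so Menger's theorem bounds the number of such paths by two, a contradiction. For condition~($\dotminus$), where $x_1$ is a vertex and $x_2=e$ an edge with $x_3$-free endvertex~$y$, I would first apply \autoref{vfreeInCorner} to place~$y$ in a corner; the two guaranteed $x_1$--$y$ paths in $G-x_3-e$ must then split so that one uses $s_A$ and the other uses $s_B$, since $e$ is not in~$S(C,D)$ and so $\{s_A,s_B\}$ continues to separate the two $C/D$-sides in $G-x_3-e$. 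The path through~$s_B$ visits the $B\setminus A$ region at~$s_B$ and terminates at~$y\in A\setminus B$, so it must cross the $A/B$ boundary after leaving~$x_1$; but the only elements that can effect such a crossing belong to $S(A,B)=\{x_1,e,x_3\}$, all of which are either deleted or serve as the path's starting vertex, a contradiction.

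For condition~($=$), both $x_1,x_2$ are edges, with $x_3$-free endvertices $y_1,y_2$ placed in corners by \autoref{vfreeInCorner}. If $y_1$ and $y_2$ lie on opposite sides of $(A,B)$, then $G-x_1-x_2-x_3$ contains no element capable of $A/B$-crossing, so no $y_1$--$y_2$ path exists there at all, contradicting the two assumed paths. If $y_1$ and $y_2$ lie on the same side, the path-through-$s_B$ argument used for~($\dotminus$) again rules out the second of the two disjoint paths. The main obstacle is the corner-diagram bookkeeping: one must verify in each subcase that the relevant elements sit in exactly the expected corner or link, and that every potential $A/B$-crossing route is blocked once the required vertices and edges have been removed. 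Once the \nameref{dasBesteLemma} locks down the combinatorial picture, however, each individual subcase reduces to a short Menger-style count against the very thin separator~$S(A,B)$.
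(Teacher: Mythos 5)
Your proof is correct and takes essentially the same approach as the paper: both invoke the \nameref{dasBesteLemma} to fix the corner-diagram structure, then handle the three criteria (:), ($\dotminus$), ($=$) by counting internally disjoint paths against the size-one links and invoking \autoref{vfreeInCorner}. The one small misstep is that \autoref{trivial_is_nested} does not give nontriviality of $(C,D)$ here (that corollary requires $(A,B)$ to be strong, whereas the lemma only assumes it half-connected); nontriviality of $(C,D)$ follows instead because $x_3$ lies in $C\cap D$, while trivial tri-separations are atomic cuts with $C\cap D=\emptyset$ by \autoref{trivial}.
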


\begin{proof}
Let us denote the separator of $(A,B)$ by $\{x_1,x_2,x_3\}$, and let us assume that $x_3$ is a vertex and that the separator is externally tri-connected around~$x_3$.
Let us assume for a 
contradiction that $(A,B)$ is crossed by a strong tri-separation $(C,D)$ so that $x_3$ lies in the centre.
Since $K_4$ has no strong tri-separation, $G$ is not a~$K_4$. By the \nameref{dasBesteLemma} (\ref{dasBesteLemma}), all links have size one, $x_3$~is the only element of the centre, and there are no jumping edges.
We distinguish three cases. 

(:) In the first case, $x_1$ and $x_2$ are vertices. 
Since there are no jumping edges, 
$x_1$ and $x_2$ are not adjacent. So by external tri-connectivity, there are three internally 
disjoint paths in~$G$ from $x_1$~to~$x_2$ avoiding~$x_3$. Each of them has to meet the two links that contain neither $x_1$ nor~$x_2$, which is not possible as there are three paths and the two links have size one, a contradiction.

($\dotminus$) In the second case, $x_1$ is a vertex and $x_2$ is an edge, say.
Without loss of generality, $x_1$ lies in the link for $D$ while $x_2$ lies in the link for~$C$.
For a depiction of the situation, see \autoref{fig:is_nested_dotminus}.
By external 
tri-connectivity, there are two internally 
disjoint paths $P,Q$ from $x_1$ to an endvertex $y$ of $x_2$ that is $x_3$-free, and these paths avoid  
$x_3$ and~$x_2$. 
Without loss of generality, $y$ lies in~$A\sm B$.
Then the two paths $P,Q$ are contained in~$G[A]$.
Since the link for~$A$ has size one, not both of the two paths $P,Q$ can meet it in internal vertices.
Hence the vertex~$y$ must lie in the link for~$A$.
This contradicts \autoref{vfreeInCorner}.

($=$) In the third case, $x_1$ and $x_2$ are edges. By external 
tri-connectivity, these edges have $x_3$-free endvertices $y_1$ and $y_2$ and there are two 
internally 
disjoint paths $P,Q$ from $y_1$ to $y_2$ avoiding~$x_1, x_2,x_3$. 
By~symmetry assume that the vertex $y_1$ lies in the side~$A$. 
Then the two paths $P,Q$ are contained in $G[A]$ and $y_2$ is in~$A$ as well. 
Since the link for~$A$ has size one, not both of the two paths~$P,Q$ can meet it in internal vertices.
So $y_1$ or~$y_2$ must lie in the link for~$A$.
This contradicts \autoref{vfreeInCorner}. 
\end{proof}

\begin{prop}
\label{is_nested}
Let $G$ be a 3-connected graph and let $(A,B)$ be a tri-sepa\-ra\-tion of~$G$.
If $(A,B)$ is externally tri-connected, half-connected, strong and nontrivial, then $(A,B)$ is totally nested.
\end{prop}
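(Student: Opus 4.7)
The plan is to argue by contradiction and reduce to \autoref{extTriconcentre}. Suppose some tri-separation of $G$ crosses $(A,B)$. The first step is to replace this crossing tri-separation by a strong one~$(C,D)$ using \autoref{orangeLemma}. Since $(A,B)$ is itself strong, \autoref{trivial_is_nested} forces $(C,D)$ to be nontrivial. I would also observe that $G\neq K_4$: every vertex of $K_4$ has degree three, so any strong tri-separation of $K_4$ would need its separator to consist of three edges and would thus be trivial by \autoref{trivial}, contradicting the nontriviality of $(A,B)$.

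Next I would apply the \nameref{dasBesteLemma}~(\ref{dasBesteLemma}) to the crossing pair $(A,B)$, $(C,D)$: it yields either a centre of three vertices (all links empty) or a centre of a single vertex (all four links of size one). Half-connectedness of $(A,B)$ kills the first option via \autoref{crossing implies centre not three} (equivalently, the `Moreover' clause of the Crossing Lemma), leaving us with a centre $\{v\}$ consisting of a single vertex and with all four links of size one.

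Since $v\in A\cap B$, the vertex $v$ is one of the vertex-elements of the mixed 3-separator $S(A,B)$. Because $S(A,B)$ is externally tri-connected around $v$, \autoref{extTriconcentre} asserts that no strong tri-separation of $G$ can cross $(A,B)$ with $v$ in the centre -- directly contradicting the existence of $(C,D)$. Hence no tri-separation of $G$ crosses $(A,B)$, so $(A,B)$ is totally nested (alternatively, one can route this last conclusion through \autoref{expendable}, using that $(A,B)$ is strong and nested with every strong nontrivial tri-separation).

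All of the substantive work lives in \autoref{extTriconcentre} and the Crossing Lemma; the present proof is essentially an assembly of those results. The one step that genuinely needs attention is the verification that the crossing tri-separation can be taken strong and nontrivial so that the Crossing Lemma applies in its sharp form and delivers exactly the `single-vertex centre' configuration that \autoref{extTriconcentre} is designed to forbid.
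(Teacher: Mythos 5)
Your proof is correct and follows the same route as the paper's: reduce to a strong nontrivial crossing tri-separation $(C,D)$, apply the Crossing Lemma to get a vertex in the centre, and invoke \autoref{extTriconcentre}. The only cosmetic difference is that you unroll \autoref{expendable} into its ingredients (\autoref{orangeLemma} and \autoref{trivial_is_nested}), which is precisely how the paper proves \autoref{expendable} in the first place, and you make the Moreover-clause step of the Crossing Lemma explicit where the paper lets \autoref{extTriconcentre} absorb it.
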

\begin{proof}
Let $(A,B)$ be a tri-separation of~$G$ that is externally tri-connected, half-connected, strong and nontrivial.
Assume for a contradiction that $(A,B)$ is crossed by a tri-separation $(C,D)$ of~$G$.
By \autoref{expendable}, we may assume that $(C,D)$ is strong and nontrivial.
As $K_4$ has no strong nontrivial tri-separation, $G$ is not a~$K_4$.
Hence by the \nameref{dasBesteLemma} (\ref{dasBesteLemma}), $(A,B)$ and $(C,D)$ cross so that the centre contains a vertex.
This contradicts \autoref{extTriconcentre}.
\end{proof}

\begin{eg}\label{always_nested}
In a 3-connected graph~$G$, every strong and nontrivial tri-separation $(A,B)$ with $G[A\sm B]$ connected and 
$G[B\sm A]$ 
disconnected is totally nested.
\end{eg}
\begin{proof}[Proof of \autoref{always_nested}]
Note first that $E(A\sm B,B\sm A)$ is empty since $G$ is 3-connected.
Hence it suffices to find three internally disjoint paths between any pair of vertices in the 
separator $A\cap B$ avoiding the third vertex, by criterion~(:) and \autoref{is_nested}.
By assumption, \mbox{$G\sm (A\cap B)$} has at least three components, and each component has 
neighbourhood 
equal to $A\cap B$ since $G$ is 3-connected.
Thus we find three internally disjoint paths for each pair of vertices in $A\cap B$ through these 
components.
\end{proof}

\begin{lem}\label{K3n lemma}
Let $G$ be a 3-connected graph, and $X$ a set of three vertices in~$G$ such that $G\sm X$ has at least three components.
Let $K$ be a component of $G\sm X$, let $A:=V(K)\cup X$ and let $B:=V(G\sm K)$.
We denote by $(A',B')$ the reduction  of the 3-separation $(A,B)$.
Then the following assertions hold:
\begin{enumerate}
    \item $B'=B$ and $(A',B')\le (A,B)$;
    \item $(A',B')$ is half-connected and strong.
    \item If $(A',B')$ is nontrivial, then it is totally nested.
    \item $(A',B')$ is nontrivial if and only if two vertices in~$X$ are adjacent or $|K|\ge 2$.
\end{enumerate}
\end{lem}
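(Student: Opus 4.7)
The plan is to leverage the strong structural implication of $G\setminus X$ having at least three components: since $|X|=3$ and $G$ is 3-connected, every component $C$ of $G\setminus X$ must satisfy $N(C)=X$ (otherwise $N(C)$ would be a separator of order $<3$). Consequently each $v\in X$ has at least one neighbour in every component of $G\setminus X$, and in particular at least two neighbours in $B\setminus V(K)$ (coming from the $\ge 2$ components other than $K$). By the definition of the reduction, $v$ is therefore never removed from $B$, so $B'=B$; combined with $A'\subseteq A$ by construction this establishes (1). For (2), $A'\setminus B'=V(K)$ is connected as a component of $G\setminus X$, so $(A',B')$ is half-connected. For strongness, if $v\in A'\cap B'$ then $v$ has $\ge 2$ neighbours outside $V(K)\cup X$ by the component count, while being in $A'$ forces either $\ge 2$ neighbours in $V(K)$ (so $\deg v\ge 4$) or one neighbour in $V(K)$ together with a neighbour in $X\setminus\{v\}$ (so $\deg v\ge 1+1+2=4$).

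For (3), I will apply \autoref{is_nested}, so it suffices to show that $S(A',B')$ is externally tri-connected. Writing $X_A:=A'\cap B'\subseteq X$, every $u\in X\setminus X_A$ contributes to $S(A',B')$ an edge $\{u,x(u)\}$ with $x(u)\in V(K)$; moreover, since $u$'s only neighbour in $A$ is $x(u)$, we have $N(u)\cap(X\setminus\{u\})=\emptyset$, so $u$ is $v$-free for every $v\in X\setminus\{u\}$. This resolves the $v$-free requirement in cases $(\dotminus)$ and $(=)$ by always taking the endpoint of each separator-edge that lies in $X$. For the disjoint-path requirement in all three cases $(:)$, $(\dotminus)$, $(=)$, I route paths through distinct components of $G\setminus X$ other than $K$: each such component $C$ contains a neighbour of every vertex of $X$ (as $N(C)=X$) and is connected, so $C$ hosts a path between any two prescribed vertices of $X$ whose interior lies entirely in $C$. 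With at least two ``other'' components we obtain at least two such internally disjoint paths, and in case $(:)$ a third path is supplied analogously through $K$ itself (which is safe because case $(:)$ means $S(A',B')$ contains no edges). These paths automatically avoid the forbidden vertex (which sits in $X$, outside every component) and any forbidden edges (which are incident to $V(K)$ and hence never used by paths through other components). The main obstacle I anticipate is the fiddly case analysis for external tri-connectedness—in particular verifying that the chosen endpoint $y$ is $v$-free in the right sense and that the component-routed paths terminate at the intended endpoint of each edge—though no new idea beyond ``route through the other components'' is required.

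For (4), $G[B']$ always contains a cycle: pick two ``other'' components $K_1,K_2$ and any two vertices $v_1,v_2\in X$; since $N(K_j)=X$ there is a cycle through $v_1,K_1,v_2,K_2$ in $G[B']$. Hence nontriviality hinges on whether $G[A']$ contains a cycle. If $|K|\ge 2$, then either $G[V(K)]$ already contains a cycle, or $K$ is a tree; in the latter case pick two leaves $l_1,l_2$ of $K$, observe that 3-connectivity forces each $l_i$ to have at least two neighbours in $X$, and deduce from $|X|=3$ that some $v\in X$ is adjacent to both $l_1$ and $l_2$, placing $v\in X_A$ and making the tree-path from $l_1$ to $l_2$ in $K$ together with the edges $l_2v$ and $vl_1$ a cycle in $G[A']$. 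If instead $X$ contains an edge $v_1v_2$, then $v_1,v_2\in X_A$ (each has the other plus a neighbour in $V(K)$), and joining them through any $V(K)$-path yields a cycle in $G[A']$. Conversely, if $|K|=1$ and $X$ is independent, then every $v\in X$ has only the unique vertex of $V(K)$ as its neighbour in $A$, so $X_A=\emptyset$, $A'=V(K)$ is a single vertex, and $G[A']$ contains no cycle; hence $(A',B')$ is trivial.
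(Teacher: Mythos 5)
Your proposal is correct and follows essentially the same structure as the paper's proof: (1) via showing every $v\in X$ keeps two neighbours in $B$; (2) via half-connectedness from $A'\sm B'=V(K)$ plus degree counting across components; (3) via \autoref{is_nested} and routing paths through the $\ge 2$ components of $G\sm X$ other than $K$, with the key observation that any reduced $u\in X$ is $v$-free because its only $A$-neighbour is $x(u)\in V(K)$; and (4) via the structure of $G[A']$. The only stylistic divergence is in (4), where you construct explicit cycles in $G[A']$ (handling the tree case of $K$ via pigeonhole on neighbourhoods of two leaves) rather than invoking \autoref{trivial} to reduce to the simpler check $|A'|\ge 2$ as the paper does; both routes work, and the paper's is marginally shorter.
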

\begin{proof}
(1).~Since every vertex in $X$ has at least one neighbour in every component of $G\sm X$ and since $B$ includes the vertex-sets of at least two such components, the graph~$G[B]$ has minimum degree~$\ge 2$.
From this, we deduce that $B'=B$, and so $(A',B')\le (A,B)$.

(2).~Since the vertex set of the component $K$ is equal to $A'\sm B'$, the tri-separation $(A',B')$ is half-connected. 
To see that $(A',B')$ is strong, let $v$ be a vertex in the separator $A'\cap B'$. 
As $(A',B')$ is a tri-separation, $v$ has two neighbours in~$A'$.
Furthermore, $v$ has two neighbours in $B\sm A$, one in each component by 3-connectivity. Note that $B\sm A\se B'\sm A'$. So $v$ has at least four neighbours.

(3).~Suppose that $(A',B')$ is nontrivial; we have to show that $(A',B')$ is totally nested. For this, it suffices to show that $(A',B')$ is externally tri-connected, by \autoref{is_nested}.
    In the case (:) we construct the three internally-disjoint paths so that they have their internal vertices in different components of $G\sm X$.   
    So assume that we are in the cases ($\dotminus$) or~($=$).
Every vertex $x\in X$ that is reduced to an edge in $S(A',B')$ is $x'$-free for every other vertex $x'\in X$, as $x$ and $x'$ are not adjacent in this case. 
Hence to show that $(A',B')$ is externally tri-connected, it suffices to find two internally 
disjoint paths in $G[B']$ between every two vertices in $X$ avoiding the third vertex in~$X$; these are picked so that their internal vertices are in the two components of $G\sm X$ aside from~$K$.

(4).~If $(A',B')$ is nontrivial, then $G[A]$ contains a cycle, so two vertices in~$X$ are adjacent or~$|K|\ge 2$.
Conversely, suppose now that two vertices in $X$ are adjacent or that~$|K|\ge 2$.
Since $B'=B$ and $|B|\ge 2$, it follows that $|B'| \ge 2$. 
Thus it suffices to show that $A'$ contains at least two vertices, by \autoref{trivial}. 
If~two vertices in~$X$ are adjacent, then these two vertices are not reduced to edges in $S(A',B')$, so they lie in~$A'$ and we are done.
So assume that $|V(K)|\ge 2$. Since $V(K)\se A'$, we are done as well. 
\end{proof}

\begin{cor}\label{conjunction}
Let $G$ be a 3-connected graph with a tri-separation $(C,D)$ that is not half-connected.
If $G$ has no totally-nested nontrivial tri-separation, then $G=K_{3,m}$ for some~$m\ge 4$.
\end{cor}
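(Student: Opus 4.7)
My plan is to show first that the separator of $(C,D)$ consists entirely of vertices, so that $(C,D)$ is in fact a 3-separation, and then to apply \autoref{K3n lemma} to each component of $G\sm X$ (where $X$ is this 3-vertex separator) to derive the structural conclusion.

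For the first step, I would write $S(C,D) = X \sqcup \mathcal{E}$ with $X = C\cap D$ and $\mathcal{E} = E(C\sm D, D\sm C)$, so $|X|+|\mathcal{E}|=3$. Since $(C,D)$ is not half-connected, $G[C\sm D]$ has at least two components. For any component $K$ of $G[C\sm D]$ the pair $(V(K)\cup X,\, V(G)\sm V(K))$ is a mixed-separation of~$G$ whose separator is $X \cup E_K$, where $E_K\subseteq \mathcal{E}$ consists of the edges with an endpoint in $V(K)$; edges between $V(K)$ and other components of $G[C\sm D]$ do not exist by maximality of~$K$. A short case analysis shows that in a 3-connected graph every mixed-separator has size at least three (replacing each separator edge by a suitable endpoint yields a vertex separator of size at most three, and a degree argument handles the degenerate case where a side has size one). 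Hence $|X|+|E_K|\ge 3 = |X|+|\mathcal{E}|$, which forces $E_K = \mathcal{E}$. Applying this to two distinct components $K_1, K_2$ of $G[C\sm D]$, and using that each edge in $\mathcal{E}$ has a unique $C\sm D$-endpoint lying in exactly one of $V(K_1), V(K_2)$, I conclude $\mathcal{E} = \emptyset$, and therefore $|X| = 3$.

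For the second step, write $X = \{x_1, x_2, x_3\}$. Since both sides of $(C,D)$ are disconnected, $G\sm X$ has at least four components. For each component $K$ of $G\sm X$, \autoref{K3n lemma} yields a half-connected strong tri-separation $(A', B')$ as the reduction of $(V(K)\cup X,\, V(G\sm K))$; if $(A',B')$ were nontrivial it would be totally nested by item~(3) of the lemma, contradicting the hypothesis. Hence $(A',B')$ is trivial for every~$K$, and by item~(4) of \autoref{K3n lemma} this forces $|K|=1$ and no two vertices of $X$ being adjacent. So $G[X]$ is edgeless and $G\sm X$ consists of $m \geq 4$ isolated vertices; 3-connectivity then forces each such vertex to be joined to all three vertices of~$X$, giving $G = K_{3,m}$ with $m\ge 4$, as desired. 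I expect the main obstacle to lie in the first step, where one must verify that every mixed-separator in a 3-connected graph has size at least three and then track the endpoints of edges in $\mathcal{E}$ across two different components; once $(C,D)$ is seen to be a 3-separation, the conclusion follows immediately from \autoref{K3n lemma}.
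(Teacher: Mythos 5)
Your proof is correct and follows the same route as the paper: the paper states (in one terse line) that $3$-connectedness together with non-half-connectedness forces the separator $S(C,D)$ to consist of three vertices and $G\sm S(C,D)$ to have at least four components, and then invokes \autoref{K3n lemma} exactly as you do. Your first step merely spells out the brief 3-connectivity assertion via the observation that every mixed-separator of a 3-connected graph has size at least three, applied to the components of $G[C\sm D]$; this is a reasonable and correct way to justify the claim.
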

\begin{proof}
As  $(C,D)$ is not half-connected, by 3-connectivity of $G$  its separator $X$ consists of three vertices; and $G\sm X$ has at least four components.
By \autoref{K3n lemma}, no two vertices in $X$ are adjacent, and every component of $G\sm X$ is trivial.
As $G$ is 3-connected, every component of $G\sm X$ has neighbourhood equal to~$X$. 
So $G$ is a $K_{3,m}$ for some~$m\ge 4$.
\end{proof}

\section{Background on 2-separations}\label{sec:BackgroundOnTutte}

 A~\emph{tree-decomposition} of a graph $G$ is a pair $(T,\Vcal)$ of a tree $T$ and a family 
$\Vcal=(V_t)_{t\in T}$ of vertex sets $V_t\se V(G)$ indexed by the nodes $t$ of~$T$ such that the 
following conditions are satisfied:
\begin{enumerate}
    \item[(T1)] $G=\bigcup_{t\in T}G[V_t]$;
    \item[(T2)] for every $v\in V(G)$, the vertex set $\{\,t\in T\mid v\in V_t\,\}$ is connected 
in~$T$.
\end{enumerate}
The vertex sets $V_t$ and the subgraphs $G[V_t]$ they induce are the \emph{bags} of this
decomposition.
The intersections $V_{t_1}\cap V_{t_2}$ for edges $t_1 t_2\in E(T)$ are the \emph{adhesion sets} 
of~$(T,\Vcal)$.
The \emph{adhesion} of $(T,\Vcal)$ is the maximum size of an adhesion set of~$(T,\Vcal)$.
The \emph{torso} of a bag is the graph obtained from $G[V_t]$ by adding for every neighbour $t'$ of
$t$ in~$T$ every possible edge $xy$ with both endvertices in the adhesion set $V_t\cap V_{t'}$.
We point out that the edges $xy$ are not required to be edges of~$G$, so each adhesion set $V_t\cap 
V_{t'}$ induces a complete graph in the torso of $G[V_t]$, and in particular torsos need not be 
subgraphs of~$G$.
The edges of a torso that are not edges of the bag are called \emph{torso edges}.

Recall that in this paper, every separation $(A,B)$ is required to satisfy $A\sm B\neq \emptyset\neq B\sm A$.
Every edge $t_1 t_2$ of~$T$, when directed from $t_1$ to $t_2$ say, \emph{induces} the separation 
$(X_1,X_2)$ of~$G$ for $X_i:=\bigcup_{t\in T_i}V_t$, where $T_i$ is the component of $T-t_1 t_2$ 
that contains~$t_i$, provided that both $X_1\sm X_2$ and $X_2\sm X_1$ are non-empty.
We call these separations the \emph{induced} separations of~$(T,\Vcal)$.
In this paper, all tree-decompositions have the property that all their edges induce separations.
The separator of $(X_1,X_2)$ is the adhesion set $V_{t_1}\cap V_{t_2}$, which is why we also refer 
to the adhesion sets of $(T,\Vcal)$ as the \emph{separators} of~$(T,\Vcal)$.

Let us call a set~$S$ of separations of $G$ \emph{symmetric} if $(A,B)\in S$ implies $(B,A)\in S$ for 
all $(A,B)\in S$.
A set $S$ of separations of~$G$ \emph{induces} a tree-decomposition $(T,\Vcal)$ of~$G$ if the map $(t_1,t_2)\mapsto (X_1,X_2)$ is a bijection between the directed edges of~$T$ and the set~$S$.

We shall use the \nameref{2SeparatorTheorem} of Tutte~\cite{TutteGrTh} with the total-nestedness description by Cunningham and Edmonds 
\cite{cunningham_edmonds_1980}, which we recall below with the notation most suitable here.
Let us say that a 2-separation of a graph is \emph{totally nested} if it is nested with every 2-separation of the graph.
A cycle $O$ \emph{alternates} between two sets $X,Y\se V(G)\cup E(G)$ if $O$ contains exactly two elements $x_1,x_2$ from $X$ and exactly two elements $y_1,y_2$ from $Y$ and $O$ induces the cycling ordering $x_1 < y_1 < x_2 < y_2$ on these four elements (possibly after swapping the names of the $x_i$ or of the $y_i$).
We emphasise that $O$ may have arbitrary length and still alternate.

\begin{thm}[2-Separation Theorem]\label{2SeparatorTheorem}
For every 2-connected graph~$G$, the totally-nested 2-separa\-tions of~$G$ induce a tree-decomposition 
$(T,\Vcal)$ of $G$ all whose torsos are minors of $G$ and are 3-connected, cycles, or~$K_2$'s.
Moreover, $(T,\Vcal)$ is canonical and has the following two properties:
\begin{enumerate}
    \item If $(A,B)$ and $(C,D)$ are two mixed 2-separations of $G$ that cross so that all four links have size one (and the centre is empty), then there exists a unique node $t\in T$ such that the associated torso is a cycle which alternates between $S(A,B)$ and~$S(C,D)$.
    \item If the torso associated with a node $t\in T$ is 3-connected or a cycle, then the adhesion sets induced by the edges $st\in E(T)$ are pairwise distinct.
\end{enumerate}
\end{thm}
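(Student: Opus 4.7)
The plan is to verify that the set $N$ of (oriented) totally-nested 2-separations of~$G$ induces the desired tree-decomposition, then classify the torsos by an argument parallel to the tri-separation analysis developed earlier. First, $N$ is nested by definition and symmetric once both orientations are included, so by standard nested-set theory any such set of proper separations of a connected graph induces a tree-decomposition~$(T,\Vcal)$ whose induced separations are exactly that set; this step I would treat as routine. To see that every torso is a minor of~$G$, one uses 2-connectivity: for each torso edge~$xy$ at a bag~$V_t$, the associated adhesion set is~$\{x,y\}$, and on the opposing side of~$G$ one may find an $x$--$y$ path by 2-connectivity, which can be contracted to realise the torso edge.

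Second, I would classify the torsos. If~$\tau_t$ is a~$K_2$ there is nothing to prove, so suppose~$\tau_t$ has at least three vertices and is not 3-connected; let~$(A',B')$ be a 2-separation of~$\tau_t$. I would lift~$(A',B')$ to a 2-separation~$(A,B)$ of~$G$ by distributing the outside piece at each adhesion set of~$t$ to the appropriate side, unambiguously for adhesion sets lying entirely on one side of~$(A',B')$, and by choosing a side for the outside piece of each adhesion set whose two vertices both lie in the separator of~$(A',B')$. Then~$(A,B)$ cuts strictly through the bag, so it is not an adhesion set of~$t$; hence~$(A,B)\notin N$, and so~$(A,B)$ is crossed by some 2-separation~$(C,D)$ of~$G$. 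A corner-diagram analysis for order-two separations, analogous to and simpler than \autoref{magicLemmaStep3} and \autoref{dasBesteLemma}, shows that two crossing 2-separations of a 2-connected graph force all four links to have size one and the centre to be empty, so all four corner-separators are themselves 2-separators of~$G$. Iterating this for every 2-separation of~$\tau_t$ yields enough cyclic structure on the bag to conclude that~$\tau_t$ is a cycle.

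For the moreover properties: for~(1), the hypothesis that all four links of the crossing pair have size one forces all four corner-separators to have size exactly two (using 2-connectivity of~$G$), and each such corner-separator is nested with every 2-separation nested with both~$(A,B)$ and~$(C,D)$, hence with every totally-nested 2-separation of~$G$; so the four corner-separators lie in~$N$. Since~$T$ is a tree, the four edges corresponding to these corner-separators cannot form a 4-cycle in~$T$; instead they must all be incident to a single common node~$t$, and the torso at~$t$ is the alternating cycle claimed. For~(2), if two edges~$ts_1$ and~$ts_2$ of~$T$ had the same adhesion set~$\{x,y\}$, the two corresponding torso edges at~$t$ would be parallel in~$\tau_t$, which is impossible in a simple cycle of length at least three.

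The main obstacle I foresee is the lifting step in the torso classification, specifically handling torso edges whose endpoints are split between~$A'\sm B'$ and~$B'\sm A'$. One must show that after choosing sides for the outside pieces of these split adhesion sets, the resulting~$(A,B)$ is a proper 2-separation of~$G$ rather than a mixed one (possibly after taking a reduction analogous to the reductions developed earlier for tri-separations). Once this is handled, the corner-diagram machinery of \autoref{sec:cornerDias}, specialised to order two, delivers both the classification of torsos and the moreover properties.
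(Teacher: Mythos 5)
Your high-level strategy — build the tree-decomposition from the nested set $N$, classify the torsos by lifting 2-separations and analysing crossings with corner diagrams, then read off the two ``moreover'' properties from the crossing structure — is the same strategy the paper pursues in its appendix. But there are three concrete gaps.

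First, your claim that ``two crossing 2-separations of a 2-connected graph force all four links to have size one and the centre to be empty'' is false. Working through the submodularity equations for two crossing mixed 2-separations of order two yields \emph{two} solutions, not one: either all four links have size one and the centre is empty, or all four links are empty and the centre has size two. The second case — which the paper calls crossing \emph{with a four-flip} — occurs exactly when the two 2-separations share the same separator $U$ and $G\sm U$ has at least four components that get split up differently by the two separations. You need to rule this out for the torso classification, and the paper does so with a separate argument (the appendix's Claim about interlacing separations inducing crossings of the torso, together with the $U$-principal splitting-star construction which shows that such a $U$ yields a torso equal to $K_2$, contradicting the size assumption). Without handling the four-flip, your iteration to conclude ``$\tau_t$ is a cycle'' does not go through.

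Second, for property (1) you appear to treat $(A,B)$ and $(C,D)$ as genuine 2-separations, but the statement concerns \emph{mixed} 2-separations, whose separators may contain edges. The corner-separators you form may then themselves contain edges, in which case they are not separators of genuine 2-separations and cannot lie in~$N$ as you claim. The paper resolves this by subdividing all edges of $S(A,B)\cup S(C,D)$, working in the subdivided graph where the crossing pair becomes a pair of genuine 2-separations, and then transferring the splitting star back across the subdivision via an explicit bijection between totally-nested 2-separations of $G$ and of its subdivision. Some device of this kind is necessary.

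Third, your proof of property (2) rests on the claim that two distinct $T$-edges at $t$ with the same adhesion set $\{x,y\}$ would produce parallel torso edges $xy$ in a simple cycle. But torsos are simple graphs by construction: adding the edge $xy$ twice gives the same torso, so no parallel edges appear and no contradiction results. The paper's actual argument is different: it takes the union of all bags past those parallel $T$-edges as one side $A$, shows that $(A,B)$ is totally-nested (via external 2-connectedness), and then observes that $(A,B)$ interlaces the splitting star $t$ — contradicting that $t$ is a splitting star of $N$.

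Finally, a minor point: asserting that a nested symmetric set of 2-separations ``by standard nested-set theory'' induces a tree-decomposition is not automatic in the infinite setting (which this paper covers). One needs a chain condition — no $(\omega+1)$-chains, and vanishing tails for $\omega$-chains — which the paper establishes for half-connected 2-separations via Halin's lemma on minimal separators.
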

We refer to the unique tree-decomposition of $G$ provided by the \nameref{2SeparatorTheorem} as the \emph{Tutte-decomposition} of~$G$ as customary.
We provide a proof of the \nameref{2SeparatorTheorem} in \autoref{sec:2SeparatorTheorem}.
A far reaching extension of the 2-separation theorem (that also applies to infinite matroids and extends \cite{infiniteSPQR,richter2004decomposing}) was proved by  Aigner-Horev, Diestel and Postle~\cite{Matroid2seps}.

\section{Apex-decompositions}\label{sec:ProofMain}

Recall that a \emph{star} is a rooted tree with at most two levels.
The root of the star is commonly referred to as its \emph{centre}.
A \emph{star-decomposition} means a tree-decomposition whose decomposition tree is a star.

Let $G$ be a graph and $v\in V(G)$ a vertex.
An \emph{apex-decomposition} of~$G$ with \emph{centre}~$v$ is a star-decomposition $\Acal$ of $G-v$ of adhesion two such that its central torso is a cycle~$O$ and all adhesion sets are pairwise distinct.
We refer to $O$ as the \emph{central torso-cycle} of~$\Acal$.
The intersection of a leaf-bag $B_\ell$ of~$\Acal$ with the centre-bag of~$\Acal$ is the \emph{adhesion set of~$B_\ell$}.
We call the edges of~$O$ that are spanned by adhesion sets of leaf-bags \emph{bold}, and all other edges of~$O$ are \emph{timid}.
Note that the timid edges of~$O$ exist in~$G$, while possibly some but not necessarily all bold edges of~$O$ exist in~$G$.
An apex-decomposition is \emph{2-connected} if all its leaf-bags $G[B_\ell]$ are 2-connected.
We stress that we really consider the leaf-bags $G[B_\ell]$ rather than the torsos, and that this will be important in the proofs to come.

\begin{lem}\label{v has neighbours}
Let $G$ be a 3-connected graph, let $\Acal$ be a 2-connected apex-decomposition of~$G$ with centre~$v$, and let $O$ denote the central torso-cycle of~$\Acal$.
In~$G$, the vertex $v$ has a neighbour in $B_\ell \sm V(O)$ for every leaf-bag~$B_\ell$ of~$\Acal$.
\end{lem}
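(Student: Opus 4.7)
The plan is to argue by contradiction: if $v$ has no neighbour in $B_\ell \setminus V(O)$, then the two vertices of the adhesion set of $B_\ell$ form a 2-separator of $G$, violating 3-connectivity.

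Let $B_\ell$ be a leaf-bag of $\Acal$ with adhesion set $\{a,b\}$, and suppose for a contradiction that $v$ has no neighbour in $B_\ell \setminus V(O)$. First I would observe that $B_\ell \cap V_{\text{centre}} = \{a,b\}$ because $\Acal$ has adhesion two, and that the only vertices of $B_\ell$ lying on the central torso-cycle $O$ are precisely those in $\{a,b\}$ (any other vertex $w$ of $B_\ell$ lying in $V(O) \subseteq V_{\text{centre}}$ would enlarge the adhesion set $V_\ell \cap V_{\text{centre}}$ beyond size two). Hence $B_\ell \setminus V(O) = B_\ell \setminus \{a,b\}$.

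Next I would verify that both $B_\ell \setminus \{a,b\}$ and its complement in $G - \{a,b\}$ are nonempty. Since $B_\ell$ is 2-connected it has more than two vertices, so $B_\ell \setminus \{a,b\}$ is nonempty. On the other side, since $O$ is a cycle it has at least three vertices, so $V(O) \setminus \{a,b\}$ contains a vertex~$w$; by the above $w \notin B_\ell$, so $w$ lies outside $B_\ell$ in $G - \{a,b\}$.

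It then remains to argue that $\{a,b\}$ separates $B_\ell \setminus \{a,b\}$ from the rest of $G$. By the tree-decomposition property applied to $\Acal$ in $G - v$, any edge of $G - v$ between $B_\ell \setminus \{a,b\}$ and $V(G) \setminus B_\ell$ would have to cross the adhesion set $\{a,b\}$, so no such edge exists. By assumption, $v$ has no edge to $B_\ell \setminus \{a,b\}$ either. Therefore removing $\{a,b\}$ from $G$ disconnects $B_\ell \setminus \{a,b\}$ from the rest, contradicting that $G$ is 3-connected. The only nontrivial step is confirming that the relevant vertex sets are nonempty, which follows from $B_\ell$ being 2-connected and $O$ being a cycle.
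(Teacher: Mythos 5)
Your proof is correct and follows essentially the same approach as the paper's: both argue that if $v$ had no neighbour in $B_\ell\setminus V(O)$, the adhesion set of $B_\ell$ would be a 2-separator of $G$, contradicting 3-connectivity. You simply spell out the details (the identification $B_\ell\cap V(O)=\{a,b\}$, nonemptiness of both sides, and the absence of crossing edges) that the paper leaves implicit.
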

\begin{proof}
Since $\Acal$ is 2-connected, $B_\ell$ has at least three vertices, so $B_\ell\sm V(O)$ is non-empty.
If $v$ had no neighbour in $B_\ell \sm V(O)$, then the adhesion set of~$B_\ell$
would form a 2-separator of~$G$, contradicting that~$G$ is 3-connected.
\end{proof}

\begin{figure}[ht]
    \centering
    \includegraphics[height=7\baselineskip]{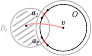}
    \caption{In the depicted situation, the separator of the pseudo-reduction induced by~$\ell$ consists of the red edges}
    \label{fig:EF}
\end{figure}

Let $\Acal=(S,\Bcal)$ be a 2-connected apex-decomposition of $G$ with centre~$v$.
We call a vertex $u$ in the adhesion set of a leaf-bag~$B_\ell$ of~$\Acal$ \emph{edgy} if all but exactly one of the neighbours of~$u$ in~$G$ lie in~$B_\ell$.

\begin{obs}\label{edgySituation}
If a vertex $u$ in the adhesion set of~$B_\ell$ is edgy, then 
\begin{enumerate}
    \item $v$ is not a neighbour of~$u$ in~$G$, and
    \item the two edges of~$O$ incident with~$u$ are bold and timid.\qed
\end{enumerate}
\end{obs}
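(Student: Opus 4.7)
The plan is to reduce both conclusions to showing that the cycle-edge at $u$ opposite $uu'$ on $O$ cannot be bold. Write $\{u, u'\}$ for the adhesion set of $B_\ell$, so $uu'$ is an edge of $O$ and is bold by definition. Let $w'$ be the other cycle-neighbour of $u$ on $O$; pairwise-distinctness of adhesion sets forces $w' \neq u'$. Assuming $uw'$ is not bold: since $uw'$ is an edge of $O$ and torso-edges of the central bag come either from $G-v$ or from leaf adhesion sets, we get $uw' \in E(G)$, hence timid, which together with $uu'$ being bold yields (2). Then $w' \in B_c \setminus B_\ell$ is already a neighbour of $u$ lying outside $B_\ell$, and by edginess it is the \emph{only} such neighbour; since $v$ belongs to no bag at all, $v$ cannot be an additional outside neighbour, giving (1).

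The technical heart is a single sub-claim: if $\{u, w'\}$ were the adhesion set of some leaf-bag $B_{\ell'}$, then $u$ would have at least two neighbours in $B_{\ell'} \setminus B_\ell$, immediately contradicting edginess. To prove the sub-claim, use property~(T2) on the star-shape of the decomposition tree to conclude $B_\ell \cap B_{\ell'} \subseteq B_c$, and then combine with the pairwise-distinctness of adhesion sets to obtain
\[
B_\ell \cap B_{\ell'} \;\subseteq\; B_c \cap B_\ell \cap B_{\ell'} \;=\; \{u, u'\} \cap \{u, w'\} \;=\; \{u\},
\]
using $w' \neq u'$. Now 2-connectedness of the leaf-bag $B_{\ell'}$ grants $u$ at least two neighbours in $B_{\ell'}$, all of which, by the display, must lie outside $B_\ell$.

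I expect the sub-claim to be the main obstacle, though even it is quite light. Everything else is a routine bookkeeping step from the definitions of \emph{edgy}, \emph{bold}, \emph{timid}, and apex-decomposition. The crucial use of pairwise distinctness of adhesion sets (built into the definition of apex-decomposition) is what makes $\{u,u'\} \cap \{u,w'\} = \{u\}$ hold, and this is the one place where the structural hypothesis of the setting is genuinely exploited.
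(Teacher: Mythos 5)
Your proof is correct and is the routine verification the paper treats as immediate (the observation is stated without proof). One small misattribution worth correcting: $w'\neq u'$ is not a consequence of pairwise-distinctness of adhesion sets --- it holds simply because $u'$ and $w'$ are the two distinct $O$-neighbours of $u$ on a cycle, which has length at least three; likewise, the identity $\{u,u'\}\cap\{u,w'\}=\{u\}$ rests only on $u'\neq w'$. What pairwise-distinctness would actually give you is that no \emph{second} leaf-bag shares the adhesion set $\{u,u'\}$ with $B_\ell$ (which could otherwise contribute extra outside neighbours of $u$), but even that is not needed here since such a bag would again contradict edginess. The sub-claim argument via (T2) on the star shape together with 2-connectedness of $B_{\ell'}$ is the right way to rule out $uw'$ being bold, and the deductions of (1) and (2) from there are sound: $uw'$ timid gives a genuine $G$-edge (timid edges of $O$ exist in $G$), so $w'$ is the unique outside neighbour forced by edginess, which excludes $v$.
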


Each leaf $\ell$ of $S$ \emph{induces} the 2-separation $(B_\ell,\bigcup_{t\in S-\ell}B_t)=:(X_\ell,Y_\ell)$ of~$G-v$.
The \emph{pseudo-reduction} of $(X_\ell,Y_\ell)$ is the mixed 3-separation $(X,Y)$ of~$G$ defined as follows (see \autoref{fig:EF}):
\begin{itemize}
    \item $X$ is obtained from $X_\ell$ by adding~$v$ unless $v$ has at most one neighbour in $X_\ell$, and
    \item $Y$ is obtained from $Y_\ell+v$ by removing any vertex that lies in the adhesion set of~$B_\ell$ and is edgy.
\end{itemize}

A set $\sigma=\{\,(A_i,B_i): i\in I\,\}$ of mixed-separations of~$G$ is a \emph{star} with \emph{leaves}~$A_i$ if $(A_i,B_i)< (B_j,A_j)$ for all distinct indices~$i,j\in I$.
In this context, we also refer to~$A_i$ as the \emph{leaf-side} of~$(A_i,B_i)$.

\begin{eg}\label{starEG}
Stars of genuine separations correspond to star-decompositions, as follows.
On the one hand, if $(S,\Vcal)$ is a star-decomposition of~$G$ and $c$ is the central node of~$S$, then the separations induced by the edges of~$S$ incident with~$c$ and directed to~$c$ form a star~$\sigma_c$ of separations with leaves~$V_\ell$ where the nodes~$\ell$ are the leaves of~$S$.
On the other hand, if $\sigma=\{\,(A_i,B_i): i\in I\,\}$ is a star of separations with leaves~$A_i$, then it defines a star-decomposition $(S,\Vcal)$ of~$G$ with leaf-bags~$A_i$ and which induces $\sigma$ in the sense that $\sigma=\sigma_c$, where
\begin{itemize}
    \item $S$ is a star whose set of leaves is equal to~$I$, and
    \item $V_i:=A_i$ for $i\in I$ while $V_c:=V(G)\sm \bigcup\,\{A_i:i\in I\}$, with $c$ denoting the central node of~$S$.
\end{itemize}
\end{eg}

The set of pseudo-reductions of the separations induced by~$\Acal$ is a star of mixed 3-separations of~$G$, which we call the \emph{tri-star} of~$\Acal$.
We will show in \autoref{is_splitting_star} that the elements of the tri-star are tri-separations, provided that $O$ essentially is not too short.

\begin{rem}
The pseudo-reduction of~$(X_\ell,Y_\ell)$ need not be a reduction of the 3-separation $(X_\ell+v,Y_\ell+v)$ of~$G$.
Indeed, suppose that $G$ is a 3-connected graph which has an apex-decomposition $\Acal=(S,\Bcal)$ with centre~$v$.
Suppose further that $\Acal$ has a leaf-bag $B_\ell$ with adhesion set $\{a_1,a_2\}$ such that no other leaf-bag contains any~$a_i$, that $a_1 a_2$ is an edge in~$G$ but neither $a_1$ nor $a_2$ is adjacent to~$v$, and that $v$ has at least two neighbours in~$B_\ell$ and at least two neighbours on~$O$.
Then $a_1$ and $a_2$ are edgy, and so they are not in the separator of the pseudo-reduction induced by~$\ell$.
However, the 3-separation $(X_\ell+v,Y_\ell+v)$ is a tri-separation of~$G$.
So in this case the pseudo-reduction is not a reduction of the 3-separation $(X_\ell+v,Y_\ell+v)$ of~$G$.
\end{rem}

Let $\sigma$ be a star of mixed-separations of a graph~$G$.
We say that a mixed-separation $(A,B)$ of~$G$ \emph{interlaces}~$\sigma$ if for every $(C,D)\in\sigma$ either $(C,D)<(A,B)$ or $(C,D)<(B,A)$; see \autoref{fig:interlace}.
Note that for every $(C,D)\in\sigma$, neither $(C,D)$ nor $(D,C)$ interlaces~$\sigma$.

\begin{figure}[ht]
    \centering
    \includegraphics[height=8\baselineskip]{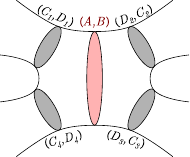}
    \caption{A mixed-separation $(A,B)$ interlaces a star $\{\,(C_i,D_i):i\in [4]\,\}$}
    \label{fig:interlace}
\end{figure}

\begin{lem}\label{apex_exists}
Let $G$ be a 3-connected graph with two tri-separations $(A,B)$ and $(C,D)$ that cross so that their separators intersect only in a vertex~$v$ and all links have size one.
Then $G$ has a 2-connected apex-decomposition $\Acal$ with centre~$v$ such that $(A,B)$ and $(C,D)$ interlace the tri-star of~$\Acal$ and the central torso-cycle of~$\Acal$ alternates between $S(A,B)-v$ and $S(C,D)-v$.
\end{lem}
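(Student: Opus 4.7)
The plan is to construct $\Acal$ by applying the \nameref{2SeparatorTheorem} to the graph $G-v$, which is 2-connected since $G$ is 3-connected. Because $v$ lies in the centre of the crossing of $(A,B)$ and $(C,D)$, deleting $v$ from both sides produces two mixed 2-separations $(A\sm\{v\},B\sm\{v\})$ and $(C\sm\{v\},D\sm\{v\})$ of~$G-v$ with separators $S(A,B)-v$ and $S(C,D)-v$, each of size two. These still cross, with all four links of size one and with empty centre, so property~(1) of the \nameref{2SeparatorTheorem} applied to $G-v$ supplies a unique node~$t^{*}$ of the resulting tree-decomposition $(T,\Vcal)$ whose torso is a cycle~$O$ that alternates between $S(A,B)-v$ and $S(C,D)-v$.

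From $(T,\Vcal)$ I would then form the apex-decomposition $\Acal$ of~$G-v$ with centre~$v$: take $V_{t^{*}}$ as the central bag and, for each neighbour $t'$ of~$t^{*}$ in~$T$, amalgamate the component of $T-t^{*}$ containing~$t'$ into a single leaf-bag. This star-decomposition has adhesion two, its central torso is~$O$ by construction, and its leaf adhesion sets are pairwise distinct by property~(2) of the \nameref{2SeparatorTheorem}. The main obstacle is verifying that each leaf-bag is 2-connected. For this I would exploit that every bag of $(T,\Vcal)$ has a 3-connected, cyclic, or $K_2$ torso, so that amalgamating such bags along their 2-separator adhesions within a subtree of~$T-t^{*}$ preserves 2-connectedness, with $K_2$-torsos merely playing the role of single edges absorbed into adjacent bags.

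Finally, I would check that $(A,B)$ and $(C,D)$ interlace the tri-star of~$\Acal$. Each bold edge of~$O$ connects one element of $S(A,B)-v$ to one element of $S(C,D)-v$, so the associated leaf-bag $B_{\ell}$ is confined to one of the four corners of the crossing together with its two bounding link elements; note here that the links for~$A$ and~$B$ lie in~$S(C,D)$ while the links for~$C$ and~$D$ lie in~$S(A,B)$, and each link element sits on the expected combination of sides. Since $v\in A\cap B\cap C\cap D$, re-inserting~$v$ via the pseudo-reduction keeps the leaf-side of the resulting mixed 3-separation strictly inside exactly one of $A,B$ and strictly inside exactly one of~$C,D$, which yields the strict orderings required for interlacing.
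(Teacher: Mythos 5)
Your overall route matches the paper's: apply the \nameref{2SeparatorTheorem} to $G-v$, locate the cycle-torso node via property (1), contract $T-t^*$ to form the star, and verify the two required properties. However, there are two concrete gaps in the verifications.

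\textbf{2-connectedness of leaf-bags.} The amalgamation argument only establishes that the \emph{torso} of the leaf-bag — the graph $G'[B_\ell]$ with the virtual adhesion edge $a_1a_2$ added — is 2-connected. What the definition of a 2-connected apex-decomposition requires is that the induced subgraph $G'[B_\ell]$ itself be 2-connected, and gluing 2-connected torsos along adhesion edges does not give that, because the outer adhesion edge of $B_\ell$ is missing and the inner adhesion edges need not be real either. Bags of the Cunningham--Edmonds decomposition can fail to be 2-connected as induced subgraphs: in a theta graph, for instance, the central $K_2$-torso bag consists of two nonadjacent vertices. To close the gap one actually has to use the \emph{external} 2-connectedness of the totally-nested 2-separation $(B_\ell,X)$ of $G'$: at least one of $G'[B_\ell]$ and $G'[X]$ is 2-connected, and when $a_1a_2$ is not a real edge the cycle-torso structure exhibits a cutvertex in $G'[X]$, so it must be $G'[B_\ell]$ that is 2-connected.

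\textbf{Interlacing.} You argue that the leaf-side $X$ of the pseudo-reduction lands strictly inside $A$, but interlacing also demands $Y\supsetneq B$, and this is exactly where the pseudo-reduction is dangerous: it \emph{removes} edgy vertices of $\{a_1,a_2\}$ from $Y$. Your write-up does not check that such a removed $a_i$ is not in $B$. The paper's argument is that if an edgy $a_i$ were in $B$, then $a_i\in A\cap B$, so by the tri-separation condition $a_i$ has two neighbours in $B$, one of which must be $a_{3-i}$ (since $B\cap B_\ell\se\{a_1,a_2\}$), forcing $a_1a_2\in E(G)$ and both $a_1,a_2\in S(A,B)$; but then the bold edge $a_1a_2$ of $O$ lies between two consecutive elements of $S(A,B)$, contradicting that $O$ alternates between $S(A,B)-v$ and $S(C,D)-v$. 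Without this (or an equivalent) check, the strict inclusion on the $Y$-side is unjustified.
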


\begin{proof}
Let us consider the 2-connected graph~$G':=G-v$.
Let $\Tcal=(T,\Vcal)$ be the Tutte-decomposition of~$G$ provided by the \nameref{2SeparatorTheorem} (\ref{2SeparatorTheorem}).
Since $(A,B)$ and $(C,D)$ cross in~$G$ so that the centre consists of~$v$ and all links have size one, their induced mixed 2-separations of~$G'$ cross with empty centre and all links of size one.
Hence there is a 
bag $V_t\in\Vcal$ whose torso is a cycle $O$ that
alternates between $S(A,B)-v$ and $S(C,D)-v$.

Let~$S$ be the star obtained from~$T$ by contracting 
all edges of~$T$ not incident with~$t$. 
Put~$B_t:=V_t$.
For each leaf~$\ell$ of~$S$, we let~$B_\ell$ be the union 
of all bags $V_s\in\Vcal$ with~$s\in \ell$. 
Then $\Acal:=(S,\Bcal)$ with $\Bcal:=(\,B_s: s\in V(S)\,)$ is an apex-decomposition of~$G'$.
Note that $O$ is equal to the torso of~$B_t$.

Next, we show that the leaf-bags of~$\Acal$ are 2-connected subgraphs of~$G'$.
Let~$B_\ell$ be any leaf-bag of~$\Acal$ and let $\{a_1,a_2\}$ denote its adhesion set.
By construction, the torso of the bag~$B_\ell$ is 2-connected, so $B_\ell$ has at least three vertices.
Furthermore, $B_\ell$ is a side of totally-nested 2-separation $(B_\ell,X)$ of~$G'$ with separator~$\{a_1,a_2\}$.
So at least one of $G'[B_\ell]$ and $G'[X]$ is 2-connected. If $a_1$ and $a_2$ are adjacent in $G$, both are 2-connected and we are done.
Otherwise every vertex of $O$ other than~$a_1,a_2$ witnesses that $G'[X]$ is not 2-connected, so $G'[B_\ell]$ is 2-connected as desired.

It remains to show that both $(A,B)$ and $(C,D)$ interlace the tri-star of~$\Acal$.
By symmetry, it suffices to show this for $(A,B)$.
Consider any pseudo-reduction $(X,Y)$ induced by a leaf~$\ell$ of~$S$.

\begin{sublem}\label{boldEdgeNoAdhesionSet}
The adhesion-set of $B_\ell$ is distinct from the set of endvertices of every edge in $S(A,B)\cup S(C,D)$.
\end{sublem}
\begin{cproof}
Otherwise, since $G[B_\ell]$ is 2-connected, it would provide a path joining the endvertices of the edge while avoiding the separator that contains the edge, a contradiction.
\end{cproof}\medskip

Hence, without loss of generality, the leaf-bag $B_\ell$ is included in~$A$.
We claim that $(X,Y)\le (A,B)$.
The side $X$ is obtained from $B_\ell$ by possibly adding the vertex~$v$.
Since $v$ lies in the separator of $(A,B)$ by assumption, this gives~$X\subseteq A$.
The side $Y$ is obtained from $(V(G)\sm B_\ell)\cup\{a_1,a_2\}$ by possibly removing some of the vertices in the adhesion set $\{a_1,a_2\}$ of~$B_\ell$.
Since $B$ is included in $(V(G)\sm B_\ell)\cup\{a_1,a_2\}$, it suffices to show that $a_i\notin Y$ implies $a_i\notin B$ for both~$i=1,2$.
If $a_i$ is not contained in~$Y$, then this is because $a_i$ is edgy, i.e.~$a_i$ has just one neighbour outside $B_\ell$ in~$G$.
If $B$ contains $a_i$, then $a_i$ has two neighbours in~$B$ since $(A,B)$ is a tri-separation, and these two neighbours lie in $G-v$ as~$a_i$, being edgy, is not adjacent to~$v$. 
The neighbour of~$a_i$ in~$B\cap B_\ell$ can only be $a_{3-i}$ since $B\cap B_\ell\se\{a_1,a_2\}$.
But then $O$ cannot alternate between the separators of $(A,B)$ and $(C,D)$, as $\{a_1,a_2\}\se S(A,B)$ but $a_1a_2$ is a bold edge of~$O$ and therefore cannot lie in~$S(C,D)$.
\end{proof}

We label the edges of the central torso-cycle $O$ of an apex-decomposition~$\Acal$ with the letters \ty{b} or~\ty{t}, depending on whether they are bold or timid, respectively. 
The cyclic sequence of these letters is the \emph{type} of~$O$.
For example, $O$ has type \ty{btbt} if $O$ is a 4-cycle $O=v_0 v_1 v_2 v_3 v_0$ and there is an index $i$ such that the edges along either $v_i v_{i+1} v_{i+2} v_{i+3} v_{i+4}$ or $v_i v_{i-1} v_{i-2} v_{i-3} v_{i-4}$ (using indices in~$\mathbb{Z}_4$) read \ty{btbt}.
When $O$ has type \ty{btbt}, we say that $O$ has the type $\ty{btbt}^-$ if $O$ additionally has a timid edge both of whose endvertices are not adjacent to~$v$; otherwise we say that $O$ has the type~$\ty{btbt}^+$.

\begin{obs}\label{plus}
A central torso-cycle of type $\ty{btbt}^+$ has two non-adjacent vertices that are adjacent to $v$ or else the two endvertices of one bold edge are neighbours of $v$ while no endvertex of the other bold edge is adjacent to~$v$. \qed
\end{obs}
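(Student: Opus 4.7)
The plan is to argue by a short case analysis on which vertices of the central torso-cycle $O$ are adjacent to the centre~$v$. Label the vertices of $O$ cyclically as $a_1,a_2,a_3,a_4$ so that the bold edges are $a_1a_2$ and $a_3a_4$ while the timid edges are $a_2a_3$ and $a_4a_1$. Let $N$ denote the set of $a_i$ that are adjacent to $v$ in $G$. The hypothesis that $O$ has type $\ty{btbt}^+$ translates into the two conditions that each timid edge has at least one endvertex in $N$; that is, $\{a_2,a_3\}\cap N\neq\emptyset$ and $\{a_1,a_4\}\cap N\neq\emptyset$.

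Assume the first alternative in the statement fails, so no two non-adjacent vertices of $O$ both belong to $N$; concretely, $\{a_1,a_3\}\not\se N$ and $\{a_2,a_4\}\not\se N$. The plan is to show this forces the second alternative. From $\{a_2,a_3\}\cap N\neq\emptyset$ we may, by the symmetry that swaps $(a_1,a_2)$ with $(a_4,a_3)$, assume $a_2\in N$; then $a_4\notin N$ because $a_2$ and $a_4$ are non-adjacent on~$O$. The condition $\{a_1,a_4\}\cap N\neq\emptyset$ now forces $a_1\in N$, and then $a_3\notin N$ since $a_1$ and $a_3$ are non-adjacent on~$O$. Hence $\{a_1,a_2\}\se N$ and $\{a_3,a_4\}\cap N=\emptyset$, which is precisely the second alternative: the bold edge $a_1a_2$ has both endvertices adjacent to~$v$ while the bold edge $a_3a_4$ has no endvertex adjacent to~$v$.

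The argument is entirely elementary once one has unpacked the definition of $\ty{btbt}^+$, so no real obstacle arises; the only thing to be careful about is the initial application of symmetry, which should be justified by noting that the two bold edges and the two timid edges play symmetric roles in both the hypothesis and the conclusion.
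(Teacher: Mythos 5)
Your proof is correct and is precisely the direct case analysis that the paper leaves implicit (the observation is stated with a terminating $\qed$ and no written proof). Your unpacking of the $\ty{btbt}^+$ condition, the reduction by symmetry to $a_2\in N$, and the chain of forced memberships are all accurate, and your reading of ``two non-adjacent vertices'' as two opposite vertices of the four-cycle $O$ agrees with how the observation is invoked later in the paper (e.g.\ in the proof of Lemma~\ref{o4_not_alternate}, where it is rephrased as ``two opposite vertices'').
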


\begin{setting}\label{X}
Let $\Acal=(S,\Bcal)$ be a 2-connected apex-decom\-po\-sition of a 3-connected $G$ with centre~$v$.
Denote the central torso-cycle of~$\Acal$ by~$O$.
\end{setting}

When we say that we assume \autoref{X} \emph{with crossing tri-separations}, this means that we also assume that the tri-star of $\Acal$ is interlaced by two crossing tri-separations of~$G$ such that their separators intersect exactly in the vertex~$v$ and such that $O$ alternates between the two separators (minus~$v$).

\begin{obs}\label{noBoldEdgeObs}
    In \autoref{X} with crossing tri-separations $(A,B)$ and $(C,D)$, bold edges of $O$ cannot belong to $S(A,B)\cup S(C,D)$.
\end{obs}
\begin{proof}
    The same as for \autoref{boldEdgeNoAdhesionSet}.
\end{proof}

\begin{lem}\label{tiny-cases}
In \autoref{X} with crossing tri-separations, $O$ does not have type \ty{bbt}, \ty{bbb} or $\ty{btbt}^-$.
\end{lem}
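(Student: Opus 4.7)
The plan is to derive a contradiction in each of the three cases by a case analysis on how the leaf-bags of~$\Acal$ sit with respect to~$(A,B)$ and~$(C,D)$. Two general observations will be used throughout. First, because the tri-star of~$\Acal$ is interlaced by~$(A,B)$, every pseudo-reduction $(X_\ell,Y_\ell)$ satisfies $(X_\ell,Y_\ell)<(A,B)$ or $(X_\ell,Y_\ell)<(B,A)$, so each leaf-bag~$B_\ell$ is entirely contained in~$A$ or entirely contained in~$B$; the same holds for~$(C,D)$. Second, not all leaf-bags can lie on the same side of~$(A,B)$: otherwise the opposite side would contain only~$v$ together with possibly some edges, preventing it from containing a cycle and contradicting nontriviality of~$(A,B)$; the same applies to~$(C,D)$.

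For types $\ty{bbt}$ and $\ty{bbb}$ the plan is to show that $S(A,B)$ and $S(C,D)$ are forced to share an element besides~$v$, contradicting the hypothesis $S(A,B)\cap S(C,D)=\{v\}$. For~$\ty{bbt}$, label $V(O)=\{u_1,u_2,u_3\}$ with bold edges $u_1 u_2$, $u_2 u_3$ and timid edge $u_1 u_3$: the two leaf-bags must sit on opposite sides of~$(A,B)$, forcing $u_2\in A\cap B$ while $u_1$ and $u_3$ go to opposite sides, so the real timid edge $u_1 u_3$ lies in~$S(A,B)$ and $S(A,B)=\{v,u_2,u_1 u_3\}$; the identical reasoning applied to~$(C,D)$ yields $S(C,D)=\{v,u_2,u_1 u_3\}$ regardless of which opposite-side configuration arises. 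For~$\ty{bbb}$, any 2--1 split of the three leaf-bags by~$(A,B)$ sends the two vertices of the adhesion set of the minority-side bag into~$A\cap B$, so $|S(A,B)\cap V(O)|=2$ and similarly $|S(C,D)\cap V(O)|=2$; inclusion--exclusion inside~$V(O)$ then forces $S(A,B)\cap S(C,D)$ to contain a vertex of~$O$ besides~$v$.

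For type~$\ty{btbt}^-$, label $O=v_1 v_2 v_3 v_4$ with bold edges $v_1 v_2$ and $v_3 v_4$, and assume by symmetry that the timid edge $v_2 v_3$ has both endvertices non-adjacent to~$v$. The alternation of~$O$ between the separators together with $S(A,B)\cap S(C,D)=\{v\}$ forces $S(A,B)=\{v,v_1,v_3\}$ and $S(C,D)=\{v,v_2,v_4\}$. The plan is then to show that $v_2\in S(C,D)$ cannot have two neighbours on both sides of~$(C,D)$, contradicting the tri-separation property. The neighbours of~$v_2$ in~$G$ are contained in $\{v_3\}\cup\{v_1\}\cup(B_1\setminus V(O))$, where $v_1$ appears only if the bold edge $v_1 v_2$ is realised in~$G$ and where $v$ is excluded by assumption. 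Whichever side of~$(C,D)$ contains the leaf-bag~$B_1$ also contains~$v_1$ and every private vertex of~$B_1$, while the remaining candidate neighbour~$v_3$ lies in~$B_2$ on the opposite side of~$(C,D)$; since $B_1$ is 2-connected and $v_2\in B_1$, $v_2$ has at least two neighbours inside $\{v_1\}\cup(B_1\setminus V(O))$, so~$v_2$ is left with only the single neighbour~$v_3$ on the side of~$(C,D)$ opposite to~$B_1$, contradicting that $(C,D)$ is a tri-separation. The main obstacle I anticipate is pinning down the separators $S(A,B)$ and~$S(C,D)$ exactly from the alternation condition; once this is done, the neighbour-counting is uniform across all placements of $B_1$ and~$B_2$.
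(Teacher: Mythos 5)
Your arguments for the types \ty{bbt} and \ty{bbb} are correct and in the same spirit as the paper's, although you reach the final contradiction by a slightly different route: you observe that the shared vertex $u_2$ (being in the adhesion sets of both leaf-bags, each of which must lie on a different side of each separation) is forced into $S(A,B)\cap S(C,D)$, directly contradicting $S(A,B)\cap S(C,D)=\{v\}$; the paper instead argues that one separator must contain both endvertices of the timid edge and that this empties a side. Two small phrasing issues: your claim that ``$u_1$ and $u_3$ go to opposite sides'' and hence $u_1u_3\in S(A,B)$ is not established (either could land in $A\cap B$), but this is harmless because the contradiction you actually need is just $u_2\in S(A,B)\cap S(C,D)$; and your Observation 2 should be justified by the definition of a mixed-separation (one side would equal $V(G)$, emptying the other) rather than by ``nontriviality,'' which is not an explicit hypothesis of \autoref{X}.

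The $\ty{btbt}^-$ case has a genuine gap, and you have correctly identified it yourself as the main obstacle. Your assertion that alternation together with $S(A,B)\cap S(C,D)=\{v\}$ forces $S(A,B)=\{v,v_1,v_3\}$ and $S(C,D)=\{v,v_2,v_4\}$ is false. The candidate elements on $O$ are $v_1,v_2,v_3,v_4$ together with the two timid edges $v_2v_3$ and $v_4v_1$, and alternation with disjointness admits several other splits; in particular the split $S(A,B)-v=\{v_1,v_4\}$, $S(C,D)-v=\{v_2v_3,\,v_4v_1\}$ alternates correctly and avoids both $v_2$ and $v_3$ entirely, so your degree-counting argument at $v_2$ never gets off the ground. (That split is in fact impossible, but for a wholly different reason: with $S(A,B)=\{v,v_1,v_4\}$ consisting of vertices only, the two leaf-bags cannot straddle the separator without placing the timid edge $v_2v_3$ in it as well, and so $A\sm B$ or $B\sm A$ is empty.) The paper's proof therefore has two parts you are collapsing into one: first it proves by exactly your degree argument that $v_2$ and $v_3$ can lie in neither separator, which is a claim about all configurations rather than one; and second it shows that once $v_2,v_3$ and the bold edges are excluded, pigeonhole plus the vertex-versus-incident-edge exclusion forces $\{v_1,v_4\}$ into a single separator, and that this empties a side. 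Your degree argument is correct and does cover every configuration in which $v_2$ or $v_3$ lies in a separator, but the complementary case, which is what remains after the paper's Claim, is not addressed, so the proof as written is incomplete.
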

\begin{proof}
Let $(A,B)$ and $(C,D)$ denote the two crossing tri-separations of~$G$ from the assumption.
Let us suppose for a contradiction that $O$ has one of the types we claim it hasn't.

\textbf{Case} \ty{bbt}.
Since $O$ alternates between $S(A,B)-v$ and $S(C,D)-v$ by assumption, 
but neither $S(A,B)$ nor $S(C,D)$ can contain a bold edge of~$O$ by \autoref{noBoldEdgeObs}, it follows that one of $S(A,B)$ and $S(C,D)$ contains both endvertices of the timid edge of~$O$, say $S(A,B)$ contains them.
But then one of $A\sm B$ or $B\sm A$ is empty, contradicting that $(A,B)$ is a tri-separation.

\textbf{Case} \ty{bbb}. Here we find that $S(A,B)$ and $S(C,D)$ must share a vertex on~$O$, a contradiction.

\textbf{Case} $\ty{btbt}^-$.
Let $e=xy$ be a timid edge of $O$ such that neither $x$ nor $y$ is adjacent to~$v$ in~$G$.
Let us write $O=:wxyz$. 
The edges $wx$ and $yz$ lie in neither $S(A,B)$ nor $S(C,D)$ since they are bold.

We claim that the vertices $x$ and $y$ lie in neither $S(A,B)$ nor $S(C,D)$ as well.
Assume for a contradiction that $x\in S(A,B)$, say. 
Then $w\notin S(A,B)$, since $O$ alternates between $S(A,B)-v$ and $S(C,D)-v$, and since $wx$ is bold.
Hence the leaf-bag $B_\ell$ of $\Acal$ with adhesion set $\{w,x\}$ meets $S(A,B)$ only in~$x$. 
As $G[B_\ell]$ is 2-connected, $G[B_\ell]-x$ is connected, so $B_\ell$ is included in $A\sm B$ or in $B\sm A$, say in~$A\sm B$.
But since $v$ is not a neighbour of $x$ in~$G$, all neighbours of $x$ in~$G$ besides $y$ lie in $B_\ell$.
Thus $x$ has at most one neighbour in~$B$, contradicting that $(A,B)$ is a tri-separation.

So neither vertex $x,y$ and neither edge $wx,yz$ lies in $S(A,B)$ or $S(C,D)$.
Since $O$ alternates between $S(A,B)$ and $S(C,D)$ and only the vertices $w,z$ and the edges $e,wz$ can lie in $S(A,B)$ or $S(C,D)$, we find that $S(A,B)$ or $S(C,D)$ must contain two elements of $\{w,wz,z\}$.
Say $S(A,B)$ contains two elements.
These two elements can only be $w$ and~$z$, since separators of mixed-separations do not contain both a vertex and an edge incident to that vertex.
But then $A\sm B$ or $B\sm A$ is empty, a contradiction.
\end{proof}

\begin{prop}\label{is_splitting_star}
Assume \autoref{X} with crossing tri-separations. Then the tri-star of~$\Acal$ consists of totally-nested strong nontrivial tri-separations.
\end{prop}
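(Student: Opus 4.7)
My plan is to apply \autoref{is_nested} to each pseudo-reduction $(X,Y)$ of the tri-star. Fix a leaf $\ell$ of $S$ with leaf-bag $B_\ell$ and adhesion set $\{a_1,a_2\}$; by \autoref{v has neighbours} the $v$-element of $S(X,Y)$ is well-defined (either $v$ itself, or the edge from $v$ to its unique neighbour in $B_\ell$), and each $a_i$-element is either $a_i$ itself (non-edgy case) or the edge to its unique external neighbour (edgy case). The interlacing assumed in \autoref{X} gives, say, $(X,Y)<(A,B)$ strictly, hence $B\subsetneq Y$, which both carries the cycle witnessing nontriviality of $(A,B)$ into $G[Y]$ and delivers $\ge 2$ neighbours of $v$ into $Y$ from the tri-separation condition on $(A,B)$; combined with the cycle in $X$ coming from $G[B_\ell]\se G[X]$ and the $\ge 2$ neighbours of $v$ in $B_\ell$ secured by the definition of pseudo-reduction, this verifies tri-separation and nontriviality for the $v$-element. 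For an $a_i$-element in the separator, the non-edgy condition excludes the 1-external-neighbour case and the 0-external-neighbour case is ruled out because it would make $\{a_{3-i}\}$ a cutvertex of $G$ (contradicting 3-connectivity); hence $a_i$ has $\ge 2$ external and $\ge 2$ internal neighbours (the latter by 2-connectivity of $G[B_\ell]$), giving both tri-separation and strongness. Strongness for the $v$-element itself follows from $v$ having $\ge 2$ neighbours in each of $A\sm B$ and $B\sm A$.

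To apply \autoref{is_nested}, half-connectedness and external tri-connectivity remain. For half-connectedness I would verify that $G[Y\sm X]$ is connected: the path $V(O)\sm\{a_1,a_2\}$ along the central torso-cycle (avoiding the bold edge $a_1a_2$) lies in $Y\sm X$, every other leaf-bag attaches to it through its adhesion set, and $v$ (when in $Y\sm X$) attaches via any of the neighbours supplied by \autoref{v has neighbours}. For external tri-connectivity between the $a_1$- and $a_2$-elements avoiding the $v$-element, two internally disjoint paths arise inside $G[B_\ell]$ by its 2-connectivity (routed through $v$-free endpoints when an $a_i$-element is an edge), and a third path follows $O$ the long way around from $a_1$ to $a_2$, traversing each bold edge through the corresponding 2-connected leaf-bag and each timid edge directly.

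The main obstacle is establishing the external tri-connectivity between the $v$-element and the $s_1$-element avoiding the $s_2$-element (symmetrically $s_2$). Here I would execute the pattern analysis outlined in the introduction around \autoref{fig:Overview2sep2}: label the edges of $O$ starting at $a_1$ going away from $a_2$ as $f_1,f_2,f_3,\dots$, each of type \ty{b} or \ty{t}, and case-split on the initial letters. One path always exits $v$ through a neighbour in $B_\ell\sm V(O)$ (given by \autoref{v has neighbours}) and reaches $a_1$ inside $G[B_\ell]-a_2$; the other two paths route through neighbours of $v$ in the leaf-bags at the two closest bold edges on $O$, reconnecting to $a_1$ via the intervening timid segments of $O$. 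The patterns \ty{bbt}, \ty{bbb} and \ty{btbt}$^-$ excluded by \autoref{tiny-cases} are precisely those obstructing this construction; for the shortest admissible type \ty{btbt}$^+$, \autoref{plus} supplies the extra adjacencies of $v$ needed, and longer cycles are exhausted by roughly five cases on the initial pattern of $f_1f_2f_3$, each admitting a straightforward routing. Once these cases are verified, \autoref{is_nested} applies and delivers total nestedness for each pseudo-reduction in the tri-star.
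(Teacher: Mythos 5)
Your plan follows the paper's route (reduce to \autoref{is_nested}, establish external tri-connectivity by a pattern analysis on $O$, handle short types via \autoref{tiny-cases} and \autoref{plus}), but there is a genuine gap in your strongness argument for the $v$-element. You assert that $v$ has $\geq 2$ neighbours in each of $A\sm B$ and $B\sm A$; this does not follow from $(A,B)$ being a tri-separation, which only gives $\geq 2$ neighbours of $v$ in $G[A]$ and in $G[B]$, and all of those may lie inside $A\cap B$ (both link vertices of $S(A,B)$ can be adjacent to $v$). In fact one can distribute three $v$-neighbours over the links and corners of the corner diagram so that all four tri-separation constraints coming from $(A,B)$ and $(C,D)$ hold with $\deg(v)=3$; and \autoref{X} does not assume $(A,B)$ or $(C,D)$ strong, so $\deg(v)\geq 4$ cannot be imported from there either. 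The paper closes this hole with the dedicated pattern-analysis \autoref{make_nice} (``$v$ has two neighbours in $G\sm B_\ell$''), which together with the $\geq 2$ neighbours inside $B_\ell$ that the pseudo-reduction grants when $v\in S(X,Y)$ forces $\deg(v)\geq 4$. You need a case analysis of that kind, of the same flavour as but distinct from the one you run for external tri-connectivity; the same goes for nontriviality of the $Y$-side, which the paper obtains from $|Y\sm X|\geq 2$ and \autoref{trivial} rather than from the (unstated) nontriviality of $(A,B)$ — although that implicit use is in fact legitimate, since $v\in S(A,B)\cap S(C,D)$ means neither separator is a 3-edge-cut, so neither tri-separation is trivial.

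A smaller organisational point: the paper treats $O$ of length three (\autoref{is_splitting_star_case3}) and $O$ of type \ty{btbt} (\autoref{o4_not_alternate}) as separate base cases, because \autoref{u-lemma}, \autoref{find 3 paths}, \autoref{find 2 paths} and \autoref{make_nice} all require $O$ to be long enough that the pattern $e_1,\dots,e_4$ does not wrap around. Your sketch via \autoref{plus} heads in the right direction for type \ty{btbt}$^+$, but both short cases need their own direct verification of strongness and nontriviality as well as of external tri-connectivity, as in the paper.
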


We prove \autoref{is_splitting_star} across the next two sections.

\section[Special cases]{Proof of \autoref{is_splitting_star}: Special cases}\label{sec:specialAngry}

\begin{figure}[ht]
    \centering
    \includegraphics[height=7\baselineskip]{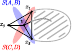}
    \caption{The situation in the proof of \autoref{is_splitting_star_case3}}
    \label{fig:Length3case}
\end{figure}

\begin{lem}\label{is_splitting_star_case3}
Assume \autoref{X} with crossing tri-separations.
If $O$ has length three, then the tri-star of~$\Acal$ consists of totally-nested strong nontrivial tri-separations, $O$ has type \ty{ttt} or \ty{btt}, and $v$ is adjacent to all vertices of~$O$.
\end{lem}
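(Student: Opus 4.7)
The plan is to use \autoref{tiny-cases} to narrow down the possible types of $O$, and then to analyse the two remaining types separately. Since $O$ has length three and \autoref{tiny-cases} excludes the types \ty{bbt} and \ty{bbb}, only \ty{ttt} and \ty{btt} remain, which gives the second conclusion.

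If $O$ has type \ty{ttt}, then there are no bold edges and so no leaf bags, making $\Acal$ consist of its central bag alone; hence $G - v$ equals the triangle $O$. The tri-star is empty, so the first conclusion holds vacuously, and each vertex of $O$ has degree two in $G - v$ and must therefore be adjacent to $v$ by 3-connectivity, giving the third conclusion (and forcing $G = K_4$).

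If $O$ has type \ty{btt}, let $ab$ be the bold edge, let $c$ be the third vertex, let $B_\ell$ be the unique leaf bag, and set $L := B_\ell \setminus \{a,b\}$, which is nonempty by \autoref{v has neighbours}. The vertex $c$ lies in no leaf bag, so its only neighbours in $G - v$ are $a$ and $b$, forcing $v \sim c$ by 3-connectivity. The main obstacle is to prove $v \sim a$ and $v \sim b$, which I would do by pinning down the alternation pattern on $O$: each of $S(A,B) - v$ and $S(C,D) - v$ occupies two of the six slots (three vertices and three edges) of $O$, and these four occupied slots alternate in cyclic order. A vertex slot and an incident edge slot cannot coexist in a single mixed separator, and tri-star interlacement forces $B_\ell$ onto one side of each of $(A,B)$ and $(C,D)$; in particular, any configuration that places the edge $ab$ in either separator, or that leaves some edge side with an empty non-separator part, is ruled out. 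A case analysis of the $\binom{6}{4}$ possible slot subsets then leaves exactly one admissible configuration up to symmetry: $S(A,B) = \{v, a, bc\}$ and $S(C,D) = \{v, b, ca\}$ with $a, b \in A \cap B$. Orienting so that $b \in A \setminus B$ and $c \in B \setminus A$ (hence $L \subseteq A \setminus B$, since $B_\ell \subseteq A$ by interlacement), the side $B$ reduces to $\{v, a, c\}$, so the tri-separator condition that $v \in S(A,B)$ have at least two neighbours in $B$ forces $v \sim a$; the symmetric argument on $(C,D)$ yields $v \sim b$.

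With $v \sim a, b, c$ in hand, neither $a$ nor $b$ is edgy, so the unique pseudo-reduction $(X, Y)$ has $X = B_\ell + v$, $Y = \{v, a, b, c\}$, and separator $\{v, a, b\}$ (the tree-decomposition structure rules out edges between $L$ and $c$). A direct check using 2-connectivity of $B_\ell$ shows that $(X, Y)$ is a strong nontrivial tri-separation; it is half-connected via $G[Y \setminus X] = G[\{c\}]$, and externally tri-connected under criterion (:), since $v \sim a$ and $v \sim b$ handle the pairs $\{v, b\}$ and $\{v, a\}$ respectively, while two internally disjoint paths from $a$ to $b$ in $B_\ell$ together with the path $a, c, b$ through $O$ give the three required internally disjoint $a$--$b$ paths in $G - v$ for the pair $\{a, b\}$. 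Total nestedness of $(X, Y)$ then follows from \autoref{is_nested}, completing the proof.
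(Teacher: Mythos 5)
Your proof is correct and follows essentially the same route as the paper's: both reduce via \autoref{tiny-cases} to types \ty{ttt} and \ty{btt}, both pin down the separators of the two interlacing tri-separations up to the unique alternation $\{v,a,bc\}$ and $\{v,b,ca\}$, both deduce $v\sim a,b,c$ from the tri-separation condition, and both verify total nestedness of the single pseudo-reduction $(B_\ell+v,\{v,a,b,c\})$ by checking external tri-connectivity around each separator vertex (using the edges $va$, $vb$ for criterion (:) around $b$, $a$, and the two paths in $B_\ell$ plus $acb$ for criterion (:) around $v$) and then invoking \autoref{is_nested}. One small slip: after deducing $S(A,B)=\{v,a,bc\}$ and $S(C,D)=\{v,b,ca\}$, you write ``with $a,b\in A\cap B$'' --- this should read $A\cap B=\{v,a\}$ and $C\cap D=\{v,b\}$; the claim $b\in A\cap B$ contradicts the separator you just fixed, though the subsequent steps use the correct membership. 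Your description of the case analysis on the $\binom{6}{4}$ slot subsets is a little compressed compared to the paper's direct exclusion chain (bold edge $ab$ excluded by interlacement, both endvertices of one timid edge excluded by nonemptiness of sides, hence $c$ excluded), but the conclusion is the same.
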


\begin{proof}
Let $(A,B)$ and $(C,D)$ denote the two crossing tri-separations from the assumptions.
If~$O$ has type \ty{ttt}, then the tri-star of $\Acal$ is empty, and we are done.
So $O$ has at least one bold edge.
By \autoref{tiny-cases}, $O$ does not have type \ty{bbt} or \ty{bbb}, so $O$ must have type~\ty{btt}.
Let $x_1 x_2 x_3:=O$ so that $x_1 x_2$ is the bold edge of~$O$; see \autoref{fig:Length3case}.

Since $O$ alternates between $S(A,B)-v$ and $S(C,D)-v$, since $x_1 x_2$ cannot lie in $S(A,B)$ or $S(C,D)$, and since neither separator can contain both endvertices of $x_2x_3$ or of $x_3x_1$, we find that $x_3$ cannot lie in either separator.
Thus only four elements of $V(O)\sqcup E(O)$ can lie in $S(A,B)$ or $S(C,D)$.
Hence the separators of $(A,B)$ and $(C,D)$ are determined up to symmetry, say $S(A,B)-v=\{x_2,x_1 x_3\}$ and $S(C,D)-v=\{x_1,x_2 x_3\}$; see \autoref{fig:Length3case}.
Using that $(A,B)$ and $(C,D)$ are tri-separations, we infer that $v$ is adjacent to all three vertices $x_1,x_2,x_3$ of~$O$.
We also recall that $v$ has a neighbour in the leaf-bag of~$\Acal$ other than $x_1$ and~$x_2$ by \autoref{v has neighbours}.
Hence $(E,F):=(V(G)-x_3,\{x_1,x_2,x_3,v\})$ with $S(E,F)=\{v,x_1,x_2\}$ is a strong tri-separation of~$G$ and the unique element of the tri-star of~$\Acal$.
It follows immediately from the definition of $(E,F)$ that $(E,F)$ is nontrivial.
Since $G[F\sm E]$ is a~$K_1$, the tri-separation $(E,F)$ is half-connected.

We claim that $(E,F)$ is totally nested. 
By \autoref{is_nested}, it suffices to show that $(E,F)$ is externally tri-connected.
As~$v$ is joined to $x_1$~and~$x_2$ by edges, it remains to show that $S(E,F)$ is externally tri-connected around~$v$. For this, note that $x_1$~and~$x_2$ are joined by 
three 
internally disjoint paths avoiding~$v$: we find two paths in the 2-connected leaf-bag, and 
the third path is~$x_1x_3x_2$. 
\end{proof}

\begin{lem}\label{o4_not_alternate}
Assume \autoref{X}.
If $O$ has type $\ty{btbt}^+$, then the tri-star of~$\Acal$ consists of totally-nested strong nontrivial tri-separations.
\end{lem}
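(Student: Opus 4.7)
The plan is to fix notation, analyse the structure of $G$ near $O$ forced by the hypotheses of \autoref{X}, compute the two elements of the tri-star explicitly, and then verify that each is a totally-nested strong nontrivial tri-separation by invoking \autoref{is_nested}. Write $O=x_1x_2x_3x_4x_1$ with the bold edges $e_1=x_1x_2$, $e_3=x_3x_4$ and timid edges $e_2=x_2x_3$, $e_4=x_4x_1$, and denote by $B_{12}, B_{34}$ the two leaf-bags with adhesion sets $\{x_1,x_2\}, \{x_3,x_4\}$. Since the apex-decomposition is 2-connected, the leaf-bags $B_{12}, B_{34}$ are disjoint 2-connected subgraphs each containing two internally disjoint paths between its two adhesion-vertices, and the only edges of $G-v$ between them are $e_2$ and $e_4$; moreover, $G[V(O)]\subseteq O$ since the torso of the central bag equals~$O$.

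First I carry out a short case analysis on the configuration of $S(A,B)\setminus\{v\}$ and $S(C,D)\setminus\{v\}$ on~$O$. The four elements lie in $\{x_1,x_2,e_2,x_3,x_4,e_4\}$ (bold edges are excluded), alternate around~$O$, satisfy the constraint that a vertex and an incident edge do not appear in the same separator, and the pair $S(A,B)-v$ and $S(C,D)-v$ each has to be a genuine 2-separator of the 2-connected graph~$G-v$; these requirements cut the list of possible configurations down to a handful. For each configuration, I translate the tri-separation conditions on $(A,B)$ and $(C,D)$ into structural information about~$G$. For instance, in the configuration $S(A,B)-v=\{x_1,x_3\}$ the induced 2-separation of $G-v$ isolates $B_{12}-x_1$ from $B_{34}-x_3$, so the only neighbours of $x_1$ in the opposite side are $x_4$ and possibly~$v$; the requirement that $x_1$ have two neighbours on each side of $(A,B)$ then forces $v x_1$ to be an edge of~$G$. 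Analogous constraints in the other configurations force the desired subset of edges $v x_i$ to be present in~$G$.

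Next I analyse the pseudo-reduction $\tau_{12}$ of the 2-separation induced by $B_{12}$. Its separator has size three and equals $\{v, x_1, x_2\}$ with each $x_i$ replaced by the edge~$e_4$ or~$e_2$ exactly when $x_i$ is edgy, which by \autoref{edgySituation} happens precisely when $v x_i\notin E(G)$. Using the adjacencies of $v$ extracted in the previous step together with \autoref{v has neighbours}, both sides of $\tau_{12}$ contain a 2-connected leaf-bag and hence a cycle, and every vertex in the separator has at least two neighbours on both sides; hence $\tau_{12}$ is a strong nontrivial tri-separation. For totally-nestedness I apply \autoref{is_nested}: half-connectedness is immediate since $G[B_{34}]$ is connected. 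For external tri-connectivity around a vertex of $S(\tau_{12})$, the two 2-connected leaf-bags supply the short paths (two $x_1$--$x_2$ paths inside $B_{12}$; two paths from $v$ to a $v$-neighbour in $B_{34}$ continuing inside~$B_{34}$), while the longer paths go around $O$ using the timid edges $e_2, e_4$ and the edges $vx_i$ forced by the case analysis, and when the separator contains edges one verifies that the required $x_i$-free endpoints exist, using $G[V(O)]\subseteq O$ to ensure non-adjacency between distant~$x_i$'s. The argument for $\tau_{34}$ is symmetric.

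The main obstacle will be the bookkeeping in the last step: in the configurations where some element of $S(\tau_{12})$ is an edge rather than a vertex, I must pick the correct $x_i$-free endpoint and verify the correct criterion $(\dotminus)$ or $(=)$ of external tri-connectivity, and in the worst sub-cases the path going the long way around $O$ has to simultaneously avoid the edge in the separator and still arrive at the right endpoint. I expect to dispatch this by routine path-chasing once the forced edges $vx_i$ and the 2-connectivity of the leaf-bags are in hand, so the proof is essentially a careful verification rather than a new idea.
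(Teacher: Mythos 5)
Your proposal follows essentially the same route as the paper: derive how $v$ attaches to $O$, compute the two pseudo-reductions explicitly, and verify total-nestedness via \autoref{is_nested} by building paths through the two 2-connected leaf-bags and around $O$. The paper reads off $v$'s attachment directly from \autoref{tiny-cases} and \autoref{plus} instead of re-deriving it from the configurations of $S(A,B)-v$ and $S(C,D)-v$, and be aware that in the subcase where both ends of one bold edge are adjacent to $v$ while neither end of the other is, the separator of $\tau_{34}$ contains at most one vertex (namely $v$, or none at all), so your treatment of $\tau_{34}$ is not literally a relabelling of the one for $\tau_{12}$ --- a different criterion of external tri-connectivity, or none, applies there.
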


\begin{proof}
Since $O$ has type \ty{btbt} and $\Acal$ is 2-connected, $G-v$ is obtained from the disjoint union of two 2-connected graphs $X$ and~$Y$ 
by adding a matching of size two between~$X$ and~$Y$. Call the two matching edges $e_1$~and~$e_2$.
We denote the endvertices of~$e_i$ in~$X$ and in~$Y$ by $x_i$ and~$y_i$, respectively, for both~$i=1,2$.
Since the torso-cycle $O$ has type $\ty{btbt}^+$, we can use \autoref{plus} to find that $O$ has two opposite vertices that are adjacent to $v$ or else the two ends of some bold edge of~$O$ are neighbours of $v$ while no endvertex of the other bold edge is adjacent to~$v$.
We consider the two cases separately.

\textbf{Case~1:} $O$ has two opposite vertices that are adjacent to~$v$ in~$G$.
Without loss of generality, $x_1$ and~$y_2$ are adjacent to~$v$ in~$G$.
By symmetry, it suffices to show that the pseudo-reduction induced by~$X$ (viewing $X$ as a leaf-bag of~$\Acal$) is a totally-nested nontrivial tri-separation of~$G$.
Since $x_1 v$ is an edge in~$G$, and since $v$ has a neighbour in $X\sm \{x_1,x_2\}$ by \autoref{v has neighbours}, the pseudo-reduction induced by~$X$ is either $(X+v,Y\cup\{v,x_1,x_2\})$ with separator~$\{x_1,v,x_2\}$ or $(X+v,Y\cup\{v,x_1\})$ with separator $\{x_1,v,e_2\}$, depending on whether $x_2v$ is an edge in~$G$ or not, respectively.
In either case, since $v$ also has a neighbour in $Y\sm\{y_1,y_2\}$ by \autoref{v has neighbours}, we have a half-connected nontrivial tri-separation that is strong.
So by \autoref{is_nested}, it remains to show external tri-connectivity.

\textbf{Subcase~1a:} $x_2v$ is an edge in~$G$.
Then the separator is $\{x_1,v,x_2\}$.
External tri-connectivity around~$x_i$ is witnessed by the edge~$x_{3-i}v$ for both~$i=1,2$.
External tri-connectivity around $v$ is witnessed by two internally disjoint $x_1$--$x_2$ paths through~$X$ and a third $x_1$--$x_2$ path which passes through~$Y$ via the edges $e_1$ and~$e_2$.

\textbf{Subcase~1b:} $x_2v$ is not an edge in~$G$.
Then the separator is $\{x_1,v,e_2\}$.
The endvertex $x_2$ of~$e_2$ is $v$-free as $x_2v$ is not an edge.
For external tri-connectivity around~$v$, we find two internally disjoint $x_1$--$x_2$ paths in~$X$.
The endvertex $y_2$ of~$e_2$ is $x_1$-free as $x_1 y_2$ is not an edge in~$G$.
For external tri-connectivity around~$x_1$, we find two internally disjoint $y_2$--$v$ paths in~$G-x_1-e_2$, one through $Y$ and via a neighbour of $v$ in $Y\sm\{y_1,y_2\}$ (which exists by \autoref{v has neighbours}), and the second one is~$y_2 v$.

\textbf{Case~2:} 
$x_1$ and $x_2$ are adjacent to~$v$ while $y_1$ and~$y_2$ are not, say.
First, we consider the pseudo-reduction induced by~$X$.
This is $(X+v,Y\cup\{v,x_1,x_2\})$ with separator $\{v,x_1,x_2\}$.
We verify external tri-connectivity as in Subcase~1a.
The pseudo-reduction induced by~$Y$ is $(X+v,Y+v)$ or $(X+v,Y)$, depending on whether $v$ has more than one or just one neighbour in $Y\sm\{y_1,y_2\}$, respectively.
In either case, we have a strong half-connected nontrivial tri-separation, and by \autoref{is_nested} it remains to verify external tri-connectivity.
For $(X+v,Y+v)$ we notice that $y_1$ and $y_2$ are $v$-free and find two internally disjoint $y_1$--$y_2$ paths in~$Y$, which suffices as $v$ is the only vertex in the separator $\{v,e_1,e_2\}$.
For $(X+v,Y)$ there is nothing to show as its separator consists of three edges.
\end{proof}

\section[General case]{Proof of \autoref{is_splitting_star}: General case}\label{sec:GeneralAngry}

In the previous section, we have seen that the conclusion of \autoref{is_splitting_star} holds if $O$ has length three (\autoref{is_splitting_star_case3}) or if $O$ has type \ty{btbt} (\autoref{tiny-cases} with \autoref{o4_not_alternate}).
In this section, we show that the conclusion of \autoref{is_splitting_star} also holds if $O$ has length at least four and $O$ does not have type \ty{btbt}, thereby completing the proof of \autoref{is_splitting_star}.

\begin{lem}\label{ApexGeneral}
Assume \autoref{X}.
If $O$ has length at least four and $O$ does not have type \ty{btbt}, then the tri-star of~$\Acal$ consists of totally-nested strong nontrivial tri-separations.
\end{lem}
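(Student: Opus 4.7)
The plan is to fix an arbitrary element $(X,Y)$ of the tri-star of $\Acal$, induced by some bold edge $e=ab$ of $O$ with associated leaf-bag $B_\ell$, and to show that $(X,Y)$ is a half-connected, strong, nontrivial tri-separation whose separator is externally tri-connected. Then \autoref{is_nested} will yield that $(X,Y)$ is totally nested, as required.

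The first four properties are relatively routine. That $(X,Y)$ is a tri-separation is immediate from the construction of the pseudo-reduction. Half-connectedness holds because $G[X\setminus Y]$ lies inside the 2-connected leaf-bag $B_\ell$ and is connected there. Strongness follows because every vertex in $S(X,Y)$ has at least two neighbours inside $B_\ell$ (by 2-connectedness) and at least two outside (by the tri-separation property), giving degree at least four. Nontriviality rests on the presence of a cycle inside the 2-connected $B_\ell$, and of a cycle on the other side formed from the path $O-e$ together with $v$ and at least two of its incident edges provided by \autoref{v has neighbours} and by the tri-separation conditions on $(A,B)$ and $(C,D)$ imposed in \autoref{X}.

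The main content is to show that $S(X,Y)$ is externally tri-connected. Paths between the two elements representing $a$ and $b$ and avoiding the element representing $v$ are easy to exhibit: two disjoint $a$--$b$ paths inside the 2-connected $B_\ell$, and a third path along $O-e$ inside $G-v$, which exists because the leaf-bags along $O-e$ are 2-connected and joined by the timid edges of $O$. Paths between an element at $a$ (respectively $b$) and $v$, avoiding the element at $b$ (respectively $a$), are more delicate; here I plan to follow the pattern analysis sketched in the overview.

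The main obstacle is precisely this pattern analysis. Starting at $a$ and reading along $O$ in the direction away from $e$, one sees a sequence $f_1, f_2, f_3, \dots$ of edges, each marked bold or timid. Because $O$ has length at least four, is not of type \ty{btbt}, and by \autoref{tiny-cases} is not of type \ty{bbt}, \ty{bbb} or $\ty{btbt}^-$, the local configuration near $e$ is sufficiently rich. Following the overview, I expect to organise all cases into five canonical patterns of the first three labels, encoded as strings such as \ty{btx}, \ty{bbx}, \ty{ttb}, and their symmetric variants, which together exhaust every possibility up to symmetry. For each pattern I will construct three internally disjoint $a$--$v$ paths: one via a neighbour of $v$ inside $B_\ell\setminus\{a,b\}$ furnished by \autoref{v has neighbours}; a second through the leaf-bag at $f_1$ when $f_1$ is bold, or traversing the timid edge $f_1$ directly to a neighbour of $v$ otherwise; and a third routed further along $O$ via the leaf-bag at $f_3$ and a neighbour of $v$ there. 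The key tools are \autoref{v has neighbours}, which supplies a neighbour of $v$ in every leaf-bag off $O$, together with the tri-separation conditions for $(A,B)$ and $(C,D)$, which pin down precisely those vertices of $O$ that are adjacent to $v$ in $G$. I expect the disjointness bookkeeping across the five patterns to constitute the most intricate step, but the alternation of $O$ between $S(A,B)-v$ and $S(C,D)-v$ forces enough structure for the construction to succeed in each case.
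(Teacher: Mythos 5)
You have the right architecture — verify that each pseudo-reduction is half-connected, strong, nontrivial and externally tri-connected, then invoke \autoref{is_nested} — and this does match the paper's route. But your treatment of the ``routine'' properties conceals a real gap. You write that strongness ``follows because every vertex in $S(X,Y)$ has at least two neighbours inside $B_\ell$ (by 2-connectedness) and at least two outside (by the tri-separation property)''. For the two vertices $a_1,a_2$ this is fine, since being in the separator means they are not edgy. But for the centre $v$ (when $v\in S(X,Y)$), the claim ``at least two neighbours outside $B_\ell$'' does not come for free: the pseudo-reduction only guarantees $v$ has a neighbour in each leaf-bag (\autoref{v has neighbours}), and if $\Acal$ has a single leaf-bag then all of $v$'s off-cycle neighbours might lie inside~$B_\ell$. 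Showing $v$ has two neighbours in $G\setminus B_\ell$ is exactly the content of \autoref{make_nice}, which is itself a pattern-analysis lemma using the hypotheses $|O|\geq 4$ and $O$ not of type \ty{btbt}. For the same reason, your sentence ``That $(X,Y)$ is a tri-separation is immediate from the construction of the pseudo-reduction'' is incorrect — the paper explicitly flags this as needing proof (see the remark right after the definition of the tri-star), and again it hinges on $v$ having two neighbours in $G[Y]$. So the hypotheses on $O$ are already needed \emph{before} you get to external tri-connectivity, not only inside it.

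On the main pattern analysis you are candid that the work is undone, but two further points are worth flagging. First, the paper's patterns are \ty{bb}, \ty{btx}, \ty{tbb}, \ty{tbtx}, \ty{ttx}, and crucially they are read starting from the edge \emph{after} $e$ (not including $e$), with the first letter determining whether $a_i$ is edgy; the patterns you list (\ty{bbx}, \ty{ttb}, ``and symmetric variants'') do not match, and in particular you do not distinguish the edgy case, in which the separator element at $a_i$ is the edge $a_i a_i'$ rather than the vertex $a_i$, and criterion ($\dotminus$) or ($=$) applies with a shifted endpoint $a_i'$ and paths that must avoid all of~$B_\ell$ (this is \autoref{find 2 paths}, which also needs the type-\ty{btbt} exclusion to rule out the short-$O$ subcase). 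Second, when $v\notin S(X,Y)$ the separator contains the edge $uv$ for the unique neighbour $u$ of $v$ in $B_\ell$, and your path constructions must then avoid that edge and exploit that $v$ is $a_i$-free; this case split between $v\in S(X,Y)$ and $uv\in S(X,Y)$ is orthogonal to the pattern analysis and needs to be carried through (it is \autoref{sublemvinsep} and \autoref{sublem7} in the paper).
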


To prove \autoref{ApexGeneral} systematically, we need some machinery.
\autoref{u-lemma} below will help us to find paths for verifying external tri-connectivity of mixed 3-separators.
To make \autoref{u-lemma} applicable in various settings, we introduce the following definitions which will allow us to deal with cases systematically in \autoref{u-lemma} and its applications.
Assume \autoref{X}.
A \emph{pattern} is any of the following five words: 
\[
    \text{\ty{bb}, \ty{btx}, \ty{tbb}, \ty{tbtx}, \ty{ttx}}.
\]
We say that a finite sequence of consecutive edges in a given cyclic orientation of~$O$ \emph{has 
pattern}~$p$ if~$p$ is a pattern and the labels of the edges in the sequence start with the 
pattern~$p$ after possibly replacing an occurrence of~\ty{x} in~$p$ with either \ty{b}~or~\ty{t}.
Note that an edge-sequence of length at least four has a unique pattern; we refer to this pattern 
as \emph{its} pattern.
If a sequence $e_0,\ldots,e_n$ has pattern~$p$, and~$e_i$ is the last edge in the sequence which 
contributes to~$p$, then the endvertex of~$e_i$ that is not incident with~$e_{i-1}$ is called the 
\emph{capstone} of the sequence $e_0,\ldots,e_n$.
If a sequence $e_0,\ldots,e_n$ has pattern $p$, then the \emph{pre-reservoir} of this sequence is 
the union of all leaf-bags of~$\Acal$ (viewed as induced subgraphs of $G-v$) whose adhesion set span edges~$e_i$ which contribute to~$p$, plus all the timid edges $e_i$ 
which contribute to~$p$.
The \emph{reservoir} of $e_0,\ldots,e_n$ is obtained from the pre-reservoir of $e_0,\ldots,e_n$ 
by adding the vertex $v$ plus all the edges in~$G$ from $v$ to the pre-reservoir and then 
deleting the capstone of $e_0,\ldots,e_n$.
Note that the reservoir is a subgraph of~$G$.

\begin{figure}[ht]
\centering
\includegraphics[height=9\baselineskip]{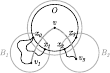}
\caption{The situation of \autoref{u-lemma} for $p=\ty{btx}$ with $\ty{x}:=\ty{b}$}
\label{fig:u-lemma}
\end{figure}

\begin{lem}[Linking Lemma]\label{u-lemma}
Assume \autoref{X}.
Suppose that $O$ has length at least four.
Let $e_1, e_2, e_3, e_4$ be consecutive edges in a cyclic orientation of~$O$ with pattern~$p$.
Denote the endvertices of~$e_1$ by~$x_0$ and~$x_1$ so that~$x_1$ is incident with~$e_2$.
\begin{enumerate}
\item If the first letter of~$p$ is \ty{b}, then there are two internally disjoint paths from~$x_0$ to~$v$ included in the reservoir of~$e_1,\ldots,e_3$.
\item Otherwise, there are two internally disjoint paths from $x_1$~to~$v$ included in the reservoir of $e_2,\ldots, e_4$
avoiding~$x_0$.
\end{enumerate}
\end{lem}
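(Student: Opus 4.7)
The plan is to prove the lemma by case analysis on the five patterns, constructing the two internally disjoint paths explicitly in each case. Three facts are used repeatedly: (i)~\autoref{v has neighbours} guarantees that every leaf-bag $B$ of~$\Acal$ contains an internal vertex $w\in B\setminus V(O)$ with $vw\in E(G)$; (ii)~the 2-connectivity of each leaf-bag~$B$, via the fan version of Menger's theorem, supplies, for any vertex $a\in B$ and any 2-element set $\{w,b\}\se B\setminus\{a\}$, two internally disjoint paths in $B$ from~$a$ to $\{w,b\}$ with one ending at~$w$ and the other at~$b$; and (iii)~any vertex $x\in V(O)$ whose two incident $O$-edges are both timid lies in no leaf-bag and thus has only two neighbours in $G-v$, so $vx\in E(G)$ by 3-connectivity of~$G$.

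For Case~1, both paths start at~$x_0$. The first path leaves $x_0$ through the leaf-bag $B_1$ of the bold edge~$e_1$, proceeds via~(i) to a $v$-neighbour $w_1\in B_1\setminus\{x_0,x_1\}$, and ends at~$v$ through the edge $w_1 v$. Using~(ii) inside~$B_1$, the second path is chosen internally disjoint from the first in~$B_1$ and ends at~$x_1$; from there it continues as follows. In pattern~\ty{bb} it enters the leaf-bag $B_2$ of~$e_2$, reaches a $v$-neighbour of~$B_2$ inside $B_2\setminus\{x_2\}$ (connected by 2-connectivity of~$B_2$), and ends via the corresponding edge to~$v$, so the capstone $x_2$ is avoided. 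In pattern~\ty{btx} the path crosses the timid edge $e_2$ to~$x_2$ and then either enters $B_3$ (if $e_3$ is bold) and finishes as in the \ty{bb}~case, or uses the edge $vx_2$ directly (if $e_3$ is timid, in which case~(iii) gives $vx_2\in E(G)$ because $x_2$ sits between the two timid edges $e_2$ and~$e_3$).

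Case~2 starts at~$x_1$, and each constructed path must avoid~$x_0$. For patterns \ty{tbb} and \ty{tbtx} the sub-sequence $e_2,e_3,e_4$ has pattern \ty{bb} or \ty{btx} respectively, so we repeat the Case~1 construction with $x_1$ in the role of~$x_0$; the vertex $x_0$ appears in the reservoir of $e_2,\ldots,e_4$ only if $\vert O\vert=4$ and $x_0$ is an endvertex of~$e_4$, in which situation $x_0$ is itself the capstone of the sub-sequence and is therefore excluded by definition. For pattern~\ty{ttx}, the vertex $x_1$ lies between the two timid edges $e_1$ and~$e_2$, so~(iii) yields the edge $vx_1$, which we take as the first path. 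The second path traverses $e_2$ to~$x_2$ and then reaches~$v$ either via the edge $vx_2$ (if $e_3$ is timid, again by~(iii)) or through the leaf-bag $B_3$ via a $v$-neighbour supplied by~(i) (if $e_3$ is bold).

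The main technical hurdle is verifying that the two paths are genuinely internally disjoint while remaining in the prescribed reservoir and avoiding the capstone. Internal disjointness inside each leaf-bag is taken care of by the fan structure of~(ii); across distinct leaf-bags and timid edges, the paths can only meet at vertices of~$O$, which always appear as endpoints of the component sub-paths and so cause no internal collision. The capstone is avoided in each case either by routing through the connected subgraph $B\setminus\{\text{capstone}\}$ of the terminal leaf-bag, or by short-circuiting to~$v$ with the edge provided by~(iii) before ever reaching the capstone.
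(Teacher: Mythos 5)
Your proof follows essentially the same route as the paper's: both carry out a case analysis over the five patterns, both rely on the three observations you list (a $v$-neighbour inside every leaf-bag, the 2-fan in 2-connected leaf-bags, and adjacency to~$v$ for vertices sitting between two timid edges), and both assemble the two paths by routing one fan leg to the nearest $v$-neighbour and the other leg along $O$ into a subsequent bag or timid edge.

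One small inaccuracy in your treatment of Case~2, pattern \ty{tbb}: you claim that when $|O|=4$ and $x_0$ is an endvertex of~$e_4$, the vertex $x_0$ is the capstone of the sub-sequence $e_2,e_3,e_4$. For pattern~\ty{bb} (the pattern of this sub-sequence) the capstone is the endvertex of $e_3$ not incident with $e_2$, namely~$x_3$, not~$x_0$. The conclusion you want (that $x_0$ is absent from the reservoir) still holds, but for a simpler reason: the pre-reservoir for pattern~\ty{bb} is just $B_2\cup B_3$, and since $|O|\geq 4$ we have $x_0\notin B_2\cup B_3$, so $x_0$ never enters the picture. Your capstone argument is correct for pattern~\ty{tbtx}, where the sub-sequence really does have pattern~\ty{btx} and its capstone is~$x_4=x_0$ when $|O|=4$. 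Other than this, the proof is sound.
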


Recall that a \emph{2-fan} from $u$ to $x$ and~$y$ is the union of a $u$--$x$ path with a $u$--$y$ path where the two paths meet precisely in~$u$.

\begin{fact}\label{2fan}
In a 2-connected graph, there exists a 2-fan from $u$ to $x$ and~$y$ for every three vertices $u,x,y$ in the graph.\qed
\end{fact}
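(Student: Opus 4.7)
The plan is to derive this from Menger's theorem via an auxiliary-vertex trick. First I would handle the degenerate cases: if $u=x$ or $u=y$, then one of the two paths of the fan can be chosen trivially (the single vertex $u$) and the other as any path to the remaining vertex, which exists by 2-connectivity (in particular, connectivity). If $x=y\neq u$, then by 2-connectivity there exist two internally disjoint $u$--$x$ paths, which form a 2-fan. So from now on I assume $u$, $x$, $y$ are pairwise distinct.

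Next I would introduce the auxiliary graph $G^+$ obtained from $G$ by adjoining a new vertex $z$ and the two edges $zx$, $zy$. The key step is to check that $G^+$ is 2-connected. Since $G$ is 2-connected, $G = G^+ - z$ is connected. For any other vertex $w\in V(G)$, the graph $G^+ - w$ is connected as well: $G-w$ is connected by 2-connectivity of $G$, and $z$ is attached to $G-w$ through at least one of $x,y$, since $w$ can equal at most one of them. Hence $G^+$ has no cutvertex, and it has more than two vertices, so it is 2-connected.

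By Menger's theorem applied in $G^+$ to the vertex pair $\{u,z\}$, there exist two internally disjoint $u$--$z$ paths $P_1$ and $P_2$ in $G^+$. Since the only neighbours of $z$ in $G^+$ are $x$ and $y$, the penultimate vertex of each $P_i$ is $x$ or $y$; and since $P_1$, $P_2$ are internally disjoint, these penultimate vertices must differ. Relabelling, assume $P_1$ ends with the edge $zx$ and $P_2$ ends with $zy$. Deleting $z$ yields a $u$--$x$ path $P_1':=P_1-z$ and a $u$--$y$ path $P_2':=P_2-z$ inside $G$. It remains to verify that $P_1'$ and $P_2'$ meet only in $u$. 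Internal disjointness in $G^+$ of $P_1,P_2$ transfers to $P_1',P_2'$ in $G$. Moreover $y\neq u,z$ is internal to $P_2$ and so cannot lie on $P_1$, hence $y\notin P_1'$; symmetrically $x\notin P_2'$. Therefore $P_1'\cup P_2'$ is a 2-fan from $u$ to $x$ and $y$.

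There is no real obstacle here; the only care needed is in the auxiliary-vertex construction, namely verifying 2-connectivity of $G^+$ and arguing that the two Menger paths necessarily split the neighbours of $z$ between them so that deleting $z$ does not merge the two branches of the fan.
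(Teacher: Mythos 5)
Your proof is correct: the paper states this fact without proof (it is the standard fan lemma, hence the bare \qed), and your auxiliary-vertex construction followed by Menger's theorem is exactly the classical argument, with the 2-connectivity of $G^+$ and the splitting of the two $u$--$z$ paths over the neighbours $x,y$ of $z$ verified correctly. The only cosmetic point is the degenerate case $x=y$: under the paper's definition two $u$--$x$ paths both contain $x$, so they do not meet precisely in $u$; but since the statement speaks of three (distinct) vertices, that case is outside its scope and your main argument is all that is needed.
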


\begin{proof}[Proof of the \nameref{u-lemma} (\ref{u-lemma})]
We consider each of the five possible patterns in turn.
Let $G':=G-v$.
Whenever an edge~$e_i$ is bold, we let $B_i:=G'[B_\ell]$ for the unique leaf-bag~$B_\ell$ of~$\Acal$ whose adhesion set consists of the endvertices of~$e_i$ (tacitly assuming that~$i$ is not a node of~$S$).
By \autoref{v has neighbours}, the vertex $v$ has a neighbour in $B_i\sm O$ whenever~$B_i$ exists, and 
we choose such a neighbour~$v_i$ for each eligible~$i$.
We denote the vertex of~$O$ that is incident with both~$e_i$ and~$e_{i+1}$ by~$x_i$.
This is consistent with the naming of~$x_1$ in the statement of the lemma.

(\ty{bb}) By \autoref{2fan}, we find a 2-fan in~$B_1$ from $x_0$ to $v_1$~and~$x_1$.
Since $B_2$ is 2-connected, we find an $x_1$--$v_2$ path in~$B_2$ which avoids~$x_2$.
The subgraph of~$G$ obtained from the union of the 2-fan with the $x_1$--$v_2$ path by adding~$v$ 
and the edges $v v_1$~and~$v v_2$ contains two desired paths.

(\ty{btx}) By \autoref{2fan}, we find a 2-fan in~$B_1$ from $x_0$~to $v_1$~and~$x_1$.
If~\ty{x} is equal to~\ty{b}, then $B_3$ exists, and since $B_3$ is 2-connected, there is an $x_2$--$v_3$ path in 
$B_3$ which avoids~$x_3$.
Then the subgraph of $G$ obtained from the union of the 2-fan with the path by adding the timid 
edge $e_2$ as well as $v$ and the two edges $v v_1$ and $v v_3$ contains two desired paths.
Otherwise \ty{x} is equal to~\ty{t}.
Then the two edges $e_2$ and $e_3$ are timid, hence 3-connectivity implies that $x_2$ is adjacent 
to~$v$.
Then the subgraph of $G$ obtained from the 2-fan by adding the edges $v v_1$, $e_2$ and $v x_2$ 
contains two desired paths.

(\ty{tbb}) Since \ty{bb} is a suffix of \ty{tbb} and since the sought paths are allowed to start in~$x_1$ 
instead of~$x_0$, we may follow the argumentation of~(\ty{bb}).

(\ty{tbtx}) Since \ty{btx} is a suffix of \ty{tbtx} and since the sought paths are allowed to start in $x_1$ 
instead of~$x_0$, we may follow the argumentation of~(\ty{btx}).

(\ty{ttx}) By 3-connectivity, the edge $x_1 v$ must exist in~$G$.
Suppose first that \ty{x} is equal to~\ty{b}.
Then~$B_3$ exists. 
Since $\Acal$ is 2-connected, we find an $x_2$--$v_3$ path in~$B_3$ avoiding~$x_3$.
Then $x_1 v$ is one path and adding both edges $e_2$ and $v v_3$ to the $x_2$--$v_3$ path yields the 
second path.
Otherwise \ty{x} is equal to~\ty{t}.
By 3-connectivity, the edge $x_2 v$ must exist in~$G$.
Hence $x_1 v$ and $x_1 x_2 v$ are two desired paths.
\end{proof}

\begin{lem}\label{find 3 paths}
Assume \autoref{X}.
Suppose that $O$ has length at least four.
Let $\{a_1,a_2\}$ be the adhesion set of a leaf-bag of~$\Acal$ and let~$i\in\{1,2\}$.
If~$a_i$ is not edgy, then either there are three internally disjoint paths from~$a_i$ to~$v$ avoiding~$a_{3-i}$, or~$va_i$ is an 
edge in~$G$.
\end{lem}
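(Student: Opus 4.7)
We may assume $va_i \notin E(G)$, for otherwise there is nothing to prove. Let $e:=a_1a_2$, let $f_1$ be the edge of $O$ at $a_i$ distinct from $e$, and let $f_2,f_3,f_4$ be the next consecutive edges of $O$ in the cyclic orientation (these exist since $O$ has length at least four). The plan is to construct three internally disjoint $a_i$-$v$ paths avoiding $a_{3-i}$: one routed through the leaf-bag $B_\ell$ whose adhesion set is $\{a_1,a_2\}$, and the other two obtained from the \nameref{u-lemma} applied to $f_1,f_2,f_3,f_4$.

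The first step is to use the non-edgy hypothesis on $a_i$ to force $f_1$ to be bold. By the tree-decomposition properties, $a_i$ lies in at most two leaf-bags of $\Acal$: $B_\ell$, and (if $f_1$ is bold) the leaf-bag $B'$ of $f_1$, with $B_\ell \cap B' = \{a_i\}$. Hence the neighbours of $a_i$ in $G$ outside $B_\ell$ consist of $v$ (if adjacent) together with either the other endvertex of $f_1$ when $f_1$ is timid, or two or more vertices of $B'\setminus\{a_i\}$ when $f_1$ is bold (using the 2-connectedness of $B'$). With $va_i \notin E(G)$ assumed and $a_i$ not edgy, $a_i$ must have at least two neighbours outside $B_\ell$, which rules out the timid case.

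Since $f_1$ is bold, the pattern of $f_1,f_2,f_3,f_4$ begins with \ty{b}, and case~(1) of the \nameref{u-lemma} delivers two internally disjoint paths $P_2,P_3$ from $a_i$ to $v$ inside the reservoir of $f_1,f_2,f_3$. A short case check confirms that $a_{3-i}$ is never in this reservoir: the only way $a_{3-i}$ could appear as an endvertex of a contributing $f_j$ is if $O$ has length exactly four and $j=3$, in which case the pattern must be \ty{btx} and the capstone (which the reservoir excludes by definition) is precisely $a_{3-i}$.

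For the third path $P_1$, \autoref{v has neighbours} supplies a neighbour $v_\ell$ of $v$ in $B_\ell \setminus V(O)$, and \autoref{2fan} applied in the 2-connected subgraph $B_\ell$ yields a 2-fan from $a_i$ to $\{a_{3-i},v_\ell\}$; the $a_i$-$v_\ell$ leg avoids $a_{3-i}$, and appending $v_\ell v$ produces $P_1$. The three paths then share only the endpoints $a_i$ and $v$: the reservoir of $f_1,f_2,f_3$ meets $B_\ell$ only in $\{a_i\}$, since any leaf-bag of a bold contributing $f_j$ intersects $B_\ell$ only in their shared adhesion vertices (the sole possible overlap $a_{3-i}$ arising when $|O|=4$ coincides with the excluded capstone), while $v_\ell \in B_\ell \setminus V(O)$ lies outside the reservoir. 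The main obstacle is this final bookkeeping step in the borderline case $|O|=4$ with pattern \ty{btx}: naively, $f_3$'s endvertex $a_{3-i}$ would be swept into the reservoir, and only the careful capstone definition rescues the argument.
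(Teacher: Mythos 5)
Your proof is correct and takes essentially the same route as the paper's: derive that $f_1$ is bold from non-edginess and non-adjacency of $v$, invoke case~(1) of the \nameref{u-lemma} for two paths inside the reservoir, and route a third path through $B_\ell$ via a neighbour of $v$ supplied by \autoref{v has neighbours}. You are slightly more explicit than the paper about why $a_{3-i}$ escapes the reservoir in the borderline case $|O|=4$ with pattern \ty{btx} (it is precisely the excluded capstone), but this is the same mechanism the paper uses implicitly.
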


\begin{proof}
Without loss of generality we have~$i=2$.
If~$va_2$ is an edge in~$G$ we are done, so let us suppose that~$v$ is not adjacent to~$a_2$.
Let $e_1,e_2,e_3$ and~$e_4$ be the four edges of~$O$ that come after $a_1a_2$ on~$O$ in the 
cyclic orientation of $O$ in which $a_1$ precedes~$a_2$. 
Since~$a_2$ is not edgy and~$v$ is not a neighbour of~$a_2$, 
the edge $e_1$ is bold. 
So the sequence $e_1, e_2, e_3, e_4$ has pattern~\ty{bb} or~\ty{btx}, both of which have length at most three. 
Now we apply the \nameref{u-lemma} (\ref{u-lemma}) to the sequence $e_1, e_2, e_3, e_4$. 
This gives us two internally 
disjoint paths  from 
$a_2$~to~$v$ included in the reservoir of $e_1,\ldots,e_3$. 
By assumption, $O$ has length at least four, so the vertex~$a_1$ is distinct from the endvertices of the edges 
$e_1$ and~$e_2$. Hence, the two internally disjoint paths avoid~$B_\ell-a_2$, where $B_\ell$ is the leaf-bag with adhesion set~$\{a_1,a_2\}$. 
By \autoref{v has neighbours}, we find a third path from $a_2$~to~$v$ 
included in~$G[B_\ell+v]$, completing the proof.
\end{proof}

\begin{lem}\label{find 2 paths}
Assume \autoref{X}.
Suppose that $O$ has length at least four and that $O$ does not have type~\ty{btbt}.
Let $\{a_1,a_2\}$ be the adhesion set of a leaf-bag of~$\Acal$ and let~$i\in\{1,2\}$.
Denote the unique neighbour of~$a_i$ on~$O$ other than~$a_{3-i}$ by~$a_i'$.
If~$a_i$ is edgy, then $a_i a_i'$ is an edge in~$G$ while $a_i v$ is not, and there are two internally 
disjoint paths from $a_i'$ to $v$ in~$G$ that avoid~$B_\ell$.
\end{lem}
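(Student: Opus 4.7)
The plan is to deduce the two structural assertions directly from \autoref{edgySituation}, and then to produce the pair of paths by a single application of the \nameref{u-lemma}~(\ref{u-lemma}), carefully checking that the reservoir it produces is disjoint from $B_\ell$.

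For the structural assertions, since $a_i$ is edgy, \autoref{edgySituation} tells us that $a_iv\notin E(G)$ and that the two $O$-edges incident with $a_i$ are one bold and one timid. The adhesion set $\{a_{3-i},a_i\}$ of the leaf-bag $B_\ell$ forces $a_ia_{3-i}$ to be the bold one, so $a_ia_i'$ is the timid one and hence belongs to~$G$ by definition of timid.

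For the paths, I would choose the cyclic orientation of $O$ in which $a_{3-i},a_i,a_i'$ appear in this order, set $e_1:=a_ia_i'$, and let $e_2,e_3,e_4$ be the next three edges in this orientation. Since $e_1$ is timid, case~(2) of the \nameref{u-lemma} produces two internally disjoint $a_i'$--$v$ paths in the reservoir of $e_2,\ldots,e_4$ that avoid $x_0=a_i$. It remains to verify that this reservoir is disjoint from $B_\ell$, after which the two paths will be as required.

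Since distinct leaf-bags of $\Acal$ have distinct adhesion sets, any two leaf-bags intersect only in vertices common to both adhesion sets. Hence a leaf-bag at a bold contributing edge among $e_2,e_3,e_4$ can meet $B_\ell$ only if that edge is incident with $a_{3-i}$, and then only in $\{a_{3-i}\}$; similarly, contributing timid edges can meet $B_\ell$ only at $a_{3-i}$. When $|O|\ge 5$, the only contributing edge possibly incident with $a_{3-i}$ is $e_4$, and whenever $e_4$ actually contributes (which happens only for pattern $\ty{tbtx}$), the vertex $a_{3-i}$ is the capstone and hence removed from the reservoir. The main obstacle is the case $|O|=4$, in which $e_4$ coincides with $e_0=a_{3-i}a_i$ and so $B_{e_4}=B_\ell$. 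Here the hypothesis that $O$ is not of type $\ty{btbt}$ becomes essential: the only pattern in which $e_4$ contributes is $\ty{tbtx}$ with $\ty{x}=\ty{b}$, i.e., $\ty{tbtb}$, which corresponds precisely to $O$ being of type $\ty{btbt}$. Excluding this leaves only $\ty{tbb}$ and $\ty{ttx}$, for which $e_4$ does not contribute; a direct case check (using that $a_{3-i}$ is the capstone in the only subcases where it could otherwise appear in the reservoir, and that for pattern $\ty{ttt}$ the two paths produced by the proof of the \nameref{u-lemma} use only the vertex set $\{a_i',a'',v\}$) shows that the reservoir is disjoint from $B_\ell$ in every case, completing the argument.
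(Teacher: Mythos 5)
Your approach matches the paper's in outline: deduce the two structural claims from \autoref{edgySituation} (the paper does this slightly more tersely via ``$e_1$ is timid''), then apply the \nameref{u-lemma}~(\ref{u-lemma}) to $e_1,\ldots,e_4$ with $e_1=a_ia_i'$, and finally split into $|O|\ge 5$ and $|O|=4$, using the \texttt{btbt} exclusion to rule out the dangerous pattern in the short case. That is exactly the paper's proof.

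There is, however, a real inaccuracy in the way you try to finish. You aim to prove the strong statement that \emph{the reservoir of $e_2,\ldots,e_4$} is disjoint from $B_\ell$, and your case analysis for $|O|=4$ asserts that $e_4$ contributes only for pattern $\ty{tbtx}$. That statement is only about the pattern of $e_1,\ldots,e_4$; the reservoir of $e_2,\ldots,e_4$ is computed from the pattern of the \emph{3-edge subsequence} $e_2,e_3,e_4$, and when $|O|=4$ the edge $e_4=e_0$ is always bold, so the pattern of $e_2,e_3,e_4$ always ends at $e_4$ (for instance $\ty{ttb}$ of $e_1,\ldots,e_4$ gives $\ty{tbb}$ for $e_2,e_3,e_4$, and $\ty{ttt}$ gives $\ty{ttb}$). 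Consequently $B_4=B_\ell$ lies in the pre-reservoir of $e_2,\ldots,e_4$ in every $|O|=4$ subcase; removing the capstone only deletes $x_4=a_i$, so the reservoir genuinely contains $B_\ell\setminus\{a_i\}$ and your claimed disjointness fails. What is true --- and what the paper asserts --- is that the two \emph{paths} produced in the proof of the Linking Lemma avoid $B_\ell$: for $\ty{tbb}$ and $\ty{ttb}$ they only touch $B_3$, and there only via an $x_2$--$v_3$ path that avoids $x_3=a_{3-i}$; for $\ty{ttt}$ they use only $\{x_1,x_2,v\}$. You do track the paths for $\ty{ttt}$, but for the other $|O|=4$ subcases you rely on the false reservoir claim. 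The fix is to argue about the paths themselves throughout (which is what your $|O|\ge 5$ sentence already implicitly does when it invokes the capstone), unwinding the Linking Lemma's proof one step rather than its coarser reservoir bound.
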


\begin{proof}
Without loss of generality, we have~$i=2$.
Let $e_1,e_2,e_3$ and~$e_4$ be the four consecutive edges which come after~$a_1a_2$ in the 
cyclic orientation of~$O$ in which~$a_1$ precedes~$a_2$. 
Since~$a_2$ is edgy and $\Acal$ is 2-connected, $e_1$ is timid.
We apply the \nameref{u-lemma} (\ref{u-lemma}) to the sequence $e_1, e_2, e_3, e_4$. Since the pattern of this sequence starts with~\ty{t} and since $O$ has length at least four, we obtain two internally 
disjoint paths $P,Q$ from~$a_2'$ to~$v$ included in the reservoir of $e_2,\ldots ,e_4$ and avoiding~$a_2$.

If $O$ has length at least five, the vertex $a_1$ is distinct from the endvertices of the edges 
$e_1,e_2$ and~$e_3$. 
In particular, the two paths $P,Q$ avoid the unique leaf-bag~$B_\ell$ of~$\Acal$ with adhesion set~$\{a_1,a_2\}$.

So it remains to consider the case that $O$ has length four. 
The existence of the bag~$B_\ell$ implies that the 
edge~$e_4$ is bold. 
In combination with our assumption that~$O$ does not have the type \ty{btbt}, it follows that the pattern \ty{tbtx} is not possible (indeed, here $\ty{x}=\ty{b}$ since $e_4$ is bold).
Hence the pattern has length at most three.
So $P$ and $Q$ are contained in the reservoir of $e_2,e_3$ and avoid~$a_1,a_2$, by the earlier application of the \nameref{u-lemma} (\ref{u-lemma}) and since the 3-letter pattern starts with \ty{t} for the edge~$e_1$.
In particular, $P$ and $Q$ avoid~$B_\ell$.
This completes the proof.
\end{proof}

\begin{lem}\label{make_nice}
Assume \autoref{X}.
If $O$ has length at least four and does not have type \ty{btbt}, then~$v$ has two neighbours in~$G\sm B_\ell$ for every leaf-bag $B_\ell$ of~$\Acal$.
\end{lem}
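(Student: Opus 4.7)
The plan is to combine neighbours of $v$ in leaf-bag interiors (coming from \autoref{v has neighbours}) with forced neighbours of $v$ on the central torso-cycle $O$ (coming from 3-connectivity of~$G$). Write $\{a_1,a_2\}$ for the adhesion set of $B_\ell$, so that $V(O)\setminus V(B_\ell)=V(O)\setminus\{a_1,a_2\}$; the goal is to produce two distinct neighbours of $v$ in $V(G)\setminus V(B_\ell)$.

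First, I apply \autoref{v has neighbours} to every leaf-bag $B_{\ell'}\ne B_\ell$, obtaining a neighbour $v_{\ell'}\in B_{\ell'}\setminus V(O)$ of $v$. Since the interiors of distinct leaf-bags are pairwise disjoint, these $v_{\ell'}$ are distinct and all lie in $V(G)\setminus V(B_\ell)$. Hence if $\Acal$ has at least three leaf-bags, we are already done.

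The key observation, which I will establish next, is that every vertex $x\in V(O)$ that is not in the adhesion set of any leaf-bag of $\Acal$ must be adjacent to $v$ in~$G$. Indeed, such an $x$ lies only in the central bag of $\Acal$, so in $G-v$ all neighbours of $x$ lie in $V(O)$; since $x$ bounds no bold edge, both $O$-edges at $x$ are timid and thus present in $G$, giving $x$ exactly two neighbours in $V(O)$; the 3-connectivity of $G$ then forces a third neighbour, which can only be $v$. Any such non-adhesion vertex automatically lies in $V(O)\setminus\{a_1,a_2\}\subseteq V(G)\setminus V(B_\ell)$.

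It then remains to handle the cases where $\Acal$ has one or two leaf-bags. If $\Acal$ has only $B_\ell$, then $\{a_1,a_2\}$ are the only adhesion vertices on $O$, so by the observation all $|V(O)|-2\geq 2$ vertices of $V(O)\setminus\{a_1,a_2\}$ are neighbours of $v$ outside $V(B_\ell)$. If $\Acal$ has exactly two leaf-bags $B_\ell$ and $B_{\ell'}$, the two bold edges of $O$ either share a vertex (so there are only three adhesion vertices on $O$, leaving at least $|V(O)|-3\geq 1$ non-adhesion vertex) or are disjoint; in the disjoint case, $|V(O)|=4$ would force $O$ to have type \ty{btbt}, contradicting the hypothesis, so $|V(O)|\geq 5$ and at least $|V(O)|-4\geq 1$ non-adhesion vertex remains. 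In either sub-case, combining such a non-adhesion vertex (adjacent to $v$ by the observation) with $v_{\ell'}$ gives two neighbours of $v$ in $V(G)\setminus V(B_\ell)$. The main obstacle is precisely this subtle counting in the two-leaf-bag regime: the explicit exclusion of type \ty{btbt} in the hypothesis is used decisively to guarantee the existence of at least one non-adhesion vertex on $O$.
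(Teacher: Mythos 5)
Your proof is correct, and it takes a genuinely different route from the paper's. The paper proves \autoref{make_nice} by the same pattern machinery (\ty{bb}, \ty{btx}, \ty{tbb}, \ty{tbtx}, \ty{ttx}) that it uses for the \nameref{u-lemma}: it inspects the four edges $e_1,\dots,e_4$ following $a_1a_2$ on $O$ and handles each pattern separately, finding the two neighbours of $v$ inside leaf-bags $B_i\sm O$ and, when it runs out of bold edges, at a vertex of $O$ with two timid incident edges. You instead argue globally by counting leaf-bags: three or more leaf-bags gives two neighbours of $v$ directly from \autoref{v has neighbours} (and the disjointness of leaf-bag interiors, which follows from~(T2) together with the pairwise-distinct adhesion sets); one or two leaf-bags leaves enough non-adhesion vertices on $O$, each forced by 3-connectivity to be adjacent to~$v$ because the central bag $G'[V(O)]$ is a subgraph of $O$ (no chords) and such a vertex lies in no leaf-bag. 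Both proofs rest on the same two ingredients — \autoref{v has neighbours} and the observation that a vertex of $O$ with two timid incident $O$-edges must be adjacent to~$v$ — and both use the exclusion of type \ty{btbt} only once, to rule out the tight case of a 4-cycle with two disjoint bold edges. What the paper's local case split buys is uniformity with the surrounding lemmas (which all run the same pattern analysis), letting it reuse suffix reasoning (\ty{tbb} reduces to \ty{bb}, etc.); what your global count buys is a shorter and more self-contained argument that does not invoke the pattern formalism at all.
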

\begin{proof}
Let $B_\ell$ be a leaf-bag of~$\Acal$, and let $\{a_1,a_2\}$ denote its adhesion set.
Let $e_1,e_2,e_3$ and~$e_4$ be the four edges on~$O$ which come after~$a_1a_2$ in the cyclic 
orientation of~$O$ in which $a_1$ precedes~$a_2$. 
Let $p$ be the pattern of this sequence.
For each bold~$e_i$, let $B_i$ denote the leaf-bag of~$\Acal$ witnessing that~$e_i$ is bold.

\textbf{Case} $p=\ty{bb}$. 
By \autoref{v has neighbours}, the vertex $v$ has two neighbours, one in $B_1\sm O$ and one in $B_2\sm O$.

\textbf{Case} $p=\ty{btx}$. 
By \autoref{v has neighbours}, the vertex $v$ has one neighbour in $B_1\sm O$.
If the third edge is bold, we find a second neighbour in $B_3\sm O$.
Otherwise, we consider the endvertex that is shared by $e_2$ and~$e_3$. 
By 3-connectivity, this endvertex must be adjacent to~$v$. 
So~$v$ has two 
neighbours outside of~$B_\ell$. 

\textbf{Case} $p=\ty{tbb}$. Since \ty{bb} is a suffix of~\ty{tbb}, we may argue as in the case $p=\ty{bb}$.

\textbf{Case} $p=\ty{tbtx}$. 
We consider two subcases. 
If $O$ has length at least five, then we may argue as in the case $p=\ty{btx}$, since \ty{btx} is a suffix of \ty{tbtx}.
Otherwise $O$ has length four. 
Then the existence of the leaf-bag $B_\ell$ entails that the edge $e_4$
is bold, and so this case is excluded as~$O$ does not have type~$\ty{btbt}$. 

\textbf{Case} $p=\ty{ttx}$.
Here we may argue similarly as in the case $p=\ty{btx}$.
\end{proof}

\begin{lem}\label{is_nontrivial}
Assume \autoref{X}.
If $O$ has length at least four and does not have type \ty{btbt}, then the tri-star of $\Acal$ consists of strong nontrivial tri-separations.
\end{lem}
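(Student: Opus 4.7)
The plan is to verify, for every leaf $\ell$ of $\Acal$ with leaf-bag $B_\ell$ and adhesion set $\{a_1,a_2\}$, that the induced pseudo-reduction $(X,Y)$ is a strong nontrivial tri-separation. I would begin with a short case analysis on whether $v\in X$ (equivalently, whether $v$ has at least two neighbours in $B_\ell$) and on which of $a_1,a_2$ is edgy. A direct calculation, using \autoref{v has neighbours} to ensure that the unique neighbour of $v$ in $B_\ell$ (in the case $v\notin X$) lies in $B_\ell\sm V(O)$, shows that in all cases $|S(X,Y)|=3$ and that each vertex of $S(X,Y)$ is either $v$ or a non-edgy $a_i$.

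For the tri-separation and strong properties, the key inputs are \autoref{make_nice} and 3-connectivity of $G$. If $v\in S(X,Y)$, then $v$ has at least two neighbours in $B_\ell\se X$ by the case hypothesis, and at least two neighbours outside $B_\ell$ by \autoref{make_nice}; the latter lie in $Y$ since edgy vertices are never neighbours of $v$ by \autoref{edgySituation}(1). If a non-edgy $a_i$ lies in $S(X,Y)$, then 2-connectivity of $B_\ell$ on at least three vertices gives at least two neighbours in $B_\ell\se X$; being not edgy forbids having exactly one neighbour outside $B_\ell$, while having none outside would make $\{a_{3-i},v\}$ a 2-separator of $G$, contradicting 3-connectivity, so $a_i$ has at least two neighbours outside $B_\ell$, all of which lie in $Y$. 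Hence every vertex in $S(X,Y)$ has degree at least four in $G$ and at least two neighbours on each of the two sides.

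For nontriviality, $G[X]\supseteq G[B_\ell]$ is 2-connected on at least three vertices and thus contains a cycle. For a cycle in $G[Y]$ I would split on whether $\Acal$ has more than one leaf-bag. If there is a second leaf-bag $B_{\ell'}$, then \autoref{edgySituation}(2) forbids an edgy $a_i$ of $\ell$ from lying in the adhesion set of $\ell'$ (otherwise both $O$-edges at $a_i$ would be bold), so $V(B_{\ell'})\se Y$ and any cycle of the 2-connected $B_{\ell'}$ lies in $G[Y]$. If $\ell$ is the only leaf of $\Acal$, then $a_1a_2$ is the unique bold edge of $O$, so $O\sm\{a_1a_2\}$ is a path using only timid (hence $G$-present) edges whose vertex set $V(O)\sm\{a_1,a_2\}$ lies in $Y$; two distinct neighbours $u_1,u_2$ of $v$ on that set, supplied by \autoref{make_nice}, together with the subpath of $O$ from $u_1$ to $u_2$ then close up to a cycle in $G[Y]$. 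The main obstacle is exactly this one-leaf case, where one must manufacture a cycle in $Y$ without a second 2-connected leaf-bag to draw on; the hypothesis that $O$ does not have type $\ty{btbt}$ enters essentially here, since that is precisely what makes \autoref{make_nice} applicable and secures the two outside neighbours of $v$.
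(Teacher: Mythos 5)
Your proposal is correct. For the \emph{strong tri-separation} part it follows essentially the paper's route: Lemma~\ref{make_nice} gives $v$ its two outside neighbours, and a non-edgy $a_i$ has at least two neighbours outside $B_\ell$; you in fact spell out a step the paper compresses, namely that a non-edgy $a_i$ cannot have \emph{zero} neighbours outside $B_\ell$, which you rule out via the $2$-separator $\{a_{3-i},v\}$. (You could equally invoke \autoref{forUniqueReduction} or just the definition of a mixed-separation, but your argument is fine.)

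Where you genuinely diverge is nontriviality. The paper notes $G[X]\supseteq G'[B_\ell]$ contains a cycle, and then appeals to \autoref{trivial}: for a tri-separation of a $3$-connected graph, one side acyclic forces an atomic cut, hence a singleton side. Since $X$ is not a singleton, the only way $(X,Y)$ could be trivial is $|Y|=1$, which is excluded by the purely numerical fact $|Y\sm X|\geq 2$ coming from $|O|\geq 4$. You instead construct an explicit cycle in $G[Y]$, splitting on whether $\Acal$ has one leaf or several, and your one-leaf construction via two neighbours of $v$ on $O\sm\{a_1a_2\}$ is sound. The paper's route buys a cleaner argument that avoids exactly the one-leaf subcase you flag as the main obstacle, since \autoref{trivial} replaces the cycle-construction in $Y$ with a vertex count; your route is more hands-on but also more work. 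One small inaccuracy in your closing remark: the hypothesis that $O$ does not have type $\ty{btbt}$ is not specifically what powers the one-leaf case (there it is vacuous, as a one-leaf $O$ has only one bold edge); its real role is to make \autoref{make_nice} applicable in the strong case $u=v$, which you use earlier.
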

\begin{proof}
Let $(X,Y)$ be an arbitrary pseudo-reduction in the tri-star of $\Acal$, induced by a leaf~$\ell$ of~$S$ (if no such $(X,Y)$ exists, then the tri-star is empty and we are done).
Let us denote the adhesion set of the leaf-bag~$B_\ell$ by~$\{a_1,a_2\}$.
We claim that very vertex $u\in S(X,Y)$ has degree at least four in~$G$ and has at least two neighbours in both~$X$ and~$Y$.

\textbf{Case~1:} $u=a_i$ for some $i\in\{1,2\}$.
Since $G'[B_\ell]$ is 2-connected, $a_i$ has at least two neighbours in~$B_\ell\se X$.
As $a_i$ lies in~$S(X,Y)$, it is not edgy, so it has at least two neighbours in $V(G)\sm B_\ell\se Y$.
As these neighbours are distinct, $a_i$ has degree at least four in~$G$.

\textbf{Case~2:} $u=v$.
Since $v$ lies in $S(X,Y)$, it has at least two neighbours in~$B_\ell\se X$.
By \autoref{make_nice}, $v$~has two neighbours in $V(G)\sm B_\ell\se Y$.

Therefore, $(X,Y)$ is a strong tri-separation.
It remains to show that $(X,Y)$ is nontrivial. 
Since $G'[B_\ell]$ is 2-connected, it contains a cycle, which is included in~$G[X]$.
To see that $G[Y]$ contains a cycle, by \autoref{trivial} it suffices to show that $|Y\sm X|\ge 2$, 
which follows from $O$ having length at least four. 
\end{proof}

\begin{proof}[Proof of \autoref{ApexGeneral}]
Assume \autoref{X}.
Further suppose that $O$ has length at least four and that $O$ does not have type~\ty{btbt}.
We have to show that the tri-star of~$\Acal$ consists of totally-nested strong nontrivial tri-separations.
By \autoref{is_nontrivial}, the tri-star of~$\Acal$ consists of strong nontrivial tri-separations.
So it remains to show that these are totally nested.

Let $(X,Y)$ be a pseudo-reduction in the tri-star of~$\Acal$.
Let $\ell$ be the leaf of~$S$ which induces~$(X,Y)$, and let $B_\ell$ denote the leaf-bag of~$\Acal$ assigned to~$\ell$.
Let $\{a_1,a_2\}$ denote the adhesion set of~$B_\ell$.
By definition, $X\cap Y$ is a subset of~$\{v,a_1,a_2\}$.

\begin{sublem}\label{sublemvinsep}
If the separator of $(X,Y)$ contains~$v$, then it is externally tri-connected around~$v$.
\end{sublem}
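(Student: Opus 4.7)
My plan is to classify $S(X,Y)$ by how many of $a_1,a_2$ are edgy, use 2-connectivity of the leaf-bag $B_\ell$ to produce two internally disjoint paths, and then complete the verification of external tri-connectivity around~$v$ case by case. To unpack the hypothesis: since $v\in S(X,Y)$, the pseudo-reduction construction forces $v$ to have at least two neighbours in $B_\ell$, so $v\in X\cap Y$. The remaining two elements of $S(X,Y)$ are the vertex $a_i$ whenever $a_i$ is not edgy, and otherwise the edge $a_i a_i'$ where $a_i'$ is the unique neighbour of $a_i$ outside $B_\ell$; in the latter case $a_i a_i'$ is a timid edge on $O$ by \autoref{edgySituation}, and $a_i$ itself is $v$-free since $a_iv\notin E(G)$ by \autoref{edgySituation}(1).

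The workhorse is that since $\Acal$ is 2-connected, $G[B_\ell]$ is a 2-connected graph with at least three vertices, so Menger's theorem yields two internally disjoint $a_1$--$a_2$ paths $P_1,P_2$ in $G[B_\ell]$. These paths automatically avoid $v$ (as $v\notin B_\ell$) and both edges $a_1 a_1'$ and $a_2 a_2'$ (as $a_1',a_2'\notin B_\ell$).

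I then dispatch the easy shapes of $S(X,Y)$. If $a_1$ is edgy and $a_2$ is not, criterion~$(\dotminus)$ around $v$ asks for a $v$-free endvertex of $a_1 a_1'$ together with two internally disjoint $a_2$--$y$ paths in $G-v-(a_1 a_1')$; the choice $y:=a_1$ works (since $a_1$ is $v$-free) with witnesses $P_1,P_2$. If both $a_1,a_2$ are edgy, criterion~$(=)$ is verified analogously by $y_1:=a_1$, $y_2:=a_2$ and the same pair $P_1,P_2$, which lie inside $G-v-(a_1 a_1')-(a_2 a_2')$.

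The one case that genuinely needs something beyond $B_\ell$, and hence the main obstacle, is criterion~$(:)$ when neither $a_i$ is edgy, so $S(X,Y)=\{v,a_1,a_2\}$: here I must exhibit a third $a_1$--$a_2$ path in $G-v$ internally disjoint from $P_1,P_2$. I would construct it by traversing $O$ along the arc obtained by deleting the bold edge $a_1 a_2$: timid edges of this arc are used directly, and each bold edge $uw$ on it is traversed via a $u$--$w$ path inside its 2-connected leaf-bag. This arc has length at least $3$ because $|O|\ge 4$, and the resulting walk avoids $v$ by construction. Since the leaf-bags of $\Acal$ have pairwise distinct adhesion sets and share vertices only within the central bag $V(O)$, this walk meets $B_\ell$ only in $\{a_1,a_2\}$ and is therefore internally disjoint from $P_1,P_2$, yielding the three internally disjoint paths required by~$(:)$.
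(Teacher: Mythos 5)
Your proposal is correct and takes essentially the same route as the paper: split on whether $a_1,a_2$ lie in $S(X,Y)$ (equivalently, on which of them are edgy), use two internally disjoint $a_1$--$a_2$ paths in the 2-connected leaf-bag $G'[B_\ell]$ to witness criterion~$(\dotminus)$ or~$(=)$ with $v$-freeness of edgy vertices supplied by \autoref{edgySituation}, and in the criterion-$(:)$ case append a third $a_1$--$a_2$ path built from $O-a_1a_2$ by detouring through the leaf-bags at bold edges. Your write-up is simply more explicit about the case distinctions and the disjointness bookkeeping.
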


\begin{cproof}
We assume~$v\in S(X,Y)$.
The vertices~$a_1,a_2$ either lie in the separator 
of~$(X,Y)$ or are $v$-free.
If a vertex~$a_i$ is not in~$S(X,Y)$, then~$S(X,Y)$ contains the edge on $O$ that joins~$a_i$ to its neighbour on~$O$ other than $a_{3-i}$.
So if at least one of~$a_1$ and~$a_2$ is not in $S(X,Y)$, then the two internally disjoint $a_1$--$a_2$ paths through~$G'[B_\ell]$ provided by 2-connectedness witness that $S(X,Y)$ is externally tri-connected around~$v$, according to criterion~($=$) or~($\dotminus$).
It remains to consider the case where $S(X,Y)$ contains both $a_1$ and~$a_2$.
Then, to satisfy criterion~(:), we accompany the two $a_1$--$a_2$ paths through~$G'[B_\ell]$ with a third path, internally disjoint from the former two, which we obtain from the $a_1$--$a_2$ path 
$O-a_1 a_2$ by 
replacing torso-edges with detours through their corresponding leaf-bags if necessary. 
\end{cproof}

\begin{sublem}\label{sublem7}
If the separator of~$(X,Y)$ contains an~$a_i$, then it is externally tri-connected around~$a_i$.
\end{sublem}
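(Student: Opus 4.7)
The plan is to verify external tri-connectivity of $S(X,Y)$ around $a_i$ by case-splitting on how the other two elements of $S(X,Y)$ are represented. The pseudo-reduction forces this shape: $v$ lies in $S(X,Y)$ iff $v$ has at least two neighbours in $B_\ell$, and otherwise $v$ is replaced by the edge $vw$ where $w$ is its unique neighbour in $B_\ell$; and $a_{3-i}$ lies in $S(X,Y)$ iff it is not edgy, and otherwise it is replaced by the timid edge $a_{3-i}a_{3-i}'$ on $O$. This yields four configurations, for each of which I will verify the appropriate one of the criteria~(:), ($\dotminus$), ($=$), drawing paths from \autoref{find 3 paths} and \autoref{find 2 paths}.

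Concretely, when both $v,a_{3-i}\in S(X,Y)$, criterion~(:) follows by applying \autoref{find 3 paths} to $a_{3-i}$ (not edgy, since it lies in the separator), which yields either the edge $v a_{3-i}$ or three internally disjoint $v$--$a_{3-i}$ paths avoiding $a_i$. When $v\in S(X,Y)$ but $a_{3-i}$ is edgy, criterion~($\dotminus$) is witnessed with $y:=a_{3-i}'$ by the two internally disjoint $v$--$a_{3-i}'$ paths in $G$ that avoid $B_\ell$ provided by \autoref{find 2 paths}; these automatically avoid $a_i$ and the edge $a_{3-i}a_{3-i}'$ because both have an endvertex in $B_\ell$. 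When $a_{3-i}\in S(X,Y)$ but $v$ has exactly one neighbour $w$ in $B_\ell$, criterion~($\dotminus$) is witnessed with $y:=v$: by \autoref{find 3 paths} applied to $a_{3-i}$, either $va_{3-i}$ is an edge (impossible since $a_{3-i}\neq w$) or there are three internally disjoint $a_{3-i}$--$v$ paths avoiding $a_i$, at most one of which enters $v$ through the edge $vw$; the remaining two paths give what is needed. Finally, when both $v$ and $a_{3-i}$ are replaced by edges, criterion~($=$) is witnessed by $y_1:=v$ and $y_2:=a_{3-i}'$ together with the two paths supplied by \autoref{find 2 paths}, which avoid both deleted edges since they miss all of $B_\ell$.

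The one non-routine point is checking $a_i$-freeness of the chosen endvertices. For $v$ this is immediate because \autoref{v has neighbours} forces $v$'s unique neighbour $w$ in $B_\ell$ to lie in $B_\ell\setminus V(O)$, so $w\neq a_i$ and hence $v$ is not adjacent to $a_i$. For $a_{3-i}'$ it relies on the fact that, since $|O|\geq 4$, the vertex $a_{3-i}'$ lies in a leaf-bag of $\Acal$ different from $B_\ell$, and in $G-v$ distinct leaf-bags of~$\Acal$ communicate only through bold edges of $O$; no such edge connects $a_{3-i}'$ to $a_i$, and adding $v$ back does not create an $a_i$--$a_{3-i}'$ edge. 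Beyond this verification, the argument is essentially a matter of matching the right criterion and the right lemma to the right configuration, with the assumption that $O$ is not of type \ty{btbt} inherited from \autoref{X} as needed for \autoref{find 2 paths}.
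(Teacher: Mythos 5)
Your proposal is correct and follows essentially the same route as the paper's own proof: a case split on whether $v$ and $a_{3-i}$ survive in $S(X,Y)$ or are replaced by incident edges, then applying \autoref{find 3 paths} when $a_{3-i}$ is in the separator and \autoref{find 2 paths} when it is edgy, matching each configuration to the corresponding criterion~(:), ($\dotminus$) or~($=$). The only differences are organisational (four explicit sub-cases rather than the paper's two cases with sub-cases) plus the fact that you make the $a_i$-freeness of $v$ and $a_{3-i}'$ explicit where the paper leaves it partly implicit; the one small wobble in your write-up is the claim that $a_{3-i}'$ ``lies in a leaf-bag different from $B_\ell$'', which need not hold, but the conclusion you actually need -- that $a_i$ and $a_{3-i}'$ are non-adjacent because they are at distance two on~$O$ and any edge of $G-v$ between $O$-vertices is an edge of~$O$ or a bold adhesion edge -- is correct.
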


\begin{cproof}
Suppose that $a_2\in S(X,Y)$, say.
We distinguish two cases.

{\bf Case~1:} the vertex~$a_1$ lies in the separator of~$(X,Y)$ as well.
Then $a_1$ is not edgy and \autoref{find 3 
paths} either yields three 
internally disjoint $a_1$--$v$ paths avoiding $a_{2}$ or that $a_1 v$ is an edge in~$G$. 
So~if~$S(X,Y)$ contains~$v$, it is externally tri-connected around~$a_2$ by criterion~(:).
Otherwise, $v$~is not in $S(X,Y)$,
and the unique neighbour $u$ of $v$ in $B_\ell$ is distinct from $a_1$ and $a_2$ by \autoref{v has neighbours},
so $v$ is $a_2$-free.
If there exist three internally disjoint $a_1$--$v$ paths avoiding~$a_2$, at least two paths also avoid the edge $uv\in S(X,Y)$; or $a_1v$ is an edge; so~$S(X,Y)$ is externally tri-connected around~$a_2$ by criterion~($\dotminus$).
 
{\bf Case~2:} not Case~1.
Then instead of the vertex $a_1$, the edge $a_1 a_1'$ lies in $S(X,Y)$, where $a_1'$ denotes the unique neighbour of $a_1$ in $G$ outside $B_\ell$.
Note that $a_1'\in O$.
By assumption, $O$ has length at least four and does not have type \ty{btbt}.
So by \autoref{find 2 paths},
there are two internally 
disjoint paths from $a_1'$~to~$v$ in~$G$ that avoid~$B_\ell$.
If~$S(X,Y)$ contains~$v$, the two paths witness that $S(X,Y)$ is externally tri-connected around~$a_2$ by criterion~($\dotminus$).
So we may assume that $v$ is not in $S(X,Y)$, so $S(X,Y)$ instead contains the edge $uv$ where $u\in B_\ell\sm\{a_1,a_2\}$ is the unique neighbour of $v$ in $B_\ell$.
Hence $v$ is $a_2$-free.
Since the vertex~$u$ lies in~$B_\ell$, it is avoided by both paths.
As~$v$ is $a_2$-free, the two paths witness that~$S(X,Y)$ is externally tri-connected around~$a_2$ by criterion~($=$).
\end{cproof}\medskip

By \autoref{sublemvinsep} and \autoref{sublem7}, $S(X,Y)$ is externally tri-connected.
Since $(X,Y)$ also is half-connected, \autoref{is_nested} gives that $(X,Y)$ is totally nested.
\end{proof}

\begin{proof}[Proof of \autoref{is_splitting_star}]
We combine \autoref{tiny-cases}, \autoref{is_splitting_star_case3}, \autoref{o4_not_alternate} and \autoref{ApexGeneral}.
\end{proof}

\begin{proof}[Proof of the \nameref{Angry} (\ref{Angry})]
Let us assume for a contradiction that there exists a 3-connected graph~$G$ that fails all three outcomes of \autoref{Angry}, that is: all nontrivial tri-separations of~$G$ are crossed; $G$~is neither a wheel nor a $K_{3,n}$ for any~$n\ge 3$; and~$G$ is not internally 4-connected.
Then $G$ has a nontrivial strong tri-separation $(A,B)$ by \autoref{tetraXtrisep}, which is half-connected by \autoref{conjunction}.
By assumption, the tri-separation $(A,B)$ is crossed by another tri-separation $(C,D)$.
The tri-separation $(C,D)$ is nontrivial by \autoref{trivial_is_nested}.
By \autoref{dasBesteLemma} and using that $G$ is not a wheel such as~$K_4$, all four links have size one, and the centre consists of a single vertex~$v$.
By \autoref{apex_exists}, $G$~has a 2-connected apex-decomposition $\Acal$ with centre~$v$, such that $(A,B)$ and $(C,D)$ interlace the tri-star of~$\Acal$, and such that the central torso-cycle of~$\Acal$ alternates between $S(A,B)-v$ and $S(C,D)-v$.
As~$G$ is not a wheel, $\Acal$ has at least one leaf-bag, and so the tri-star of $\Acal$ is non-empty.
By \autoref{is_splitting_star}, the tri-star of~$\Acal$ consists of totally-nested nontrivial tri-separations, contradicting our assumption that all nontrivial tri-separations of~$G$ are crossed.
\end{proof}

\clearpage
\renewcommand{\thechapter}{2}
\phantomsection
\noindent{\huge{\textbf{Chapter \thechapter \\ \\Decomposing 3-connected graphs}}}
\addcontentsline{toc}{section}{\thechapter\ Decomposing 3-connected graphs}

\setcounter{section}{0}

\section{Overview of this chapter}

In this chapter, we prove the main result of the paper, \autoref{mainIntro}.
The proof of \autoref{mainIntro} offers additional structural insights, which lead us to a refinement of \autoref{mainIntro} that comes in the form of \autoref{univ_3sepr}.

This chapter is organised as follows.
In the next section we introduce the notation we need to then state \autoref{univ_3sepr}.
Like \autoref{mainIntro}, this theorem will have three possible outcomes for the torsos, and we devote a section to the analysis of each possible outcome.

\section{Basics}\label{sec:basicsForDecomp}

\subsection{Generalised wheels}

\begin{figure}[ht]
    \centering
    \includegraphics[height=8\baselineskip]{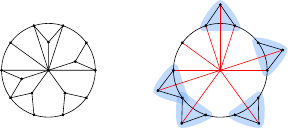}
    \caption{A concrete generalised wheel (left) and its apex-decomposition (right), where leaf-bags are indicated in blue and the centre plus its incident edges are red}
    \label{fig:genWheel}
\end{figure}

The following definitions are supported by \autoref{fig:genWheel}.
A \emph{$Y$-graph} is a 3-star~$K_{1,3}$ and the set of its 3 leaves is referred to as its \emph{attachment set}.
A \emph{concrete generalised wheel} is a triple $(W,O,v)$ where $W$ is a graph obtained from a cycle $O$ and a vertex~$v$ not on~$O$ by doing the following, subject only to the condition that the resulting graph has minimum degree three:
\begin{enumerate}
    \item for every vertex on~$O$, we may (but need not) join it to~$v$, and
    \item for every edge $xy$ on~$O$, we may (but need not) disjointly add a $Y$-graph and identify its attachment set with $\{x,y,v\}$.
\end{enumerate}
We refer to $O$ as the \emph{rim} of this concrete generalised wheel, and we refer to~$v$ as its \emph{centre}.
For convenience, we write $W$ instead of~$(W,O,v)$, and refer to $W$ as a concrete generalised wheel by a slight abuse of notation.
Since concrete generalised wheels have minimum degree three, it is straightforward to show that they are 3-connected.
The \emph{length} of a concrete generalised wheel means the length of its rim.

A \emph{generalised wheel} is a triple $(W,\Acal,v)$ where $W$ is a 3-connected graph, $v$~is a vertex of~$W$, and $\Acal$ is an apex-decomposition of~$W$ with centre~$v$ such that all leaf-bags are triangles.
The \emph{rim} of a generalised wheel is the cycle that is given by the torso of the central bag of the apex-decomposition.
The \emph{length} of a generalised wheel means the length of its rim.
The vertex $v$ is its \emph{centre}.
For convenience, we write $W$ instead of $(W,\Acal,v)$ and refer to $W$ as a generalised wheel by a slight abuse of notation.

\begin{lem}\label{make-concrete}
A graph $G$ is a generalised wheel with centre $v$ and rim~$O$ if and only if it is a concrete generalised wheel with centre $v$ and rim~$O$.
\end{lem}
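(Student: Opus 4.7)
The plan is to establish the equivalence by directly translating the structure of one description into the other. For the concrete-to-abstract direction, given a concrete generalised wheel~$(G,O,v)$, I~would define a candidate apex-decomposition~$\Acal$ of~$G-v$ as follows: the central bag is~$V_c:=V(O)$, and for each edge~$xy\in E(O)$ that carries a $Y$-graph with new vertex~$w_{xy}$, there is a leaf-bag~$\{x,y,w_{xy}\}$. The tree-decomposition axioms (T1) and (T2) are immediate from the disjointness of the added $Y$-graphs, the adhesion is visibly two, and the adhesion sets are pairwise distinct because distinct $Y$-graphs sit on distinct edges of~$O$. Since no chord of~$O$ was introduced between rim vertices, $G[V_c]$ is a subgraph of~$O$, and consequently the central torso (obtained by adding a torso-edge for every adhesion set) is exactly~$O$. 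Each leaf-bag induces a triangle in~$G-v$ because $xy\in E(O)$ while~$xw_{xy}$ and~$yw_{xy}$ are edges of the $Y$-graph. Combined with the 3-connectedness of~$G$, which the paper flags as straightforward from the minimum-degree-three condition, this shows that $(G,\Acal,v)$ is a generalised wheel with rim~$O$ and centre~$v$.

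For the abstract-to-concrete direction, start with a generalised wheel~$(G,\Acal,v)$ with rim~$O$. First I~would observe that $V_c=V(O)$, and that no vertex of $G-v$ outside the central bag lies on~$O$: indeed, each leaf-bag is a triangle~$\{x,y,w\}$ meeting $V_c$ precisely in its adhesion set~$\{x,y\}$, so its \emph{third vertex}~$w$ lies outside~$V_c$, and by axiom~(T2) lies in no other bag. Since $G[V_c]$ is a subgraph of the central torso~$O$, no chord of~$O$ exists in~$G-v$. Each third vertex~$w$ therefore has exactly two neighbours in~$G-v$, namely~$x$ and~$y$; as $G$ is 3-connected, $w$~must additionally be adjacent to~$v$. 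Hence every leaf-bag contributes precisely a $Y$-graph with attachment set~$\{x,y,v\}$ and new vertex~$w$. I~then recover the concrete description by starting from the disjoint union of~$O$ and~$\{v\}$, adding the edge~$vx$ for each rim vertex~$x$ that is adjacent to~$v$ in~$G$, and attaching a $Y$-graph with attachment set~$\{x,y,v\}$ for each leaf-bag~$\{x,y,w\}$ of~$\Acal$.

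The main obstacle is the careful bookkeeping needed to confirm that every edge of~$G$ is captured exactly once by one of the operations of the concrete construction. For edges not incident with~$v$, one uses axiom~(T1) to locate each edge in a bag and then the inclusion $G[V_c]\se O$ to rule out any non-rim edges inside the central bag. For edges incident with~$v$, the other endpoint is either on~$O$ (handled by operation~(1) of the concrete definition) or is a third vertex of some leaf-bag triangle (handled by operation~(2)). The only genuinely non-formal step is the forced adjacency~$vw$ for each third vertex~$w$, which relies on the 3-connectedness of~$G$; the rest is a direct translation between the two forms of the data.
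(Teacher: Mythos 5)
Your argument is correct and follows essentially the same route as the paper: the only substantive ingredient in the abstract-to-concrete direction is that 3-connectedness of~$W$ forces the degree-two vertices of~$W-v$ to be adjacent to~$v$, which is precisely the observation recorded in the paper's one-line proof. Your forward direction carefully verifies what the paper dismisses as \lq clear\rq, but there is no difference in approach.
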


\begin{proof}
Clearly, every concrete generalised wheel is a generalised wheel with the same rim and centre. 
As generalised wheels $W$ are 3-connected, the centre is adjacent to every vertex that has degree two in $W-v$, which implies that $W$ has the structure of a concrete generalised wheel with the same rim and centre. 
\end{proof}

\subsection{Splitting stars}

Recall that a set $\sigma=\{\,(A_i,B_i): i\in I\,\}$ of (oriented) mixed-separations of~$G$ is a \emph{star} with \emph{leaves}~$A_i$ if $(A_i,B_i)< (B_j,A_j)$ for all distinct indices~$i,j\in I$.
We have seen in \autoref{starEG} that these stars naturally correspond to star-decompositions if they consist of separations only.

Let $S$ be a set of mixed-separations of~$G$.
A star $\sigma=\{\,(A_i,B_i): i\in I\,\}\se S$ with leaves~$A_i$ is \emph{splitting} if for every $(C,D)\in S$ there is $i\in I$ with either $(C,D)\le (A_i,B_i)$ or $(D,C)\le (A_i,B_i)$.

\begin{eg}
Let $(T,\Vcal)$ be a tree-decomposition of~$G$, and let $S$ denote the set of induced separations of~$(T,\Vcal)$.
For every node $t\in T$, let $\sigma_t$ denote the star of separations induced by the edges of~$T$ incident with~$t$ and directed to~$t$.
The splitting stars of~$S$ are precisely the stars $\sigma_t$ with~$t\in T$.
\end{eg}

\begin{lem}\label{splitVsInterlace}
Let $N$ be a nested set of mixed-separations of a graph~$G$, and let $\sigma\se N$ be a star.
Then the following assertions are equivalent:
\begin{enumerate}
    \item $\sigma$ is a splitting star of~$N$;
    \item no element of~$N$ interlaces~$\sigma$.
\end{enumerate}
\end{lem}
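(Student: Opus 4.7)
The plan is to prove the contrapositive equivalence: $\sigma$ fails to be a splitting star of $N$ if and only if some element of $N$ interlaces $\sigma$. Write $\sigma=\{\,(A_i,B_i):i\in I\,\}$ with leaves $A_i$, and consider an arbitrary $(E,F)\in N$. Because $N$ is nested, $(E,F)$ and $(A_i,B_i)$ are nested for every $i\in I$, so for each $i$ at least one of the four relations
\[
(E,F)\le (A_i,B_i),\quad (F,E)\le (A_i,B_i),\quad (A_i,B_i)\le (E,F),\quad (A_i,B_i)\le (F,E)
\]
holds. The splitting condition captures $(E,F)$ iff one of the first two holds for some $i$; interlacing says that (strictly) one of the last two holds for every $i$.

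For the direction ``not splitting $\Rightarrow$ interlacing,'' suppose $(E,F)\in N$ is not captured. Then for every $i\in I$, neither $(E,F)\le (A_i,B_i)$ nor $(F,E)\le (A_i,B_i)$ holds, so by nestedness one of the last two relations holds. If $(A_i,B_i)=(E,F)$ or $(A_i,B_i)=(F,E)$, then $(A_i,B_i)$ itself would witness capture, contradicting the choice of $(E,F)$; hence the inequality is strict, i.e.\ $(A_i,B_i)<(E,F)$ or $(A_i,B_i)<(F,E)$ for every $i$. That is exactly the definition of $(E,F)$ interlacing $\sigma$.

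For the direction ``interlacing $\Rightarrow$ not splitting,'' suppose $(E,F)\in N$ interlaces $\sigma$, and assume for contradiction that $(E,F)\le (A_j,B_j)$ or $(F,E)\le (A_j,B_j)$ for some $j\in I$. Combining the relevant side with the strict inequality $(A_j,B_j)<(E,F)$ or $(A_j,B_j)<(F,E)$ coming from interlacing yields one of four cases. Two cases, such as $(A_j,B_j)<(E,F)\le (A_j,B_j)$, are immediately impossible by antisymmetry of $\le$. The remaining ``mixed'' cases are the main (mildly delicate) obstacle: for instance $(A_j,B_j)<(F,E)$ together with $(E,F)\le (A_j,B_j)$ would give $E\subseteq A_j\subsetneq F$, hence $E\subsetneq F$; but then $E\setminus F=\emptyset$, contradicting that $(E,F)$ is a mixed-separation. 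The symmetric mixed case is handled identically. Hence no such $j$ exists, so $(E,F)$ witnesses that $\sigma$ is not splitting, completing the proof.
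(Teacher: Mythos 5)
Your proof is correct and follows essentially the same route as the paper's: both directions reduce, via nestedness, to the same four-way case analysis, with two cases killed by antisymmetry of $\le$ and the two ``mixed'' cases killed because they would force $E\subseteq F$ or $F\subseteq E$, contradicting that $(E,F)$ is a mixed-separation (the paper phrases this as $A\subseteq B$ or $B\subseteq A$). The only cosmetic difference is that you argue both directions via contrapositives, while the paper argues both by direct contradiction.
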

\begin{proof}
(1)$\to$(2). 
Let $\sigma$ be a splitting star of~$N$, and assume for a contradiction that $(C,D)\in N$ interlaces~$\sigma$.
Then there is $(A,B)\in\sigma$ such that $(C,D)\le (A,B)$ or $(D,C)\le (A,B)$. But we also have $(A,B)<(C,D)$ or $(A,B)<(D,C)$ since $(C,D)$ interlaces~$\sigma$.
In two cases we obtain immediate contradictions, and in the other two cases we obtain $C\se D$ or $D\se C$ which contradicts the definition of a separation.

(2)$\to$(1). Assume that no element of~$N$ interlaces~$\sigma$; we show that $\sigma$ is a splitting star of~$N$.
Let $(C,D)\in N$, and assume for a contradiction that there is no $(A,B)\in\sigma$ such that $(C,D)\le (A,B)$ or $(D,C)\le (A,B)$.
Then, since $N$ is nested, for every $(A,B)\in\sigma$ we have $(A,B)< (C,D)$ or $(A,B)<(D,C)$, so $(C,D)$ interlaces~$\sigma$.
\end{proof}

\begin{lem}\label{splittingStarsDist}
Let $M$ be a nested set of mixed-separations of a graph~$G$, and let $\sigma$ and $\tau$ be two distinct splitting stars of~$M$.
Then there are $(A,B)\in\sigma$ and $(C,D)\in\tau$ such that $(B,A)\le (C,D)$.
\end{lem}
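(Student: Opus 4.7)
The plan is to run a two-step pursuit between $\sigma$ and $\tau$, using the splitting property of each in turn. Since $\sigma\neq\tau$, by symmetry I may pick some $(A,B)\in\sigma\setminus\tau$. Applying the splittingness of $\tau$ to the element $(A,B)\in M$, I obtain $(C,D)\in\tau$ with either $(A,B)\le(C,D)$ or $(B,A)\le(C,D)$. In the latter alternative the pair $((A,B),(C,D))$ already witnesses the conclusion, so the whole argument reduces to the case $(A,B)\le(C,D)$.

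In that remaining case, I would apply the splittingness of $\sigma$ to $(C,D)\in M$ and obtain $(A',B')\in\sigma$ with $(C,D)\le(A',B')$ or $(D,C)\le(A',B')$. The second alternative rewrites as $(B',A')\le(C,D)$, which is exactly the desired conclusion with $(A',B')\in\sigma$ and $(C,D)\in\tau$. So everything reduces to ruling out the first alternative $(C,D)\le(A',B')$. Chaining it with $(A,B)\le(C,D)$ yields $(A,B)\le(A',B')$. If $(A,B)=(A',B')$, then $(A,B)=(C,D)\in\tau$, contradicting the choice of $(A,B)$. Otherwise $(A,B)$ and $(A',B')$ are distinct members of the star $\sigma$, so the star property gives $(A,B)\le(B',A')$ as well. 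Combined with $(A,B)\le(A',B')$ this forces $A\subseteq A'\cap B'$ and $B\supseteq A'\cup B'=V(G)$, hence $B=V(G)$ and $A\setminus B=\emptyset$, contradicting that $(A,B)$ is a mixed-separation.

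The main obstacle is simply bookkeeping: keeping straight which side is the leaf-side at each step and ensuring the inequalities are oriented in the direction the splitting condition actually provides. The only substantive fact beyond unwinding definitions is the elementary observation that a mixed-separation $(A,B)$ cannot satisfy both $(A,B)\le(X,Y)$ and $(A,B)\le(Y,X)$ (which would force $B=V(G)$), and this is what rules out the unwanted sub-case. Once this is pinned down, the proof is a clean two-hop argument of constant length, not depending on any further structure of $M$.
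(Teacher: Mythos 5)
Your proof is correct and follows essentially the same two-hop strategy as the paper's: apply the splitting property of $\tau$ to an element of $\sigma$, then the splitting property of $\sigma$ to the resulting element of $\tau$, and close the loop using the star property and the fact that a mixed-separation cannot satisfy $(A,B)\le(X,Y)$ and $(A,B)\le(Y,X)$ simultaneously. The one place you are more careful than the paper is the degenerate subcase $(A,B)=(A',B')$: the paper's chain $(X,Y)\le(C,D)\le(A,B)\le(Y,X)$ silently uses the star inequality between $(A,B)$ and the initially chosen element, which is only available when they are \emph{distinct} elements of $\sigma$; your choice of $(A,B)\in\sigma\setminus\tau$ at the outset, together with the observation that equality would force $(A,B)=(C,D)\in\tau$, handles this cleanly.
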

\begin{proof}
Let $(X,Y)\in\sigma$ be arbitrary.
Since $\tau$ is a splitting star, there is $(C,D)\in\tau$ such that $(X,Y)\le (C,D)$ or $(Y,X)\le (C,D)$.
In the latter case, we put $(A,B):=(X,Y)$ and are done.
In the former case, we use that $\sigma$ is a splitting star to find $(A,B)\in\sigma$ such that $(C,D)\le (A,B)$ or $(D,C)\le (A,B)$.
It suffices to derive a contradiction from $(C,D)\le (A,B)$.
Indeed, then $(X,Y)\le (C,D)\le (A,B)\le (Y,X)$ gives $X\se Y$, contradicting that $(X,Y)$ is a mixed-separation.
\end{proof}

\begin{lem}\label{unique_split}
Given a nested set of mixed-separations $M$ of a graph~$G$, a mixed-separation of~$G$ interlaces at most one splitting star of~$M$.
\end{lem}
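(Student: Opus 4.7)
The plan is to argue by contradiction: I will assume that a single mixed-separation $(A,B)$ interlaces two distinct splitting stars $\sigma,\tau$ of $M$, and derive a contradiction by combining the interlacement inequalities with the content of \autoref{splittingStarsDist}.

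First, since $\sigma\neq\tau$, \autoref{splittingStarsDist} supplies some $(X,Y)\in\sigma$ and $(C,D)\in\tau$ with $(Y,X)\le(C,D)$; that is, $Y\subseteq C$ and $X\supseteq D$. Because $(A,B)$ interlaces $\sigma$, we have either $(X,Y)<(A,B)$ or $(X,Y)<(B,A)$, and by swapping the labels $A$ and $B$ if necessary I may assume the former, so $X\subseteq A$, $Y\supseteq B$, with at least one inclusion strict. Because $(A,B)$ also interlaces $\tau$, we have $(C,D)<(A,B)$ or $(C,D)<(B,A)$.

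In the first subcase, $C\subseteq A$ together with $Y\subseteq C$ gives $Y\subseteq A$; combined with $Y\supseteq B$ this yields $B\subseteq A$, which contradicts the definition of a mixed-separation since $B\setminus A$ would be empty. In the second subcase, $C\subseteq B$ together with $Y\subseteq C\subseteq B$ and $Y\supseteq B$ forces $Y=B$, and symmetrically $X\supseteq D\supseteq A$ together with $X\subseteq A$ forces $X=A$; but then $(X,Y)=(A,B)$, contradicting the strict inequality $(X,Y)<(A,B)$ that was assumed.

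Both subcases being impossible, no such pair $\sigma,\tau$ can exist, which is what we wanted to show. The main (and only) subtle point is keeping track of the orientations of the various inequalities and using the strictness in the definition of interlacement to rule out the degenerate equality case; no other machinery beyond \autoref{splittingStarsDist} and the definitions of nestedness and interlacement is needed.
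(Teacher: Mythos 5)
Your proof is correct and is essentially the same argument as the paper's: both invoke \autoref{splittingStarsDist} to obtain $(X,Y)\in\sigma$ and $(C,D)\in\tau$ with $(Y,X)\le(C,D)$, then combine the two interlacement inequalities via transitivity of $\le$ to force either $A\subseteq B$, $B\subseteq A$, or an equality contradicting the strict $<$ in the definition of interlacement. The only difference is cosmetic: you fix the orientation for $\sigma$ by a WLOG swap of $A$ and $B$ and then split on the orientation for $\tau$, whereas the paper carries both orientations symmetrically and discards the two reflexive cases in one stroke.
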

\begin{proof}
Assume for a contradiction that some mixed-separation $(A,B)$ of $G$ interlaces two distinct splitting stars $\sigma_1$ and $\sigma_2$ of~$M$.
By \autoref{splittingStarsDist}, there exist $(C_1,D_1)\in\sigma_1$ and $(C_2,D_2)\in\sigma_2$ such that $(D_1,C_1)\le (C_2,D_2)$.
Since $(A,B)$ interlaces $\sigma_1$ and $\sigma_2$, and since $(A,B)\not < (A,B)$ nor $(B,A)\not < (B,A)$, we either have
\[
    (A,B)<(D_1,C_1)\le (C_2,D_2)<(B,A)\quad\text{or}\quad(B,A)<(D_1,C_1)\le (C_2,D_2)<(A,B).
\]
Then $A\se B$ or $B\se A$, contradicting that $(A,B)$ is a mixed-separation.
\end{proof}

\subsection{Torsos}\label{subsec:Torsos}

A~\emph{mixed-separation\plus } of a graph~$G$ is a pair $(A,B)$ such that $A\cup B=V(G)$ and no two edges in $E(A\sm B,B\sm A)$ share endvertices.
We stress that we allow $A\sm B$ and $B\sm A$ to be empty.
All the usual concepts for mixed-separations extend to mixed-separations\plus\ in the obvious way.

\begin{eg}
All nontrivial mixed 3-separations of a 3-connected graph are mixed 3-separations\plus\ by \autoref{independentEdges}.
Pairs $(A,V(G))$ for $A\se V(G)$ are separations\plus\ but not separations.
\end{eg}

Let $\sigma=\{\,(A_i,B_i): i\in I\,\}$ be a star of mixed-separations\plus\ of a graph~$G$, with leaf-sides~$A_i$.
The \emph{bag} of~$\sigma$ is the intersection $\bigcap_{i\in I}B_i$ of all non-leaf sides~$B_i$.
We follow the convention that the bag of the empty star is equal to the vertex-set of~$G$.

If all $(A_i,B_i)$ are separations\plus , then the \emph{torso} of $\sigma$ is the graph obtained from $G[\beta]$, where $\beta$ is the bag of~$\sigma$, by making every separator $A_i\cap B_i$ into a clique (by adding all possible edges inside $A_i\cap B_i$ for all $i\in I$).
In general, however, there are (at least) two ways how the notion of a torso can be generalised to stars of mixed-separations\plus .
Here we present two ways, supported by \autoref{fig:DifferentTorsos}.

The \emph{compressed torso} of~$\sigma$ is the graph that is obtained from~$G$ by contracting all edges in separators of elements of~$\sigma$, reducing parallel edges to simple ones, and then taking the torso as defined above.
The torsos in \autoref{mainIntro} that were mentioned in the introduction are the compressed torsos.

The \emph{expanded torso} of~$\sigma$ is the graph that is obtained from~$G$ as follows.
We obtain $(A_i',B_i')$ from $(A_i,B_i)\in \sigma$ by letting $A_i':=A_i$ and we obtain $B_i'$ from $B_i$ by adding all endvertices of edges in the separator of $(A_i,B_i)$.
Then $(A_i',B_i')$ is a separation\plus\ with the same order as~$(A_i,B_i)$.
We take the torso of the star $\{\,(A_i',B_i'):i\in I\,\}$ of separations\plus\ as the expanded torso of~$\sigma$.

\begin{figure}[ht]
    \centering
    \includegraphics[height=7\baselineskip]{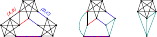}
    \caption{Left: the star $\sigma=\{\,(A,B),(C,D)\,\}$. Middle: the expanded torso of~$\sigma$. Right: the compressed torso of~$\sigma$.}
    \label{fig:DifferentTorsos}
\end{figure}

Note that the compressed torso can be obtained from the expanded torso by contracting all edges in the separators~$S(A_i,B_i)$.
If all $(A_i,B_i)$ are separations\plus , then the compressed torso and the expanded torso coincide.
If $N$ is a nested set of mixed-separations of~$G$, then the (compressed/expanded) torsos of~$N$ are the (compressed/expanded) torsos of the splitting stars of~$N$.
We remark that if $N$ is the set of induced separations of a tree-decomposition $(T,\Vcal)$ of~$G$, then the torsos of~$N$ are precisely the torsos of~$(T,\Vcal)$.

\begin{lem}\label{torsoMinors}
Let $N$ be a nonempty nested set of nontrivial mixed 3-separations of a 3-connected graph~$G$.
Then all compressed torsos and expanded torsos of~$N$ are minors of~$G$.
\end{lem}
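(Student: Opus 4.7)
The plan is first to reduce the claim to the expanded-torso case and then to realise each expanded torso as a minor of $G$ by constructing a rooted $K_3$-minor inside every leaf side.

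The reduction is straightforward. The compressed torso of a splitting star $\sigma\se N$ is obtained from the expanded torso of the same star by contracting, for every $(A_i,B_i)\in\sigma$, each separator edge of $S(A_i,B_i)$: both endpoints of such an edge lie in the expanded bag $\beta^+:=\bigcap_i B_i^+$ (so the edge survives in the expanded torso), and contracting it identifies the two corresponding vertices of the expanded triangle on $X_i:=A_i\cap B_i^+$ into the single vertex of the compressed triangle. Since minors are closed under further contraction, it suffices to treat expanded torsos. For the expanded case I would fix a splitting star $\sigma=\{(A_i,B_i):i\in I\}$, put $B_i^+:=B_i\cup V(S(A_i,B_i)\cap E(G))$, $X_i:=A_i\cap B_i^+$, and $L_i:=A_i\sm B_i^+$; by \autoref{independentEdges} each $X_i$ has three vertices, the star property yields the partition $V(G)=\beta^+\sqcup\bigsqcup_{i\in I}L_i$ with $X_i\se\beta^+$, and every vertex of $L_i$ has all its $G$-neighbours inside $A_i$ (because $L_i$ contains no endpoint of a separator edge).

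The minor model then takes $\{v\}$ as branch set for every $v\in\beta^+\sm\bigcup_i X_i$ and, for each $i\in I$, three branch sets $V_{x_1^i},V_{x_2^i},V_{x_3^i}\se A_i$ forming a rooted $K_3$-minor in $G[A_i]$ at $X_i$ (each $V_{x_j^i}$ containing $x_j^i$, connected, pairwise disjoint, with an edge of $G$ between every pair). Since the $L_i$'s are pairwise disjoint, branch sets for different $i$'s do not overlap; contracting them produces precisely the edges of $G[\beta^+]$ together with the triangle on each $X_i$, which is exactly the expanded torso.

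The crux is producing the rooted $K_3$-minor in $G[A_i]$ at $X_i$. My plan here is first to show that the auxiliary graph $H_i:=G[A_i]\cup K_3(X_i)$ is 3-connected: a purported 2-cut $S$ of $H_i$ would, using that the added triangle keeps $X_i\sm S$ in one component of $H_i-S$, isolate a component $C\se L_i$ of $H_i-S$ whose $G$-neighbours then lie wholly in $S$ (as $L_i$-vertices have all their $G$-neighbours in $A_i$), contradicting 3-connectivity of~$G$. From the 3-connectivity of $H_i$, Menger would supply both a $K_{1,3}$-subdivision in $G[A_i]$ from some $c\in L_i$ to $X_i$ and, for each $l\in\{1,2,3\}$, an auxiliary $x_j^i$--$x_k^i$ path in $G[A_i]$ avoiding $x_l^i$ (chosen to avoid the added triangle edges, which is possible since $H_i-x_l^i$ is 2-connected); these would then be combined into the three branch sets by assigning internal vertices of the auxiliary path appropriately along its length. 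The main obstacle will be this final combinatorial step, since the naive construction from a single $K_{1,3}$-subdivision yields only a $K_{1,2}$ between the three roots and the auxiliary path has to be woven in while respecting disjointness. A $K_{2,3}$-style example, in which $G[A_i]$ admits no cycle through $X_i$ and yet still has a rooted $K_3$-minor at $X_i$ via branch-set extensions into $L_i$, already shows that the simpler $K_3$-subdivision strategy cannot suffice in general.
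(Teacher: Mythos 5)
Your reduction from compressed to expanded torsos is fine, and your general plan --- one rooted $K_3$-minor inside each leaf side $A_i$, singletons on the rest of the bag --- is the right one and is also what the paper does. But the crux of the argument, producing the rooted $K_3$-minor at $X_i$ inside $G[A_i]$, is not actually carried out: you yourself flag the ``final combinatorial step'' of weaving an auxiliary $x_j$--$x_k$ path into the branch sets of a $K_{1,3}$-subdivision as an unresolved obstacle, and that is a genuine gap, not a routine detail. A $K_{1,3}$-subdivision from some $c\in L_i$ to $X_i$ only yields two of the three required adjacencies (everything through the centre $c$), and the auxiliary path can meet the existing branch sets arbitrarily, so there is no clean way to absorb it without a further idea.

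The further idea you are missing is precisely the one the paper's one-line proof leans on: \emph{nontriviality}. Each $(A_i,B_i)\in N$ is a nontrivial mixed 3-separation, so $G[A_i]$ contains a cycle $O_i$. Applying Menger's theorem (via the 3-connectivity of $G$), you get three disjoint $V(O_i)$--$X_i$ paths that stay inside $G[A_i]$. The cycle together with these three paths splits canonically into three connected, disjoint pieces --- cut $O_i$ at the three attachment vertices into three arcs and append to each arc its path --- and the three $O_i$-edges joining consecutive arcs give the triangle. The cycle does all the work that your auxiliary path was supposed to do, and there is nothing left to weave. By contrast, your route via the 3-connectivity of $H_i:=G[A_i]\cup K_3(X_i)$ never invokes nontriviality, and indeed cannot succeed without it: if $G[A_i]=K_{1,3}$ with $X_i$ its three leaves (the trivial case), then $H_i=K_4$ is 3-connected yet $G[A_i]$ has no rooted $K_3$-minor at $X_i$. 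So the hypothesis you are not using is exactly the one that makes the lemma true.
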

\begin{proof}
By nontriviality, the leaves of the splitting stars induce subgraphs of~$G$ that include cycles, and using Menger's theorem we can contract these cycles onto the respective triangles in the torsos.
\end{proof}

\subsection{Statement of the main theorem}
Let $\sigma$ be a splitting star of a nested set $N$ of tri-separations of~$G$.
Let $(A,B)$ be a strong nontrivial tri-separation of~$G$.
We say that $(A,B)$ interlaces $\sigma$ \emph{lightly} if $(A,B)$ interlaces~$\sigma$ and both $G[A\sm B]$ and $G[B\sm A]$ have at least two components.
We say that $(A,B)$ interlaces $\sigma$ \emph{heavily} if $(A,B)$ interlaces $\sigma$ and $(A,B)$ is half-connected.
So if $(A,B)$ interlaces~$\sigma$, then it does so either lightly or heavily.
We stress that  tri-separations that fail to be strong or nontrivial interlace $\sigma$ neither lightly nor heavily by definition.

A \emph{thickened $K_{3,m}$} is obtained from the bipartite graph $K_{3,m}$ by making a bipartition class of size three complete; that is, we add the three edges of a triangle to that set. We allow the degenerated case of a triangle as a thickened $K_{3,0}$.

Recall that a graph $G$ is quasi 4-connected if $G$ is 3-connected, $G$ has $>4$ vertices, and every 3-separation of $G$ has a side of size at most four.

\begin{thm}\label{univ_3sepr}
Let $G$ be a 3-connected graph and let $N$ denote its set of totally-nested nontrivial tri-separations.
Each splitting star $\sigma$ of $N$ has the following structure:
\begin{enumerate}[label={\textnormal{(\roman*)}}]
    \item\label{K3Case} if $\sigma$ is interlaced lightly, then its compressed torso is a thickened $K_{3,m}$ or $G=K_{3,m}$ for some $m\geq 0$;
    \item\label{genWheelCase} if $\sigma$ is interlaced heavily, then its compressed torso is a wheel, and its expanded torso is a generalised wheel;
    \item\label{4tangleCase} if $\sigma$ is not interlaced by a strong nontrivial tri-separation, then its compressed torso is quasi 4-connected or a~$K_4$ or~$K_3$.
\end{enumerate}
Moreover, all expanded torsos and compressed torsos of~$N$ are minors of~$G$.
\end{thm}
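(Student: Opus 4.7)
The plan is to prove the trichotomy by analysing each of the three interlacing regimes separately, using the compressed torso $\tau$ of $\sigma$ as the central object of study. The ``moreover'' clause follows immediately from \autoref{torsoMinors}. In each case I would first check, using 3-connectivity of~$G$ and the construction of the torso, that $\tau$ is 3-connected or (in degenerate situations) a~$K_3$, and then determine its structure by translating between tri-separations of~$\tau$ and tri-separations of~$G$.

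For case \ref{4tangleCase}, where no strong nontrivial tri-separation of~$G$ interlaces $\sigma$, I would argue by contradiction. If $\tau$ is not quasi 4-connected and distinct from $K_4$ and~$K_3$, then \autoref{tetraXtrisep} (invoked, if needed, after the \nameref{Angry} (\ref{Angry})) produces a strong nontrivial tri-separation $(A',B')$ of~$\tau$ with both sides containing cycles. The main obstacle of the entire proof is the technical step of \emph{lifting} $(A',B')$ to a strong nontrivial tri-separation of~$G$ that interlaces~$\sigma$: separator vertices of~$\tau$ arising from contracted connected subgraphs in the torso construction must be re-expanded to suitable vertices or edges of~$G$, and one must verify that the resulting pair remains a tri-separation while still strictly separating the leaves of~$\sigma$. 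This contradicts the standing assumption of case \ref{4tangleCase}.

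For case \ref{K3Case}, suppose $\sigma$ is lightly interlaced by a strong nontrivial~$(A,B)$. Because both $G[A\sm B]$ and $G[B\sm A]$ have at least two components and $G$ is 3-connected, the separator $S:=S(A,B)$ must consist of three vertices and $G-S$ must have at least four components. By \autoref{K3n lemma}, each component~$K$ of~$G-S$ determines a totally-nested half-connected tri-separation $(A_K,B_K)$ of~$G$; the nontrivial ones form a star $\sigma'\se N$. A short argument, using the external tri-connectivity of~$S$ that the four components guarantee, shows that $\sigma'$ is a splitting star of~$N$, and since $(A,B)$ interlaces both $\sigma$ and~$\sigma'$, \autoref{unique_split} gives $\sigma=\sigma'$. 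A direct case analysis on whether $G[S]$ contains an edge and whether all components of~$G-S$ are trivial then identifies the compressed torso of~$\sigma$ as a thickened~$K_{3,m}$, with the degenerate case $G=K_{3,m}$ arising precisely when $G[S]$ is edgeless and every component has size one.

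For case \ref{genWheelCase}, suppose $\sigma$ is heavily interlaced by a strong nontrivial~$(A,B)$. By \autoref{splitVsInterlace} we have $(A,B)\notin N$, so \autoref{expendable} and \autoref{orangeLemma} supply a strong nontrivial~$(C,D)$ that crosses~$(A,B)$. Since $(A,B)$ is half-connected, the \nameref{dasBesteLemma} (\ref{dasBesteLemma}) forces the separators of $(A,B)$ and $(C,D)$ to intersect in exactly one vertex~$v$ with all four links of size one. \autoref{apex_exists} then produces a 2-connected apex-decomposition~$\Acal$ of~$G$ with centre~$v$, and \autoref{is_splitting_star} guarantees that its tri-star~$\sigma'$ lies in~$N$. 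A direct structural inspection of~$\Acal$, together with \autoref{make-concrete}, shows that the compressed torso of~$\sigma'$ is a wheel and its expanded torso is a generalised wheel. The remaining obstacle is to prove $\sigma=\sigma'$: I would verify that $\sigma'$ is a splitting star of~$N$ by ruling out any interlacing tri-separation $(U,W)\in N$; such a $(U,W)$ would induce a mixed 2-separation on the central torso-cycle, from which a mixed 3-separation of~$G$ through~$v$ crossing~$(U,W)$ can be constructed and, after a small reduction, turned into a crossing tri-separation, contradicting $(U,W)\in N$. Then \autoref{unique_split}, applied to $(A,B)$ interlacing both $\sigma$ and~$\sigma'$, yields $\sigma=\sigma'$ and completes the case.
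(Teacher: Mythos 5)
Your proposal follows the paper's proof of \autoref{univ_3sepr} essentially step by step, and the overall plan is sound: lifting in case \ref{4tangleCase}, the $U$-principal star construction with \autoref{K3n lemma} and \autoref{unique_split} in case \ref{K3Case}, and the apex-decomposition route via \autoref{dasBesteLemma}, \autoref{apex_exists}, \autoref{is_splitting_star} and \autoref{is_splitting} in case \ref{genWheelCase}.

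One point where your sketch deviates from the paper and is worth flagging: in case \ref{4tangleCase} you propose to take a strong nontrivial \emph{tri-separation} of~$\tau$ (via \autoref{tetraXtrisep}) and lift that, whereas the paper instead observes that ``not quasi 4-connected'' directly supplies a \emph{3-separation} of~$\tau$ with both sides of size at least five, and lifts that via the hyper-lift machinery built on the subdivided graph~$\dot G$. The distinction matters for two reasons. First, the \nameref{separationLifting} (\ref{separationLifting}) lifts separations, not mixed 3-separations; a tri-separation of~$\tau$ may have edges in its separator, so you would need an extra reduction step before the lifting lemma applies. Second, and more importantly, to make the lift \emph{interlace}~$\sigma$ (strictly, not just $\leq$), the paper uses that $|C\sm D|\geq 2$ on both sides, which follows from ``both sides have size at least five'' via \autoref{HyperLiftIsUseful}(3)--(4) and \autoref{montag}; knowing only that both sides of the torso-tri-separation contain cycles gives $|C\sm D|\geq 1$, which is not quite enough to rule out the lift collapsing onto an element of~$\sigma$. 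This is fixable --- ``not quasi 4-connected and $|\tau|>4$'' is by definition a 3-separation with both sides of size $\geq 5$, so you can avoid \autoref{tetraXtrisep} altogether --- but as written the nontriviality and interlacing checks you defer would fail for the object you picked.
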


\begin{figure}[ht]
    \centering
    \includegraphics[height=8\baselineskip]{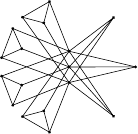}
    \caption{A graph with a thickened~$K_{3,3}$ as torso; see \autoref{eg:K3m}}
    \label{fig:my_label}
\end{figure}
\begin{rem}\label{eg:K3m}
For every integer $m\ge 0$, there exist $G$ and~$\sigma$ as in the statement of \autoref{univ_3sepr} such that \ref{K3Case} holds and the compressed torso of~$\sigma$ is a thickened~$K_{3,m}$.
Indeed, let $m$ be given.
Let $X$ and $Y$ be disjoint vertex sets of size $m$ and three, respectively.
We let $G$ be the graph obtained from the complete bipartite graph on $(X,Y)$ by disjointly adding four triangles $\Delta_1,\ldots,\Delta_4$ and joining each triangle $\Delta_i$ to the three vertices in~$Y$ by a matching of size three.
Then $\sigma:=\{\,(\Delta_i,V(G\sm\Delta_i)):i\in [4]\,\}$ is a splitting star of~$N$.
The splitting star~$\sigma$ is lightly interlaced by the tri-separation $(\Delta_1\cup\Delta_2\cup Y,V(G\sm(\Delta_1\cup\Delta_2))\,)$.
The compressed torso of~$\sigma$ is obtained from~$G$ by contracting all edges in the matchings between~$Y$ and the triangles~$\Delta_i$, so it is a thickened~$K_{3,m}$.
\end{rem}

   \begin{figure} [htpb]   
\begin{center}
   	\includegraphics[height=8\baselineskip]{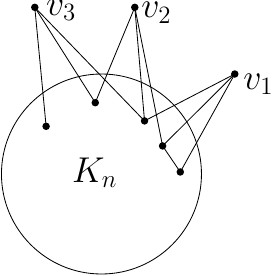}
   	\caption{This graph is obtained from a $K_n$ with $n=100$ by first attaching the three vertices $v_1$, $v_2$ and $v_3$ of degree three as illustrated, and then deleting all edges with both ends in the neighbourhood of a~$v_i$, except one edge which lies as in the figure.}\label{fig:ord}
\end{center}
   \end{figure}

\begin{rem}\label{eg100}
In \ref{4tangleCase}, we cannot replace \lq quasi 4-connected\rq\ by \lq internally 4-connected\rq.
Indeed, let $G$ be the graph depicted in \autoref{fig:ord}.
The graph $G$ has precisely one strong nontrivial tri-separation (up to flipping sides), which has the form $(A,B)=(v_1+x+y,V(G)-v_1)$ for $xy$ the unique edge with both ends in the neighbourhood of~$v_1$.
Hence $\{\,(A,B)\,\}$ is a splitting star of~$N$. 
The compressed torso of $\sigma$ is obtained from $G$ by making the neighbourhood of the vertex $v_1$ complete and then deleting $v_1$. Thus this compressed torso has a nontrivial tri-separation, similar to the tri-separation $(A,B)$ with \lq $v_2$\rq\ taking the role of~\lq $v_1$\rq.
Hence the compressed torso is quasi 4-connected but not internally 4-connected, compare \autoref{tetraXtrisep}.
\end{rem}

\begin{figure}[ht]
    \centering
    \includegraphics[height=8\baselineskip]{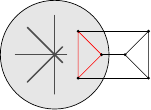}
    \caption{An expanded torso that is not quasi 4-connected; red edges are missing}
    \label{fig:KNwithPrism}
\end{figure}

\begin{rem}
In \ref{4tangleCase}, we cannot replace \lq compressed torso\rq\ by \lq expanded torso\rq.
Indeed, let $G$ be the graph obtained from $K_{10}$ by picking a triangle $\Delta$ within and attaching a new triangle~$\Delta'$ to~$\Delta$ via a matching, and then deleting the edges of~$\Delta$; see \autoref{fig:KNwithPrism}. 
The matching is a 3-edge cut of~$G$ and determines a splitting star $\{\,(\Delta',G\sm\Delta')\,\}$ of~$N$.
Hence $G$ is an expanded torso of~$N$, but the splitting star is not interlaced by a tri-separation and $G$ is not quasi 4-connected.
\end{rem}

\section[Proof of (i)]{Proof of \ref{K3Case}}

Our strategy to prove \ref{K3Case} is to construct for every tri-separation $(A,B)$ that is not half-connected a splitting star $\sigma$ interlaced by $(A,B)$. The construction is explicit and allows us to deduce that this splitting star has the structure for~\ref{K3Case}. Then we apply \autoref{unique_split} to deduce that every splitting star interlaced by $(A,B)$ must be equal to $\sigma$, completing the proof. The details are as follows. 

Let $G$ be a 3-connected graph.
Let $U$ be a set of three vertices of~$G$.
The \emph{star of 3-separations induced by~$U$} is the star
\[
    \{\,(A_K,B_K): K\text{ is a component of }G\sm U\,\}\quad\text{with leaves}\quad A_K:=V(K)\cup U\text{, where }B_K:=V(G\sm K).
\]
Suppose now that $G\sm U$ has at least four components.
By \autoref{K3n lemma}, for every 3-separation $(A_K,B_K)$ in this star the reduction $(A_K',B_K')$, which in particular is a tri-separation, satisfies $(A_K',B_K')\le (A_K,B_K)$.
The \emph{star of tri-separations induced by~$U$} is the star that consists of all the reductions of the 3-separations in the star of 3-separations induced by~$U$.

\begin{lem}\label{obs321}
Let $(A,B)$ be a mixed 3-separation of a 3-connected graph~$G$, and let $(A',B')$ be the reduction of~$(A,B)$.
Then no tri-separation $(C,D)$ of~$G$ satisfies $(A',B')<(C,D)\le (A,B)$.
\end{lem}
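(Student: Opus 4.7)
I would argue by contradiction. First I would unpack the two orderings in the hypothesis $(A',B') < (C,D) \le (A,B)$ into the containments $A' \subseteq C \subseteq A$ and $B \subseteq D \subseteq B'$. The construction of the reduction yields $A' \subseteq A$ and $B' \subseteq B$; combining $B' \subseteq B$ with the chain $B \subseteq D \subseteq B'$ forces the equalities $B = D = B'$. In particular, no vertex $v \in A \cap B$ has $i(v) = 2$, and the strict inequality $(A',B') < (C,D)$ is entirely absorbed on the $A$-side, so that $A' \subsetneq C$. Hence I can pick some vertex $v \in C \setminus A'$.

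Such a $v$ lies in $A$ (since $C \subseteq A$) but not in $A'$, so by the definition of the reduction $v \in A \cap B$ with $i(v) = 1$, and $v$ has a unique neighbour $x(v) \in A \setminus B$ in~$A$. On the other hand, $v \in C$ by choice while $v \in B = D$, so $v$ sits in the separator $C \cap D$ of the tri-separation $(C,D)$, and must therefore have at least two neighbours in $G[C]$. But $C \subseteq A$, so any neighbour of $v$ in $C$ is a neighbour of $v$ in $A$, and there is only one such neighbour. This contradiction completes the argument.

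I do not anticipate any substantial obstacle: the lemma is a short bookkeeping argument that witnesses the $\le$-minimality of the reduction among the tri-separations dominated by $(A,B)$. The only point requiring care is being fluent with the directionality of the ordering (so that the combination $B \subseteq D \subseteq B' \subseteq B$ is read correctly) and with the asymmetric behaviour of the reduction at vertices with $i(v) = 1$ versus $i(v) = 2$.
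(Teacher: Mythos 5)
Your argument is correct and follows essentially the same route as the paper's own proof: both use $B'\subseteq B$ together with $(A',B')\le(C,D)\le(A,B)$ to force $B'=D=B$, locate a vertex $v\in C\setminus A'$ that was removed in the reduction because $i(v)=1$, and derive a contradiction from $v$ having only one neighbour in $C\subseteq A$ while lying in $S(C,D)$. No gap.
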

\begin{proof}
Suppose for a contradiction that $(C,D)$ is a tri-separation of~$G$ with $(A',B')< (C,D)\le (A,B)$.
Since $(A',B')$ is the reduction of $(A,B)$, we have $B'\se B$.
From $(A',B')\le (C,D)\le (A,B)$ we obtain $B'\supseteq D\supseteq B$, so $B'=D=B$.
Since $(A',B')<(C,D)$ and $B'=D$, the inclusion $A'\se C$ must be proper.
As $C\se A$, this means that some vertex $v\in A\cap B$ lies in~$C$ but has been removed from~$A$ to obtain~$A'$.
So $v$ has only one neighbour in~$A$.
But then $v$ has only one neighbour in $C\se A$, contradicting that $(C,D)$ is a tri-separation with $v\in C\cap D$.
\end{proof}

\begin{lem}\label{ThreeCompsTriBlock}
Let $G$ be a 3-connected graph and $U\se V(G)$ of size three such that $G\sm U$ has at least three components.
Then $U\se A$ or $U\se B$ for every nontrivial tri-separation $(A,B)$ of~$G$.
\end{lem}
\begin{proof}
Let $U=:\{u,v,w\}$.
Suppose for a contradiction that there is a nontrivial tri-separation $(A,B)$ of $G$ with $u\in A\sm B$ and $v\in B\sm A$.
Since $G\sm U$ has at least three components, and since every component $K$ has neighbourhood equal to $U$ by 3-connectivity, there exist three independent $u$--$v$ paths $P_K$ in $G$ with all internal vertices included in~$K$.
Hence $S(A,B)$ must contain an edge or an internal vertex from each path~$P_K$.
Therefore $w$ cannot be contained in $S(A,B)$ and $G\sm U$ has exactly three components.
So $w\in B\sm A$, say.
Hence $u$ is a cutvertex of $G[A]$.
As $(A,B)$ is nontrivial, there is a cycle in $G[A]$.
Let $K^*$ denote the component of $G\sm U$ that contains a vertex of this cycle.
The cycle has a vertex $x$, say, that is distinct from $u$ and not in $S(A,B)$ (since otherwise $S(A,B)$ would intersect $K^*$ in two vertices, a contradiction).
But then the element of $S(A,B)$ on $P_{K^*}$ together with $u$ separate $x$ from the other components of $G\sm U$, contradicting that $G$ is 3-connected.
\end{proof}

\begin{lem}\label{stuff_into_component}
Let $G$ be a 3-connected graph and $U\se V(G)$ of size three such that $G\sm U$ has at least three components.
Then for every half-connected nontrivial tri-separation $(C,D)$ of~$G$ there is a component $K$ of $G\sm U$ such that $(C,D)\le (A_K,B_K)$ or $(D,C)\le (A_K,B_K)$.
\end{lem}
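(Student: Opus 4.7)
The plan is to locate $U$ on one side of $(C,D)$ using the $4$-path hypothesis, and then to exploit half-connectedness to pinpoint a single component of $G\setminus U$.

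\emph{Step 1 (localising $U$).} I would first show $U\subseteq C$ or $U\subseteq D$: otherwise, picking $u_1\in U\cap(C\setminus D)$ and $u_2\in U\cap(D\setminus C)$, the mixed separator $S(C,D)$ separates $u_1$ from $u_2$, and a standard counting argument (each separator vertex or edge can be internally used by at most one $u_1$-$u_2$ path, accounting for both ordinary vertex cuts and for separator edges whose endpoints become internal to the path) bounds the number of internally disjoint $u_1$-$u_2$ paths by $|S(C,D)|=3$, contradicting the hypothesis of~$4$. Since the conclusion is symmetric under $(C,D)\leftrightarrow(D,C)$, I assume $U\subseteq C$.

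\emph{Step 2 (locating $K$).} By half-connectedness, either $G[C\setminus D]$ or $G[D\setminus C]$ is connected. I would treat the case where $G[D\setminus C]$ is connected first. Since $U\subseteq C$, the set $D\setminus C$ is disjoint from~$U$, and being a connected subgraph of $G\setminus U$ it lies in a single component~$K$ of~$G\setminus U$. This~$K$ is my candidate.

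\emph{Step 3 (verifying the containments).} The condition $V(G\setminus K)\subseteq C$ needed for $(D,C)\le(A_K,B_K)$ is automatic: $U\subseteq C$, and every other component~$K'$ of $G\setminus U$ is disjoint from $V(K)\supseteq D\setminus C$, so $V(K')\subseteq V(G)\setminus(D\setminus C)=C$. The condition $D\subseteq V(K)\cup U$ reduces to showing that $(C\cap D)\setminus U\subseteq V(K)$, since $D\setminus C\subseteq V(K)$ already. To establish this, I would suppose for contradiction that some $v\in(C\cap D)\setminus U$ lies in a component $K_v\ne K$; then $V(K_v)\cap(D\setminus C)\subseteq V(K_v)\cap V(K)=\emptyset$ forces $V(K_v)\subseteq C$, and a corner analysis of $(A_{K_v},B_{K_v})$ against $(C,D)$—combined with submodularity (\autoref{submod}) and the fact that every $u\in U$ sends at least one edge into each of $V(K)$ and $V(K_v)$ by $3$-connectivity ($N(K_v)=U=N(K)$)—produces a contradiction: the separator budget $|S(C,D)|+|S(A_{K_v},B_{K_v})|=6$ is too small to accommodate the edges that $U$ must send to $D$-side vertices of both components together with~$v$'s role in the link for~$A_{K_v}$.

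The main obstacle I anticipate is precisely Step~3—ruling out a ``rogue'' vertex $v\in(C\cap D)\setminus U$ sitting in a component distinct from the one absorbing $D\setminus C$. The remaining symmetric half-connected case, where $G[C\setminus D]$ rather than $G[D\setminus C]$ is the connected side, would be handled by swapping $C$ and $D$ (using the symmetry of the conclusion) together with a sub-case split on whether $U\subseteq C\setminus D$ or $U$ meets $C\cap D$; in the former sub-case a direct edge count shows that the number of $D$-side components must be at most one, making the analogue of Step~3 trivial, while the latter sub-case reduces to the case treated above by re-naming.
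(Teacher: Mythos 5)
Your Step~1 matches the paper's and is fine. The trouble starts with the way you tie the location of the candidate component $K$ to half-connectedness: you locate $K$ as the component of $G\setminus U$ containing the connected side $D\setminus C$, but this only works when the connected side and the $U$-avoiding side coincide. Your ``Case B'' ($U\subseteq C$ and $G[C\setminus D]$ connected) is a genuine gap. Swapping $C$ and $D$ does not turn Case~B into Case~A: after the swap you would need $U\subseteq D$ together with $G[D\setminus C]$ connected, but you only know $U\subseteq C$, and these align only in the degenerate case $U=C\cap D=S(C,D)$. And the asserted ``direct edge count'' in the sub-case $U\subseteq C\setminus D$ is simply false: there are graphs where $U\subseteq C\setminus D$, $G[C\setminus D]$ is connected, yet $G[D\setminus C]$ has two components. (Take $C\setminus D$ a $K_4$ containing $U$, let $C\cap D=\{v_1,v_2,v_3\}$ be joined to $U$ and to two independent vertices $w_1,w_2=D\setminus C$; the two $w_i$ form two components of $G[D\setminus C]$.) In that example the lemma still holds — both components of $D\setminus C$ sit in the same component $K$ of $G\setminus U$ because they are all adjacent to $v_1\notin U$ — but your outlined argument would not find this.

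Separately, your Step~3 is massively over-engineered. You do not need any corner analysis or submodularity. The point is that in a 3-connected graph every vertex $w$ in the separator of a mixed $3$-separation $(C,D)$ has a neighbour in $D\setminus C$ (this is exactly \autoref{forUniqueReduction}); since $D\setminus C\subseteq V(K)$ and $w\notin U$, the vertex $w$ lies in the same component of $G\setminus U$ as that neighbour, i.e.~$w\in V(K)$. So there is no rogue vertex, by a one-line argument.

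The paper's proof avoids your Case~B entirely by \emph{not} splitting on which side is connected. After establishing $U\subseteq D$ (their naming convention), it fixes an arbitrary component $K'$ of $G[C\setminus D]$, lets $K$ be the component of $G\setminus U$ containing $K'$, and then shows all of $G[C\setminus D]$ lies in $K$ by splitting instead on whether $S(C,D)$ contains an edge (then $G[C\setminus D]$ is forced to be connected, because each of its components would otherwise have $\le 2$ neighbours) or consists of three vertices (then some $v\in S(C,D)\setminus U$ exists, every component of $G[C\setminus D]$ has $v$ in its neighbourhood, and $v\notin U$ glues them all into $K$). Half-connectedness is used \emph{only} in the residual case $S(C,D)=U$. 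If you replace your ``locate $K$ via the connected side'' step by this ``glue all components of the $U$-avoiding side into one $K$'' step, your proof closes.
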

\begin{proof}
By \autoref{ThreeCompsTriBlock} we may assume that $U$ avoids $C\sm D$, say.
Hence $U\se D$.
If $S(C,D)=U$, then $(C,D)$ being half-connected implies that $(C,D)$ or $(D,C)$ is equal to a 3-separation $(A_K,B_K)$ for some component $K$ of $G\sm U$, and we are done.
So assume that $S(C,D)\neq U$.
Let $K'$ be an arbitrary component of~$G[C\sm D]$.
Since $U$ is included in~$D$, the component $K'$ avoids~$U$.
So $K'$ is included in a unique component $K$ of $G\sm U$.
We claim that $(C,D)\le (A_K,B_K)$.

First, we show $C\se A_K$.
It suffices to show $G[C\sm D]\se K$ since this implies 
\[
    C\se (C\sm D)\cup N(C\sm D)\se V(K)\cup N(K) =A_K.
\]
If $S(C,D)$ contains an edge, then by 3-connectivity every component of $G[C\sm D]$ must contain the end of this edge in~$C$, and so $G[C\sm D]=K'\se K$ as desired.
Hence we may assume that $S(C,D)$ consists of three vertices, and since $S(C,D)\neq U$ there is a vertex $v\in S(C,D)\sm U$.
By 3-connectivity, every component of $G[C\sm D]$ has neighbourhood equal to $C\cap D$, and so $K'\se K$ with $A_K\cap B_K=U$ implies $v\in A_K\sm B_K=V(K)$.
As all components of $G[C\sm D]$ avoid~$U$ but have~$v$ in their neighbourhoods, they must all be included in the component $K$ of $G\sm U$ that contains~$v$, so $G[C\sm D]\se K$ as desired.

For $D\supseteq B_K$, we use $C\se A_K$ to get $B_K\sm A_K\se D\sm C$, and recall that $B_K\cap A_K=U\se D$.
\end{proof}

\begin{lem}\label{totallyNestedHalfConnected}
Every totally-nested tri-separation of a 3-connected graph is half-connected.
\end{lem}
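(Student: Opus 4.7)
The plan is to prove the contrapositive: I will show that any tri-separation $(A,B)$ of $G$ which is not half-connected is crossed by some tri-separation of $G$. Trivial tri-separations have a singleton side by \autoref{trivial}, so they are half-connected; I may therefore assume $(A,B)$ is nontrivial.

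The first step will be to show that, under the standing assumption of non-half-connectedness, the separator $S(A,B)$ cannot contain any edges. I would write $S(A,B)$ as $k$ vertices together with $\ell = 3-k$ edges; by \autoref{independentEdges} these edges form a matching between $A\sm B$ and $B\sm A$, with near endpoints $u_1,\ldots,u_\ell \in A\sm B$ and far endpoints $w_1,\ldots,w_\ell \in B\sm A$. For any component $K$ of $G[A\sm B]$, the neighbourhood $N(K)$ in $G$ is contained in the $k$ separator-vertices together with $\{\,w_j : u_j \in K\,\}$. Since $G$ is 3-connected, $|N(K)| \ge 3$, and a short case analysis on $\ell \in \{1,2,3\}$ shows that every component of $G[A\sm B]$ must simultaneously contain \emph{all} of $u_1,\ldots,u_\ell$; hence $G[A\sm B]$ has only one component. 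The symmetric argument gives the same for $G[B\sm A]$, so if $S(A,B)$ has any edges then $(A,B)$ is half-connected, a contradiction.

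So $S(A,B) = \{x_1,x_2,x_3\}$ consists of three vertices, and by 3-connectivity every component of $G - x_1 - x_2 - x_3$ has neighbourhood exactly $\{x_1,x_2,x_3\}$. As $(A,B)$ is not half-connected, I can pick distinct components $K,K'$ of $G[A\sm B]$ and distinct components $L,L'$ of $G[B\sm A]$. Next I will set
\[
C := V(K) \cup V(L) \cup \{x_1,x_2,x_3\}, \qquad D := V(G) \sm \bigl(V(K) \cup V(L)\bigr).
\]
The equalities $N(K) = N(L) = \{x_1,x_2,x_3\}$ ensure that there are no edges between $C\sm D$ and $D\sm C$, so $S(C,D) = \{x_1,x_2,x_3\}$. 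Since each $x_i$ has neighbours in both $K$ and $L$ on one side, and in both $K'$ and $L'$ on the other, $(C,D)$ is a tri-separation. The sets $V(K),V(K'),V(L),V(L')$ populate all four corners of the pair $(A,B),(C,D)$, so these two tri-separations cross, contradicting the total-nestedness of $(A,B)$.

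The main subtlety lies in the first step, where the case analysis must handle every possible mix of edges and vertices in $S(A,B)$ and conclude that one of $G[A\sm B]$ or $G[B\sm A]$ is connected. Once edges are excluded from $S(A,B)$, the explicit construction of the crossing tri-separation $(C,D)$ in the pure-vertex case is completely direct.
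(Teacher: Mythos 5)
Your proposal is correct and follows essentially the same route as the paper: prove the contrapositive, observe that a non-half-connected tri-separation must have a pure-vertex separator, then pick one component from each side and form $(C,D)$ with $C=(A\cap B)\cup V(K)\cup V(L)$ to produce a crossing tri-separation. The paper simply asserts that the separator consists of vertices only, whereas you supply the component-counting argument justifying it; otherwise the two proofs coincide.
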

\begin{proof}
We show the contrapositive.
Let $(A,B)$ be non-half-connected tri-separation of a 3-connected graph.
Then the separator of $(A,B)$ consists of vertices only.
Pick arbitrary components $\alpha$ and $\beta$ of $G[A\sm B]$ and $G[B\sm A]$, respectively.
Let 
\[
    C:=(A\cap B)\cup V(\alpha)\cup V(\beta) \quad\text{and}\quad D:=V(G\sm(\alpha\cup\beta))
\]
Since $G$ is 3-connected, every component of $G-(A\cap B)$ has neighbourhood equal to $A\cap B$.
Hence $(C,D)$ is a tri-separation of~$G$.
It is straightforward to check that $(C,D)$ crosses~$(A,B)$.
\end{proof}

\begin{lem}\label{4insepUdefinesSplittingStar}
Let $G$ be a 3-connected graph and $U\se V(G)$ of size three such that $G\sm U$ has at least three components.
Let $\sigma'$ denote the star of tri-separations induced by~$U$, and let $\sigma\se\sigma'$ consist of its nontrivial elements.
Then $\sigma$ is a splitting star of the set of all totally-nested nontrivial tri-separations of~$G$.
\end{lem}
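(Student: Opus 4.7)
The plan is to verify the three requirements for $\sigma$ to be a splitting star of $N$: that $\sigma$ is a star, that $\sigma \subseteq N$, and that every member of $N$ is $\leq$-dominated, up to flipping, by some member of $\sigma$. The star property is immediate: the full set $\sigma'$ is a star because for distinct components $K, K'$ of $G - U$ we have $A_K' \subseteq V(K) \cup U$ and $A_{K'}' \subseteq V(K') \cup U$ while $B_K' = V(G) \setminus V(K)$ and $B_{K'}' = V(G) \setminus V(K')$, and these are $\leq$-compatible in the required sense because $V(K) \cap V(K') = \emptyset$; restricting to the subset $\sigma$ preserves this. For $\sigma \subseteq N$, \autoref{K3n lemma}~(3) gives that each nontrivial reduction $(A_K', B_K')$ is totally nested.

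The third requirement carries the real content. Given $(C,D) \in N$, I would first invoke \autoref{totallyNestedHalfConnected} to conclude that $(C,D)$ is half-connected, and then apply \autoref{stuff_into_component} (which is where the hypothesis on four internally disjoint paths through $U$ enters) to produce a component $K$ of $G - U$ such that, after possibly swapping $C$ with $D$, we have $(C,D) \leq (A_K, B_K)$. The key step is to upgrade this inequality from $(A_K, B_K)$ to the reduction $(A_K', B_K')$. The relation $D \supseteq B_K' = B_K$ is automatic by \autoref{K3n lemma}~(1). For $C \subseteq A_K'$, suppose some $u \in U$ lies in $C$ but was reduced out of $A_K$. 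By \autoref{forUniqueReduction}, such $u$ has only one neighbour in $A_K$, hence only one neighbour in $C \subseteq A_K$. But $u \in U \subseteq B_K \subseteq D$ puts $u$ in the separator of the tri-separation $(C,D)$, so $u$ must have at least two neighbours in $C$---a contradiction.

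Finally, I would ensure that $(A_K', B_K')$ is nontrivial, so that it actually lies in $\sigma$: if it were trivial, then by \autoref{trivial} it would be an atomic tri-separation of the form $(\{v\}, V(G) \setminus \{v\})$, and $(C,D) \leq (\{v\}, V(G) \setminus \{v\})$ would force $(C,D) = (\{v\}, V(G) \setminus \{v\})$ itself, contradicting nontriviality of $(C,D) \in N$. Hence $(A_K', B_K') \in \sigma$, completing the verification of the splitting property. The step I expect to be the main obstacle is the upgrade from $(A_K, B_K)$ to $(A_K', B_K')$: a priori the reduction might drop a vertex that $(C,D)$ still legitimately places on the $A_K$-side, and it is precisely the defining property of a tri-separation---that each separator vertex has at least two neighbours on each side---that rules this out cleanly.
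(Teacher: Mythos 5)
Your proof is correct and takes essentially the same route as the paper: both rest on \autoref{totallyNestedHalfConnected}, \autoref{stuff_into_component}, and the observation that a tri-separation dominated by $(A_K,B_K)$ is already dominated by its reduction $(A_K',B_K')$. The only organisational differences are that you verify the splitting property directly rather than arguing by contradiction via interlacing, you re-derive \autoref{obs321} inline rather than citing it, and in the nontriviality step you argue $(C,D)=(A_K',B_K')$ would force $(C,D)$ trivial, whereas the paper notes $G[A_K]$ would be a $K_{1,3}$ so $G[C]$ has no cycle — both are sound.
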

\begin{proof}
The elements of $\sigma$ are totally-nested nontrivial tri-separations of~$G$ by \autoref{K3n lemma}.

Suppose for a contradiction that $\sigma$ is not splitting.
Then some totally-nested nontrivial tri-separation $(C,D)$ of~$G$ interlaces~$\sigma$ by \autoref{splitVsInterlace}.
Since $(C,D)$ is totally nested, it is half-connected by \autoref{totallyNestedHalfConnected}.
By \autoref{stuff_into_component} there is a component $K$ of $G\sm U$ such that $(C,D)\leq (A_K,B_K)$, say.
Let $(A_K',B_K')\in\sigma'$ denote the reduction of~$(A_K,B_K)$.

If $(A_K',B_K')$ is nontrivial, it lies in~$\sigma$, so $(A_K',B_K')<(C,D)\le (A_K,B_K)$ as $(C,D)$ interlaces~$\sigma$.
This contradicts \autoref{obs321}.
Hence $(A_K',B_K')$ is trivial; so $A_K'$ and $B_K'$ are the sides of an atomic 3-cut by \autoref{trivial}.
The only possibility here is that~$|A_K'|=1$, since $B_K'$ includes~$U$.
But then $G[A_K]$ is a~$K_{1,3}$, so $G[C]$ contains no cycle and $(C,D)$ is trivial, a contradiction.
\end{proof}

\begin{proof}[Proof of \autoref{univ_3sepr}~\textnormal{\ref{K3Case}}.]
Let $\sigma$ be a splitting star of $N$ that is interlaced lightly by a tri-separation $(A,B)$ of $G$. 
Then $U=A\cap B$ is a 3-separator of $G$ such that $G\sm  U$ has at least four components. 
Let $\bar \sigma$ be the star of tri-separations induced by $U$, and note that $(A,B)$ interlaces~$\bar \sigma$ as well.
Let $\sigma'$ consist of the nontrivial tri-separations in~$\bar \sigma$.
By \autoref{4insepUdefinesSplittingStar}, $\sigma'$ is a splitting star of $N$. 
As $(A,B)$ interlaces the splitting stars $\sigma$ and $\sigma'$ of~$N$, these splitting stars need to be equal by \autoref{unique_split}.

If $G=K_{3,m}$ for some $m\ge 4$, then we are done.
So we may assume that the graph $G[U]$ has an edge or $G\sm U$ has a component of size at least two. 
Thus $\sigma'$ is non-empty by \autoref{K3n lemma}.
The compressed torso of $\sigma'$ can be obtained from $G$ by first removing every component $K$ of $G\sm U$ with $|K|\ge 2$, then removing every component $K$ of $G\sm U$ with $|K|=1$ if $G[U]$ has an edge, and finally making $U$ into a clique (for the latter we need that~$\sigma'$ is nonempty).
Hence the compressed torso of~$\sigma=\sigma'$ is a thickened $K_{3,m}$ with $m\geq 0$.
\end{proof}

\section[Tools to prove (ii) and (iii)]{Tools to prove \ref{genWheelCase} and \ref{4tangleCase}}
In this short section we prove a few lemmas that we will use in the proofs of \ref{genWheelCase} and \ref{4tangleCase}.

\begin{dfn}[Almost interlacing]
We say that a mixed-separation\plus\ $(C,D)$ of a graph~$G$ \emph{almost interlaces} a star $\sigma$ of mixed-separations\plus\ of~$G$ if $(A,B)\leq (C,D)$ or  $(A,B)\leq (D,C)$ for all $(A,B)\in \sigma$.
\end{dfn}
The notion of `almost interlaces' is more general than the notion of `interlaces' in two ways: on the one hand, we consider mixed-separations\plus , and on the other hand we no longer require that $(A,B)$ and its inverse are not in~$\sigma$.

\begin{lem}\label{v17}
Assume \autoref{X}.
If a tri-separation $(C,D)$ of~$G$ almost interlaces the tri-star $\sigma$ of~$\Acal$, then $S(C,D)$ contains $v$ or an edge incident with $v$.
\end{lem}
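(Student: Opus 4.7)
The plan is to prove the contrapositive: assume $(C,D)$ almost interlaces $\sigma$, $v \notin S(C,D)$, and no edge of $S(C,D)$ is incident with~$v$, and derive a contradiction. Without loss of generality $v \in C \setminus D$, and the two latter hypotheses together force every neighbour of $v$ in $G$ to lie in~$C$: any neighbour in $D \setminus C$ would contribute an edge to $E(C \setminus D, D \setminus C) \subseteq S(C,D)$.

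The first key step is to show that for each pseudo-reduction $(P_\ell, Q_\ell) \in \sigma$ we must have $P_\ell \subseteq C$ rather than $P_\ell \subseteq D$ in the almost-interlacing inequality. Suppose instead $P_\ell \subseteq D$ and hence $Q_\ell \supseteq C$. Using \autoref{v has neighbours} I pick a neighbour $u_\ell$ of~$v$ in $B_\ell \setminus V(O)$; then $u_\ell \in B_\ell \subseteq P_\ell \subseteq D$, and $u_\ell \in C$ by the previous paragraph, so $u_\ell \in C \subseteq Q_\ell$. On the other hand, unpacking the definition of $Q_\ell$ gives $Q_\ell \subseteq V(O) \cup \bigcup_{\ell' \neq \ell} B_{\ell'} \cup \{v\}$, and because $u_\ell \in B_\ell \setminus V(O)$ lies only in the bag $B_\ell$ and is distinct from~$v$, it belongs to none of those sets, the desired contradiction.

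With $P_\ell \subseteq C$ established for every leaf $\ell$, I obtain $\bigcup_\ell B_\ell \subseteq C$. Since $V(G) = V(O) \cup \bigcup_\ell B_\ell \cup \{v\}$ and $v \in C$, the set $D \setminus C$ lies inside $V(O) \setminus \bigcup_\ell B_\ell$, i.e., consists of vertices of~$O$ that lie in no adhesion set (equivalently, both incident $O$-edges are timid). Picking an arbitrary $w \in D \setminus C$ (which exists because $D \setminus C$ is non-empty), I note that $w$ belongs only to the central bag, so its neighbours in $G - v$ lie in~$V(O)$. Because the torso of the central bag is exactly the cycle~$O$, the induced subgraph $G[V(O)]$ is a subgraph of~$O$, so the only neighbours of $w$ in $V(O)$ are its two neighbours on~$O$.

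The proof finishes with a short degree argument: $w$ has degree two in $G - v$, and 3-connectivity of~$G$ forces $w \sim v$. But then $v \in C \setminus D$ and $w \in D \setminus C$ place the edge $vw$ in $E(C \setminus D, D \setminus C) \subseteq S(C,D)$, contradicting the assumption that no edge of $S(C,D)$ is incident with~$v$. I expect the most delicate step to be the definitional unpacking of $Q_\ell$ in the second paragraph, which is what converts the almost-interlacing inequality into a concrete membership condition for the interior vertex $u_\ell$.
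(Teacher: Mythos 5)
Your proposal is correct, and it takes a genuinely different route from the paper's. The paper first shows (under the contradiction hypothesis) that $S(C,D)\subseteq V(O)\cup E(O)$, and then proves that $G\setminus S(C,D)$ is connected by routing everything to~$v$: the bag-interiors reach~$v$ by \autoref{v has neighbours}, and the vertices of~$O$ with two timid incident $O$-edges are adjacent to~$v$ by 3-connectivity; this contradicts $(C,D)$ being a mixed-separation. Your proof instead fixes $v\in C\setminus D$, exploits the almost-interlacing inequalities to show every leaf-side lands in~$C$, confines $D\setminus C$ to vertices of~$O$ not in any adhesion set, and finishes with a degree count on a single such vertex. Both proofs rely on the same two structural facts (a neighbour of~$v$ inside every bag via \autoref{v has neighbours}, and adjacency to~$v$ for degree-two-in-$G-v$ vertices of~$O$ via 3-connectivity). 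The paper's connectivity argument is marginally slicker because it treats the three elements of $S(C,D)$ uniformly, whereas your proof reasons explicitly with the inequalities $(P_\ell,Q_\ell)\le(C,D)$ or $(P_\ell,Q_\ell)\le(D,C)$, which some readers may find more transparent. The step you flagged as delicate is fine: $Q_\ell\subseteq Y_\ell+v$ is indeed disjoint from $B_\ell\setminus V(O)$, because in a star-decomposition two leaf-bags intersect only inside the central bag~$V(O)$ by~(T2).
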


\begin{proof}
Since $(C,D)$ almost interlaces~$\sigma$, the elements of the separator $S(C,D)$ are vertices or edges of $O+v$ or edges incident with~$v$.
Suppose for a contradiction that $S(C,D)$ contains neither $v$ nor an edge incident to~$v$.
Then $S(C,D)\se O$.
Every component of $G\sm (O+v)$ contains a neighbour of~$v$, since $G$ is 3-connected.
Every vertex on~$O$ that is not a neighbour of a component of $G\sm (O+v)$ is incident with two timid edges on~$O$, and hence is adjacent to~$v$ by 3-connectivity.
Hence $G\sm S(C,D)$ is connected, a contradiction.
\end{proof}

\begin{lem}\label{x13}
Let $G$ be a 3-connected graph.
Let $(A,B)$ be a mixed 3-separation of~$G$, and let $(A',B')$ be a strengthening of~$(A,B)$.
Then for every strong tri-separation $(C,D)$ of~$G$ with $(C,D)\leq (A,B)$ we also have $(C,D)\leq (A',B')$.
\end{lem}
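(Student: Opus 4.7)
The plan is to establish $C\subseteq A'$ and $D\supseteq B'$ separately, noting that the second containment is essentially free while the first requires tracking $C$ through the strengthening procedure. By \autoref{strengthening-good}, $B'\subseteq B$, and combined with the hypothesis $B\subseteq D$, transitivity yields $B'\subseteq D$, so $D\supseteq B'$ is immediate.

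For $C\subseteq A'$, I would take an arbitrary $v\in C$ and show $v\in A'$. Since $C\subseteq A$, we have $v\in A$; and if $v\in A\sm B$, then \autoref{strengthening-good} already gives $v\in A'$. So the interesting case is $v\in A\cap B$. Since $A\cap B\subseteq B\subseteq D$, this forces $v\in C\cap D$, i.e., $v$ lies in the separator of the strong tri-separation $(C,D)$. Strength yields $\deg_G(v)\ge 4$, and the tri-separation property yields at least two neighbours of $v$ in~$C$.

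Next I would track $v$ through the two stages that produce $(A',B')$ from $(A,B)$: first the deletion of all degree-three vertices of $A\cap B$ having a neighbour in $A\cap B$, from one chosen side, producing $(A'',B'')$, and then the reduction giving $(A',B')$. Since $\deg_G(v)\ge 4$, $v$ is not among the deleted vertices, so $v\in A''$. In the reduction, $v$ would be removed from $A''$ only if it had fewer than two neighbours there, so it suffices to exhibit two neighbours of $v$ in~$A''$. I would take the two neighbours $w$ of $v$ in~$C$ supplied above: such a $w$ lies in $A$, and I would verify $w\in A''$ by a case split. If $w\in A\sm B$, then $w\notin A\cap B$ and so survives the first stage. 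Otherwise $w\in A\cap B$, whence $w\in C\cap D$, so $w$ sits in the separator of the strong tri-separation $(C,D)$ and has $\deg_G(w)\ge 4$, again surviving the first stage. Either way $w\in A''$, so $v$ retains both its $C$-neighbours in~$A''$, is not reduced out, and ends up in~$A'$, as required.

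The main technical subtlety — not really an obstacle — is the case analysis on whether the neighbours of $v$ in $C$ lie in $A\sm B$ or in $A\cap B$; the second case is tamed uniformly by invoking strength of $(C,D)$ to place such neighbours in the separator $C\cap D$ and to conclude that they have degree at least four, hence avoid the degree-three deletion step.
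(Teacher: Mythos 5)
Your proposal is correct and follows essentially the same route as the paper: reduce to $v\in C\cap(A\cap B)\subseteq C\cap D$, use strength of $(C,D)$ to get $\deg v\ge 4$ (so $v$ survives the degree-three deletion), and use the tri-separation property of $(C,D)$ to get two neighbours of $v$ in $C$, which you then verify survive into $A''$ so that $v$ is not reduced out. The paper compresses this entire argument into the single sentence ``the set $C$ witnesses that in the construction of the strengthening no vertex of $C$ can be deleted from $A$''; your explicit case split on whether a $C$-neighbour of $v$ lies in $A\sm B$ or in $A\cap B$ is exactly the unpacking of that sentence.
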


\begin{proof}
Clearly $B'\se B\se D$.
Let $v$ be  a vertex in $A\cap B$. Assume that $v$ is in $C$. It remains to show that $v$ is in~$A'$.
Since $B\se D$, we have that $v\in C\cap D$. 
As $(C,D)$ is a strong tri-separation, $v$ has degree four in $G$ and two neighbours in $C$. 
Hence the set $C$ witnesses that in the construction of the strengthening $(A',B')$ no vertex of $C$ can be deleted from~$A$.
So $v\in A'$ as desired.
\end{proof}

\begin{cor}\label{capture_star2}
Let $\sigma$ be a star of strong tri-separations of a 3-connected graph~$G$.
If a mixed 3-separation $(A,B)$ of~$G$ almost interlaces~$\sigma$, then every strengthening of $(A,B)$ almost interlaces~$\sigma$.
\qed
\end{cor}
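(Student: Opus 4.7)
The plan is to reduce directly to \autoref{x13} by a short case analysis. Let $(A',B')$ be any strengthening of $(A,B)$. To verify that $(A',B')$ almost interlaces $\sigma$, I would fix an arbitrary $(C,D)\in\sigma$ and aim to prove that $(C,D)\le (A',B')$ or $(C,D)\le (B',A')$. By the hypothesis on $\sigma$, the tri-separation $(C,D)$ is strong, and since $(A,B)$ almost interlaces $\sigma$, we already have $(C,D)\le (A,B)$ or $(C,D)\le (B,A)$; so a case distinction on which of these inequalities holds suggests itself.

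In the first case, \autoref{x13} applied to the mixed 3-separation $(A,B)$, its strengthening $(A',B')$, and the strong tri-separation $(C,D)$ immediately yields $(C,D)\le (A',B')$. For the second case, I would first observe that the definition of a strengthening is symmetric in the two sides: the step that deletes degree-three vertices ``from one of the sides'' allows either side to play the role of ``one of the sides'', and the subsequent reduction is itself symmetric. Hence $(B',A')$ is a strengthening of $(B,A)$, and a second application of \autoref{x13}---now to $(B,A)$, $(B',A')$, and $(C,D)$---yields $(C,D)\le (B',A')$, as desired. Since $(C,D)\in\sigma$ was arbitrary, this proves that $(A',B')$ almost interlaces~$\sigma$.

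I do not anticipate any genuine obstacle: the argument is essentially just unfolding the definitions and invoking \autoref{x13} twice. The only step that requires a moment of care is the symmetry observation for strengthenings, but this is a direct consequence of the definition and uses no further machinery.
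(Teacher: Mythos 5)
Your proof is correct and is precisely the unfolding of the argument the paper leaves to the reader (the corollary is stated with \verb|\qed|, indicating it is meant to follow at once from \autoref{x13}). The only point requiring care is the one you identify — that the definition of a strengthening is symmetric in the two sides, so that $(B',A')$ is a strengthening of $(B,A)$ — and your justification of this via the explicit side-choice in the definition and the side-symmetry of reductions is sound.
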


Similar to \autoref{x13}, we prove the following (differences are underlined).

\begin{lem}\label{x12}
Let $G$ be a 3-connected graph.
Let $(A,B)$ be a mixed 3-separation of~$G$, and let $(A',B')$ be the \underline{reduction} of~$(A,B)$.
Then for every \underline{tri-separation}~$(C,D)$ of~$G$ with $(C,D)\leq (A,B)$ we also have $(C,D)\leq (A',B')$.\qed
\end{lem}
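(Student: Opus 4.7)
The plan is to adapt the proof of \autoref{x13} with the simplifications permitted by the weaker operation (reduction in place of strengthening) and the weaker hypothesis on~$(C,D)$. Since the reduction only deletes vertices from~$A\cap B$, we immediately have $B'\subseteq B\subseteq D$, which handles one half of the required inequality $(C,D)\le(A',B')$.

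For the other half $C\subseteq A'$, I would pick an arbitrary $v\in C$ and show that $v\in A'$. If $v\notin A\cap B$, then $v\in A\setminus B\subseteq A'$, since the reduction does not touch vertices outside~$A\cap B$. Otherwise $v\in A\cap B$; then $v\in B\subseteq D$ forces $v\in C\cap D$, and the tri-separation hypothesis on~$(C,D)$ supplies at least two neighbours of~$v$ in~$C$. As $C\subseteq A$, the vertex~$v$ thus has at least two neighbours in~$A$, which is precisely the criterion preventing $v$ from being deleted from~$A$ by the reduction. Hence $v\in A'$ as desired.

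There is no genuine obstacle here: the reduction's deletion rule (fewer than two neighbours on a side) is exactly the negation of the tri-separation condition on that side, so \emph{any} tri-separation~$(C,D)$ sitting below $(A,B)$ automatically survives the reduction, without needing to be strong. This is in contrast to \autoref{x13}, where the strengthening additionally discards degree-three vertices of $A\cap B$ that have a neighbour in $A\cap B$, and it is precisely to accommodate that extra step that the hypothesis on $(C,D)$ had to be upgraded from tri-separation to \emph{strong} tri-separation.
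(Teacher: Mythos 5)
Your proof is correct and takes essentially the same approach as the paper, which deliberately leaves this lemma without a written proof, noting only that it is proved like \autoref{x13}; your argument is exactly that adaptation. Your closing observation — that the reduction's deletion criterion is the negation of the tri-separation condition, so any tri-separation below $(A,B)$ survives, whereas the strengthening's extra removal of degree-three vertices is what forces the stronger hypothesis in \autoref{x13} — is accurate and identifies the precise reason the two lemmas have different hypotheses.
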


\begin{cor}\label{capture_star}
Let $\sigma$ be a star of tri-separations of a 3-connected graph~$G$.
If a mixed 3-separation $(A,B)$ of~$G$ almost interlaces~$\sigma$, then the reduction of $(A,B)$ almost interlaces~$\sigma$ as well.
\qed
\end{cor}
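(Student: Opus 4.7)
The plan is to apply \autoref{x12} elementwise to the star~$\sigma$, together with a brief check that the reduction operation is symmetric in its two sides. Let $(A',B')$ denote the reduction of $(A,B)$ and let $(C,D) \in \sigma$ be arbitrary. By hypothesis one of $(C,D) \leq (A,B)$ or $(C,D) \leq (B,A)$ holds, and in each case we wish to conclude the analogous inequality with primed sides.

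In the first case, \autoref{x12} applies directly and yields $(C,D) \leq (A',B')$. In the second case, I would first note that the reduction operation is invariant under swapping the two sides: the definition of reduction removes each vertex $v \in X_1 \cap X_2$ from the side $X_{i(v)}$ in which it has fewer than two neighbours, and this prescription depends only on the pair of sides and not on the chosen orientation of the mixed-separation. Hence reducing $(B,A)$ produces $(B',A')$. Applying \autoref{x12} with $(B,A)$ in place of $(A,B)$ then gives $(C,D) \leq (B',A')$.

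Combining the two cases, for every $(C,D) \in \sigma$ we obtain $(C,D) \leq (A',B')$ or $(C,D) \leq (B',A')$, which is precisely almost interlacing of $\sigma$ by $(A',B')$.

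The only substantive step is the symmetry observation about the reduction; everything else is immediate from \autoref{x12}. Since \autoref{capture_star2} (the analogue for strengthenings and strong tri-separations) was proved in the same style from \autoref{x13}, I expect no further obstacle here.
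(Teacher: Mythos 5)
Your proof is correct and is precisely the argument the paper intends (the corollary is stated with \qed, i.e.\ as immediate from \autoref{x12}): apply \autoref{x12} elementwise, using in the case $(C,D)\le(B,A)$ the symmetry of the reduction operation to see that the reduction of $(B,A)$ is $(B',A')$. Your symmetry observation is valid, since in the definition of the reduction the side from which a vertex is removed depends only on which side gives it fewer than two neighbours, not on the orientation of the pair.
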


\section[Proof of (ii)]{Proof of \ref{genWheelCase}}

A key step in the proof of  \ref{genWheelCase} is to show that the tri-star of the apex-decomposition from \autoref{X} is splitting, see \autoref{is_splitting} below.
Then we finish the proof similarly to the proof of \ref{K3Case}. We start preparing to prove \autoref{is_splitting}.

\begin{lem}\label{bttExtra}
Assume \autoref{X} with crossing tri-separations.
If $O$ has type \ty{btt}, then the tri-star of~$\Acal$ is a splitting star of the set of all totally-nested nontrivial tri-separations of~$G$.
\end{lem}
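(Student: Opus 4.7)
My plan is to assume for contradiction that some $(U,W)\in N$ interlaces the tri-star, and then pin down $(U,W)$ completely using the structure forced by \autoref{is_splitting_star_case3}. That lemma identifies the tri-star in the \ty{btt} case as a single totally-nested nontrivial tri-separation $(E,F)=(V(G)-x_3,\{v,x_1,x_2,x_3\})$, where $O=x_1x_2x_3$ with $x_1x_2$ bold and~$v$ adjacent to all three~$x_i$. By \autoref{splitVsInterlace}, it suffices to show that no $(U,W)\in N$ interlaces~$\{(E,F)\}$, so I assume otherwise and, after possibly swapping $U$ and~$W$, take $(E,F)<(U,W)$.

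The first step is to observe that since $V(G)\sm E=\{x_3\}$, the inclusion $E\se U$ leaves only two possibilities: either $U=V(G)$, which contradicts that $(U,W)$ is a mixed-separation, or $U=V(G)-x_3$, in which case $x_3\in W$ and $W\subsetneq F=\{v,x_1,x_2,x_3\}$. Setting $T:=W\cap\{v,x_1,x_2\}$, I rule out $|T|\le 1$ quickly: for $T=\emptyset$ the resulting $(U,W)$ is the atomic trivial tri-separation at~$x_3$, contradicting nontriviality; for $|T|=1$ the unique vertex of~$T$ lies in $U\cap W$ but has only $x_3$ as a neighbour inside~$G[W]$, contradicting the tri-separation condition. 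This leaves exactly three possible~$W$, namely $\{x_1,x_2,x_3\}$, $\{v,x_1,x_3\}$, and $\{v,x_2,x_3\}$.

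The second step is to apply exactly the same inclusion analysis to the two tri-separations $(A,B)$ and $(C,D)$ provided by \autoref{X}: since both interlace the tri-star (after relabelling sides) and the identification of $S(A,B)$ and $S(C,D)$ in \autoref{is_splitting_star_case3} forces $x_3$ onto the small side of each, I obtain $(A,B)=(V(G)-x_3,\{v,x_2,x_3\})$ and $(C,D)=(V(G)-x_3,\{v,x_1,x_3\})$. Hence two of the three candidates for $(U,W)$ literally coincide with $(A,B)$ or $(C,D)$, and these are not in~$N$ because they cross each other. For the third candidate $W=\{x_1,x_2,x_3\}$ (for which being a tri-separation additionally forces $x_1x_2\in E(G)$), a direct comparison with $(A,B)=(V(G)-x_3,\{v,x_2,x_3\})$ shows that $B$ and $W$ differ in the single pair $v$ versus~$x_1$, so neither contains the other; combined with $A=U$ this makes $(A,B)$ cross $(U,W)$, again contradicting $(U,W)\in N$. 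The one conceptual observation is that interlacing $\{(E,F)\}$ forces $U=V(G)-x_3$, which confines $W$ to the four-vertex set~$F$ and reduces everything to a finite case check; the rest is mechanical bookkeeping.
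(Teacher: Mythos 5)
Your proposal is correct and follows essentially the same approach as the paper: both proofs invoke \autoref{is_splitting_star_case3} and \autoref{splitVsInterlace}, establish that any nontrivial tri-separation interlacing the singleton tri-star must have small side equal to $V(G)-x_3$ and large side a three-element subset of $\{v,x_1,x_2,x_3\}$ containing $x_3$, and then show each of the three resulting candidates is crossed. The only difference is in how the crossing is exhibited: the paper checks directly that the three candidates are tri-separations that pairwise cross, while you observe that two of them are forced to coincide with $(A,B)$ and $(C,D)$ from \autoref{X} (which cross by assumption) and verify by hand that the remaining one crosses $(A,B)$.
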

\begin{proof}
By \autoref{is_splitting_star_case3}, the tri-star of~$\Acal$ consists of totally-nested nontrivial tri-separations, and 
$v$ is adjacent to all three vertices of~$O$. It remains to show that the tri-star of $\Acal$ is splitting. 
Let $x_1,x_2,x_3$ denote the vertices of~$O$ so that the edge $x_2 x_3$ is bold.
Let $B_\ell$ denote the unique leaf-bag of~$\Acal$; so $\{x_2,x_3\}$ is the adhesion set of~$B_\ell$.
Since all~$x_i$ are neighbours of~$v$, the pseudo-reduction $(C,D)$ induced by~$\ell$ is given by $C:=B_\ell+v$ and $D:=\{x_1,x_2,x_3,v\}$.

Let $(U,W)$ be  a nontrivial tri-separation of $G$ with $(C,D)<(U,W)$. 
We have to show that $(U,W)$ is crossed by a tri-separation.

\begin{sublem}
$(U,W)=(C,D-y)$ for some $y\in \{x_2,x_3,v\}$.   
\end{sublem}
\begin{cproof}
    Since $D\sm C=\{x_1\}$ and $W\sm U$ is nonempty, we deduce from $(C,D)\leq (U,W)$ that  $x_1\in W\sm U$ and $C=U$. Thus since $(C,D)<(U,W)$, the side $W$ is a proper subset of $D=\{x_1,x_2,x_3,v\}$. As $G[W]$ contains a cycle, it has exactly three vertices.
\end{cproof}\medskip

It is straightforward to check that $(C,D-x_i)$ is a tri-separation for both $i\in \{2,3\}$, and that $(U,W)=(C,D-y)$ is crossed by $(C,D-x_i)$ for every $i\in \{2,3\}$ with $x_i\neq y$.
\end{proof}

\begin{dfn}[Red]
Assume~\autoref{X}.
A vertex of $O$ is \emph{red} if it is adjacent to $v$ or incident with two bold edges of~$O$. 
\end{dfn}

\begin{eg}\label{eg-red}
    Vertices incident with two timid edges of~$O$ are red: since $G$ is 3-connected, they need to have a third neighbour in~$G$, and this can only be~$v$.
\end{eg}
A~\emph{mixed 2-separat{\"o}r}\footnote{It is a technical variant of a mixed 2-separator, hence the similar name.} of $O$ is a mixed 2-separator of $O$ or else it consists of the two endvertices of a bold edge of~$O$. 
It is \emph{red} if all edges in it are timid and all vertices in it are red.

\begin{rem}(Motivation)
    In what follows, we offer a way to understand the tri-separations interlacing the tri-star of $\Acal$ in \autoref{X} via red mixed 2-separat{\"o}rs. 
    They have smaller order and hence are easier to analyse. 
\end{rem}

Given a mixed 2-separat{\"o}r~$X$, we denote by $O_X$ the topological space obtained from the geometric realisation of $O$  (which is homeomorphic to $\mathbb{S}^1$) by removing all vertices of $X$ and all interior points of edges in~$X$. We refer to the two connected components of $O_X$ as the \emph{intervals} of~$O_X$.

In the following, when $(C,D)$ is a tri-separation, we write $S(C,D)\cap O$ as an abbreviation of $S(C,D)\cap (V(O)\cup E(O))$.

\begin{lem}\label{is_red}
Assume \autoref{X}.
    If a strong tri-separation $(C,D)$ of~$G$ almost interlaces the tri-star $\sigma$ of~$\Acal$, then $S(C,D)\cap O$ is a red mixed 2-separat{\"o}r. 
\end{lem}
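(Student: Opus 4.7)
The plan is to analyse the three elements of $S(C,D)$ using the almost-interlacement with $\sigma$, and then verify in turn the defining conditions of a red mixed 2-separat\"or. First, I would locate the three elements. By \autoref{v17}, $S(C,D)$ contains $v$ or an edge incident with $v$. Combining this with almost-interlacement, each leaf-bag $B_\ell$ of~$\Acal$ satisfies $X_\ell\se C$ or $X_\ell\se D$, where $X_\ell$ is the leaf-side of the pseudo-reduction. Interior vertices of $B_\ell$ sit in $X_\ell\sm Y_\ell$, hence strictly on one side of $(C,D)$ and not in its separator. Similarly no edge strictly inside a leaf-bag (in particular, no bold edge of~$O$ realised in~$G$) can belong to $S(C,D)$. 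Thus the elements of $S(C,D)$ are: possibly $v$, possibly an edge at~$v$, vertices of $O$, or timid edges of $O$. Since $G-v$ is connected by 3-connectedness and edges at $v$ have no effect in $G-v$, $S(C,D)$ cannot consist only of $v$-stuff; a slightly more detailed version of the same argument rules out the case of one $O$-element and two $v$-elements, giving $|S(C,D)\cap O|=2$.

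Next, I would show $S(C,D)\cap O$ is a mixed 2-separat\"or. If the two elements are non-adjacent on $O$ (whether vertices, edges, or one of each), their removal partitions $O$ into two arcs, which together with leaf-bags attached via bold edges carry vertices on opposite sides of $(C,D)$, so they form a mixed 2-separator of~$O$. If the two elements are adjacent on $O$, they must be two vertices joined by a single edge $e$ of $O$, since a vertex and an incident edge cannot coexist in a separator. If $e$ is bold, this is the second clause of the definition. If $e$ is timid, a contradiction arises: everything outside $\{v,x,y\}$ lies on one side of $(C,D)$, forcing $v\notin S(C,D)$ and $D\sm C=\{v\}$; then each of $x,y$ admits only $\{v,\text{other endvertex of }e,\text{one further vertex}\}$ as neighbours in~$G$, giving degree three and contradicting strongness of $(C,D)$.

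Finally, all edges of $S(C,D)\cap O$ are timid by the leaf-bag argument. For redness of any vertex $x\in S(C,D)\cap V(O)$, suppose $x$ is not red. Then $x$ is not adjacent to $v$ and incident to at most one bold edge of $O$; by \autoref{eg-red} it is incident to exactly one bold edge $xy$ (whose leaf-bag adhesion is $\{x,y\}$) and one timid edge $xz$. All neighbours of $x$ in $G$ other than $z$ lie in the associated leaf-bag $B_\ell$, so $x$ is edgy and is reduced out of $Y_\ell$ in the pseudo-reduction. Then almost-interlacement with $(C,D)$ forces $x\in X_\ell\sm Y_\ell$, placing $x$ strictly on one side of $(C,D)$ and thus contradicting $x\in S(C,D)$. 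The main obstacle will be the adjacent-pair case above, where the strongness of $(C,D)$ and the structural constraints of \autoref{X} (alternation of $O$ and the exclusions from \autoref{tiny-cases}) must combine precisely to force the degree contradiction.
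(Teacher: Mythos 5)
Your proposal takes essentially the same route as the paper's proof: \autoref{v17} together with the structure of $S(C,D)$ forced by almost-interlacement gives $|S(C,D)\cap O|=2$; the adjacent-timid-edge case is ruled out by a degree contradiction with strongness; and redness follows because a non-red vertex would be edgy in its unique bold leaf-bag, hence land in $X_\ell\sm Y_\ell$ of the corresponding pseudo-reduction and so be forced strictly to one side of $(C,D)$ by almost-interlacement (you apply this edginess argument uniformly, where the paper splits the redness step into two sub-cases, but the mechanism is the same). One small inaccuracy: in the adjacent-timid case, only \emph{one} of $x,y$ is guaranteed to have degree three --- your bound $D\sm C=\{v\}$, combined with the fact that $S(C,D)$ contains only a single edge at~$v$, forces at most one bold edge among the two $O$-edges other than $e$, not zero --- but a single degree-three vertex in $S(C,D)$ already contradicts strongness, so the argument is unaffected.
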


\begin{proof}
Since $(C,D)$ almost interlaces~$\sigma$, the elements of the separator $S(C,D)$ are vertices or edges of $O+v$ or edges incident with~$v$. 
By \autoref{v17}, $S(C,D)$ contains $v$ or an edge incident with $v$.
By \autoref{independentEdges}, no two edges in $S(C,D)$ share ends.
Thus, $X:=S(C,D)\cap O$ has size two. 

\begin{sublem}\label{nice123}
    Each interval of $O_X$ contains (the interior points of) a bold edge or a vertex.
\end{sublem}

\begin{cproof}
Suppose not for a contradiction. 
Then one of the intervals of $O_X$ is equal to the set of interior points of a timid edge $e=xy$. 
So one of the sides of $(C,D)$, say $D$, contains all vertices of~$O$. 
So $C$ intersects $V(O)$ precisely in the vertices $x$ and~$y$. 
As the leaf-bags of $\Acal$ are 2-connected and $e$ is timid, the graph $G-v-x-y$ is connected; hence $C$ contains no other vertex of $G-v$. 
Since $C\sm D$ is nonempty, it must contain a vertex and the only possibility is that $C\sm D=\{v\}$. 
By \autoref{independentEdges} the vertex $v$ has at most one neighbour in $D\sm C= D-x-y$. 
So by \autoref{v has neighbours}, at most one edge of $O$ can be bold. 
Thus one of the vertices $x$ and $y$, say $x$, is incident with two timid edges. 
So $x$ has degree at most three. 
As $x\in C\cap D$, this contradicts the assumption that $(C,D)$ is strong. 
\end{cproof}\medskip

By \autoref{nice123}, $X$ is a mixed 2-separat{\"o}r. It remains to show that $X$ is red.
Every edge in  $X$ is timid, so let $x$ be a vertex in~$X$.
Since we are done otherwise, assume that $x$ is not adjacent to $v$.
If $x$ is not adjacent to a vertex $y$ in~$X$, then the fact that it has at least two neighbours in the sides $C$ and $D$ implies that both its incident edges on $O$ must be bold. 
So assume that $x$ has a neighbour $y$ in~$X$; that is, $X=\{x,y\}$. 
The only way this is possible is that $xy=:e$ is an edge of~$O$.
By \autoref{nice123}, the edge $e$ must be bold. Let $f$ be the edge of~$O$ incident with $x$ aside from~$e$. 

Suppose for a contradiction that $f$ is timid. Then as $x$ is not adjacent to~$v$, the edge $f$ is in the separator of the pseudo-reduction corresponding to~$e$. 
Denote this pseudo-reduction by $(E,F)$ with leaf-side~$E$. 
We have shown that the vertex $x$ is in $C\cap D$ and in $E\sm F$. 
As $(C,D)$ almost interlaces, we have that $C\cap D\se F$, a contradiction. 
So both edges incident with $x$ are bold. Hence $x$ is red. 
\end{proof}

\begin{lem}\label{red_lift}
Assume \autoref{X}.
    For every red mixed 2-separat{\"o}r~$X$ of~$G$, there is a tri-separation $(C,D)$ of $G$ that almost interlaces the tri-star $\sigma$ of~$\Acal$ and satisfies $S(C,D)\cap O=X$.
\end{lem}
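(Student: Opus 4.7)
The plan is to construct $(C,D)$ directly from the intervals of $O_X$. Let $I_1, I_2$ be the two intervals of $O_X$, and for each $j\in\{1,2\}$ let $V_j$ be the union of the vertices of~$O$ lying in~$I_j$ together with the internal vertices (those not on~$O$) of every leaf-bag of~$\Acal$ whose bold edge lies in~$I_j$. Write $V_X$ for the vertex set of~$X$; then $V_1$, $V_2$, $V_X$, and $\{v\}$ partition~$V(G)$, and moreover every bold edge of~$O$ lies entirely in one of $I_1$ or~$I_2$ because $X$ contains no bold edges.

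I would first try the natural candidate $(A,B) := (V_1\cup V_X\cup\{v\},\, V_2\cup V_X\cup\{v\})$. The only edges between $V_1\setminus V_X$ and $V_2\setminus V_X$ are the timid edges in~$X$, so $S(A,B) = V_X \cup \{v\} \cup (\text{timid edges in }X) = X\cup\{v\}$, an order-$3$ mixed separator with $S(A,B)\cap O = X$. Both $A\setminus B$ and $B\setminus A$ are nonempty, since each interval contains either a vertex of~$O$ or a bold edge together with its associated leaf-bag. To verify that $(A,B)$ is a tri-separation under the assumption that~$v$ has at least two neighbours in each~$V_j$, I would use that each $x\in V_X$ is red: if~$x$ is incident with two bold $O$-edges (one in each interval), then 2-connectedness of each associated leaf-bag supplies at least two neighbours of~$x$ in each~$V_j$; and if~$x$ is adjacent to~$v$, then~$v$ is a common neighbour on both sides and, together with $x$'s incident $O$-edges, furnishes at least two neighbours of~$x$ on each side. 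In the degenerate case where~$v$ has only one neighbour $w$ in some~$V_j$ (say $V_1$), I would replace $v$ in the separator by the edge~$vw$: concretely $(C,D) := (V_1\cup V_X,\, V_2\cup V_X\cup\{v\})$, still with an order-$3$ separator intersecting~$O$ in exactly~$X$. A local analysis using \autoref{v has neighbours} and \autoref{make_nice} would show that, under such a restrictive~$v$-neighbourhood hypothesis, no $x\in V_X$ can be a degree-three vertex of~$G$ adjacent to~$v$, so the tri-separation conditions still hold; an analogous modification handles the case where~$v$ has no neighbour in some~$V_j$.

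Finally, for the almost-interlacing property, every pseudo-reduction $(E,F)\in\sigma$ is induced by a leaf-bag~$B_\ell$ whose bold edge lies entirely in one interval~$I_j$. The leaf-side~$E$ is contained in $B_\ell\cup\{v\}\subseteq V_j\cup V_X\cup\{v\}$, which is a side of~$(C,D)$ (or of~$(D,C)$), yielding the required $(E,F)\le(C,D)$ or $(E,F)\le(D,C)$. The main obstacle will be the degenerate case: proving that the modified construction is a tri-separation requires carefully ruling out problematic red degree-three vertices of~$V_X$ adjacent to~$v$ under restricted~$v$-neighbourhood hypotheses, via a case analysis on the local structure of~$O$ around~$X$.
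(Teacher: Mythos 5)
Your candidate $(A,B)=(V_1\cup V_X\cup\{v\},\ V_2\cup V_X\cup\{v\})$ is exactly the $(C_2,D_2)$ the paper builds, and your final almost-interlacing argument for it is correct; so the overall plan is the same as the paper's. The gap lies in the ``degenerate case''.

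Your criterion for entering the degenerate case --- ``$v$ has only one neighbour in some $V_j$'' --- does not coincide with the criterion for $(A,B)$ failing to be a tri-separation. The latter requires $v$ to have fewer than two neighbours in $G[A]$ or in $G[B]$, i.e.\ fewer than two neighbours in $V_j\cup V_X$ for some $j$. It can happen that $v$ has only one neighbour $w$ in $V_1$ but is also adjacent to a vertex $x\in V_X$; then $v$ already has two neighbours ($w$ and $x$) in $G[A]$, so $(A,B)$ is a tri-separation and no modification is needed --- yet your criterion sends you to the degenerate branch. In that branch, your modified pair $(C,D)=(V_1\cup V_X,\ V_2\cup V_X\cup\{v\})$ moves $v$ to $D\setminus C$, so $x$ loses $v$ as a $C$-side neighbour. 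If $x$ is red solely because it is adjacent to $v$ (e.g.\ $x$ has two timid $O$-edges and degree three, which is possible here), then $x$ is left with a single neighbour in $G[C]$ and $(C,D)$ is \emph{not} a tri-separation. So your proposed $(C,D)$ can genuinely fail, and the claim you invoke --- ``no $x\in V_X$ is a degree-three vertex adjacent to $v$'' --- is neither what you need (degree-$\geq 4$ vertices adjacent to $v$ can also lose a $C$-side neighbour, depending on which side their bold edge lies on) nor something you can establish under your case hypothesis (the counterexample above satisfies it). Moreover, the second ``degenerate case'' you mention, that $v$ has no neighbour in some $V_j$, cannot arise: by 3-connectedness $v$ always has a neighbour in each of $C_2\setminus D_2$ and $D_2\setminus C_2$.

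The paper's route avoids all of this by routing the fix-up through the general reduction machinery. The dichotomy it uses is: either $v$ has a neighbour in $X$, in which case $(C_2,D_2)$ is already a tri-separation, or $v$ has no neighbour in $X$, in which case \autoref{forUniqueReduction} guarantees that the unique offending neighbour lies strictly on one side, and the \emph{reduction} of $(C_2,D_2)$ is the desired tri-separation; since $v$ has no neighbour in $X$, every vertex of $X$ is red because it is incident with two bold edges (not because it is adjacent to $v$), so removing $v$ from a side cannot cost any $x\in V_X$ a required neighbour. Finally, the paper invokes \autoref{capture_star} to see that the reduction still almost interlaces $\sigma$; your proposal omits this verification for the modified pair. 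To repair your write-up, replace your case split with the paper's (does $v$ have a neighbour in $X$?), in the negative case apply the reduction of $(C_2,D_2)$ and note that all $x\in V_X$ are red via two bold edges, and cite \autoref{capture_star} for the almost-interlacing of the reduction.
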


\begin{proof}
Denote the intervals of $O_X$ by $C_1$ and $D_1$.
We obtain $C_2$ from $C_1$ by replacing every bold edge $e$ of $O$ {with interior in~$C_1$} by the leaf-side $A_i$ of the tri-separation $(A_i,B_i)\in \sigma$ that corresponds to~$e$, and adding~$v$. 
We define $D_2$ analogously. 
Since $X$ is red, it contains no bold edges and $C_2\sm D_2$ and $D_2\sm C_2$ are nonempty. 
Thus $(C_2,D_2)$ is a mixed-separation of $G$ that almost interlaces~$\sigma$. 
Its separator is $X+v$, so it is a mixed 3-separation. Vertices of $X$ are red, so have two neighbours in $C_2$ and $D_2$. 
By 3-connectivity, $v$ has a neighbour in $C_2\sm D_2$ and in~$D_2\sm C_2$. 
So if $v$ has a neighbour in $X$, the mixed 3-separation $(C_2,D_2)$ is the desired tri-separation. 
Otherwise the reduction $(C,D)$ of $(C_2,D_2)$ satisfies $S(C,D)=X$ and almost interlaces $\sigma$ by \autoref{capture_star}, so it is the desired tri-separation. 
\end{proof}

The \emph{boundary} of an edge $e$ of $O$ is the 2-element set that, for each endvertex $u$ of~$e$, contains $u$ if $u$ is red, and otherwise contains the unique edge of~$O$ other than~$e$ that is incident with~$u$.

\begin{eg}\label{boundary_good}
    If $e$ is bold, then its boundary is a red mixed 2-separat{\"o}r.
\end{eg}

\begin{lem}\label{is_bold}
Assume \autoref{X} with crossing tri-separations and that $O$ does not have the type \ty{btt}.  
    If a tri-separation $(C,D)$ of~$G$ almost interlaces the tri-star $\sigma$ of $\Acal$ and $S(C,D)\cap O$ is equal to the boundary of a bold edge of~$O$, then $(C,D)$ or $(D,C)$ is in the tri-star of~$\Acal$ or $(C,D)$ is crossed by some tri-separation of~$G$.
\end{lem}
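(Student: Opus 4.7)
The plan is to show that $(C,D)$ coincides with the pseudo-reduction at~$e$, up to flipping sides, and so lies in~$\sigma$. Let $B_\ell$ denote the leaf-bag of~$\Acal$ corresponding to~$e$, and let $(X,Y)\in\sigma$ denote its pseudo-reduction. First I would verify that $S(X,Y)\cap O$ equals the boundary of~$e$: a vertex $u\in\{a,b\}$ survives as a vertex in~$S(X,Y)$ precisely when $u$ is not edgy, and a brief check on which of $u\sim v$ and ``the $O$-edge at~$u$ other than~$e$ is timid'' hold shows that $u$ is edgy precisely when $u$ is not red. Since $(C,D)$ almost interlaces~$\sigma$ and $(X,Y)\in\sigma$, after possibly flipping $(C,D)$ I may assume $(X,Y)\le(C,D)$, so $B_\ell\subseteq C$ and $D\subseteq Y$.

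The first main step is to show that every other pseudo-reduction $(X_i,Y_i)$ (for bold $e_i\neq e$) satisfies $(X_i,Y_i)\le(D,C)$, i.e.\ $B_{\ell_i}\subseteq D$. If instead $B_{\ell_i}\subseteq C$ for some~$i$, then since $|S(C,D)|=3$ and boundary$(e)$ fills two of its three slots, the adhesion $\{a_i,b_i\}$ must lie on the same side as the connected non-adhesion of~$B_{\ell_i}$: otherwise at least two additional edges would cross in $S(C,D)$. Hence $\{a_i,b_i\}\subseteq C$, forcing all long-interval vertices of~$O$ into~$C$ and then every other leaf-bag into~$C$. So $C\supseteq V(G)-\{v\}$; in case $v\in C$ this gives $D\setminus C=\emptyset$, while in case $v\in D\setminus C$ the single leftover slot of $S(C,D)$ is an edge $vw$ with $w\in C\setminus D$, limiting $v$ to at most one neighbour in $C\setminus D$. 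But \autoref{v has neighbours} places a $v$-neighbour inside each leaf-bag's non-adhesion, and we have at least two leaf-bags, contradicting the limit.

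With all other leaf-bags on the $D$-side, $C=B_\ell$ or $B_\ell+v$ (depending on whether $v\in C$), while $D$ consists of the long interval, the non-adhesion of every other leaf-bag, the red elements of $\{a,b\}$, and $v$ if $v\in D$. To conclude $(C,D)=(X,Y)$ I would split into cases on~$v$. If $v\in C\cap D$, the tri-separation condition forces $v$ to have at least two neighbours in~$B_\ell$, placing $v\in X$ and yielding $C=B_\ell+v=X$; a direct computation of~$Y$ matches~$D$. If $v\in D\setminus C$ with $vw\in S(C,D)$, the budget $|S(C,D)|=3$ limits $v$ to exactly one neighbour in~$B_\ell$, placing $v\notin X$ and again giving $(C,D)=(X,Y)$.

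The hardest case will be ruling out $v\in C\setminus D$: here the leftover edge $vw$ allows $v$ only one neighbour in $D\setminus C$, whereas every long-interval vertex incident with two timid edges of~$O$ is forced by 3-connectivity to be adjacent to~$v$, and every other leaf-bag's non-adhesion contributes such a neighbour by \autoref{v has neighbours}. The hypotheses that $O$ is not of type \ty{btt} (together with \autoref{tiny-cases} for the other length-3 types and \autoref{o4_not_alternate} for \ty{btbt}) guarantee enough such forced-red vertices or non-$\ell$ leaf-bags to exceed the three-slot budget of $S(C,D)$. In every admissible case we get $(C,D)=(X,Y)\in\sigma$, concluding the proof.
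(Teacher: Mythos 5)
Your proposal takes a genuinely different route than the paper.

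\paragraph{Comparison of approaches.}
The paper's proof is short and hinges on a single counting claim (its Claim~\ref{calc2}): after possibly swapping which of the (at most two) bold edges with the given boundary is called $e$, the ``far'' interval $P$ carries at least two $v$-neighbours plus bold edges. This forces $v$ to have at least two neighbours on the $D$-side; since two edges at~$v$ cannot both lie in $S(C,D)$, we get $v\in D$, and then a quick argument ($C-v = A-v$, $v\in B$, and the reduction rule for $v$) yields $(A,B)=(C,D)$. Your proposal instead computes the sides $C$ and $D$ explicitly: you first argue that every pseudo-reduction other than the one at~$e$ must lie below $(D,C)$, pin down $C$ to be $B_\ell$ or $B_\ell+v$, and then do a three-way case split on where $v$ sits. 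This is more hands-on but also longer, and it pushes all the difficulty into the third case.

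\paragraph{Minor issue in the $v\in D\setminus C$ case.}
You write that ``the budget $|S(C,D)|=3$ limits $v$ to exactly one neighbour in~$B_\ell$.'' The budget only limits $v$'s neighbours in $C\setminus D$; $v$ may freely be adjacent to red vertices of $\{a,b\}$ lying in $C\cap D$ without consuming any slot. What actually forces ``at most one neighbour of~$v$ in~$B_\ell$'' is that $X\subseteq C$ and $v\notin C$ give $v\notin X$, and by the definition of the pseudo-reduction $v\notin X$ happens precisely when $v$ has at most one neighbour in~$B_\ell$. The conclusion is right, but the stated reason is not.

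\paragraph{The crux is left as a sketch, and the sketch is insufficient.}
You correctly identify ruling out $v\in C\setminus D$ as the hard part, but the counting you gesture at does not close in general. You want at least two $v$-neighbours in $D\setminus C$, from non-$\ell$ leaf-bags and from long-interval vertices flanked by two timid edges. But take $O$ of type $\ty{btbt}^+$, $e=x_1x_2$, $f=x_3x_4$, with both $x_1,x_2$ adjacent to~$v$ (so $\text{boundary}(e)=\{x_1,x_2\}$) and $x_3,x_4$ not adjacent to~$v$. Then $P=\{x_3,x_4\}$ has no vertex between two timid edges, and only one other leaf-bag, so the count is exactly~$1$. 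Neither \autoref{tiny-cases} nor \autoref{o4_not_alternate} supplies the missing neighbour: \autoref{o4_not_alternate} is a conclusion about total-nestedness of the tri-star and says nothing about the number of $v$-neighbours on one side of~$(C,D)$. This is precisely the configuration where the paper's Claim~\ref{calc2} invokes the ``exchange the roles of $e$ and~$f$'' manoeuvre (which also requires caring about whether the boundary of~$f$ coincides with that of~$e$), a delicate point your sketch does not engage with at all. As written, the proposal does not establish the statement in this configuration, and without it the whole case split collapses.
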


\begin{proof}
Let $e$ be a bold edge of~$O$ such that $S(C,D)\cap O$ is equal to the boundary of~$e$.
Let $(A,B)$ be the tri-separation in~$\sigma$ that corresponds to the edge~$e$. 
As $(C,D)$ almost interlaces~$\sigma$, we have $(A,B)\leq (C,D)$ or $(A,B)\leq (D,C)$, say the former. 
Abbreviate $X:=S(C,D)\cap O$. 
Let $P$ denote the interval of $O_X$ that does not include the interior of~$e$. 

\begin{sublem}\label{betterCalc}
    Assume that $P$ contains no neighbour of $v$ and exactly one bold edge~$f$. Then $(A,B)=(C,D)$ or $(C,D)$ is crossed by some tri-separation of~$G$.
\end{sublem}
\begin{cproof}
    If a vertex of $O$ is incident with two timid edges, this vertex is a neighbour of $v$ by \autoref{eg-red} and is in~$P$, which is excluded. So every vertex of $O$ is incident with a bold edge.
    So $O$ is a cycle with exactly two bold edges $e,f$ such that all its vertices are incident with a bold edge. So $O$ contains at most four vertices. 
    By \autoref{tiny-cases} and since $O$ does not have the type \ty{btt} by assumption, $O$ has the type $\ty{btbt}^+$. 
    As $P$ contains no neighbour of~$v$, by \autoref{plus} the two endvertices of $e$ are adjacent to~$v$ while no endvertex of~$f$ is adjacent to~$v$.
    Then both endvertices of $e$ are red, so $X$ equals the set of endvertices of~$e$.
    Let $B_e$ denote the leaf-bag at the bold edge~$e$, and let $B_f$ denote the leaf-bag at the bold edge~$f$.
    Since both endvertices of $e$ are adjacent to~$v$, by the definition of pseudo-reduction we have $A=B_e\cup\{v\}$, $B=B_f\cup\{v\}\cup X$ and $S(A,B)=X\cup\{v\}$.
    We clearly have $A-v=C-v$ and $B-v=D-v$.
    We have seen above that $(A,B)\le (C,D)$.
    So if $v\in D$, then $(A,B)=(C,D)$.

    Now assume that $v\notin D$.
    Then $C=B_e\cup\{v\}$ and $D=B_f\cup X$.
    Hence $S(C,D)=X\cup\{vu\}$, where $u$ is the unique neighbour of $v$ in $B_f\sm V(O)$ (using \autoref{v has neighbours}).
    We now find a tri-separation of $G$ that crosses $(C,D)$, as follows.
    Since the two vertices in $X$ must have at least two neighbours in~$D$, the edge $e$ of $O$ must exist in~$G$.
    So $X\cup\{v\}$ spans a triangle in~$G$.
    Let $xy$ and $x'y'$ denote the two timid edges of~$O$ so that $X=\{x,x'\}$.
    Let $U:=B_e\cup\{v\}$ and $W:=B_f\cup\{v\}\cup\{x'\}$.
    Then $S(U,W)=\{xy,v,x'\}$.
    Since $X\cup \{v\}$ spans an edge, $vx'\in E(G)$, so $(U,W)$ is a tri-separation.
    In the corner-diagram for $(C,D)$ and $(U,W)$, $x$ and $vu$ lie in opposite links, so $(C,D)$ and $(U,W)$ cross as desired.
\end{cproof}

\begin{sublem}\label{calc2}
The sum of neighbours of $v$ on $P$ plus bold edges of $P$ is at least two, or $(A,B)=(C,D)$ or $(C,D)$ is crossed by some tri-separation of~$G$.
\end{sublem}

\begin{cproof}
Assume not for a contradiction. Then by \autoref{betterCalc}, $P$ contains no bold edges and at most one neighbour of~$v$.
Then all edges of $O$ except for $e$ are timid. So all vertices not incident with $e$ are neighbours of $v$ by \autoref{eg-red} and they are in $P$. So $O$ has length three and the type \ty{btt}. 
By assumption this type is excluded, so we reach a contradiction.    
\end{cproof}\medskip

By \autoref{calc2} we may assume that the sum of neighbours of $v$ on $P$ plus bold edges of $P$ is at least two. 
So $v$ has two neighbours in $D$ by \autoref{v has neighbours}. By \autoref{independentEdges}, two edges incident with $v$ cannot both be in $S(C,D)$, so~$v\in D$. 

We shall show that $(A,B)= (C,D)$. 
We immediately get $A-v=C-v$ and $B-v=D-v$.
By definition of $\sigma$, the vertex $v$ is in~$B$. 
So~$B=D$. 
Since $(A,B)\leq (C,D)$, it remains to show that if $v\in C$, then~$v\in A$.
So assume~$v\in C$. 
Since $(C,D)$ is a tri-separation, $v$ has two neighbours in~$C$. 
As $C-v=A-v$, the vertex $v$ has two neighbours in $A-v$. 
So by the definition of $(A,B)$, the vertex $v$ is in~$A$. This completes the proof.
\end{proof}

We say that a mixed 2-separat{\"o}r $X$ is \emph{crossed} by a mixed 2-separat{\"o}r $Y$ if the two intervals of $O_X$ contain elements of $Y$ (these elements can be vertices from $Y$ or edges from $Y$ viewed as open intervals). 
Note that crossing is a symmetric relation for mixed 2-separat{\"o}rs.

\begin{lem}\label{red_angry}
Assume \autoref{X}.
A mixed 2-separat{\"o}r $X$ of~$O$ is crossed by a red mixed 2-separat{\"o}r of~$O$ if and only if $X$ is not equal to the boundary of a bold edge.    
\end{lem}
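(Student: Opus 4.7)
The plan is to prove both directions of the iff separately. For the forward direction, suppose $X$ equals the boundary of a bold edge $e=uv$. Then one of the two intervals of $O_X$ — the one whose closure is the arc of $O$ containing $e$ — consists of $e$ together with whichever of $u,v$ is non-red (since the boundary replaces a non-red endvertex of $e$ by the adjacent edge of $O$ at that endvertex, which lies in $X$ and hence not in that interval). So this interval contains only the bold edge $e$ and possibly non-red vertices, none of which are red elements. Therefore no red mixed 2-separat{\"o}r $Y$ can have an element in this interval, and so $Y$ cannot cross $X$.

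For the backward direction, suppose $X$ is not the boundary of any bold edge. The strategy is to find a red element in each of the two intervals $I_1,I_2$ of $O_X$, pick red elements $y_1\in I_1$ and $y_2\in I_2$, and take $Y:=\{y_1,y_2\}$. The key observation is that if an interval $I$ contains an \emph{interior vertex} $u$ (one whose both $O$-edges lie in $I$), then $I$ contains a red element: if both $O$-edges at $u$ are bold, then $u$ itself is red; otherwise at least one of these edges is timid and therefore is a red element lying in $I$. Hence any interval with at least one interior vertex already contains a red element.

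It remains to handle intervals $I$ with no interior vertex; these are short and fall into a handful of configurations, indexed by the form of $X$. By inspecting these configurations (\emph{vertex--vertex}, \emph{vertex--edge}, \emph{edge--edge}), each short $I$ contains only a small fixed number of elements (at most two vertices and one edge on $O$). A direct check of each configuration shows that the absence of a red element in $I$ forces every vertex in $I$ to be non-red and the unique edge of $O$ inside $I$ to be bold; applying the definition of boundary to that bold edge yields precisely the set $X$. Contrapositively, if $X$ is not the boundary of any bold edge, then $I$ must contain a red element.

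Finally, given red $y_1\in I_1$ and $y_2\in I_2$, one checks $Y=\{y_1,y_2\}$ is a mixed 2-separat{\"o}r: since $y_1$ and $y_2$ lie in different components of $O_X$, they are non-incident on $O$ in the generic case and the removal of $Y$ separates $O$, making $Y$ a mixed 2-separator; in the borderline case where $y_1,y_2$ are the two endvertices of an edge $g$ of $O$, the only way $y_1,y_2$ can end up in distinct intervals is that $g\in X$, and then one can argue that each interval contains at least one red element distinct from $y_i$ (using that interior vertices would again be red or deliver a timid edge), allowing us to re-pick and avoid this pathology — or $g$ is bold and $Y$ is a mixed 2-separat{\"o}r of the bold-edge type. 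The main obstacle is this last verification that a valid $Y$ can always be chosen, which is where the careful short-interval analysis pays off.
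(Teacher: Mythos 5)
Your proof takes the same route as the paper's. Forward direction: the interval of $O_X$ associated with the bold edge has no red element. Backward direction: an interior vertex of an interval yields a red element (either the vertex itself, if both its $O$-edges are bold, or else a timid incident edge), which is exactly the paper's observation for intervals with at least two edges; a short case analysis covers intervals with no interior vertex. Two remarks on the details. First, you rightly flag --- though do not fully close --- the verification that the chosen pair $\{y_1,y_2\}$ really is a mixed 2-separat{\"o}r: if $y_1,y_2$ are the two endvertices of a \emph{timid} edge of~$O$ lying in~$X$, the pair is not one, and your re-picking argument is left as a sketch. The paper passes over this check entirely, so here you are being more careful, but the step is not actually complete. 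Second, your claim that for a short red-free interval ``applying the definition of boundary to that bold edge yields precisely the set~$X$'' tacitly requires the two bounding elements of that interval, which belong to~$X$, to already be red vertices or timid edges --- in other words, it tacitly assumes $X$ itself is red. Without that assumption, $X$ could be the two endvertices of a bold edge one of whose endvertices is not red; then $X$ is not the boundary of any bold edge, yet the interval $\mathrm{int}(e)$ contains no red element and no red $Y$ crosses $X$. The paper's one-line claim ``both intervals of $O_X$ either contain a red vertex, a timid edge or at least two edges'' has exactly the same blind spot, which is harmless in the only place the lemma is applied (there $X$ comes from \autoref{is_red} and is red), but which neither your proof nor the paper's addresses explicitly.
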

\begin{proof}
If $X$ is equal to the boundary of a bold edge~$e$, then one of the intervals of~$O_X$ consists only of the interior of~$e$ plus possibly some non-red endvertices of~$e$; thus $X$ is not crossed by a red mixed 2-separat{\"o}r. 
Conversely, if $X$ is not equal to the boundary of a bold edge, then both intervals of~$O_X$ either contain a red vertex, a timid edge or at least two edges.
Since we are done otherwise immediately, assume that we have the third outcome: two edges in an interval, and as we do not have the second outcome assume all the edges in the interval are bold. 
Then the interval has an internal vertex (a vertex that is not in the boundary of the interval), which is incident with two bold edges and thus is red. 
Hence $X$ is crossed by a red mixed 2-separat{\"o}r.
\end{proof}

\begin{lem}\label{is_splitting}
Assume \autoref{X} with crossing tri-separations.
The tri-star of $\Acal$ is a splitting star of the set of totally-nested nontrivial tri-separations.
\end{lem}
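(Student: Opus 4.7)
The plan is to argue by contradiction: if $\sigma$ fails to be splitting, \autoref{splitVsInterlace} gives a totally-nested nontrivial tri-separation $(C,D)$ of $G$ that interlaces $\sigma$. Hence $(C,D)$ almost interlaces $\sigma$ while neither $(C,D)$ nor $(D,C)$ lies in $\sigma$. The goal is to produce a tri-separation of $G$ crossing $(C,D)$, which will contradict total-nestedness.

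I would first dispose of the case where $O$ has type \ty{btt} by invoking \autoref{bttExtra}, and assume from now on that $O$ is not of this type. Since $(C,D)$ is totally-nested and nontrivial, it is strong by \autoref{expendable} and half-connected by \autoref{totallyNestedHalfConnected}; hence \autoref{is_red} applies and $X := S(C,D) \cap O$ is a red mixed 2-separat{\"o}r of $O$. If $X$ were the boundary of a bold edge of $O$, then \autoref{is_bold} would force $(C,D)$ or $(D,C)$ into $\sigma$, contradicting the interlacing assumption. So $X$ is not such a boundary, and by \autoref{red_angry} there is a red mixed 2-separat{\"o}r $Y$ of $O$ crossing $X$. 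Applying \autoref{red_lift} to $Y$ produces a tri-separation $(E,F)$ of $G$ that almost interlaces $\sigma$ and satisfies $S(E,F) \cap O = Y$.

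The main obstacle is the final step: showing that $(C,D)$ and $(E,F)$ genuinely cross in $G$. Because $X$ and $Y$ cross as mixed 2-separat{\"o}rs of $O$, the four pairwise intersections of the intervals of $O_X$ with the intervals of $O_Y$ are each non-empty. Since both $(C,D)$ and $(E,F)$ almost interlace $\sigma$, the two separations are pinned down on $V(O)$ by $X$ and $Y$, and every leaf-bag of $\Acal$ sits entirely on the side of each separation that is determined by which interval contains its supporting bold edge. I would then verify that the crossing diagram of $(C,D)$ and $(E,F)$ violates the nestedness criterion of \autoref{nestedviacorners}: each $O$-corner that contains either a vertex of $O$ or a bold edge immediately yields a non-empty corner in $G$ (via that vertex or via the interior vertices of the corresponding leaf-bag), while any $O$-corner consisting only of the interior of a single timid edge forces $O$ to be very short and is handled by combining the exclusion of type \ty{btt} with \autoref{tiny-cases} and by exhibiting non-empty opposite links coming from the vertices of $X$ landing in the links of $(E,F)$ and from the vertices of $Y$ landing in the links of $(C,D)$. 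This would show $(C,D)$ and $(E,F)$ cross, contradicting total-nestedness of $(C,D)$ and completing the proof.
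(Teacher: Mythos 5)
Your proposal follows the paper's proof almost step for step: the same appeal to \autoref{splitVsInterlace}, the same disposal of the \ty{btt} case via \autoref{bttExtra}, the same chain \autoref{expendable}/\autoref{totallyNestedHalfConnected} $\to$ \autoref{is_red} $\to$ \autoref{is_bold} $\to$ \autoref{red_angry} $\to$ \autoref{red_lift}. The one place where you diverge is the final step showing $(C,D)$ and $(E,F)$ cross, and there your argument is needlessly convoluted.

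The paper's observation is short: since $X$ and $Y$ cross on~$O$ they are disjoint, so the two elements of $X$ lie in $S(C,D)\sm S(E,F)$ and hence in the links for the sides of $(E,F)$; moreover, because one element of $X$ lies in each interval of $O_Y$, one lands in the link for~$E$ and the other in the link for~$F$. Dually the two elements of $Y$ land one in each of the links for $C$ and $D$. Thus all four links of the corner diagram are nonempty, and $(C,D),(E,F)$ cross by \autoref{nestedviacorners}. Your detour through showing the corners nonempty (case analysis on whether an $O$-corner carries a vertex, a bold edge, or only a timid edge interior) is extra work; and the claim that a timid-edge-only $O$-corner ``forces $O$ to be very short'' is not true in general, since the other three $O$-corners can be arbitrarily long. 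However, the clause you tack on at the end --- exhibiting nonempty opposite links from the elements of $X$ landing in the links of $(E,F)$ and of $Y$ in the links of $(C,D)$ --- is precisely the paper's general argument, works uniformly, and makes the corner case analysis superfluous. After tightening that up (and phrasing it so as to cover the case where elements of $X$ or $Y$ are edges, which land in the edge-links rather than the vertex parts of the links), the proof is correct and essentially the same as the paper's.
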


\begin{proof}
Let $(C,D)$ be a nontrivial totally nested tri-separation of~$G$ that almost interlaces the tri-star of~$\Acal$. Since every tri-separation with a degree-3-vertex $x$ in its separator is crossed (by the atomic cut at $x$), the totally nested tri-separation $(C,D)$ is strong.
     By \autoref{is_red}, $X:=S(C,D)\cap O$ is a red mixed 2-separat{\"o}r.
    By \autoref{red_angry}, either $X$ is crossed by a red mixed 2-separator $Y$ or $X$ is equal to the boundary of a bold edge of~$O$.
    In the second case, by \autoref{is_bold} and since we are otherwise done by \autoref{bttExtra}, the tri-separation $(C,D)$ or $(D,C)$ is in the tri-star of $\Acal$.
    In the first case, by \autoref{red_lift} there is a tri-separation $(E,F)$ almost interlacing the tri-star of $\Acal$ such that $S(E,F)\cap O=Y$.
    Since $X$ and $Y$ cross on $O$, the sets $X$ and $Y$ together ensure that all four links of the corner diagram for the tri-separations $(C,D)$ and $(E,F)$ are nonempty; thus the tri-separations $(C,D)$ and $(E,F)$ cross. We have shown that any nontrivial tri-separation $(C,D)$ that interlaces the tri-star of $\Acal$ is crossed by a tri-separation.
    To summarise, the tri-star of $\Acal$, which consists of  totally-nested nontrivial tri-separations by \autoref{is_splitting_star}, is splitting within the set of all totally-nested nontrivial tri-separations by \autoref{splitVsInterlace}.
\end{proof} 

\begin{lem}\label{is_gen-wheel}
Let $\Acal$ be a 2-connected apex-decomposition with central torso-cycle $O$ of a 3-connected graph, and let $\sigma$ denote the tri-star of~$\Acal$.
Then the expanded torso of $\sigma$ is a generalised wheel with rim~$O$, and the compressed torso of~$\sigma$ is a wheel with rim~$O$.
\end{lem}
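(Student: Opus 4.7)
The plan is to deduce the structure of both torsos from an explicit classification of the separators in the tri-star $\sigma$. For each leaf $\ell$ of $\Acal$ let $\{a_1,a_2\}$ be the adhesion set spanning the bold edge of $O$ at $\ell$. By the definition of the pseudo-reduction $(X_\ell,Y_\ell)$ together with \autoref{v has neighbours}, the separator $S_\ell$ consists of one element coming from $v$ (the vertex $v$ itself when $v$ has two or more neighbours in $B_\ell$, and the edge $vu_\ell$ where $u_\ell$ is the unique such neighbour otherwise) and two elements coming from $a_1$ and $a_2$ (each $a_i$, or the incident timid edge $a_ia_i'$ if $a_i$ is edgy, cf.\ \autoref{edgySituation}).

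From this description I would read off the expanded bag as $\beta'=\{v\}\cup V(O)\cup\{u_\ell : \ell \text{ produces such a } u_\ell\}$, and the expanded separator at each leaf as $\{a_1,a_2,v\}$ or $\{a_1,a_2,u_\ell\}$. The induced subgraph $G[\beta']$ contains the timid edges of $O$, the edges $vx$ for $x\in V(O)\cap N_G(v)$, and the edges $vu_\ell$. Making each expanded separator into a clique then restores all the bold edges of $O$ (so the full cycle $O$ appears) and attaches to each bold edge either the two spokes $va_1,va_2$ or a $Y$-graph on $\{a_1,a_2,v\}$ centred at $u_\ell$. This is precisely the structure of a concrete generalised wheel with rim $O$ and centre $v$. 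Minimum degree three follows from 3-connectivity of $G$, \autoref{v has neighbours} and \autoref{edgySituation}: each $u_\ell$ has degree three by construction; each $x\in V(O)$ keeps its two $O$-neighbours and acquires at least one more (either directly because $x$ is a $G$-neighbour of $v$, or forced by 3-connectivity when both incident $O$-edges are timid, or else from a spoke or $Y$-attachment added by a clique); and $v$ has at least three neighbours in $\beta'$ by \autoref{v has neighbours}. Applying \autoref{make-concrete} then identifies the expanded torso as a generalised wheel with rim $O$ and centre $v$.

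For the compressed torso, the plan is to observe that compressing amounts to contracting precisely the edges appearing in the separators of $\sigma$: each $vu_\ell$ (identifying the $Y$-centre with $v$) and each timid edge $a_ia_i'$ (identifying an edgy vertex with its $O$-neighbour). The contractions of the $vu_\ell$'s collapse each $Y$-graph into the two spokes $va_1,va_2$, and afterwards every image of a vertex of $O$ is adjacent to $v$; the image of $O$ is still a cycle, so the compressed torso is a wheel with rim $O$. The main technical obstacle is the case analysis keeping track of when an element of the separator is a vertex versus an edge, and the verification of minimum degree three in the expanded torso; both are routine once the classification above is in hand.
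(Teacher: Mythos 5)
Your proposal is correct and follows essentially the same route as the paper's proof: classify the expanded bag $\beta'$ and the expanded separators explicitly, recognise the result as a concrete generalised wheel, invoke \autoref{make-concrete}, and then obtain the compressed torso by contracting the separator edges. The paper packages the case analysis into a `good edge' criterion (marking the bold edges that receive a $Y$-centre $u_\ell$), whereas you spell out the vertex-versus-edge possibilities for each of $v,a_1,a_2$ directly; the content is the same, and the remaining details you flag as routine (notably the minimum-degree-three check, in particular for $v$, which really needs $3$-connectivity and a small case split rather than \autoref{v has neighbours} alone) are left just as implicit in the paper's own proof.
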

\begin{proof}
An edge $xy$ of~$O$ is \emph{good} if there is some $(A,B)\in \sigma$ such that $\{x,y\}\se A\cap B$ and $v$ is not in the leaf-side~$A$.
Let $X$ denote the expanded torso of~$\sigma$.
The graph $X-v$ is isomorphic to the graph obtained from $O$ by adding for every good edge $xy$ of~$O$ a new vertex and joining it to $x$ and~$y$ (so that the three vertices form a triangle).
Hence $X$ is a concrete generalised wheel with rim~$O$.
By \autoref{make-concrete}, $X$ is also a generalised wheel, with the same rim.
The compressed torso of~$\sigma$ is obtained from~$X$ by contracting all edges that join $v$ to newly added vertices.
Since $X$ is a concrete generalised wheel with rim~$O$, it follows that the compressed torso is a wheel with rim~$O$.
\end{proof}

\begin{proof}[Proof of \autoref{univ_3sepr}~\textnormal{\ref{genWheelCase}}]
Let $G$ be a 3-connected graph, $N$ its set of totally-nested nontrivial tri-separations, and $\sigma$ a splitting star of~$N$.
Suppose that $\sigma$ is heavily interlaced by a tri-separation~$(A,B)$ of~$G$.
So $(A,B)$ is half-connected, strong and nontrivial.
As $(A,B)$ is not in~$N$, it is crossed by a tri-separation~$(C,D)$ of~$G$.
By \autoref{expendable}, we may assume that $(C,D)$ is nontrivial and strong.
By the \nameref{dasBesteLemma} (\ref{dasBesteLemma}), $(A,B)$ and $(C,D)$ cross so that the centre consists of a single vertex~$v$ and all links have size one.

By \autoref{apex_exists}, $G$ has a 2-connected apex-decomposition $\Acal$ with centre~$v$ whose tri-star~$\sigma'$ is interlaced by $(A,B)$ and~$(C,D)$, and whose central torso-cycle~$O$ alternates between $S(A,B)-v$ and $S(C,D)-v$; that is to say that we may assume \autoref{X} with crossing tri-separations.
By \autoref{is_splitting}, the tri-star~$\sigma'$ is a splitting star of~$N$. 
By \autoref{unique_split}, the fact that $(A,B)$ interlaces the two splitting stars $\sigma$ and~$\sigma'$ implies~$\sigma'=\sigma$.
Finally, by \autoref{is_gen-wheel}, the compressed and expanded torsos of $\sigma'=\sigma$ are a wheel and generalised wheel, respectively.
\end{proof}

\section[Proof of (iii)]{Proof of \ref{4tangleCase}}

A key step in this proof will be to understand how separations from the compressed torso for a splitting star $\sigma$ can be lifted to mixed-separations of~$G$ that interlace $\sigma$; this is then used to show that the compressed torso of $\sigma$ can only have very specific 3-separations when $\sigma$ is not interlaced at all, which roughly speaking is the essence of \ref{4tangleCase}. Next we prepare to lift.

\begin{lem}\label{edgeInAtMostTwoSeparators}
Let $G$ be a graph, and let $\sigma$ be a star of mixed-separations\plus\ of~$G$.
Then every edge of~$G$ lies in the separators of at most two elements of~$\sigma$.
\end{lem}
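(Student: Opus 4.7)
The plan is to argue by contradiction using the definition of a star together with the definition of the separator of a mixed-separation\plus. Suppose for contradiction that an edge $e=uv$ of $G$ lies in the separators of three distinct elements $(A_i,B_i),(A_j,B_j),(A_k,B_k)$ of $\sigma$. Recall that the separator of a mixed-separation\plus\ $(A,B)$ is the disjoint union of $A\cap B$ and $E(A\sm B,B\sm A)$, so $e$ being in $S(A,B)$ forces one endvertex into $A\sm B$ and the other into $B\sm A$.

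Without loss of generality, label the endvertices so that $u\in A_i\sm B_i$ and $v\in B_i\sm A_i$. The star property gives $(A_j,B_j)\le (B_i,A_i)$, i.e.\ $A_j\se B_i$; since $u\notin B_i$, this forces $u\notin A_j$, and because $A_j\cup B_j=V(G)$ we get $u\in B_j\sm A_j$. The same argument with $k$ in place of $j$ yields $u\in B_k\sm A_k$. For $e$ to lie in $S(A_j,B_j)$ we therefore need its other endvertex $v$ to lie in $A_j\sm B_j$, and analogously $v\in A_k\sm B_k$.

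But now $v\in A_j$ while $A_j\se B_k$ (another instance of the star inequality), so $v\in B_k$, contradicting $v\in A_k\sm B_k$. Hence no edge can lie in three separators, and the lemma follows.

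I expect no real obstacle: the only subtlety is keeping track of which containment from the star definition is used at each step (namely $A_j\se B_i$ to push $u$ out of $A_j$, and $A_j\se B_k$ to push $v$ into $B_k$), and noting that the mixed-separation\plus\ hypothesis $A\cup B=V(G)$ is what lets us conclude $u\in B_j$ once $u\notin A_j$. No use of 3-connectivity, tri-separations, or any properties beyond the star ordering and the definition of $S(A,B)$ is needed.
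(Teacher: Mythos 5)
Your proof is correct and takes essentially the same approach as the paper's one-line argument: the paper notes that the sets $A\sm B$ are pairwise disjoint over distinct $(A,B)\in\sigma$ and applies pigeonhole to the two endpoints of the edge, while you unwind the very same star inequalities ($A_j\se B_i$, etc.) as an explicit contradiction argument — the underlying mechanism is identical.
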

\begin{proof}
Assume for a contradiction that some edge $e$ of $G$ lies in the separators of three elements $(A_i,B_i)$ of~$\sigma$ for $i\in [3]$.
Let $x\in A_1\sm B_1$ and $y\in B_1\sm A_1$ denote the endvertices of~$e$.
Since $(A_1,B_1)\le (B_j,A_j)$ for both $j=2,3$ by the definition of star, the endvertex of $e$ in $A_j\sm B_j$ must be~$y$.
So $A_2\sm B_2$ and $A_3\sm B_3$ intersect in the vertex~$y$.
But $(A_2,B_2)\le (B_3,A_3)$ by the definition of star, which implies that $A_2\sm B_2$ and $A_3\sm B_3$ are disjoint, a contradiction.
\end{proof}

\begin{figure}[ht]
    \centering
    \includegraphics[height=8\baselineskip]{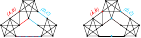}
    \caption{$\sigma$ and $\dot\sigma$}
    \label{fig:dots}
\end{figure}

\begin{dfn}[$\dot G$, $(\dot A,\dot B)$ and $\dot\sigma$]
Suppose now that $G$ is a graph and $\sigma$ is a star of mixed-separations\plus\ of~$G$.
In this context, we define the graph $\dot G$ to be the graph obtained from $G$ by subdividing every edge that lies in the separators of exactly two elements of~$\sigma$.
Let $(A,B)\in\sigma$ be arbitrary.
We define $\dot A$ to be the vertex set obtained from $A$ by adding for every edge $e\in S(A,B)$ the subdividing vertex of~$e$ if existent and the endvertex of~$e$ in~$B$ otherwise.
We define $\dot B$ to be the vertex set obtained from $B$ by adding for every edge $e\in S(A,B)$ its subdividing vertex if existent (the endvertex of $e$ in $A$ is not added).
Then $(\dot A,\dot B)$ is a separation\plus\ of~$\dot G$, which has the same order as~$(A,B)$.
We write $\dot\sigma:=\{\,(\dot A,\dot B):(A,B)\in\sigma\,\}$.
\end{dfn}

Let $G$ be a 3-connected graph, and let $\sigma$ be a star of nontrivial tri-separations of~$G$.
Let $X$ denote the compressed torso of~$\sigma$.
We define a map $\iota\colon V(X)\to V(\dot G)$ as follows.
Let $v$ be any vertex of~$X$.
If $v$ is not a contraction vertex, then $v$ also is a vertex of~$G$ and we let $\iota(v):=v$.
Otherwise $v$ is a contraction vertex with branch set~$U$.
If $U$ is spanned in $G$ by a single edge $e$ that lies in the separators of two elements of~$\sigma$, then we let $\iota$ take $v$ to the subdividing vertex of~$e$ in~$\dot G$.
Else $U$ intersects the bag of~$\sigma$ in a unique vertex, by \autoref{independentEdges}, and we let $\iota$ take $v$ to this unique vertex.
Let $\dot\iota$ be the restriction of $\iota$ onto its image.

\begin{lem}\label{dotGivesStar}
Let $G$ be a 3-connected graph, and let $\sigma$ be a star of nontrivial tri-separations of~$G$.
Then $\dot\sigma$ is a star of 3-separations\plus\ of~$\dot G$, and $\dot\iota$ is a graph-isomorphism between the compressed torso of $\sigma$ in~$G$ and the torso of $\dot\sigma$ in~$\dot G$.\qed
\end{lem}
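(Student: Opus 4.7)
The plan is to verify the two claims of the lemma separately. That each $(\dot A,\dot B)$ is a 3-separation\plus\ of $\dot G$ of order $|S(A,B)|=3$ is already recorded in the paragraph defining $\dot G$, so for the first claim the task is to check that $\dot\sigma$ is a star. Given distinct $(A,B),(C,D)\in\sigma$ with $A\se D$ and $C\se B$, I want $(\dot A,\dot B)\le(\dot D,\dot C)$. Vertices of $A$ already lie in $D$. For a $B$-endvertex $v$ of an unsubdivided edge $e=uv\in S(A,B)$ that was added to $\dot A$, the other endvertex satisfies $u\in A\sm B\se D$, and since $C\se B$ forces $A\cap C\se A\cap B$ we have $u\notin C$; so $e$ is an edge of $G[D]$ and $v\in D\se\dot D$. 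For a subdivision vertex $z_e\in\dot A$ with $e\in S(A,B)$, \autoref{edgeInAtMostTwoSeparators} gives that $e$ lies in the separator of exactly one other $(E,F)\in\sigma$: if $(E,F)=(C,D)$ then $z_e\in\dot D$ by construction, and otherwise the star property applied jointly to $(A,B)$, $(E,F)$ and $(C,D)$ forces both endvertices of $e$ into $D\sm C$, placing $z_e$ on the $\dot D$-side of $(\dot C,\dot D)$. The inclusion $\dot C\se\dot B$ is symmetric.

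For the second claim, I will exhibit $\dot\iota$ as a vertex-bijection and then check that it preserves and reflects edges. By definition of the compressed torso, its vertex set consists of the equivalence classes of the contraction identifying the endvertices of every edge in a separator of $\sigma$. By \autoref{independentEdges}, these classes come in exactly three flavours matching the cases of $\iota$: singleton bag vertices (sent to themselves); pairs $\{u,v\}$ for an edge $e=uv\in S(A,B)\cap S(E,F)$ which is subdivided in $\dot G$ (sent to $z_e$); and classes that meet the bag $\beta$ in a unique vertex while otherwise accumulating endvertices of singly-used separator edges (sent to that unique bag vertex). Verifying that the image of $\dot\iota$ equals $\bigcap_{(A,B)\in\sigma}\dot B$ uses the first claim: subdivision vertices of doubly-used edges lie in every $\dot B$, and the genuine bag vertices already do.

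To see that $\dot\iota$ preserves and reflects edges, I will split the edges of each side into bag edges and torso edges. A bag edge of the compressed torso between two classes corresponds to an edge of $\dot G$ between their $\dot\iota$-images, possibly routed through a subdivision vertex of a doubly-used separator edge; conversely every such edge in $\dot G$ contracts down to the appropriate bag edge of the compressed torso. Under $\dot\iota$ the separator $A\cap B$ matches the separator $\dot A\cap \dot B$, so the clique-completion torso edges on the two sides correspond precisely.

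The main obstacle is the star-property bookkeeping: a subdivision vertex $z_e\in\dot A$ must land on the correct side of every other $(\dot C,\dot D)\in\dot\sigma$, and this needs the star property applied to three different separations simultaneously, combined with \autoref{independentEdges}, to push both endvertices of $e$ into one side of a third separation. Everything else follows naturally once the vertex bijection is pinned down; the edge-matching is tedious only in that one must consistently reverse the contraction of $G$ against the subdivision of $\dot G$.
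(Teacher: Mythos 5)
The paper gives this lemma no proof at all — it is stated with a terminal \textnormal{\qed}, i.e.\ treated as routine. Your proposal supplies precisely the verification the paper takes for granted, and it is the right verification: check the star property for $\dot\sigma$, then set up the vertex bijection $\dot\iota$ by the three cases of~$\iota$ and match bag edges and torso edges. So the approach is correct and is the intended one.

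Two small points of care. First, in the unsubdivided-edge case you write ``so $e$ is an edge of $G[D]$'' after deducing $u\in D\sm C$, but that inference still needs the key half-step: if $v\notin D$ then $v\in C\sm D$, so $e$ would join $D\sm C$ to $C\sm D$ and lie in $S(C,D)$, contradicting that $e$ is unsubdivided (i.e.\ in only one separator). You clearly have the pieces, but the conclusion $v\in D$ should be made explicit rather than folded into ``so''. Second, in the subdivided-edge case with $(E,F)\ne(C,D)$, the claim that both endvertices of $e$ land in $D\sm C$ is slightly too strong: the $B$-endvertex could in principle be in $C\cap D$. What you actually need is only $z_e\in\dot D$, and that holds because every subdivision vertex must lie in every non-leaf side $\dot B_i$. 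On this last point, be aware that the paper's written definition of $\dot B$ only adds subdivision vertices of edges in $S(A,B)$; for $\dot A\cup\dot B=V(\dot G)$ to hold (and hence for $(\dot A,\dot B)$ to be a separation\plus\ at all) one must read $\dot B$ as also absorbing the subdivision vertices of edges in $S(C,D)\cap S(E,F)$ with $(C,D),(E,F)\ne(A,B)$. Your argument tacitly uses that corrected reading; it would strengthen the write-up to flag it.

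Beyond that, the description of the three flavours of branch set and the bag/torso-edge matching is a little loose (case three of $\iota$ is ``branch set meets the bag in a unique vertex'', which covers more than accumulations of singly-used separator edges), but the outline is sound and the paper's own phrasing of $\iota$ tells you exactly how to tighten it.
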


\begin{lem}[Lifting Lemma]\label{separationLifting}
Let $\sigma$ be a star of separations\plus\ of a graph~$G$ and let $X$ denote the torso of~$\sigma$.
For every separation $(A,B)$ of $X$ there exists a separation $(\hat A,\hat B)$ of~$G$ such that $\hat A\cap V(X)=A$ and $\hat B\cap V(X)=B$ and $\hat A\cap \hat B=A\cap B$.
Moreover, $(\hat A,\hat B)$ almost interlaces $\sigma$.
\end{lem}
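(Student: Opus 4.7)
The plan is as follows. For each $(A_i, B_i) \in \sigma$ write $S_i := A_i \cap B_i$; each $S_i$ induces a clique in the torso~$X$, so the separation $(A,B)$ of~$X$ forces $S_i \se A$ or $S_i \se B$. Partition the index set~$I$ into $I_A := \{\,i \in I : S_i \se A\,\}$ and $I_B := I \sm I_A$, breaking ties $S_i \se A \cap B$ by placing~$i$ in~$I_A$ (the choice will not matter). Now define
\[
    \hat A := A \cup \bigcup_{i \in I_A} (A_i \sm B_i), \qquad \hat B := B \cup \bigcup_{i \in I_B} (A_i \sm B_i).
\]

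Two preliminaries flow straight from the star property $(A_j, B_j) \le (B_k, A_k)$ for $j \neq k$: the sets $A_j \sm B_j$ are pairwise disjoint, and each is disjoint from the bag $\beta = V(X)$. From these, the identities $\hat A \cap V(X) = A$, $\hat B \cap V(X) = B$ and $\hat A \cap \hat B = A \cap B$ are immediate; $\hat A \cup \hat B = V(G)$ follows because $V(G) = \beta \cup \bigcup_i (A_i \sm B_i)$; and non-emptiness of $\hat A \sm \hat B$ and $\hat B \sm \hat A$ follows from non-emptiness of $A \sm B$ and $B \sm A$ in the separation~$(A,B)$ of~$X$.

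The main obstacle is verifying that $(\hat A, \hat B)$ really is a separation, i.e.\ that no edge of~$G$ joins $\hat A \sm \hat B$ to $\hat B \sm \hat A$. Consider any edge $uv$ of~$G$. If $u, v \in \beta$, then $uv$ already exists in~$X$ (torsos only add edges), so the fact that $(A,B)$ is a separation of~$X$ handles it. Otherwise assume $u \in A_i \sm B_i$. Since $(A_i, B_i)$ is a separation\plus\ and hence has no edges in its separator, $v \in A_i$; thus $v$ lies either in $A_i \sm B_i$ or in~$S_i$. In the first case $u$ and~$v$ are placed on the same side of~$(\hat A, \hat B)$ by construction; in the second, the fact that $S_i$ is confined to one side of~$(A,B)$ places $v$ on the same side of~$(\hat A, \hat B)$ as~$u$. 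The remaining possibility, $v \in A_j \sm B_j$ with $j \neq i$, is ruled out by the star property: $v \in A_j \se B_i$ combined with $v \in A_i$ from the edge forces $v \in S_i \se \beta$, contradicting $v \in A_j \sm B_j$.

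For the \emph{moreover} clause, fix $i \in I_A$. Then $A_i = (A_i \sm B_i) \cup S_i \se \hat A$ by construction; and $\hat B \se B_i$ because $B \se \beta \se B_i$ while for every $j \in I_B$ (automatically $j \neq i$) the star property gives $A_j \sm B_j \se A_j \se B_i$. Hence $(A_i, B_i) \le (\hat A, \hat B)$. Symmetrically $(A_i, B_i) \le (\hat B, \hat A)$ for $i \in I_B$, which is exactly the definition of almost-interlacing.
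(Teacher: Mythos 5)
Your proof is correct and follows essentially the same route as the paper: you define $\hat A$ and $\hat B$ by routing each $A_i\sm B_i$ to whichever side of $(A,B)$ contains the (necessarily clique, hence one-sided) separator $S_i$, and then you check that no $G$-edge crosses, using that each $(A_i,B_i)$ is a separation\plus. The paper's proof is the same construction with the same tie-breaking convention; the only difference is that you spell out the nonemptiness of $\hat A\sm\hat B$ and $\hat B\sm\hat A$ and give an explicit verification of the almost-interlacing clause, both of which the paper leaves implicit.
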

\begin{proof}
Let $(A,B)$ be given.
For every $(C,D)\in\sigma$, the separator $C\cap D\se V(X)$ is complete in~$X$, so $C\cap D$ is included in $A$ or in~$B$ (possibly in both).
We obtain $\hat A$ from $A$ by adding all vertices in $C\sm D$ from elements $(C,D)\in\sigma$ with $C\cap D\se A$, and we obtain $\hat B$ from $B$ by adding all vertices in $C\sm D$ from elements $(C,D)\in\sigma$ with $C\cap D\not\subseteq A$.
Then $\hat A\cup\hat B=V(G)$.
Let us assume for a contradiction that $G$ contains an edge $ab$ with $a\in\hat A\sm\hat B$ and $b\in\hat B\sm\hat A$.
Since $(A,B)$ is a separation of~$X$, not both $a$ and~$b$ can lie in~$V(X)$.
So $a\in C\sm D$ for some $(C,D)\in\sigma$ with $C\cap D\se A$, say.
Since $(C,D)$ is a separation\plus , it follows that $b$ must lie in~$C$, contradicting that $C\se \hat A$.

The equalities $\hat A\cap V(X)=A$ and $\hat B\cap V(X)=B$ are immediate from the fact that $C\sm D$ avoids $V(X)$ for all $(C,D)\in\sigma$.
The equality $\hat A\cap \hat B=A\cap B$ follows from the fact that $C\sm D$ is disjoint from $C'\sm D'$ for every distinct two $(C,D),(C',D')\in\sigma$.

To see that $(\hat A,\hat B)$ almost interlaces~$\sigma$, let $(C,D)\in\sigma$ be arbitrary.
Since the separator $C\cap D$ is complete in~$X$, we have $C\cap D\se A$ or $C\cap D\se B$.
In the former case, we have $(C,D)\le (\hat A,\hat B)$ since $C\se\hat A$ by construction and $D\supseteq V(X)\cup\bigcup_{(C',D')\in\sigma\sm\{(C,D)\}}D'\supseteq\hat B$ by the definition of star and construction of~$\hat B$.
In the latter case, we similarly find that $(C,D)\le (\hat B,\hat A)$.
Therefore, $(\hat A,\hat B)$ almost interlaces~$\sigma$.
\end{proof}

In the context of \autoref{separationLifting}, we say that $(A,B)$ \emph{lifts} to~$(\hat A,\hat B)$, and call $(\hat A,\hat B)$ a \emph{lift} of~$(A,B)$.

\begin{cor}\label{compressed_torso_3con}
    If $G$ is a subdivision of a 3-connected graph, and $\sigma$ is a star of 3-separations\plus\ of~$G$, and the bag of $\sigma$ does not include a degree-two vertex plus both its neighbours, then the torso of $\sigma$ is 3-connected or a~$K_3$.\qed
\end{cor}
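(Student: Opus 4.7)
My plan is to prove the contrapositive: assuming the torso $X$ of $\sigma$ is neither $3$-connected nor a $K_3$, I will exhibit a degree-two vertex of $G$ in the bag together with both of its $G$-neighbours.

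I first dispatch the cases $|V(X)|\leq 3$. Each element $(A_i,B_i)\in\sigma$ has separator $A_i\cap B_i$ of size three, and the star property yields $A_i\cap B_i\subseteq A_i\subseteq B_j$ for all $j\neq i$, so $A_i\cap B_i\subseteq\bigcap_l B_l=V(X)$. Hence $|V(X)|=3$ forces $A_i\cap B_i=V(X)$ (whenever $\sigma\neq\emptyset$), making $V(X)$ a clique in $X$ and so $X=K_3$, a contradiction. If $|V(X)|\leq 2$, no $3$-vertex separator fits, so $\sigma=\emptyset$ and $X=G$; but $G$ has at least four vertices as a subdivision of a $3$-connected graph, contradiction. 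So $|V(X)|\geq 4$ and $X$ admits a separation $(A,B)$ of order at most two with both sides non-empty.

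Now I apply \myref{separationLifting} to lift $(A,B)$ to a separation $(\hat A,\hat B)$ of $G$ with $\hat A\cap\hat B=A\cap B$, hence of order at most two. Since $G$ is $2$-connected (every subdivision of a $3$-connected graph is, as the base graph has at least four vertices), the order is exactly two; write $\hat A\cap\hat B=\{u,v\}$, and note $u,v\in V(X)$.

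The final step invokes the standard structural fact that in a subdivision of a $3$-connected graph, every $2$-separator $\{u,v\}$ lies on a common branch, and one side of $(\hat A,\hat B)$ --- say $\hat A\setminus\hat B$ --- equals the set of internal vertices of that branch between $u$ and $v$. In the intended setting of this corollary, where $G$ plays the role of the $\dot G$ introduced just before the statement (each edge of a $3$-connected graph subdivided at most once), every branch has length at most two, so $\hat A\setminus\hat B=\{w\}$ is a single degree-two vertex $w$ with $N_G(w)=\{u,v\}$. Since $A\setminus B=(\hat A\setminus\hat B)\cap V(X)$ is non-empty, $w\in V(X)=$ bag, and $u,v\in A\cap B\subseteq V(X)$ are in the bag too; so the bag contains $w$ together with both its neighbours, violating the hypothesis. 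The delicate step is this structural analysis: it is immediate in the $\dot G$ setting but for arbitrary subdivisions would require a more careful combinatorial walk along the branch, using the separation\plus\ property to locate bag vertices and the star structure to pin down where separators can occur.
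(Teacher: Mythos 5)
Your overall strategy --- pass to the contrapositive, dispatch the small cases, lift a low-order separation of the torso via \autoref{separationLifting}, then read off the structure of a $2$-separator of the subdivision --- is exactly what the paper has in mind, and it is complete in the one setting the paper uses (namely via \autoref{dotGivesStar} in the proof of \autoref{univ_3sepr}~\ref{4tangleCase}, where $G=\dot G$ and every branch has at most one interior vertex; the lifted $2$-separation then has a side consisting of a single degree-two vertex $w$ with $N_G(w)$ equal to the separator, and both $w$ and the separator land in the bag, contradicting the hypothesis).

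Your hesitation at the final step is justified, and in fact should be sharpened: the statement \emph{as written} is false once some branch has three or more interior vertices, so the ``more careful combinatorial walk'' you gesture at cannot exist. Concretely, let $G$ be $K_4$ on $\{p,q,r,s\}$ with $pq$ subdivided into $p\,m_1\,m_2\,m_3\,q$, and let $\sigma$ consist of the two $3$-separations\plus\ $\bigl(\{m_1,p,m_2,q\},\,V(G)\setminus\{m_1\}\bigr)$ and $\bigl(\{m_3,p,m_2,q\},\,V(G)\setminus\{m_3\}\bigr)$, both with separator $\{p,m_2,q\}$; one checks this is a star of $3$-separations\plus. Its bag is $\{p,q,r,s,m_2\}$, whose only degree-two vertex $m_2$ has both $G$-neighbours $m_1,m_3$ outside the bag, so the hypothesis holds. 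Yet in the torso the vertex $m_2$ is joined only to $p$ and $q$ (the clique on $\{p,m_2,q\}$ contributes nothing else), so $\{p,q\}$ is a $2$-separator of this five-vertex torso, which is therefore neither $3$-connected nor a $K_3$. This is exactly the obstruction you foresaw: an interior bag vertex on a long branch can have both of its $G$-neighbours removed by $\sigma$ without the bag ever containing a degree-two vertex together with both its neighbours. So the catch you spotted is real, but it indicts the corollary's overly general hypotheses rather than a missing step in your argument for~$\dot G$.
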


\begin{dfn}[Hyper-lift]
Let $G$ be a 3-connected graph, and let $\sigma$ be a star of nontrivial tri-separations of~$G$.
Let $(C,D)$ be a separation of the compressed torso $X$ of $\sigma$.
A \emph{hyper-lift} of $(C,D)$ to~$G$ is a mixed-separation $(\hat C,\hat D)$ of~$G$ that is obtained from $(C,D)$ in the following way.
First, we view $(C,D)$ as a separation of the torso of~$\dot\sigma$, using \autoref{dotGivesStar}.
Next, we lift $(C,D)$ from the torso of $\dot\sigma$ to a separation $(C',D')$ of~$\dot G$, using the \nameref{separationLifting} (\ref{separationLifting}).
Finally, we let $\hat C:=C'\cap V(G)$ and $\hat D:=D'\cap V(G)$.
\end{dfn}

\begin{lem}\label{HyperLiftIsUseful}
Let $G$ be a 3-connected graph, and let $\sigma$ be a star of nontrivial tri-separations of~$G$.
Let $(C,D)$ be a separation of the compressed torso $X$ of $\sigma$, and let $(\hat C,\hat D)$ be a hyper-lift of $(C,D)$ to~$G$.
Then:
\begin{enumerate}
    \item the order of $(\hat C,\hat D)$ is at most the order of $(C,D)$;
    \item $(\hat C,\hat D)$ almost interlaces~$\sigma$;
   \item $|\hat C\sm \hat D|\geq |C\sm D|$;
   \item  for every $(A,B)\in \sigma$ we have $|(\hat C\sm \hat D)\cap B|\geq |C\sm D|$.
\end{enumerate}
\end{lem}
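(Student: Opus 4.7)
My plan is to exploit the hyper-lift construction systematically. Let $(C', D')$ denote the separation of $\dot G$ produced by the \nameref{separationLifting}~(\ref{separationLifting}) applied to $(C, D)$ viewed as a separation of the torso of $\dot\sigma$ (using the isomorphism $\dot\iota$ from \autoref{dotGivesStar}); then $C' \cap D'$ has size $|C \cap D|$ and sits inside the bag of~$\dot\sigma$, and by definition $(\hat C, \hat D) = (C' \cap V(G),\, D' \cap V(G))$.

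For~(1), I would note that $\hat C \cap \hat D = (C' \cap D') \cap V(G)$, and any edge $uv$ of~$G$ in the separator of $(\hat C, \hat D)$ must be a subdivided edge of~$\dot G$ (otherwise $uv$ would be an edge of~$\dot G$ joining opposite sides of the separation~$(C', D')$); its subdivision vertex then has to lie in $C' \cap D'$, giving an injection from $S(\hat C, \hat D)$ into $C' \cap D'$ and bounding the order by $|C \cap D|$. Item~(2) follows immediately from the `almost interlaces' conclusion of the \nameref{separationLifting}~(\ref{separationLifting}) together with the routine identities $A \se \dot A$ and $B = \dot B \cap V(G)$, which translate $(\dot A, \dot B) \le (C', D')$ directly into $(A, B) \le (\hat C, \hat D)$.

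For~(3), my plan is to construct an injection $\phi \colon C \sm D \to \hat C \sm \hat D$ by case analysis on how a vertex $v$ of the compressed torso~$X$ arises: as (I)~a bag vertex of~$\sigma$, or (II)~a contraction vertex whose branch set meets the bag of~$\sigma$ in a unique vertex~$z$, or (III)~a contraction vertex whose branch set is a single edge $e = xy$ subdivided in~$\dot G$; the third case is precisely when $\dot\iota(v)$ is a subdivision vertex~$w$. In cases (I) and (II), I set $\phi(v) := v$ and $\phi(v) := z$ respectively; each lies in $\hat C \sm \hat D$ because the \nameref{separationLifting}~(\ref{separationLifting}) preserves the sides on the torso's vertex set via~$\dot\iota$. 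In case~(III), writing $e \in S(A_i, B_i) \cap S(A_j, B_j)$ for distinct $(A_i,B_i),(A_j,B_j) \in \sigma$ (using \autoref{edgeInAtMostTwoSeparators}), the vertex~$w$ belongs to both 3-cliques $\dot A_i \cap \dot B_i$ and $\dot A_j \cap \dot B_j$ of the torso of~$\dot\sigma$; since the restriction of $(C', D')$ to that torso is a separation and $w$ lies in the $C$-side but not in the separator, each clique must sit entirely in the $C$-side, which via the \nameref{separationLifting}~(\ref{separationLifting}) forces $\dot A_i \sm \dot B_i = A_i \sm B_i$ and $\dot A_j \sm \dot B_j = A_j \sm B_j$ into $C' \sm D'$. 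Hence both endpoints $x, y$ lie in $\hat C \sm \hat D$, and I set $\phi(v) := x$. Injectivity is immediate: branch sets of distinct contraction vertices are pairwise disjoint, and the bag-vertex targets of~(I) and~(II) cannot coincide with the leaf-side targets of~(III).

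For~(4), I refine the case~(III) choice. By the star property, $x \in A_i \sm B_i \se B_k$ and $y \in A_j \sm B_j \se B_k$ for every $k \neq i, j$, while $y \in B_i \sm A_i$ and $x \in B_j \sm A_j$. So given $(A, B) = (A_\ell, B_\ell) \in \sigma$, I take $\phi(v) := y$ when $\ell = i$, $\phi(v) := x$ when $\ell = j$, and either endpoint when $\ell \neq i, j$, always landing in~$B$; cases (I) and (II) already land in the bag and hence in every~$B$. I expect the main obstacle to be the case~(III) analysis in~(3): the clique-in-separation argument that pins both cliques onto the $C$-side, and the bookkeeping to confirm that leaf-side targets never collide with the bag-vertex targets coming from the other two cases.
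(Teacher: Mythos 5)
Your proof is correct and takes essentially the same route as the paper's: both use the subdivision-vertex injection for~(1), the `almost interlaces'\ transfer for~(2), and an injection built from the endpoints of contracted edges for~(3)/(4); the paper derives~(3) as a corollary of~(4) and your case~(III) reasons via the clique structure where the paper invokes $(\dot E,\dot F)\le(C',D')$ directly, but these are the same argument in different clothing.
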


\begin{proof}
Let $(C',D')$ denote the separation of $\dot G$ that was used to obtain the hyper-lift $(\hat C,\hat D)$.

(1). First, we shall define an injection from $S(\hat C,\hat D)$ to $S(C',D')$.
To this end, let $x$ be an edge in  $S(\hat C,\hat D)$. Since $(C',D')$ has no edges in its separator, 
the edge $x$ must be an edge of $G$ that is not an edge of~$\dot G$. 
Denote by $y$ the unique subdivision vertex of $x$ in $\dot G$. 
Since the two neighbours of $y$ are in each of $C'\sm D'$ and $D'\sm C'$, and $S(C',D')$ contains no edges, the vertex $y$ is in $S(C',D')$.
Let $\varphi(x):=y$. 
We extend $\varphi$ to a map from $S(\hat C,\hat D)$ to $S(C',D')$ by taking the identity on vertices. 
This map is injective since subdivision vertices $y\in V(\dot G)$ of distinct edges of $G$ are distinct. 
Hence the order of $(\hat C,\hat D)$ is at most the order of $(C',D')$. 
By \autoref{separationLifting}, the order of $(C',D')$ is equal to the order of $(C,D)$. 

(2). We prove the stronger statement that $(C',D')$ almost interlaces $\dot\sigma$ (which gives the desired result as $V(G)\se V(\dot G)$ and we just need to restrict sides). This follows from \autoref{separationLifting}. 

(3). If $\sigma$ is empty, we are done and otherwise (3) follows from (4), so it remains to prove~(4).

(4). Let $\beta$ denote the bag of $\dot\sigma$ in~$\dot G$, and let~$(A,B)\in\sigma$. 
By \autoref{separationLifting}, we have that $|(C'\sm  D')\cap \beta|= |C\sm D|$. 
We define an injection from $(C'\sm  D')\cap \beta$ to $(\hat C\sm \hat D)\cap B$. 
Assume that $v\in C'\sm D'$ is a subdivision vertex of an edge $e$ of~$G$ (so in particular $v\in\beta$). 
Then there is some $(E,F)\in \sigma$ that is different from $(A,B)$ that has the edge $e$ in its separator. 
Let $x$ be the endvertex of $e$ in~$E\sm F$.
\begin{sublem}
    $x\in (\hat C\sm \hat D)\cap B$.
\end{sublem}
\begin{cproof}
In the proof of (2) we proved that $(C',D')$ almost interlaces $\dot\sigma$. As $v\in C'\sm D'$, we have that $(\dot E,\dot F)\leq (C',D')$. 
So $\dot E\sm \dot F\se C'\sm D'$. 
So $x\in C'\sm D'$. Since $x\in V(G)$, we deduce that  $x\in \hat C\sm \hat D$. As $(E,F)\leq (B,A)$, we have that $x\in B$.
\end{cproof}\medskip

Let $\varphi$ denote the map from $(C'\sm  D')\cap \beta$ to $(\hat C\sm \hat D)\cap B$ that is the identity on vertices of $G$ and maps subdivision vertices $v$ to vertices $x$ as defined above. 
Since $x$ is not in $\beta$, the sets $E'\sm F'$ for $(E',F')\in\sigma$ are disjoint, and since $x$ is incident with at most one edge of $S(E,F)$ by \autoref{independentEdges}, the map $\varphi$ is injective. 
\end{proof}

\begin{lem}\label{montag}
Let $G$ be a 3-connected graph.
Let $(A,B)$ be a strong tri-separation of~$G$, and let $(C,D)$ be a mixed 3-separation of~$G$ such that $(A,B)\le (C,D)$.
If $|(C\sm D)\cap B|\geq 1$, then every strengthening $(C',D')$ of $(C,D)$ satisfies $(A,B)<(C',D')$.
\end{lem}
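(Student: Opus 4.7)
The plan is a two-step verification, and neither step looks hard. First I would show the non-strict inequality $(A,B)\le (C',D')$, and then exhibit a witness that the inequality must be strict.

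For the non-strict inequality, I would invoke Lemma \ref{x13}, being careful about the fact that the naming conventions there are swapped relative to the current lemma: in Lemma \ref{x13} the outer 3-separation that gets strengthened is called $(A,B)$, while here that role is played by $(C,D)$; conversely the strong tri-separation sitting below is called $(C,D)$ there and $(A,B)$ here. Once the relabelling is matched, Lemma \ref{x13} directly yields $(A,B)\le (C',D')$: the strong tri-separation $(A,B)$ sits below $(C,D)$ by hypothesis, and $(C',D')$ is by definition a strengthening of $(C,D)$.

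For strictness, I would use the hypothesised vertex $v\in (C\sm D)\cap B$. By Observation \ref{strengthening-good} we have $C\sm D\se C'$ and $D'\se D$, which together place $v$ in $C'\sm D'$ while still keeping $v\in B$. If the equality $(A,B)=(C',D')$ held, the vertex $v$ would therefore lie in $A\sm B$ and simultaneously in $B$, a contradiction; hence $(A,B)<(C',D')$.

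The only potential obstacle is cosmetic, namely keeping the relabelling between the current lemma and Lemma \ref{x13} straight. The content is essentially a packaging of Lemma \ref{x13} together with Observation \ref{strengthening-good}, evidently tailored for later use where one refines a 3-separation $(C,D)$ to a strengthening while tracking a strong tri-separation $(A,B)$ sitting below it.
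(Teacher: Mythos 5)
Your proposal is correct and follows essentially the same path as the paper's proof: Lemma~\ref{x13} (with the roles relabelled) gives $(A,B)\le (C',D')$, and the vertex $v\in(C\sm D)\cap B$, which lies in $C'\sm D'$ by Observation~\ref{strengthening-good}, witnesses $B\supsetneq D'$, hence strictness.
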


\begin{proof}
By \autoref{x13}, we have $(A,B)\le (C',D')$.
By assumption, there is a vertex $v\in C\sm D$ that lies in~$B$.
Then $v$ also lies in $(C'\sm D')\cap B$.
Hence the inclusion $B\supseteq D'$ is proper.
\end{proof}

\begin{lem}\label{iii-lifting}
Let $G$ be a 3-connected graph.
Let $\sigma$ be a star of strong tri-separations of~$G$.
Suppose that the compressed torso of $\sigma$ has a 3-separation $(C,D)$ such that both sides have size at least five.
Then $\sigma$ is interlaced by a strong nontrivial tri-separation of~$G$.
\end{lem}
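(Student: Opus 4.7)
My plan is to lift the 3-separation $(C,D)$ of the compressed torso of $\sigma$ to a strong tri-separation of $G$ that interlaces $\sigma$, using the hyper-lift machinery.

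First, I would apply the hyper-lift: view $(C,D)$ as a 3-separation of the torso of $\dot\sigma$ via \autoref{dotGivesStar}, lift it to a 3-separation of $\dot G$ using the \nameref{separationLifting}, and restrict to $V(G)$ to obtain a hyper-lift $(\hat C,\hat D)$ of $(C,D)$. By \autoref{HyperLiftIsUseful}(1) and~(2), this $(\hat C,\hat D)$ is a mixed-separation of order at most $3$ that almost interlaces~$\sigma$. Because $|C|,|D|\geq 5$ and $|C\cap D|=3$, we have $|C\sm D|,|D\sm C|\geq 2$; applying \autoref{HyperLiftIsUseful}(3) symmetrically in the two orientations of $(C,D)$ then yields $|\hat C\sm\hat D|\geq 2$ and $|\hat D\sm\hat C|\geq 2$. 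In particular, $(\hat C,\hat D)$ is a genuine mixed-separation, and since $G$ is 3-connected its order is exactly~$3$.

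Next, I would apply \autoref{reduceToStrong} to $(\hat C,\hat D)$ to produce a strengthening $(\hat C',\hat D')$, which is automatically a strong tri-separation. By \autoref{strengthening-good}, the inclusions $\hat C\sm\hat D\subseteq\hat C'$ and $\hat D\sm\hat C\subseteq\hat D'$ preserve the lower bound $|\hat C'\sm\hat D'|,|\hat D'\sm\hat C'|\geq 2$, so by \autoref{trivial} the tri-separation $(\hat C',\hat D')$ is nontrivial. Furthermore, \autoref{capture_star2} ensures that $(\hat C',\hat D')$ still almost interlaces~$\sigma$.

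To upgrade almost-interlacing to actual interlacing, fix any $(A,B)\in\sigma$; almost-interlacing gives $(A,B)\leq(\hat C,\hat D)$ or $(A,B)\leq(\hat D,\hat C)$. In the first case, \autoref{HyperLiftIsUseful}(4) yields $|(\hat C\sm\hat D)\cap B|\geq 2\geq 1$, and \autoref{montag} then promotes the inequality to $(A,B)<(\hat C',\hat D')$; the second case is symmetric. Hence $(\hat C',\hat D')$ interlaces~$\sigma$, and the proof is complete. The main obstacle will be the asymmetric formulation of \autoref{HyperLiftIsUseful}(3) and~(4): although these are stated only with $\hat C\sm\hat D$, both the nontriviality argument and the case $(A,B)\leq(\hat D,\hat C)$ of the final step require the analogous statements with $\hat D\sm\hat C$, which must be obtained by rerunning the proofs of those items with the roles of $C$ and $D$ swapped.
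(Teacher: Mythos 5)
Your proof is correct and follows essentially the same route as the paper: take a hyper-lift of $(C,D)$, use \autoref{HyperLiftIsUseful} to get the size bounds on both difference sets and on their intersections with the $B$-sides, strengthen, and then apply \autoref{trivial}, \autoref{capture_star2} and \autoref{montag} to conclude. The one point you flag at the end — that items (3) and (4) of \autoref{HyperLiftIsUseful} are stated asymmetrically and the symmetric versions are needed — is indeed a real subtlety that the paper compresses into the parenthetical ``(applied to $(C,D)$ and $(D,C)$)''; your handling of it (noting the symmetric statements follow by the same argument with $C$ and $D$ swapped) is sound, and arguably more careful than the paper's phrasing since a hyper-lift of $(D,C)$ need not literally equal $(\hat D,\hat C)$.
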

\begin{proof}
Let $(\hat C,\hat D)$ be a hyper-lift of $(C,D)$ to~$G$.
By \autoref{HyperLiftIsUseful} (applied to $(C,D)$ and $(D,C)$), $(\hat C,\hat D)$ is a mixed 3-separation of~$G$ that almost interlaces~$\sigma$, and it satisfies $|\hat C\sm \hat D|\geq 2$ and $|\hat D\sm \hat C|\geq 2$ by~(3). 
Moreover, by~(4), for every $(A,B)\in \sigma$ we have $|(\hat C\sm \hat D)\cap B|\geq 2$ and $|(\hat D\sm \hat C)\cap B|\geq 2$. 
Let $(\bar C,\bar D)$ be a strengthening of $(\hat C,\hat D)$.
Since $\bar{C}\sm\bar{D}=\hat C\sm\hat D$ and $\bar{D}\sm\bar{C}=\hat D\sm\hat C$, it follows with \autoref{trivial} that $(\bar C,\bar D)$ is nontrivial.
By \autoref{capture_star2}, $(\bar C,\bar D)$ almost interlaces~$\sigma$.
By \autoref{montag}, neither $(\bar C,\bar D)$ nor~$(\bar D,\bar C)$ lies in~$\sigma$.
Hence $(\bar C,\bar D)$ interlaces~$\sigma$.
\end{proof}

\begin{proof}[Proof of \autoref{univ_3sepr}~\textnormal{\ref{4tangleCase}}]
Let $G$ be a 3-connected graph, let $N$ denote its set of totally-nested nontrivial tri-separations, and let $\sigma$ be a splitting star of~$N$.
Suppose that $\sigma$ is not interlaced by a strong nontrivial tri-separation of~$G$.
We denote by $X$ the compressed torso of~$\sigma$.
By \autoref{dotGivesStar} and \autoref{compressed_torso_3con}, $X$ is 3-connected or a $K_3$, and we are done in the latter case.
The tri-separations in~$\sigma$ are strong by \autoref{notStrongImpliesCrossed}.
Hence by the contrapositive of \autoref{iii-lifting}, every 3-separation of $X$ has a side with at most four vertices.
Hence $X$ is quasi 4-connected or a~$K_4$.
\end{proof}

\begin{proof}[Proof of \autoref{univ_3sepr}]
We have proved \ref{K3Case}, \ref{genWheelCase} and \ref{4tangleCase} in the respective sections above.
The `Moreover' part holds by \autoref{torsoMinors}.
\end{proof}

\begin{proof}[Proof of \autoref{mainIntro}]
\autoref{univ_3sepr} implies \autoref{mainIntro}.
\end{proof}

\section{Tutte's Wheel Theorem}

If $G$ is a graph and $e$ is an edge of~$G$, we denote by $G/e$ the (multi-)graph that arises from $G$ by contracting~$e$.
Recall that a 3-connected (multi-)graph does not have parallel edges.
A graph $G$ is \emph{minimally 3-connected} if it is 3-connected and for every edge $e$ of~$G$ neither $G-e$ nor $G/e$ is 3-connected.

\begin{thm}[Tutte's Wheel Theorem~\cite{TutteWheel}]\label{tutte-wheel}
    Every minimally 3-connected finite graph~$G$ is a wheel.
\end{thm}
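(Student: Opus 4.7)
The plan is to argue by contradiction. Suppose $G$ is minimally 3-connected but not a wheel; in particular $G\neq K_4$. Apply \autoref{mainIntro} and set $N:=N(G)$; the aim is to produce an edge $e$ of $G$ such that $G-e$ or $G/e$ is still 3-connected, contradicting minimality.

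First, consider the case $N=\emptyset$. By the \nameref{Angry} and our assumption that $G$ is not a wheel, either $G=K_{3,m}$ for some $m\geq 3$, or $G$ is internally 4-connected. In the former case, contracting any edge of $K_{3,m}$ yields a thickened $K_{3,m-1}$ (and for $m=3$ this simple graph is $K_5$ minus a $2$-edge matching), which is 3-connected, so minimality fails. In the latter case, \autoref{tetraXtrisep} tells us that every strong tri-separation of $G$ is trivial, i.e., isolates a vertex of degree~$3$. Given any edge $e=uv$, minimality provides mixed 3-separations of $G$ certifying that both $G-e$ and $G/e$ fail to be 3-connected; by \autoref{reduceToStrong} each strengthens to an atomic cut. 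A case analysis over which degree-$3$ vertices can witness these cuts at every edge simultaneously will force $G$ to be $K_4$ or a wheel.

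Second, consider $N\neq\emptyset$. Pick $(A,B)\in N$ with $|A|$ minimal among all orientations of elements of~$N$. Nontriviality gives a cycle in $G[A]$, and $|A\sm B|\geq 1$ gives an edge $e=uv\in E(G[A])$ with $u\in A\sm B$. I claim $G-e$ is 3-connected. Otherwise some mixed 3-separation $(C,D)$ has $e\in S(C,D)$; say $u\in C\sm D$ and the other endpoint $v$ lies in $D\sm C$. Strengthen (\autoref{reduceToStrong}) to a strong tri-separation $(C',D')$ whose sides satisfy $C'\sm D'\supseteq C\sm D\ni u$ and $D'\sm C'\supseteq D\sm C\ni v$; in particular $(C',D')$ is nontrivial. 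Total nestedness of $(A,B)$ forces one of the four possible inclusion relations with $(C',D')$, and the placement of~$u$ (and, if $v\in A\cap B$, of~$v$) rules out all but those in which some side of $(C',D')$ is strictly contained in~$A$. If this strong tri-separation lies in $N$ we are immediately done by the minimality of~$A$; otherwise it is crossed, and by iterating via \autoref{expendable} and \autoref{orangeLemma} we eventually descend into~$N$ below~$(A,B)$, yielding the same contradiction.

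The main obstacle is the internally 4-connected subcase, which amounts to reproving, in this language, that every internally 4-connected graph other than $K_4$ and $K_{3,3}$ has a contractible or deletable edge; it requires carefully combining the triviality of strong tri-separations (\autoref{tetraXtrisep}) with the conjunction that both $G-e$ and $G/e$ fail to be 3-connected for each~$e$. A secondary concern is verifying that the descent in the second case truly terminates inside~$N$, which calls for a delicate application of \autoref{expendable} at each crossing step.
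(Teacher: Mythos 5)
Your plan shares the paper's high-level architecture---reduce to showing $N(G)=\emptyset$ and then apply the \nameref{Angry}---but both substantive steps are left open, and the second one contains an actual error. In the internally 4-connected subcase of Case~1, the assertion that ``a case analysis over which degree-$3$ vertices can witness these cuts at every edge simultaneously will force $G$ to be $K_4$ or a wheel'' is the entire missing content: nothing in the proposal carries it out. The paper instead proves \autoref{technical1}, which shows (from the absence of type-c and type-d edges, \autoref{tetra-pack} and \autoref{eg17}) that every minimally 3-connected graph has a nontrivial tri-separation, and then discharges internal 4-connectivity in one line via \autoref{tetraXtrisep}; that argument is not the case analysis you gesture at. (A minor slip there too: contracting an edge of $K_{3,m}$ does not give a thickened $K_{3,m-1}$---two of the three triangle edges in the size-3 class are missing---though the result is still 3-connected, so the conclusion for that subcase survives.)

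In Case~2, the sentence ``in particular $(C',D')$ is nontrivial'' is simply false as an inference. All you know is $u\in C'\sm D'$ and $v\in D'\sm C'$; the trivial tri-separation $(\{u\},\,V(G)\sm u)$ satisfies exactly that if $u$ has degree three, and nothing in your setup forbids $u$ from having degree three. When $(C',D')$ is trivial it is not in $N$ for reasons unrelated to $|A|$-minimality, so the contradiction never arrives. The ensuing ``descent via \autoref{expendable} and \autoref{orangeLemma}'' is also not an argument: \autoref{orangeLemma} replaces a crossing tri-separation by a strong one that still crosses, but it provides no mechanism that lands inside~$N$ strictly below $(A,B)$, nor any termination guarantee. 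The paper's route through \autoref{no-sep} is structurally different: it picks a \emph{maximal} $(A,B)\in N$, first proves \autoref{totally-nice} (in a minimally 3-connected graph every totally-nested tri-separator is three pairwise non-adjacent vertices), then uses \autoref{technical1}/\autoref{technical2} to show the singleton splitting star $\{(A,B)\}$ is interlaced, and finally invokes \autoref{univ_3sepr} to identify the leaf torso as a wheel or thickened $K_{3,m}$ and derives a contradiction from either shape. Your proposal has no analogue of \autoref{totally-nice}---which is precisely the bridge between edge-minimality and the tri-separation structure---and without it, neither the ``nontrivial'' claim nor the descent can be repaired.
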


In this section we give an automatic proof of Tutte's wheel theorem; the proof strategy is as follows. 
Take a minimally 3-connected graph~$G$. 
First, we show that all totally-nested tri-separators of~$G$ consist of three vertices that do not span any edge. 
Now consider a `leaf-torso'\footnote{A \emph{leaf-torso} means a compressed torso of a splitting star $\{\,(A,B)\,\}$ where $(A,B)$ is a $\le$-maximal totally-nested nontrivial tri-separation.} of the set of totally-nested nontrivial tri-separations. 
By \autoref{univ_3sepr} there are three options how this torso might look like and one easily checks that none of them is possible. 
Hence $G$ has no totally-nested nontrivial tri-separation, and again by \autoref{univ_3sepr} we have three options how $G$ might look like; two are excluded for the same reasons and thus the only possibility is that $G$ is a wheel.
The details are as follows.
We use `(c)' and `(d)' as a reminder for \underline{c}ontraction and \underline{d}eletion.

\begin{obs}\label{tetra-pack}
Let $e$ be an arbitrary edge of a 3-connected graph~$G$. Then:
\begin{enumerate}
    \item[\textnormal{(c)}] if the ends of $e$ do not lie in a common 3-separator of~$G$ and $e$ does not lie in a triangle of~$G$, then $G/e$ is 3-connected;
    \item[\textnormal{(d)}] if $e$ does not lie in a mixed 3-separator of~$G$, then $G-e$ is 3-connected.\qed
\end{enumerate}
\end{obs}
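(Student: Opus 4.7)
The plan is to prove both parts by contradiction, transferring a hypothetical small cut of the modified graph back to a small (mixed) separator of $G$.

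For part (d), suppose $G-e$ is not 3-connected. Since $|G-e|=|G|>3$, there is a cut $S$ of $G-e$ of size at most two. If $|S|\le 1$, then $S\cup V(e)$ is a cut of $G$ of size at most three whose removal leaves a disconnection (as $e$ is removed with its endpoints), and in fact $S$ together with one endvertex of $e$ already forms a $2$-cut of $G$, contradicting 3-connectivity. So $|S|=2$. Let $(A,B)$ be the mixed-separation of $G-e$ with $A\cap B=S$ and $A\setminus B,B\setminus A$ nonempty; if $e$ has both ends on the same side, then $S$ remains a $2$-cut of $G$, contradicting 3-connectivity. Hence $e$ joins $A\setminus B$ to $B\setminus A$, so $(A,B)$ is a mixed $3$-separation of $G$ with $e\in S(A,B)$, contradicting the hypothesis on $e$.

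For part (c), first suppose $e$ lies in a triangle $uvw$, where $e=uv$. Then in $G/e$ the contracted vertex and $w$ are joined by two parallel edges; by the convention recalled at the start of this section, $G/e$ is not 3-connected, so we are done in the contrapositive direction. Assume therefore that $e$ lies in no triangle, so $G/e$ has no parallel edges. Suppose for contradiction that $G/e$ has a vertex cut $T$ with $|T|\le 2$; note $|G/e|=|G|-1>3$ since $G$ is 3-connected (we may assume $|G|\ge 5$, else $G=K_4$ and the hypotheses of (c) fail). Let $w_e$ denote the contracted vertex. If $w_e\notin T$, then $T$ is also a cut of $G$ of size at most two, contradicting 3-connectivity. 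Hence $w_e\in T$, and then $T':=(T\setminus\{w_e\})\cup\{u,v\}$ is a vertex set of size at most three whose removal disconnects $G$. If $|T|\le 1$, then $T'$ has size two and would form a $2$-cut of $G$, contradicting 3-connectivity. So $|T|=2$, hence $|T'|=3$, and $T'$ is a $3$-separator of $G$ containing both endvertices of $e$, contradicting the hypothesis on $e$.

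The arguments above are entirely routine cut-transfer arguments; the only mild subtlety is the parallel-edge issue in (c), which is dispatched by the paper's standing convention that 3-connected multigraphs have no parallel edges. No step should present a genuine obstacle.
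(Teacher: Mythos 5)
Your proof is correct and fills in what the paper leaves to the reader: the Observation is stated with a bare \qed, so there is no proof of record to compare against, and your cut-transfer argument is exactly the routine way to verify both parts. One small imprecision in~(d): when $|S|\le 1$, the set ``$S$ together with one endvertex of~$e$'' is not guaranteed to be a cut of~$G$ -- if the component of the deleted endvertex in $(G-e)-S$ is a singleton, deleting it leaves the other side connected -- but in that case that endvertex has degree at most $|S|+1\le 2$ in~$G$, so $3$-connectivity is contradicted anyway; and if $(G-e)-S$ has three or more components, or both endvertices of~$e$ lie in the same component, then~$S$ itself already cuts~$G$. With that degenerate case noted, both (c) and (d) are handled cleanly, including the parallel-edge convention and the case analysis on whether the contracted vertex lies in~$T$.
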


An edge $e$ of~$G$ is of \emph{type c or d} if it satisfies the premises of conditions (c) or (d) in \autoref{tetra-pack}, respectively.

\begin{obs}\label{eg17}
    Minimally 3-connected graphs do not have any edges of type c or~d.\qed
\end{obs}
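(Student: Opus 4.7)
The statement is essentially an immediate contrapositive of the preceding \autoref{tetra-pack}, so the plan is very short. I would simply invoke the definitions and apply the observation directly to obtain a contradiction with minimality.

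In more detail: suppose for contradiction that a minimally 3-connected graph $G$ has an edge $e$ of type c. Then by definition the ends of $e$ do not lie in a common 3-separator of $G$ and $e$ is not in a triangle, which is exactly the hypothesis of clause (c) of \autoref{tetra-pack}. Hence $G/e$ is 3-connected, contradicting the defining property of minimal 3-connectivity that $G/e$ fails to be 3-connected for every edge $e$. The argument for edges of type d is entirely analogous: the hypothesis that $e$ is not in any mixed 3-separator of $G$ is precisely the premise of clause (d) of \autoref{tetra-pack}, so $G-e$ would be 3-connected, again contradicting minimal 3-connectivity.

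There is no real obstacle here and no case analysis beyond the two clauses. The only thing to be mildly careful about is to phrase the contradiction using exactly the form of minimal 3-connectivity recalled just above the statement (neither $G-e$ nor $G/e$ is 3-connected for any edge $e$), so that the application of \autoref{tetra-pack} is transparent. I would keep the write-up to two or three sentences.
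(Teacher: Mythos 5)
Your proposal is correct and is exactly the immediate argument the paper intends; the paper marks this observation with \qed precisely because it follows directly from the definitions of ``type c/d'' (which are the premises of \autoref{tetra-pack}) and of minimal 3-connectivity. Nothing is missing.
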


\begin{lem}\label{triangleTrick}
    Let $G$ be a 3-connected graph.
    Let $(A,B)$ be a totally-nested nontrivial tri-separation of~$G$.
    Assume that $S(A,B)$ contains an edge $ab$ with $a\in A\sm B$ and $b\in B\sm A$ such that $ab$ lies in a triangle $\Delta\se G$.
    Let $A':=A\cup\{b\}$ and $B':=B$.
    Then $(A',B')$ is a totally-nested nontrivial tri-separation of~$G$ with $(A,B)<(A',B')$.
\end{lem}
\begin{proof}
    Let $x$ denote the third vertex in $\Delta$ besides $a$ and~$b$, and note that $x\in S(A,B)$.
    The separator $S(A',B')$ is obtained from $S(A,B)$ by replacing the edge $ab$ with the vertex~$b$; in particular, $x\in S(A',B')$.
    The edge $xb$ ensures that $(A',B')$ is a tri-separation, and clearly $(A',B')$ is nontrivial.
    Assume for a contradiction that $(A',B')$ is crossed by a tri-separation~$(C,D)$.
    Since $(A,B)<(A',B')$, we must have $(A,B)\le (C,D)$ or $(A,B)\le (D,C)$; say $(A,B)\le (C,D)$.
    The edge $ab$ witnesses that $G[A'\sm B']$ is connected, so $(A',B')$ is half-connected.
    Hence $(A',B')$ and $(C,D)$ cross with all links of size one and a single vertex~$v$ in the centre, by the \nameref{dasBesteLemma}~(\ref{dasBesteLemma}).

    We claim that $v=x$.
    Since $b,x\in S(A',B')$ and $bx$ is an edge in~$G$, we have $v=b$ or $v=x$.
    We have $b\notin C$ since otherwise $A'=A\cup\{b\}\se C$ and $B'=B\supseteq D$ (from $(A,B)\le (C,D)$) together give $(A',B')\le (C,D)$, a contradiction.
    Hence $ab\in S(C,D)$.
    So neither $a$ nor $b$ lies in $S(C,D)$.
    In particular, $v\neq b$, and therefore~$v=x$.

    Let $C':=C$ and $D':=D\cup\{a\}$.
    We claim that $(C',D')$ is a tri-separation that crosses $(A,B)$.
    The separator of $(C',D')$ is obtained from the separator of $(C,D)$ by replacing the edge $ab$ with the vertex~$a$.
    The vertex $x\in S(C',D')$ is a neighbour of~$a$, which ensures that $(C',D')$ is a tri-separation.
    To show that $(C',D')$ crosses $(A,B)$, it suffices to find two opposite links that are non-empty.
    The link for $A$ contains the vertex~$a$.
    The link for $B$ contains the non-empty link for $B'$ with regard to how $(C,D)$ crosses $(A',B')$, as $B=B'$ and $\{a,b\}\se A'$.
    So $(C',D')$ crosses $(A,B)$, contradicting that $(A,B)$ is totally-nested.
\end{proof}

\begin{lem}\label{totally-nice}
    Let $G$ be a minimally 3-connected graph. 
    Among the totally-nested nontrivial tri-separations of~$G$, let $(C,D)$ be maximal with regard to the partial order~$\le$.
    Then $S(C,D)$ consists of three vertices that do not span any edge.
\end{lem}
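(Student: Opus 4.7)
The conclusion---that $S(C,D)$ consists of three pairwise non-adjacent vertices---splits naturally into (A) no edge belongs to $S(C,D)$, and (B) no edge of~$G$ spans two of the separator vertices. Because the conclusion itself requires the separator to consist of three vertices (whereas trivial tri-separations have separators made of three edges incident to a common vertex by \autoref{trivial}), the interesting case is a nontrivial $(C,D)$, which is automatically strong by \autoref{notStrongImpliesCrossed}. Both parts will be proved by contradiction: I will exhibit a tri-separation of~$G$ that crosses $(C,D)$, then apply \autoref{nestedviacorners} to detect the crossing via two opposite nonempty links. Throughout I may assume $|G|\geq 5$, since the only 3-connected graph on at most four vertices is~$K_{4}$, in which any two nontrivial tri-separations cross, so no totally-nested nontrivial tri-separation exists at all.

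For (B), let $x,y\in S(C,D)$ be distinct vertices with $xy\in E(G)$. By \autoref{eg17} the edge $xy$ is not of type~d, so it lies in the separator of some mixed 3-separation~$(X,Y)$; \autoref{reduceToStrong} then yields a strong tri-separation $(X^{*},Y^{*})$ with $xy\in S(X^{*},Y^{*})$, and since $x,y\notin X\cap Y$ the strengthening does not move them, giving $x\in X^{*}\setminus Y^{*}$ and $y\in Y^{*}\setminus X^{*}$. In the corner diagram of $(C,D)$ versus~$(X^{*},Y^{*})$ the vertex $x\in (C\cap D)\setminus Y^{*}$ sits in the link for~$X^{*}$ and $y\in (C\cap D)\setminus X^{*}$ sits in the opposite link for~$Y^{*}$; the crossing then follows from \autoref{nestedviacorners}.

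For (A), suppose $e=uv\in S(C,D)$ with $u\in C\setminus D$ and $v\in D\setminus C$. By \autoref{eg17} the edge~$e$ is not of type~c, so either (i) $u,v$ lie in a common 3-separator~$\{u,v,q\}$ of~$G$, or (ii) $e$ lies in a triangle~$uvw$ of~$G$. In (i), every component of $G-\{u,v,q\}$ has neighbourhood exactly~$\{u,v,q\}$ by 3-connectedness; fix a component~$K$ and set $(A,B)=(V(K)\cup\{u,v,q\},\,V(G)\setminus V(K))$ with reduction~$(A',B')$. The edge~$uv$ supplies $u$ and $v$ with each other as a neighbour on both sides, and each of them additionally has a neighbour inside~$K$ (on the $A$-side) and in another component (on the $B$-side). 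Hence $u,v\in A'\cap B'$, and they populate the opposite links for~$C$ and~$D$ in the corner diagram of~$(C,D)$ versus~$(A',B')$, contradicting total nestedness.

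It remains to handle (ii) when (i) fails. First, \autoref{independentEdges} applied to the nontrivial~$(C,D)$ forces $w\in C\cap D$: otherwise the edge~$wv$ or~$uw$ would lie in $S(C,D)$ and share the endpoint~$v$ or~$u$ with~$uv$. Thus $w$ is a vertex of~$S(C,D)$. Now $G-uw$ is not 3-connected by minimal 3-connectedness; since $|G|\geq 5$ and $G-uw$ is simple, it admits a 2-separator $\{a,b\}$ (augmenting a cut-vertex by an arbitrary extra vertex if necessary), which must separate $u$ from~$w$ in $G-uw$ (else it would separate~$G$). The triangle vertex~$v$, adjacent to both $u$ and~$w$, must therefore lie in~$\{a,b\}$, so $\{a,b\}=\{v,c\}$ for some $c\notin\{u,v,w\}$. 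The corresponding mixed 3-separation $(E,F)$ of~$G$ has $u\in E\setminus F$, $w\in F\setminus E$, $\{v,c\}\subseteq E\cap F$ and $uw\in S(E,F)$; its strengthening $(E^{*},F^{*})$ (again by \autoref{reduceToStrong}) is a strong tri-separation in which $u$ and~$w$ stay on opposite sides, so $uw$ populates the edge-link for~$C$ of~$(E^{*},F^{*})$, while $v$---kept as a vertex, or replaced by the edge~$vw$ if the strengthening moves~$v$ off the separator---populates the opposite link for~$D$. \autoref{nestedviacorners} again yields a crossing, and with it the final contradiction. The main obstacle here is precisely this last verification that the strengthening preserves both opposite link-entries; the crux is that $u$ and~$w$ never leave their strict sides (so $uw$ stays in $S(E^{*},F^{*})$), and that $v$'s adjacency to~$w$ keeps an endpoint of any replacement edge in the corner adjacent to the $D$-side.
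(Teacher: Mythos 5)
Your parts (B) and (A)(i) track the paper's proof closely: in both you exhibit, for the edge in question, a 3-separation with both endpoints in its separator, pass to the reduction, and observe that the two endpoints land in opposite links of the corner diagram with $(C,D)$, so the reduction crosses $(C,D)$ and contradicts total nestedness. Where you part ways with the paper is in part (A)(ii): to exclude a triangle $uvw$ around the edge $uv\in S(C,D)$, the paper simply observes that the inner triangle edges $uw$ and $vw$ are edges between two vertices of $U$ that are not in $S(C,D)$ and uses its first claim to show any such edge would be of type~d, contradicting \autoref{eg17}; whereas you remove the edge $uw$, invoke minimality, find a $2$-separator $\{v,c\}$ of $G-uw$, lift this to a mixed $3$-separation $(E,F)$ and then take a strengthening $(E^*,F^*)$ to exhibit the crossing.

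The gap is exactly where you say "the main obstacle" is, and your closing sentence does not resolve it. You cannot control which side $v$ leaves $E\cap F$ in. If $v$ is removed from $F$ (replaced by $vw$), then $vw,uw\in S(E^*,F^*)$ share the endpoint $w$; by \autoref{independentEdges}, $(E^*,F^*)$ would have to be trivial, and since $E^*\sm F^*$ contains $u$ and $v$, only $F^*$ could be the singleton side, which would make $\deg(w)=3$, contradicting strongness of $(C,D)$ (since $w\in C\cap D$). Fine. But if $v$ is removed from $E$, it is replaced by $vu$ (its unique neighbour in $E\sm F$), and then $vu,uw\in S(E^*,F^*)$ share $u$. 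Again $(E^*,F^*)$ must be trivial, and the singleton side is now forced to be $E^*$. This branch cannot be dismissed by strongness: $u$ is an endvertex of an edge of $S(C,D)$, not a vertex of $C\cap D$, so nothing prevents $\deg(u)=3$ (and if $E\sm F=\{u\}$ with $N(u)=\{v,w,c\}$ this is exactly what happens). Then $(E^*,F^*)=(\{u\},V-u)$ is the atomic cut at $u$, and since $u\in C\sm D$ it satisfies $(\{u\},V-u)\le(C,D)$ --- nested, not crossing. Your appeal to "$v$'s adjacency to $w$" does not close this hole, because in this branch $v$ was not replaced by $vw$ at all.

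Two smaller issues. First, you "dismiss" the trivial case of $(C,D)$, but the lemma's statement covers it, and the conclusion is exactly what would show a trivial tri-separation cannot be totally nested in a minimally $3$-connected graph --- it cannot be taken for granted. Your (A)(ii) uses \autoref{independentEdges}, which requires $(C,D)$ nontrivial, so as written the argument genuinely omits this case. Second, the parenthetical "(augmenting a cut-vertex by an arbitrary extra vertex if necessary)" is unnecessary and slightly misleading: $G-uw$ is automatically $2$-connected, so there is no cut-vertex case to worry about.
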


\begin{proof}
We claim that every edge $e=vw$ in $S(C,D)$ is of type~c. 
Indeed, by \autoref{triangleTrick} and the maximality of $(C,D)$, the edge $e$ does not lie in a triangle.
Moreover, the reduction $(E,F)$ of any mixed 3-separation with $v$ and $w$ in its separator also contains $v$ and $w$ in its separator (since $vw$ is an edge), and hence $(E,F)$ crosses $(C,D)$ with $v$ and $w$ in opposite links, which is not possible by total-nestedness of~$(C,D)$.
So every edge in $S(C,D)$ is of type~c.

As $G$ is minimally 3-connected, $S(C,D)$ does not contain any edge by \autoref{eg17}.
So $S(C,D)$ consists of three vertices.

We claim that every edge $e=vw$ between two vertices $v,w\in S(C,D)$ is of type~d.
Indeed, the reduction $(E,F)$ of any mixed 3-separation with $e$ in its separator crosses $(C,D)$ with $v$ and $w$ in opposite links, which is not possible by total-nestedness of~$(C,D)$.

As $G$ is minimally 3-connected, no two vertices in $S(C,D)$ span an edge by \autoref{eg17}. 
This completes the proof. 
\end{proof}

\begin{lem}\label{technical1}
    Let $G$ be a minimally 3-connected graph and let $X$ be a nonempty set of vertices of~$G$ such that the neighbourhood $N(X)$ does not span any edge. 
    Then there is a nontrivial tri-separation $(U,W)$ of~$G$ whose separator contains a vertex of~$X$ or an edge that is incident with a vertex of~$X$. 
\end{lem}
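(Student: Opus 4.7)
The plan is to argue by contradiction: assume that no nontrivial tri-separation of~$G$ has a separator meeting $X$ in a vertex or containing an edge incident with a vertex of~$X$. Pick $x \in X$; since $G$ is 3-connected, $\deg(x) \geq 3$, so we can choose an edge $e = xy$ incident with~$x$. Because $G$ is minimally 3-connected, \autoref{eg17} tells us that $e$ is of neither type~c nor type~d. The non-type-d property gives a mixed 3-separation $(A,B)$ with $e \in S(A,B)$, oriented so that $x \in A \setminus B$ and $y \in B \setminus A$ (using \autoref{independentEdges}). Taking the reduction $(A',B')$ keeps $e$ in its separator, so by our assumption $(A',B')$ must be trivial; by \autoref{trivial} this reduction is the atomic cut at a degree-$3$ vertex $v \in \{x,y\}$, which by \autoref{forUniqueReduction} forces a very rigid local structure on~$(A,B)$ (in particular, the side of $v$ has just one vertex and the other two separator elements attach to~$v$ in a prescribed way).

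Next I would use the non-type-c property of~$e$: either $x$ and $y$ lie in a common 3-separator, or $e$ lies in a triangle $xyz$. In the triangle case $y, z \in N(x) \subseteq X \cup N(X)$ and $yz$ is an edge; since $N(X)$ spans no edges, at least one of $y, z$ must lie in~$X$, and we can redirect the argument to that vertex (so that the separator we eventually produce is still incident with~$X$). The main case is therefore that $\{x, y, z\}$ is a 3-separator of~$G$, yielding a 3-separation $(C, D)$ with separator $\{x, y, z\}$. I would then apply \autoref{reduceToStrong} to strengthen $(C, D)$ to a strong tri-separation $(C', D')$. The strengthening either keeps $x$ as a separator vertex, or, if $\deg(x) = 3$ and $x$ has a neighbour in the separator, replaces $x$ by an incident edge of~$x$; either outcome leaves the separator meeting~$X$.

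It remains to ensure that $(C', D')$ is nontrivial. By \autoref{strengthening-good}, $C' \supseteq C \setminus D$ and $D' \supseteq D \setminus C$, so nontriviality reduces to arranging $|C \setminus D| \geq 2$ and $|D \setminus C| \geq 2$, which can be done by choosing an appropriate partition of the components of $G - \{x, y, z\}$; this is available unless $G$ is so small that only trivial splits exist, in which case $G$ can be treated by direct inspection (essentially $K_4$, which is covered by \autoref{tetraXtrisep}). The main obstacle I expect is the subcase where $\deg(x) = 3$ and all of $x$'s neighbours already lie in~$X$: here the edge-free hypothesis on $N(X)$ gives minimal leverage at~$x$, and I anticipate handling it by iterating the argument at a neighbour of~$x$ in~$X$, or by exploiting the rigidity derived from the trivial reduction in Step~1 to argue that some other mixed 3-separation containing an edge at~$x$ does have a nontrivial reduction, contradicting the assumption.
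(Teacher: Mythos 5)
Your starting point matches the paper's -- apply \autoref{eg17} to an edge $e$ with an endvertex in~$X$, get a mixed 3-separation from the non-type-d condition, take the reduction, and, if that is trivial, deduce that some endvertex $v\in\{x,y\}$ of $e$ has degree three -- but from there your plan has two genuine gaps.

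First, the triangle case is not actually handled. ``Redirect the argument to that vertex'' is not an argument: having noticed that one of $y,z$ lies in~$X$, you still owe a concrete nontrivial tri-separation meeting~$X$. The paper's proof instead splits on whether $N(v)$ spans an edge (here $v$ is the degree-3 endvertex, so $N(v)$ is a 3-separator once $G\neq K_4$). If $N(v)$ contains an edge $f=ab$, one can write down the nontrivial tri-separation $(\{a,b,v\},V(G)-v)$ with separator $\{a,b,vc\}$ where $c$ is the remaining neighbour of~$v$, and check directly that the endvertex of $e$ lying in~$X$ is either in that separator or incident with its edge~$vc$. This covers your triangle case (since $e$ in a triangle forces $N(v)$ to span an edge) without ever needing $y$ or $z$ to be in~$X$; your appeal to the hypothesis that $N(X)$ spans no edge at this point is a red herring. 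If $N(v)$ spans no edge, one deduces that $e$ is not in a triangle, and only then does non-type-c hand you the common 3-separator containing $x$ and $y$.

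Second, in the common-3-separator case you take a \emph{strengthening} where the argument really wants the \emph{reduction}. Since $xy$ is an edge with both ends in the separator, both $x$ and $y$ have a neighbour in $C\cap D$ and, by 3-connectivity, a further neighbour in each strict side, so the reduction keeps both of $x,y$ as vertices in the separator; nontriviality is then immediate from \autoref{trivial} (an atomic cut has no vertices in its separator). The strengthening, by contrast, preemptively deletes degree-3 separator vertices that have a neighbour in the separator, which is exactly the situation for $x$ and~$y$, so both may be turned into edges and the separator may consist of three edges. At that point your size argument ``arrange $|C\setminus D|\geq2$ and $|D\setminus C|\geq2$ by partitioning components'' does not always go through -- $G\sm\{x,y,z\}$ can have exactly three singleton components (e.g.\ $K_{3,3}$ plus the edge $xy$) -- and you would be stuck. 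Your anticipated ``main obstacle'' ($\deg(x)=3$ with $N(x)\se X$) is, by contrast, not an obstacle at all; the real issue is the one just described.
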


\begin{proof}
    For this, let $e$ be an arbitrary edge of $G$ with an endvertex in~$X$. Since $e$ is not of type~d by \autoref{eg17}, it lies in the separator of a mixed 3-separation $(A,B)$ of~$G$.
    Since we are done otherwise, we may assume that the reduction of $(A,B)$ is trivial. 
    So an endvertex $v$ of $e$ has degree three. 
    The lemma is trivial for $G=K_4$. 
    So assume that there is a mixed 3-separation $(C,D)$ of~$G$ with separator equal to~$N(v)$.
\begin{sublem}\label{no-edge}
   If $N(v)$ spans an edge  $f=ab$, then there is a nontrivial tri-separation of~$G$ whose separator contains a vertex of~$X$ or an edge that is incident with a vertex of~$X$.
\end{sublem}
\begin{cproof}
    The mixed 3-separation $(\{a,b,v\},V(G)-v)$ is a nontrivial tri-separation of $G$.
    Every endvertex of the edge $e$ is incident with the unique edge of its separator or is in its separator. Thus the endvertex of $e$ in $X$ witnesses that this tri-separation has the desired property. 
\end{cproof}

    Since we are done otherwise by \autoref{no-edge}, assume that $N(v)$ does not span an edge; that is, $e$ is not in a triangle. 
    Since $e=vw$ is not of type~c by \autoref{eg17}, there is a 3-separation $(E,F)$ with $v$ and $w$ in its separator; its reduction 
    has the neighbours $v$ and $w$ in its separator and hence is nontrivial by \autoref{trivial}. 
    As one of $v$ and $w$ is in $X$, this gives the desired result.
\end{proof}
\begin{obs}\label{technical2}
The nontrivial tri-separation $(U,W)$ in \autoref{technical1} can be chosen strong.
\end{obs}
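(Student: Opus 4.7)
My plan is to take the nontrivial tri-separation $(U, W)$ furnished by \autoref{technical1} and apply \autoref{reduceToStrong} to it, obtaining a strengthening $(U', W')$ that is automatically a strong tri-separation of~$G$. The remaining work is then to verify (a) that $S(U', W')$ still contains a vertex of $X$ or an edge incident with a vertex of~$X$, and (b) that $(U', W')$ is nontrivial.

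For~(a), \autoref{strengthening-good} gives $U \sm W \se U' \se U$ and $W \sm U \se W' \se W$. Hence every edge of $S(U, W)$ keeps its two ends inside $U' \sm W'$ and $W' \sm U'$ respectively, so such an edge remains in $S(U', W')$. A separator vertex $x \in X \cap S(U, W)$ either survives into $S(U', W')$, or, if it is a degree-three vertex deleted during the strengthening, is replaced there by an incident edge; either way a witness for $X$ persists.

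Point~(b) is the substantive part. By \autoref{trivial}, nontriviality of $(U', W')$ amounts to both sides having size at least two. Here I would exploit the freedom built into \autoref{reduceToStrong} of choosing the side onto which each deleted degree-three separator vertex is pushed. If $D$ denotes that deleted set, then deleting $D$ from $U$ places $D$ into $W' \sm U'$ and yields $|W' \sm U'| \ge |W \sm U| + |D|$ while preserving $|U' \sm W'| \ge |U \sm W|$, and the symmetric deletion from $W$ gives the mirror inequalities. So whenever $\max(|U \sm W|, |W \sm U|) \ge 2$, an appropriate choice makes both sides of $(U', W')$ of size at least two, hence the strengthening is nontrivial.

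The hard part will be the borderline case $|U \sm W| = |W \sm U| = 1$, which forces $|V(G)| = 2 + |U \cap W| \le 5$. Here a handshake-lemma count combined with 3-connectivity confines $G$ to very rigid configurations. I expect to rule these out either by invoking that \autoref{technical1} is already declared trivial for $G = K_4$, or by extracting from the local structure around the triangle-like piece a triangle sitting inside $G[N(X)]$, contradicting the hypothesis that $N(X)$ spans no edge (in the spirit of the triangle-argument of \autoref{no-edge}). In any residual configuration I would bypass the strengthening entirely and invoke \autoref{3cutsAreNested}: a non-atomic $3$-edge-cut incident with a vertex of $X$ already furnishes a strong nontrivial tri-separation of $G$ directly.
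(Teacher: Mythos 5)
Your approach is the paper's own: take a strengthening of $(U,W)$ via \autoref{reduceToStrong}. You correctly flag the two verification steps the paper's one-liner glosses over, and your argument for witness persistence is sound. For nontriviality, your observation that deleting the degree-three separator vertices towards the larger side preserves nontriviality when $\max(|U\sm W|, |W\sm U|)\ge 2$ is also correct, but you should make explicit that the freedom of choice is realised by applying \autoref{reduceToStrong} to either $(U,W)$ or $(W,U)$ -- this matters, since a single arbitrary strengthening can fail: in the \autoref{no-edge} branch, if $a$ has degree three then deleting $a$ from $U=\{a,b,v\}$ and reducing collapses $(\{a,b,v\}, V(G)-v)$ to the atomic cut at $v$, which is trivial.

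The genuine gap in your proposal is the borderline case $|U\sm W|=|W\sm U|=1$, which you leave as a sketch with several unfinished escape routes; at least one of them (falling back to \autoref{3cutsAreNested}) does not obviously apply, since no suitable $3$-edge-cut is furnished by the situation. In fact the borderline is vacuous for the $(U,W)$ produced in \autoref{technical1}, but showing this requires going through its three branches rather than a handshake count: in the \autoref{no-edge} branch one has $W\sm U = V(G)\sm\{a,b,v\}$, so $|W\sm U| = |V(G)|-3\ge 2$ because $G\ne K_4$ there; in the other two branches $S(U,W)$ contains an edge (first branch) or two adjacent vertices $v,w$ (third branch), so together with nontriviality the borderline sizes force $|V(G)|\le 4$, hence $G=K_4$ where \autoref{technical1} is vacuous, or $|V(G)|=5$ with $|U\cap W|=3$, in which case the unique vertices of $U\sm W$ and $W\sm U$ both have neighbourhood exactly $U\cap W$ and one extracts an edge with both ends in $N(v)$, contradicting the third branch's standing assumption that $N(v)$ spans no edge. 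Supplying this analysis -- rather than the speculative appeals you list -- is what is needed to close the argument.
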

\begin{proof}
Via \autoref{reduceToStrong}, take a strengthening of the tri-separation given by \autoref{technical1}. 
\end{proof}

\begin{lem}\label{no-sep}
    A minimally 3-connected finite graph $G$ has no totally-nested nontrivial tri-separation.
\end{lem}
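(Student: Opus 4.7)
The plan is to suppose for contradiction that $N(G) \neq \emptyset$ and choose $(A,B)\in N(G)$ that is $\le$-maximal. Then $\sigma := \{(A,B)\}$ is a splitting star of $N(G)$: any other $(C,D)\in N(G)$ is nested with $(A,B)$ by definition of $N(G)$, and maximality forbids both $(A,B) < (C,D)$ and $(A,B) < (D,C)$, leaving $(C,D)\le (A,B)$ or $(C,D)\le (B,A)$. By \autoref{totally-nice} the separator is $S(A,B)=\{x_1,x_2,x_3\}$ with the $x_i$ pairwise non-adjacent, so the compressed torso $\tau$ of $\sigma$ is obtained from $G[B]$ by adding the triangle on $\{x_1,x_2,x_3\}$. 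I would then apply \autoref{univ_3sepr} to $\sigma$ and rule out each of the three possibilities for $\tau$.

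Three subcases drop out by direct inspection. If $\tau=K_3$ then $|B|=3$ and $B\setminus A=\emptyset$, contradicting that $(A,B)$ is a mixed-separation. If $\tau=K_4$ then $G[B]$ equals $\tau$ minus the three added triangle-edges, so $G[B]\cong K_{1,3}$ is acyclic and $(A,B)$ is trivial by \autoref{trivial}, contradicting $(A,B)\in N(G)$. If $\tau$ is a wheel $W_n$ with $n\ge 4$, then every triangle of $\tau$ contains the centre together with two consecutive rim-vertices, so WLOG $x_3$ is the centre and $x_1,x_2$ are adjacent rim-vertices; the vertex $x_1$ has exactly three neighbours in $\tau$ (its two rim-neighbours and the centre), and removing the three added triangle-edges leaves $x_1$ with only one neighbour in $G[B]$, contradicting that a vertex in a tri-separator has at least two neighbours on each side.

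The remaining cases, namely (i) $\tau$ a thickened $K_{3,m}$ or $G=K_{3,m}$, and the quasi 4-connected subcase of (iii), I would rule out using \autoref{eg17}, which in a minimally 3-connected graph forbids edges of type c or type d. For $G=K_{3,m}$ with $m\ge 3$, a direct verification shows that contracting any edge preserves 3-connectivity, so $G$ is not minimally 3-connected. In the thickened-$K_{3,m}$ subcase the vertices $y_1,\ldots,y_m\in B\setminus\{x_1,x_2,x_3\}$ each have degree three in $G$ with neighbourhood the edgeless set $\{x_1,x_2,x_3\}$, so no edge $y_j x_i$ lies in a triangle of $G$, and the task reduces to showing that $y_j$ and $x_i$ share no 3-separator of $G$, which would make $y_j x_i$ an edge of type c. In the quasi 4-connected subcase of (iii), $\tau$ gives no direct type-c/d edge, so I would use the extra hypothesis that $\sigma$ is not interlaced by any strong nontrivial tri-separation: applying \autoref{technical1} and \autoref{technical2} to $X := A\setminus B$ (whose outer neighbourhood in $G$ equals $\{x_1,x_2,x_3\}$ by 3-connectivity and is edgeless by \autoref{totally-nice}) produces a strong nontrivial tri-separation $(U,W)$ whose separator meets $X$, and a corner-diagram analysis should then force either $(U,W)$ to interlace $\sigma$ (contradicting the case-(iii) hypothesis) or to cross $(A,B)$ in a configuration that yields an edge of type c or d.

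The main obstacle will be these last two subcases: pinning down a concrete edge of type c or d requires a careful interaction between the $\le$-maximality of $(A,B)$, the structure of $\tau$, and the ambient graph $G[A]$. In case (i), the symmetry of the $K_{3,m}$-structure of $G[B]$ together with 3-connectivity must be used to rule out $\{y_j,x_i,t\}$ being a 3-separator of $G$ for every choice of $t$, which is not automatic from the torso information alone; and in the quasi 4-connected subcase, the tri-separation produced by \autoref{technical1}--\autoref{technical2} must be shown not to sit harmlessly nested below $(A,B)$ or $(B,A)$ in the $\le$-ordering, which is where the non-interlacement hypothesis of case (iii) has to be exploited most delicately.
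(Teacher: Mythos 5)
Your proposal follows the same top-level strategy as the paper -- choose a $\le$-maximal $(A,B)\in N(G)$, form the splitting star $\sigma=\{(A,B)\}$, invoke \autoref{totally-nice} and then rule out the outcomes of \autoref{univ_3sepr} -- and your inspections of the three small cases are correct (for $\tau=K_3$ the side $B\sm A$ is empty; for $\tau=K_4$ the side $G[B]$ is a $K_{1,3}$, so $(A,B)$ is trivial by \autoref{trivial}; for a wheel $\tau$ every triangle meets the hub, so removing the triangle on $A\cap B$ leaves some $x_i$ of degree one in $G[B]$). But the rest of the plan breaks at a specific, identifiable point.

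The decisive error is the application of \autoref{technical1} and \autoref{technical2} to $X:=A\sm B$. Because $(A,B)$ is $\le$-maximal and totally-nested, any tri-separation $(U,W)$ whose separator contains a vertex of $A\sm B$, or an edge incident with one, necessarily satisfies $(U,W)\le(A,B)$ or $(W,U)\le(A,B)$: if $(A,B)<(U,W)$, then $A\subseteq U$ and $W\subseteq B$, so a vertex $v\in(A\sm B)\cap S(U,W)$ would lie in $U\cap W\subseteq B$, contradiction, and the edge case fails similarly. So $(U,W)$ sits \emph{below} $(A,B)$; it cannot interlace $\sigma$, and the ``non-interlacement hypothesis of case (iii)'' that you hope to contradict is untouched. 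No corner-diagram analysis can rescue this, because nestedness with $(A,B)$ is guaranteed a priori (that is exactly what total nestedness means), so there is no crossing configuration to exploit either. The paper applies \autoref{technical1}/\autoref{technical2} to the \emph{other} side, $U:=B\sm A$ (the small side under the maximal choice), and there the sign flips: a vertex of $U$, or an incident edge, in $S(C,D)$ forces $(A,B)<(C,D)$ or $(A,B)<(D,C)$, so $(C,D)$ interlaces $\sigma$. This settles case (iii) immediately -- $\sigma$ \emph{is} interlaced by a strong nontrivial tri-separation, so outcome (iii) of \autoref{univ_3sepr} is excluded wholesale -- and also does the heavy lifting in the $K_{3,m}$ cases: since every vertex of $U$ has degree three in the thickened-$K_{3,m}$ picture and $(C,D)$ is strong, $S(C,D)$ must contain an edge, forcing both sides of $(C,D)$ connected and hence a heavy interlacement, which funnels that case into the wheel case that you already know how to kill. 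Your proposed alternative route through type-c/d edges for the $K_{3,m}$ cases is left as an open task and would require substantial extra work (showing a $y_jx_i$-edge lies in no mixed 3-separator of $G$ needs information about $G[A]$, not just the torso). In short: replace $A\sm B$ by $B\sm A$, run the maximality argument to conclude $(C,D)$ interlaces $\sigma$, and the remaining cases collapse as above.
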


\begin{proof}
    Suppose for a contradiction that $G$ has a totally-nested nontrivial tri-separation~$(A,B)$. 
    Pick such an $(A,B)$ that is maximal with regard to the partial order~$\le$ on mixed-separations.
    Then $\sigma:=\{\,(A,B)\,\}$ is a splitting star of the set of totally-nested nontrivial tri-separations of~$G$. 
    Let $U:=B\sm A$.
    By \autoref{totally-nice}, the separator of $(A,B)$ consists of three vertices that do not span an edge.
    In particular, $N(U)=A\cap B$. 
    Applying \autoref{technical1} together with \autoref{technical2} to $U$ yields that there is a strong nontrivial tri-separation $(C,D)$ of~$G$ such that $S(C,D)$ contains a vertex of $U$ or an edge incident with a vertex of~$U$. 
    Since $(A,B)$ is totally-nested, this implies that
    $(A,B)<(C,D)$ or $(A,B)<(D,C)$.   
    So $(C,D)$ interlaces~$\sigma$.
    By \autoref{univ_3sepr}, the compressed torso~$X$ of $\sigma$ either is a wheel or a thickened $K_{3,m}$ or $G=K_{3,m}$ with $m\geq 0$.
    As $K_{3,m}$ has no totally-nested nontrivial tri-separation, $G$ is not a~$K_{3,m}$.
    Note that as $S(A,B)$ consists only of vertices, $X$ is a genuine torso. 
    Recall that $S(C,D)$ contains a vertex of $U$ or an edge. 
    
    We claim that $X$ is a wheel, and suppose that $X$ is a thickened~$K_{3,m}$.
    Then $G[B]$ is a $K_{3,m}$ with $A\cap B$ equal to the left class of size three.
    As $(C,D)$ is strong, its separator contains no vertex of~$U$, so $S(C,D)$ contains an edge. 
    Then both $G[C\sm D]$ and $G[D\sm C]$ are connected, and so $(C,D)$ interlaces $\sigma$ heavily and $X$ is a wheel~$W$.
    
    The set $A\cap B$ spans a triangle in $W$. 
    Since this set spans no edge in~$G$, the graph $G[B]$ is obtained from $W$ by deleting the edges of a triangle. 
    Since all but at most one vertex of $W$ have degree three, this leaves a vertex of $A\cap B$ with degree one in~$G[B]$, a contradiction to the assumption that $(A,B)$ is a tri-separation.
\end{proof}

\begin{proof}[Automatic proof of \autoref{tutte-wheel}]
Every edge of the graph $K_{3,m}$ with $m\geq 3$ is of type~c. So by \autoref{eg17}, $G$ is not a $K_{3,m}$ with $m\geq 3$.
Applying \autoref{technical1} with $X=V(G)$ yields that $G$ has a nontrivial tri-separation, so $G$ is not internally 4-connected by \autoref{tetraXtrisep}.
By \autoref{no-sep}, $G$ has no totally-nested nontrivial tri-separation.
Hence by the \nameref{Angry} (\ref{Angry}), $G$~is a wheel.
\end{proof}

A natural problem in this area is to understand which edges of 3-connected graph are \emph{essential} in that they cannot be contracted or deleted without destroying 3-connectivity; see for example \cite{ando1987contractible}, and \cite{kriesell2008number} for further extensions. \autoref{univ_3sepr} and our automatic proof of Tutte's wheel theorem provide a new perspective on essential edges, and it is not unreasonable to conjecture that these ideas can be used to resolve this problem.

\clearpage
\renewcommand{\thechapter}{3}
\phantomsection
\noindent{\huge{\textbf{Chapter \thechapter \\ \\Outlook}}}\label{sec:conc}
\addcontentsline{toc}{section}{\thechapter\ Outlook}\bigskip\bigskip

\setcounter{section}{1}

We start by reviewing directions to continue this research. Similarly as for graphs, decompositions along 3-separations are a key tool to study matroids, for example in the context of matroids representable over finite fields  \cite{geelen2013inequivalent} and for splitter theorems (and strengthenings thereof) \cite{brettell2014splitter,Splitter4conMatroids,Matroids3conSplitter}. 

\begin{oprob}
Extend \autoref{Angry} (and then \autoref{univ_3sepr}) to 3-connected matroids. 
\end{oprob}

To this end, a natural way to define tri-separations of matroids is the following.
Given a 3-connected matroid~$M$, a \emph{nontrivial mixed 3-separation} of~$G$ is a triple $(A,S,B)$ such that $A,S,B$ partition the ground set of~$M$ and for every bipartition $S=A'\sqcup B'$ we have that $(A\cup A',B\cup B')$ is a 3-separation of~$M$.
If $S$ is inclusionwise maximal, meaning that there is no $S'\supsetneq S$ such that $(A\sm S',S',B\sm S')$ is a nontrivial mixed 3-separation of~$M$, then $(A,S,B)$ is a \emph{nontrivial tri-separation} of~$M$.
Note that $(A,S,B)$ is a nontrivial tri-separation of $M$ if and only if it is a nontrivial tri-separation of the dual $M^*$ of $M$.

\begin{eg}
    In $U_{3,m}$ for $m\geq 6$ every nontrivial tri-separation is crossed by a {nontrivial} tri-separation.
\end{eg}

Another direction for future research is the following:

\begin{oprob}\label{etd}
Extend \autoref{Angry} (and then \autoref{univ_3sepr}) to separators of size larger than~3.
\end{oprob}

\autoref{etd} has recently been solved for size~4 in~\cite{Tutte4con}.

An instructive example concerning \autoref{etd} is the line-graph of the 3-dimensional cube.
In~this graph, there are three 4-separations that cross \lq 3-dimensionally\rq, as depicted in \autoref{fig:3Dcube} below.

\begin{figure}[ht]
    \centering
    \includegraphics[height=10\baselineskip]{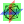}
    \caption{Three 4-separations crossing 3-dimensionally}
    \label{fig:3Dcube}
\end{figure}

(Mixed)~$k$-separations of order~$k<4$ do not cross `3-dimensionally': 
this is trivial for $k=1$; for $k=2$ we can read it from the \nameref{2SeparatorTheorem} (\ref{2SeparatorTheorem}); and for $k=3$ this follows by combining the \nameref{dasBesteLemma} (\ref{dasBesteLemma}) with \autoref{apex_exists}.
Some hope towards a solution of \autoref{etd} stems from the results of~\cite{carmesin2014k} (building on earlier work of~\cite{Watkins70}), where it is proved that if a $k$-connected but not $(k+1)$-connected graph has minimum degree larger than $\frac{3k}{2}-1$, then it has a totally-nested $k$-separation (and in fact every $k$-separation $(A,B)$ with $A$ minimal is totally-nested).

\begin{rem}
 We expect that many results about 3-connected graphs in the literature can be derived in a fairly straightforward way from \autoref{univ_3sepr}, for example those in \cite{DiestelBookCurrent} or \cite{oxley2}.
\end{rem}

Our main result \autoref{univ_3sepr} has quite a few applications in addition to the ones presented here. Whilst for some of these applications, our papers are at an early stage, the following applications will appear on the arXiv shortly (or already have appeared):

\begin{enumerate}
    \item Consider the following \emph{connectivity augmentation problem from 0 to~4}.
    Suppose that we are given a graph~$G$, a set $F\se [V(G)]^2$ of edges not in~$E(G)$, and an integer $k\ge 0$.
    Decide whether there is a $k$-element subset $X$ of $F$ such that $G+X$ is 4-connected.
    In upcoming work, the first author and Sridharan present an algorithm that solves this problem and that is an FPT-algorithm: its running time is upper-bounded by some function in $k$ times a polynomial in $|V(G)|$. 
    The property of total-nestedness is crucial for this algorithm\jaf{~\cite{conny_aug}}{~\cite{conny_aug}}{}.
    \item We characterised 4-tangles through a connectivity property\jaf{~\cite{greedy_grohe}}{~\cite{greedy_grohe}}{}.
    \item A~wheel-minor $W$ of a 3-connected graph $G$ is \emph{stellar} if $G$ admits a star-decomposition of adhesion three such that $W$ is equal to the central torso and all leaf-bags include a cycle. 
    We shall show that every stellar wheel-minor of $G$ where the rim is sufficiently large is a minor of an expanded torso of the set of totally-nested nontrivial tri-separations of~$G$\jaf{~\cite{stellar_wheel}}{~\cite{stellar_wheel}}{}.
\end{enumerate}

In the following, we compare the decomposition of this paper with Grohe's~\cite{grohe2016quasi} and with the findings of the upcoming work\jaf{~\cite{greedy_grohe,stellar_wheel}}{~\cite{greedy_grohe,stellar_wheel}}{ [references removed for anonymisation, also in the following]}.
Details that we skip here will be addressed in\jaf{~\cite{greedy_grohe,stellar_wheel}}{~\cite{greedy_grohe,stellar_wheel}}{ the upcoming work}.
The results of this paper and related works give rise to three types of decompositions of 3-connected graphs, labelled below by (D1) to~(D3).
The decomposition~(D1) is obtained by taking an inclusion-wise maximal set of pairwise nested nontrivial 3-separations; this is essentially the decomposition constructed by Grohe~\cite{grohe2016quasi} and we refer to the upcoming work\jaf{~\cite{greedy_grohe}}{~\cite{greedy_grohe}}{} for a refined analysis of this decomposition.
The decomposition (D3) is that of \autoref{univ_3sepr}.
The decomposition (D2) is obtained from (D3) by applying to each quasi 4-connected compressed torso the decomposition~(D1); so (D2) refines~(D3).

We made a list of desirable properties for such decompositions, (A1)--(C3) below, and compare the decompositions on the basis of these properties in the following chart.

\medskip

\noindent\begin{tabular}{ | l | l | l | l | l |}
\hline 
(D1) & (D2) & (D3) &  & Property\\ \hline
$\times$ & $\times$ & $\times$ & (A1) & 4-tangles appear${}^1$ as torsos\\ \hline
\checkmark & \checkmark & $\times$ & (A2) & non-cubic${}^2$ 4-tangles appear as torsos\\ \hline
\checkmark & \checkmark & \checkmark & (A3) & non-cubic 4-tangles live in different quasi 4-connected torsos\\ \hline
\checkmark & \checkmark & \checkmark & (B1) & every non-cubic internally 4-connected minor of $G$ is a minor of some torso\\ \hline
$\times$ & \checkmark & \checkmark & (B2) & every stellar $m$-wheel minor of $G$ with $m\geq 5$ is a minor of some torso\\ \hline
\checkmark & \checkmark & $\times$ & (C1) & all torsos are internally 4-connected, thickened $K_{3,m}$'s or generalised wheels\\ \hline
\checkmark & \checkmark & \checkmark & (C2) & all torsos are quasi 4-connected, thickened $K_{3,m}$'s or generalised wheels\\ \hline
$\times$ & $\times$ & \checkmark & (C3) & canonical \\ \hline
\end{tabular}\bigskip

To see that (A1) fails, construct a graph $G$ as follows. 
Start with a set $X$ of four vertices and glue a clique $K_{10}$ at each 3-element subset of~$X$.
Then $G$ has a cubic 4-tangle $\theta$ that lives on~$X$. 
In every decomposition (D$i$) the set $X$ determines a $K_4$~torso such that $\theta$ can only possible live in that torso, but $K_4$ has no 4-tangle. 
The results from the upcoming work\jaf{~\cite{greedy_grohe}}{~\cite{greedy_grohe}}{} show that the properties (A3) and (C2) hold for all three decompositions, and that (A2) and (C1) hold for (D1) and~(D2).
\autoref{eg100} shows that (A2) and (C1) fail for~(D3).
(B1) follows from (A3) via our characterisation of 4-tangles \jaf{from~\cite{greedy_grohe}}{from~\cite{greedy_grohe}}{}.
In the upcoming work\jaf{~\cite{stellar_wheel}}{~\cite{stellar_wheel}}{} we show that (B2) holds for (D2) and (D3) and that $m\geq 5$ is necessary, and we also show that no tree-decomposition can possess this property, so in particular not~(D1).
Clearly, (C3) holds for (D3).
The necklace of $K_5$'s from the introduction shows that (C3) fails for (D1).
\autoref{eg100} shows that (C3) fails for (D2) as well.

\vspace*{\fill}
\noindent ${}^1$ The~4-tangles of~$G$ \emph{appear as torsos} of~(D$i$) if there is a natural injection $\iota$ from the 4-tangles to the torsos of~(D$i$) such that $\iota(\theta)$ is internally 4-connected and its unique 4-tangle lifts to $\theta$; see \cite{greedy_grohe} for details.

\noindent ${}^2$ A set $X=\{v_1,v_2,v_3,v_4\}$ of four vertices of a graph $G$ is \emph{cubic} if there
are components $C_1,C_2,C_3,C_4$ of $G\sm X$ such that $N(C_i)=X-v_i$ for $i=1,2,3,4$, and no 
component of $G\sm X$ has the whole of $X$ in its neighbourhood.
If $X$ is cubic, then $G$ has the cube $Q_3$ as a minor where one bipartition class is $X$ and the other is $\{C_1,C_2,C_3,C_4\}$.
A 4-tangle is \emph{cubic} if it lives on a cubic vertex set~$X$; that is, every big side in the 
tangle contains~$X$.

\clearpage
\renewcommand{\thechapter}{4}
\phantomsection
\noindent{\huge{\textbf{Chapter \thechapter \\ \\Reviewing the 2-Separation Theorem}}}
\addcontentsline{toc}{section}{\thechapter\ Reviewing the 2-Separation Theorem}

\setcounter{section}{0}

\section{Overview of this chapter}\label{sec:2SeparatorTheorem}

A basic fact about graphs states that every connected graph can be cut along its cutvertices in a tree-like way into maximal 2-connected subgraphs and bridges.
2-connected graphs can be decomposed further in the same vein, which is useful to study planar embeddings of graphs, but it is no longer obvious where to best cut these graphs.
MacLane found for every 2-connected graph $G$ a tree-decomposition of adhesion two all whose torsos are 3-connected, cycles or~$K_2$'s~\cite{MacLane}.
Tutte~\cite{TutteGrTh} later found a canonical such tree-decomposition, for which Cunningham and Edmonds discovered an elegant one-step construction~\cite{cunningham_edmonds_1980}.
Here we review and prove a structural version of Tutte's result with the description by Cunningham and Edmonds, using the terminology of this paper, and then derive the \nameref{2SeparatorTheorem} (\ref{2SeparatorTheorem}) from it.

\section{Characterising nestedness through connectivity}

\begin{fact}\label{connectedSides}
If $G$ is 2-connected and $(A,B)$ is a 2-separation of~$G$, then $G[A]$ and~$G[B]$ are connected, and neither vertex in $A\cap B$ is a cutvertex of $G[A]$ or~$G[B]$.
\end{fact}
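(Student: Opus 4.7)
The plan is to handle connectedness of the sides and the cutvertex claim in two separate steps, both leveraging $2$-connectivity of $G$ together with the fact that $(A,B)$ is a genuine separation (so there are no edges between $A\sm B$ and $B\sm A$). Write $A\cap B=\{u,v\}$.

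First, I would prove that $G[A]$ is connected (the case of $G[B]$ is symmetric). The natural tool here is Menger's theorem applied to $2$-connected graphs: for any two distinct vertices $x,y$ of $G$ there are two internally vertex-disjoint $x$--$y$ paths. I~pick an arbitrary $x\in A\sm B$ and any $y\in B\sm A$ (both nonempty by the definition of a mixed-separation). Since $\{u,v\}$ separates $x$ from $y$ in $G$, each of the two internally disjoint $x$--$y$ paths must use a vertex of $\{u,v\}$, and since the paths are internally disjoint they use different vertices of $\{u,v\}$. Taking initial segments gives an $x$--$u$ and an $x$--$v$ path in $G[A]$, so $x,u,v$ all lie in one component of $G[A]$. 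As $x\in A\sm B$ was arbitrary, the whole of $A\sm B$ lies in the same component of $G[A]$ as $u$ and $v$, proving connectedness.

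Second, for the cutvertex statement, suppose for contradiction that $u$ is a cutvertex of $G[A]$, so that $G[A-u]$ has at least two components. Since $v$ lies in exactly one of these components, there exists a component $C$ of $G[A-u]$ with $C\cap\{u,v\}=\emptyset$, hence $C\se A\sm B$. The key observation is that the vertices of $C$ have no neighbours outside $C\cup\{u\}$ in~$G$: inside $G[A]$ this is by the definition of a component of $G[A-u]$, and edges from $A\sm B$ to $B\sm A$ are forbidden because $(A,B)$ is a separation (not merely a mixed-separation). Consequently $u$ is a cutvertex of $G$, contradicting $2$-connectivity. The same argument applies with $v$ in place of $u$ and with $B$ in place of $A$.

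The main obstacle, such as it is, lies in the cutvertex step: one has to pin down exactly which component of $G[A-u]$ is \emph{isolated} from $B\sm A$ in $G-u$, and this crucially relies on the absence of edges between $A\sm B$ and $B\sm A$, which is the feature distinguishing genuine $2$-separations from mixed $2$-separations. Everything else is a routine unfolding of definitions once Menger is invoked.
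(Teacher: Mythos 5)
Your proof is correct, but it takes a genuinely different and somewhat longer route than the paper's. The paper's entire proof is the single observation that every component of $G-(A\cap B)$ has neighbourhood equal to $A\cap B$: a component with a strictly smaller neighbourhood would expose a separator of size at most one, contradicting $2$-connectivity. Since $(A,B)$ is a genuine separation (no edges between $A\sm B$ and $B\sm A$), the components of $G-(A\cap B)$ split cleanly into those inside $A\sm B$ and those inside $B\sm A$, and each is attached to both vertices of $A\cap B$. Both conclusions then drop out simultaneously: $G[A]$ is connected because all of its pieces meet both vertices of $A\cap B$, and deleting either vertex of $A\cap B$ from $G[A]$ still leaves everything tied together through the other. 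You instead invoke Menger's theorem explicitly for the connectedness part (extracting $x$--$u$ and $x$--$v$ paths inside $G[A]$ from two internally disjoint $x$--$y$ paths) and then run a separate componentwise contradiction argument for the cutvertex part. Both approaches are valid; the paper's is more economical because it isolates the one structural fact from which both claims follow, whereas your two arguments each re-derive a piece of what that fact already encodes. The crux you flag at the end --- that a genuine $2$-separation has no crossing edges between $A\sm B$ and $B\sm A$ --- is indeed the load-bearing point in either approach.
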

\begin{proof}
Every component of $G-(A\cap B)$ has neighbourhood equal to~$A\cap B$.
\end{proof}

The two situations in~\ref{crossCross} and~\ref{crossFourFlip} of the following lemma are depicted in~\autoref{fig:TwoCrossings}.
Recall that a separation $(A,B)$ \emph{separates} two vertices $u,v$ if $u\in A\sm B$ and $v\in B\sm A$ or vice versa.

\begin{figure}[ht]
    \centering
    \includegraphics[height=8\baselineskip]{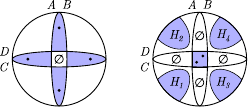}
    \caption{The two ways in which 2-separations can cross}
    \label{fig:TwoCrossings}
\end{figure}

\begin{lem}\label{2sepCrossingChar}
Two 2-separations $(A,B)$ and $(C,D)$ of a 2-connected graph~$G$ cross if and only if one of the following assertions holds:
\begin{enumerate}[label={\textnormal{(X\arabic*)}}]
    \item\label{crossCross} $(A,B)$ separates the two vertices in $C\cap D$ while $(C,D)$ separates the two vertices in $A\cap B$; or
    \item\label{crossFourFlip} $A\cap B=C\cap D$ and there are four components $H_1,\ldots,H_4$ of $G-(A\cap B)$ such that $H_1,H_2\subseteq G[A]$ and $H_3,H_4\subseteq G[B]$, while $H_1,H_3\subseteq G[C]$ and $H_2,H_4\subseteq G[D]$.
\end{enumerate}
\end{lem}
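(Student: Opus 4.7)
The plan is to combine the corner-link formalism of Section~3.4 with the sides-are-connected lemma \autoref{connectedSides}. For the easy direction I check both implications via \autoref{nestedviacorners}: under (X1), the two vertices of $A\cap B$ populate the links for $C$ and for $D$ while the two vertices of $C\cap D$ populate the links for $A$ and for $B$, so all four links are nonempty and no corner together with its two adjacent links can be empty. Under (X2), the equality $A\cap B=C\cap D$ forces all four links to be empty, but the four components $H_1,\ldots,H_4$ populate all four corners, so again no corner-plus-adjacent-links block is empty.

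For the forward direction I split on $t:=|A\cap B\cap C\cap D|$. If $t=2$, the separators coincide, all four links are empty, and crossing forces all four corners to be nonempty; since $G$ is 2-connected, each component of $G-(A\cap B)$ has neighbourhood exactly $A\cap B$ and hence lies in a single corner, so picking one component from each corner produces the $H_i$ required by (X2).

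The main technical obstacle is ruling out $t=1$. Write $A\cap B=\{v,a\}$ and $C\cap D=\{v,c\}$ with $v,a,c$ pairwise distinct; by symmetric relabeling I may arrange that $a\in C\setminus D$ and $c\in A\setminus B$. A direct inspection shows the links for $B$ and for $D$ are empty, so crossing supplies a vertex $y\in(B\setminus A)\cap(D\setminus C)$. Now $\{v,c\}=C\cap D$ separates $a$ from $y$ in $G$, while $c\notin B$; hence every $a$--$y$ path inside $G[B]$ must pass through $v$. This contradicts \autoref{connectedSides}, which says $v$ is not a cutvertex of $G[B]$, so $t=1$ is impossible.

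Finally, if $t=0$ the separators are disjoint and I aim to establish (X1) by showing each separator is split by the other. Assume for contradiction that $A\cap B\subseteq C\setminus D$, so $A\cap B\cap D=\emptyset$. Then $A\cap D\subseteq A\setminus B$ and $B\cap D\subseteq B\setminus A$, and since no edge of $G$ joins $A\setminus B$ to $B\setminus A$, the graph $G[D]$ decomposes as the disjoint union of $G[A\cap D]$ and $G[B\cap D]$. Connectedness of $G[D]$ via \autoref{connectedSides} forces one of these to be empty; this emptiness forces both vertices of $C\cap D$ into the opposite of $A\setminus B,B\setminus A$, which in turn empties the link on that side. Together with the already-empty link for $D$, this yields a corner (either $\{A,D\}$ or $\{B,D\}$) whose two adjacent links are both empty, contradicting crossing by \autoref{nestedviacorners}. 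By symmetry $C\cap D$ is also split by $(A,B)$, which is precisely (X1).
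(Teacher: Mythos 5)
Your proof is correct. You split on $t:=|A\cap B\cap C\cap D|$ exactly as the paper does, and both proofs ultimately rest on \autoref{connectedSides}, but the packaging and two of the key subarguments are genuinely different. The paper's appendix proof is self-contained: it argues directly with components of $G-S$ and in the cases $t=0$ and $t=1$ derives $A\subseteq C$ and $D\subseteq B$ to conclude $(A,B)\le (C,D)$, a nestedness contradiction. You instead import the corner-link calculus of Chapter~1 and use \autoref{nestedviacorners} throughout, which gives you a uniform template ``two adjacent links empty $+$ corner empty $\Rightarrow$ nested''; this handles the backward direction, the $t=2$ case, and the endgame of the $t=0$ case cleanly. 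Your $t=1$ argument is the most interesting divergence: rather than chasing nestedness, you locate a vertex $y$ in the corner for $\{B,D\}$, observe that every $a$--$y$ path in $G[B]$ must pass through $v$ because $c\notin B$, and conclude $v$ would be a cutvertex of $G[B]$, contradicting \autoref{connectedSides} directly. This is shorter than the paper's component-naming argument with $K(X,y)$, $K(Y,x)$. The trade-off is that your proof depends on the corner-link machinery (which in the paper's organization is developed for tri-separations and not used in the appendix), whereas the paper keeps the 2-separation appendix elementary. Minor remarks: you silently read the ``$A\sm B$'' in the statement of~(X1) as ``$A\cap B$'', which is clearly the intended reading and matches the paper's own proof; and the sentence ``forces both vertices of $C\cap D$ into the opposite of $A\setminus B,B\setminus A$'' in your $t=0$ case is awkwardly phrased, though the underlying deduction ($B\cap D=\emptyset\Rightarrow C\cap D\subseteq A\setminus B\Rightarrow$ link for $B$ empty) is sound.
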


If $(A,B)$ and $(C,D)$ cross as in~\ref{crossCross}, we say that they cross \emph{like in a cycle}.
If $(A,B)$ and $(C,D)$ cross as in~\ref{crossFourFlip}, we say that they cross \emph{with a four-flip}.

\begin{proof}[Proof of \autoref{2sepCrossingChar}]
The backward implication is straightforward.
For the forward implication, suppose that $(A,B)$ and $(C,D)$ cross.
Let us write $X:=A\cap B$ and $Y:=C\cap D$.
We consider three cases.

\textbf{Case} $X\cap Y=\emptyset$.
We have to show that $(A,B)$ and $(C,D)$ cross like in a cycle.
Let us suppose for a contradiction that they don't.
Then the two vertices in~$X$, say, are not separated by~$(C,D)$.
With $X\cap Y=\emptyset$, it follows that $X\se C\sm D$, say.
As $G[D]$ is connected by \autoref{connectedSides} and avoids~$X$, it follows that $G[D]$ is included in a unique component~$I$ of $G-X$.
Without loss of generality, $I\se G[B]$, so $B\supseteq D$.
From $Y\se I\se G[B]$ and $X\cap Y=\emptyset$ we deduce that $G[A]$, which is connected by \autoref{connectedSides}, is a connected subgraph of $G-Y$, and lies in the component $J$ of $G-Y$ that contains the subset $X\se A$.
Thus $G[A]\se J\se G[C]$, that is, $A\se C$.
Hence $(A,B)\le (C,D)$, a contradiction.

\textbf{Case} $|X\cap Y|=1$.
We will show that this case is impossible.
Let us denote the vertex in the intersection $X\cap Y$ by~$z$, and let us denote the vertices in $X\sm Y$ and $Y\sm X$ by $x$ and~$y$, respectively; so $X=\{x,z\}$ and $Y=\{y,z\}$.
Let $K(X,y)$ denote the component of $G-X$ that contains~$y$, and let $K(Y,x)$ denote the component of $G-Y$ that contains~$x$.
Without loss of generality, $K(X,y)\se G[B]$ and $K(Y,x)\se G[C]$.
Since $G[A]-z$ is connected by \autoref{connectedSides} and a subgraph of $G-Y$ that contains~$x$, it must be included in the component $K(Y,x)$ of $G-Y$ that contains~$x$.
Hence $G[A]\se K(Y,x)\cup N(K(Y,x))\se G[C]$.
A~symmetric argument shows $G[D]\se G[B]$.
So $(A,B)\le (C,D)$, a contradiction.

\textbf{Case} $X=Y$. Then it is straightforward to deduce that $(A,B)$ and~$(C,D)$ cross with a four-flip.
\end{proof}

A 2-separation $(A,B)$ of a graph~$G$ is \emph{\tn} if
\begin{itemize}
    \item at least one of $G[A]$ and $G[B]$ is 2-connected, and
    \item at least one of $G[A\sm B]$ and $G[B\sm A]$ is connected.
\end{itemize}
\autoref{2sepCrossingChar} implies the following characterisation of total nestedness through external connectivity:

\begin{cor}\label{SufficientForTotallyNested}
A 2-separation of a 2-connected graph is totally nested if and only if it is externally 2-connected.\qed
\end{cor}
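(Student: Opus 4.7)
The plan is to prove both directions by contraposition, using the dichotomy of \autoref{2sepCrossingChar}: any two crossing 2-separations of a 2-connected graph cross either like in a cycle (X1) or with a four-flip (X2).

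For $(\Rightarrow)$, assume $(A,B)$ is externally 2-connected and let $(C,D)$ cross it. In case (X2), both $G[A\sm B]$ and $G[B\sm A]$ contain at least two components each, contradicting the second clause of external 2-connectivity. In case (X1), write $C\cap D=\{c_1,c_2\}$ with $c_1\in A\sm B$ and $c_2\in B\sm A$, and $A\cap B=\{a_1,a_2\}$ with $a_1\in C\sm D$ and $a_2\in D\sm C$. Each vertex of $A-c_1$ lies in $C\sm D$ or in $D\sm C$ (since $c_2\notin A$, the only vertex of $A$ in $C\cap D$ is $c_1$), and $(C,D)$ forbids edges of $G$ between these two parts. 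As $a_1,a_2$ land on opposite parts, $G[A]-c_1$ is disconnected; by \autoref{connectedSides}, $G[A]$ itself is connected, so $c_1$ is a cut vertex of $G[A]$. Symmetrically $c_2$ is a cut vertex of $G[B]$, so neither $G[A]$ nor $G[B]$ is 2-connected, contradicting the first clause.

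For $(\Leftarrow)$, assume $(A,B)$ is not externally 2-connected. If both $G[A\sm B]$ and $G[B\sm A]$ are disconnected, pick components $H_1,H_2$ of the former and $H_3,H_4$ of the latter and set $C:=V(H_1)\cup V(H_3)\cup(A\cap B)$ and $D:=V(G)\sm(V(H_1)\cup V(H_3))$; then $(C,D)$ is a 2-separation with separator $A\cap B$ that crosses $(A,B)$ via (X2). Otherwise both $G[A]$ and $G[B]$ fail to be 2-connected; by \autoref{connectedSides} they are connected with $a_1,a_2$ non-cut, so we may pick a cut vertex $v\in A\sm B$ of $G[A]$ and a cut vertex $w\in B\sm A$ of $G[B]$. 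Any component of $G[A]-v$ avoiding both $a_1$ and $a_2$ would attach to the rest of $G$ only through $v$ (since $(A,B)$ permits no edges from $A\sm B$ to $B\sm A$), contradicting 2-connectivity of $G$; hence $G[A]-v$ has exactly two components $K_1\ni a_1$ and $K_2\ni a_2$, and symmetrically $K_1',K_2'$ for $G[B]-w$. It follows that $G-\{v,w\}$ has exactly the two components $K_1\cup K_1'$ and $K_2\cup K_2'$, so the 2-separation
\[
(C,D):=(K_1\cup K_1'\cup\{v,w\},\;K_2\cup K_2'\cup\{v,w\})
\]
has separator $\{v,w\}$ straddled by $(A,B)$ and sides that separate $a_1$ from $a_2$; hence it crosses $(A,B)$ via (X1).

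The main obstacle is this last subcase. A tempting shortcut that uses only a cut vertex $v$ of $G[A]$ together with one of $a_1,a_2$ to build a 2-separator produces a 2-separation nested below $(A,B)$ rather than a crossing one (it merely refines $(A,B)$). The key insight is to combine cut vertices $v\in A\sm B$ and $w\in B\sm A$ from opposite sides so that $\{v,w\}$ straddles $(A,B)$; 2-connectivity of $G$ then forces each of $G[A]-v$ and $G[B]-w$ to split into exactly two pieces, sorted by $a_1$ versus $a_2$, which is precisely what makes $\{v,w\}$ a 2-separator of $G$ and yields the crossing of type (X1).
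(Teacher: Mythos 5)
Your proof is correct and follows the paper's intended approach: deriving the corollary directly from the dichotomy of \autoref{2sepCrossingChar} together with \autoref{connectedSides}. The one step the paper's bare \qed glosses over is the second case of your backward direction, where the key idea of pairing a cutvertex $v\in A\sm B$ of $G[A]$ with a cutvertex $w\in B\sm A$ of $G[B]$ and checking that $G-\{v,w\}$ has exactly two components sorted by $a_1$ versus $a_2$ is exactly what is needed, and you handle it correctly.
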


\section{When all 2-separations are crossed}

\begin{thm}[Angry 2-Separation Theorem]\label{Angry2Sep}
If a 2-connected graph $G$ has a 2-separation and every 2-separation of $G$ is crossed by another 2-separation, then $G$ is a cycle of length~$\ge 4$.
\end{thm}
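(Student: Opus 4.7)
The plan is to pick a 2-separation $(A,B)$ of $G$ minimizing $|A \sm B|$ and to analyze a 2-separation $(C,D)$ that crosses it (guaranteed by hypothesis). By \autoref{2sepCrossingChar}, this crossing is either like in a cycle or with a four-flip. I would rule out the four-flip case immediately by minimality: a four-flip produces two components $H_1, H_2$ of $G - (A \cap B)$ contained in $A$, and any one of these yields a 2-separation with first side $|H_i| < |A \sm B|$, contradicting minimality.

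In the cross-like-in-cycle case, write $A \cap B = \{a,b\}$ and $C \cap D = \{c,d\}$ with $c \in A \sm B$, $d \in B \sm A$, $a \in C \sm D$, and $b \in D \sm C$. I would next apply the submodularity inequality $|L(A,C)| + |L(B,D)| \leq |S(A,B)| + |S(C,D)| = 4$ from \autoref{submod}, essentially as in the proof of the \nameref{dasBesteLemma}~(\ref{dasBesteLemma}). Since each of the four vertex-links already contains one of $a, b, c, d$, the bound must be tight: each link has size exactly one, the centre is empty, and there are no jumping edges (and no diagonal edges either, since 2-separators are vertex-only).

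Next I would show that the small-side corners $K_1, K_2 \se A \sm B$ must be empty. A vertex in $K_1 = (A \sm B) \cap (C \sm D)$ has all its neighbours in $K_1 \cup \{a, c\}$: no jumping edges, no edges across the vertex-separators $S(A,B)$ and $S(C,D)$, and the centre is empty. Hence $\{a, c\}$ is a 2-separator and the corresponding 2-separation has first side $|K_1| \leq |A \sm B| - 1$, contradicting minimality; so $K_1 = \emptyset$, and symmetrically $K_2 = \emptyset$. This forces $A \sm B = \{c\}$, so $c$ has exactly the two neighbours $a, b$ in $G$.

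If also $K_3 = K_4 = \emptyset$, then $V(G) = \{a,b,c,d\}$; since $cd$ cannot be an edge (it would lie in $S(A,B)$) and $ab$ cannot be an edge (it would lie in $S(C,D)$), and 2-connectivity forces each vertex to have degree at least two, the edge set is exactly $\{ac, ad, bc, bd\}$ and $G = C_4$. Otherwise I would induct on $|V(G)|$: form $G' := (G - c) + ab$, verify that $G'$ is 2-connected and satisfies the hypothesis, and conclude by induction that $G' = C_{n-1}$, whence $G$, obtained from $G'$ by subdividing $ab$ with $c$, is $C_n$. The hardest part will be establishing that $G'$ inherits the hypothesis: proving $G'$ is 2-connected (the added edge $ab$ should neutralize potential cutvertices created by deleting $c$) and, more delicately, proving that every 2-separation of $G'$ is crossed by another 2-separation of $G'$, which requires a careful case analysis depending on whether the separator of a $G'$-2-separation contains $a$, $b$, both, or neither, and on lifting crossings from $G$ to $G'$ across the newly added edge $ab$.
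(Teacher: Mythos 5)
Your argument follows a genuinely different route from the paper's. The paper picks $(A,B)$ so that $G[A\sm B]$ is connected (which rules out the four-flip crossing at once), then analyses the block graphs of $G[A]$ and $G[B]$ and shows each is a path of $K_2$-blocks --- a direct argument with no induction. You instead minimise $|A\sm B|$, exclude the four-flip by minimality, and run a corner analysis via \autoref{submod} to reduce $A\sm B$ to a single degree-two vertex $c$ with neighbours $a,b$, noting that $ab$ and $cd$ are non-edges. Everything up to and including the $C_4$ base case is correct.

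The inductive step is a genuine gap. To apply the induction hypothesis to $G':=(G-c)+ab$ you need three things to hold. That $G'$ is 2-connected is fine. You do not address the need for $G'$ to have a 2-separation at all --- without one, the induction hypothesis is vacuous. This can be rescued: if $G'$ were 3-connected, then the 2-separation $(\{a,b,c\},V(G)\sm\{c\})$ of $G$ would be uncrossed, since a four-flip is impossible with only two components of $G\sm\{a,b\}$ and a like-in-cycle crossing would require a separator $\{c,y\}$, making $ab$ a bridge of the 2-connected graph $G'-y$, which is impossible. But you do not say this. The hardest point --- that every 2-separation of $G'$ is crossed --- is where the real difficulty lies. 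The natural plan of lifting $(U,W)$ to $G$ (by placing $c$ on one side), finding a crossing partner $(C',D')$ there, and projecting back breaks down exactly when $c\in S(C',D')$: the pair $\{c,y\}$ does not correspond to a 2-separator of $G'$, because $G-c-y=G'-ab-y$ is reconnected in $G'-y$ by the new edge $ab$. And this bad case is unavoidable: if the lifted separation were crossed like in a cycle by a separator avoiding $c$, then $a$ and $b$ would lie in distinct components of $G$ minus that separator, contradicting that $a$--$c$--$b$ is a path avoiding it. So the crossing partner either has $c$ in its separator or is a four-flip using the component $\{c\}$, and in both cases the crossing does not transfer to $G'$ without substantial further argument that you have not supplied. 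The paper's block-graph proof avoids all of this by reasoning entirely on the fixed graph $G$.
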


\begin{proof}[Proof of \autoref{Angry2Sep}]
Let $(A,B)$ be a 2-separation of~$G$.
We may choose $(A,B)$ so that $G[A\sm B]$ is connected.
Let $(C,D)$ be a 2-separation of~$G$ that crosses $(A,B)$.
Since $G[A\sm B]$ is connected, $(C,D)$ must cross $(A,B)$ like in a cycle.
Let $T_1$ and $T_2$ be the block graphs of $G[A]$ and of~$G[B]$, respectively, where we use the definition of block graphs as in \cite[§3.1]{DiestelBookCurrent}.

The two vertices in $A\cap B$ are separated in $G[A]$ by the vertex of $C\cap D$ that lies in~$A$, so they lie in distinct blocks of~$G[A]$.
Since $G$ is 2-connected, the vertices in $A\cap B$ are not cutvertices of~$G[A]$, so the blocks containing them are unique.
Let $P_1$ be the unique path in~$T_1$ that links these two blocks.
Then $T_1=P_1$, because otherwise some edge of~$T_1$ leaving~$P_1$ would induce a 1-separation of~$G[A]$ with $A\cap B$ contained in one side, which in turn would extend to a 1-separation of~$G$, contradicting that $G$ is 2-connected.
Similarly, we find that $T_2$ is a path linking the unique blocks of $G[B]$ containing the two vertices in~$A\cap B$.
So it remains to show that all blocks of $G[A]$ and of $G[B]$ are~$K_2$'s.

Let us assume for a contradiction that $G[A]$, say, has a 2-connected block~$X$.
Let $Y$ denote the union of all blocks of $G[A]$ and of~$G[B]$ except~$X$.
Then $\{V(X),V(Y)\}$ is a 2-separation of~$G$.
By \autoref{SufficientForTotallyNested}, it it totally nested as $X$ is 2-connected and $Y\sm X$ is connected, a contradiction.
\end{proof}

\begin{cor}\cite[Theorem~3]{infiniteSPQR}\label{CayleyCase2}
Every vertex-transitive finite connected  graph either is 3-connected, a cycle, a $K_2$ or a~$K_1$.
\end{cor}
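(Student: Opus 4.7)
The plan is to follow the template established in the proof of \autoref{vx-trans}, but for $2$-separations. First, we handle the degenerate cases: if $|V(G)|\le 2$, then $G$ is $K_0,K_1$ or $K_2$, so we may assume $|V(G)|\ge 3$. A standard block-graph argument shows that any finite connected graph on at least two vertices has at least two non-cutvertices, so vertex-transitivity rules out the existence of any cutvertex; hence $G$ is 2-connected. If $G$ is 3-connected we are done, so we may further assume $G$ has a 2-separation.

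The main step is then to show that no 2-separation of $G$ is totally-nested; combined with \autoref{Angry2Sep}, this immediately yields that $G$ is a cycle of length at least four. Suppose for a contradiction that $(A,B)$ is a totally-nested 2-separation of~$G$. Let $O$ be the union of the orbits of $(A,B)$ and $(B,A)$ under $\mathrm{Aut}(G)$. Since $G$ is finite, we may pick $(U,W)\in O$ that is $\le$-minimal. Because $(U,W)$ remains totally-nested (total-nestedness is preserved by automorphisms) and no element of $O$ is strictly less than $(U,W)$, a case analysis on the four nestedness options shows that for every $(C,D)\in O$ we have $(U,W)\le (C,D)$ or $(U,W)\le (D,C)$.

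Now we mimic the proof of Claim~\ref{Cayley:threeCut} from the proof of \autoref{vx-trans}. The separator $U\cap W$ has exactly two vertices, so we may pick some $v\in U\cap W$. Since $(U,W)$ is a mixed-separation, there is a vertex $u\in U\setminus W$. By vertex-transitivity, choose $\varphi\in\mathrm{Aut}(G)$ with $\varphi(v)=u$. If $(U,W)\le (\varphi(U),\varphi(W))$, then $\varphi(W)\subseteq W$, so $u=\varphi(v)\in\varphi(W)\subseteq W$, contradicting $u\in U\setminus W$. If instead $(U,W)\le (\varphi(W),\varphi(U))$, then $\varphi(U)\subseteq W$, so $u=\varphi(v)\in\varphi(U)\subseteq W$, again a contradiction. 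This shows $U\cap W=\emptyset$, contradicting that $U\cap W$ is a 2-separator. Hence $G$ has no totally-nested 2-separation, and \autoref{Angry2Sep} completes the proof.

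The main obstacle is the minimality/orbit argument in the third paragraph: it is almost a direct transcription of the argument from \autoref{vx-trans}, but one needs to be careful that the two nestedness options $(U,W)\le(\varphi(U),\varphi(W))$ and $(U,W)\le (\varphi(W),\varphi(U))$ indeed cover all cases allowed by minimality, and that in each case the image of the separator vertex $v$ is forced into $W$. Beyond this, the only non-trivial input is the preliminary reduction to the $2$-connected case.
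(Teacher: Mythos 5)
Your proof follows the same two-stage outline as the paper's: reduce to the $2$-connected case via a block-graph argument, and then use the \nameref{Angry2Sep} after showing that vertex-transitivity forbids totally-nested $2$-separations by way of an orbit argument modeled on Claim~\ref{Cayley:threeCut}. The orbit argument itself is a minor variant of the paper's: you pick a $\le$-minimal $(U,W)$ in $O$ and derive the two usable nestedness options from minimality, then map a separator vertex into $U\sm W$ to get a contradiction, whereas the paper picks $(A,B)$ with $A$ minimal, maps a separator vertex into $A\sm B$, and uses that this obstructs both \lq upward\rq\ nestedness relations. Both versions are correct; yours reads cleanly and the case analysis checks out (in the two cases $(U,W)\le(\varphi(U),\varphi(W))$ and $(U,W)\le(\varphi(W),\varphi(U))$, the image $\varphi(v)$ indeed lands in $\varphi(W)\se W$ resp.\ $\varphi(U)\se W$).

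There is one small but genuine gap in the reduction step. You pass from $|V(G)|\le 2$ directly to $|V(G)|\ge 3$, show $G$ is $2$-connected, and then assert \lq if $G$ is not $3$-connected, it has a $2$-separation\rq. For $|V(G)|=3$ this is false: $K_3$ is $2$-connected, is not $3$-connected in this paper's convention (which requires more than $k$ vertices), yet has no $2$-separation, since a $2$-separation needs both $A\sm B$ and $B\sm A$ nonempty and so requires at least four vertices. The statement is still true for $K_3$ -- it is a cycle -- but your argument as written does not produce it, and the \nameref{Angry2Sep} gives only cycles of length $\ge 4$. The fix is trivial: dispose of $|V(G)|\le 3$ first (as the paper does), noting that the only vertex-transitive connected graphs on at most three vertices are $K_1$, $K_2$ and $K_3$, and then run your argument from $|V(G)|\ge 4$.
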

\begin{proof}
Let $G$ be a finite connected vertex-transitive graph.
If $|G|\le 3$, then $G$ is a complete graph on $\le 3$ vertices, so we may assume that $|G|\ge 4$.

We claim that $G$ is 2-connected.
Otherwise $G$ has a cutvertex.
Then every vertex of $G$ is a cutvertex.
Let $T$ be the block graph of~$G$, and let $t$ be a leaf of~$T$.
Then $t$ is a block, but contains at most one cutvertex.
So some vertex in~$t$ is not a cutvertex of~$G$, a contradiction.

Let us suppose now that $G$ is not 3-connected, so $G$ has a 2-separation.
If every 2-separation of $G$ is crossed by another one, then $G$ is a cycle of length~$\ge 4$ by the \nameref{Angry2Sep} (\ref{Angry2Sep}).
Otherwise $G$ has a totally-nested 2-separation.
Let $O$ denote its orbit under the action of the automorphism group of~$G$, and pick $(A,B)\in O$ such that $A$ is minimal.
Pick any vertex $v\in A\sm B$.
By vertex-transitivity, there is $(C,D)\in O$ such that $v\in C\cap D$.
Since $O$ is nested, and since $v$ obstructs both of $(A,B)\le (C,D)$ and $(A,B)\le (D,C)$, we have $(C,D)\le (A,B)$ or $(D,C)\le (A,B)$.
Hence $C\se A$ or $D\se A$.
As $v$ lies in $C\cap D$ but not in~$B$, the inclusion $C\se A$ or $D\se A$ must be proper, contradicting the choice of~$(A,B)$.
\end{proof}

\section{A structural 2-Separation Theorem}

We say that a star $\sigma$ is \emph{$U$-principal} for a vertex set $U\se V(G)$ if $G\sm U$ has at least three components and
\[
    \sigma=\{\,s_K\colon K\text{ is a component of }G\sm U\,\}
\]
where
\[
    s_K:=\big(\,V(K)\cup U,\,V(G)\sm V(K)\,\big).
\]
The bag of a $U$-principal star $\sigma$ is equal to~$G[U]$, and the separators of the elements of~$\sigma$ are equal to~$U$.

\begin{thm}[Structural 2-Separation Theorem]\label{2sepThm}
Let $G$ be a 2-connected graph, and let $\sigma$ be any splitting star with torso~$X$ of the set~$N$ of all totally-nested 2-separations of~$G$.
If $|X|\le 2$, then $X$ is a~$K_2$ and $\sigma$ is $V(X)$-principal.
Otherwise $|X|\ge 3$, and exactly one of the following is true:
\begin{enumerate}
    \item $\sigma$ is interlaced by a 2-separation of~$G$ that is crossed like in a cycle, and $X$ is a cycle of length~$\ge 4$;
    \item $\sigma$ is not interlaced by a 2-separation of~$G$, and $X$ is 3-connected or a triangle.
\end{enumerate}
\end{thm}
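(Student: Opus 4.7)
I plan to prove \autoref{2sepThm} by establishing, via the Lifting Lemma~(\autoref{separationLifting}), a correspondence between 2-separations of the torso~$X$ and 2-separations of~$G$ that interlace~$\sigma$; this correspondence will yield outcome~(2) immediately and will reduce outcome~(1) to the \nameref{Angry2Sep}~(\autoref{Angry2Sep}) applied inside~$X$. The degenerate case $|X|\le 2$ is handled separately.

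For the degenerate case, every separator of an element of $\sigma$ lies in the bag and has size two, so $\sigma\ne\emptyset$ forces $|X|\ge 2$, while $\sigma=\emptyset$ gives $X=G$ with $|X|\ge 3$; hence $|X|\le 2$ forces $|X|=2$ and $\sigma\ne\emptyset$, and all separators of $\sigma$ coincide with $V(X)=:U$. Since $\bigcap_i B_i=U$, the leaf-sides $A_i\sm U$ partition $V(G)\sm U$ into unions of components of $G\sm U$. If some $A_i\sm U$ contained two components $K,K'$, then $s_K=(V(K)\cup U,V(G)\sm V(K))$ would lie strictly below $(A_i,B_i)$ and strictly below $(B_j,A_j)$ for $j\ne i$ by the star condition. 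A short argument using \autoref{2sepCrossingChar} shows $s_K$ is then totally nested, so $s_K\in N$ interlaces $\sigma$, contradicting the splitting property by \autoref{splitVsInterlace}. Hence each $A_i\sm U$ is a single component of $G\sm U$, and a count on the bag then yields at least three components, so $\sigma$ is $V(X)$-principal.

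For $|X|\ge 3$ I would first verify that $X$ is 2-connected: a cutvertex of~$X$ would descend, by expanding each clique-separator back into its original leaf-side, to a cutvertex of~$G$, contradicting 2-connectivity of~$G$. Next I would prove the correspondence: $\sigma$ is interlaced by a 2-separation of~$G$ if and only if $X$ has a 2-separation. For the backward direction, the lift $(\hat C,\hat D)$ of a 2-separation $(C,D)$ of~$X$ almost interlaces~$\sigma$ by \autoref{separationLifting}, and any equality $(\hat C,\hat D)=(A_i,B_i)$ (up to flip) is ruled out by combining the identity $A_i\cap V(X)=A_i\cap B_i$ with $\hat C\cap V(X)=C$ to force $C\se D$, contradicting $(C,D)$ being a 2-separation of~$X$. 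For the forward direction, any 2-separation $(A,B)$ of~$G$ that interlaces~$\sigma$ restricts to the 2-separation $(A\cap V(X),B\cap V(X))$ of~$X$ with the same separator, since the strict inequalities from interlacing ensure that $V(X)$ meets both $A\sm B$ and $B\sm A$. In the no-interlacing case, $X$ is 2-connected with $|X|\ge 3$ and has no 2-separation, so $X$ is $K_3$ or 3-connected, which is outcome~(2).

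For outcome~(1), every 2-separation of~$X$ lifts to a 2-separation of~$G$ that interlaces~$\sigma$ and hence is not totally nested, so is crossed in~$G$. Using \autoref{2sepCrossingChar}, I would rule out the four-flip pattern: a four-flip on a 2-separator~$U'$ with at least four components would yield the $U'$-principal splitting star of~$N$ and, by \autoref{unique_split}, force $\sigma$ to coincide with it, contradicting $|X|\ge 3$. So the crossing is like in a cycle, and descending it to~$X$ shows that every 2-separation of the 2-connected graph~$X$ is crossed within~$X$; applying the \nameref{Angry2Sep} to~$X$ then yields that $X$ is a cycle of length $\ge 4$. The main obstacle I anticipate is this descent step: both 2-separations involved in the crossing in~$G$ must restrict to genuine 2-separations of~$X$ with the crossing structure preserved, which requires careful bookkeeping of how the corners, links, and centre of the corner diagram in~$G$ intersect the bag, using the positional control that interlacing provides on the relative positions of~$V(X)$ and the sides.
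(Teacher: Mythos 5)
Your treatment of the $|X|\ge 3$ case follows the paper's proof essentially step for step: lift 1-separations and 2-separations of $X$ back to $G$ via the Lifting Lemma, show interlacing, rule out the four-flip with the $U$-principal star and \autoref{unique_split}, descend the crossing to $X$, and invoke the \nameref{Angry2Sep}. The ``careful bookkeeping'' you flag as the main obstacle at the end is in fact handled in one line in the paper's Claim~\ref{InterlacingInducesCrossed}: once you observe that the crossing partner $(C,D)$ of an interlacing $(A,B)$ must itself interlace $\sigma$ (otherwise $(C,D)$ would be nested with $(A,B)$, since both would then lie below some element of $\sigma$), the separators of both $(A,B)$ and $(C,D)$ are contained in $V(X)$, and the restriction $(A\cap V(X),B\cap V(X))$ is immediately a 2-separation of $X$ that crosses $(C\cap V(X),D\cap V(X))$ like in a cycle. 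There is no delicate corner-diagram bookkeeping to do. You also rule out the four-flip exactly as the paper does.

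Your degenerate case, however, has a genuine directional error. You claim that from $s_K<(A_i,B_i)$ and $s_K<(B_j,A_j)$ it follows that $s_K$ interlaces $\sigma$. It does not: a separation $(E,F)$ interlaces $\sigma$ only if every $(A_l,B_l)\in\sigma$ satisfies $(A_l,B_l)<(E,F)$ or $(A_l,B_l)<(F,E)$ --- the $\sigma$-elements must lie strictly \emph{below} the interlacing separation. What you have shown is the opposite inclusion: $s_K$ lies strictly below elements of $\sigma$. In particular $(A_i,B_i)\not<s_K$ (since $s_K<(A_i,B_i)$) and $(A_i,B_i)\not<s_K^{-1}$ (this would combine with $s_K<(A_i,B_i)$ to force $K$ to avoid $A_i$, contradiction), so $s_K$ does not interlace $\sigma$ and the splitting property is in fact satisfied for $s_K$ by the element $(A_i,B_i)$ itself. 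No contradiction arises. You would instead need to produce a totally-nested separation lying strictly \emph{above} every element of $\sigma$ (or its flip), and this requires a different construction. For what it is worth, the paper does not prove the $V(X)$-principal claim either; its proof of \autoref{2sepThm} disposes of the case $|X|\le 2$ with the single assertion $X=K_2$ and moves on, relying implicitly on the correspondence between splitting stars of~$N$ and nodes of the tree-decomposition~$\Tcal(N)$ established via \autoref{TDCchar}. If you want a self-contained argument for this degenerate case, the cleanest route is probably via \autoref{3compsEnsuresTotallyNested} and \autoref{splittingStarsDist} rather than via the interlacing criterion as you attempted.
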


We remark that the set~$N$ in \autoref{2sepThm} is canonical.

\begin{lem}\label{3compsEnsuresTotallyNested}
Let $G$ be a 2-connected graph and $U\se V(G)$ a set of two vertices such that $G\sm U$ has at least three components.
Then the $U$-principal star $\sigma$ of separations is a splitting star of the set of all totally-nested 2-separations of~$G$.
\end{lem}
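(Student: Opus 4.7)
The plan is to verify three things in sequence: first, every $s_K \in \sigma$ is a totally-nested 2-separation; second, $\sigma$ is a star (immediate, since distinct components of $G \sm U$ have disjoint vertex sets); and third, every totally-nested 2-separation of $G$ is comparable under $\le$ to some $s_K$ or its reverse. The third item is the splitting property by definition.

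For the first step, I fix a component $K$ of $G \sm U$ and apply \autoref{2sepCrossingChar} to rule out both crossing types for $s_K$. A crossing of type (X1) would have a 2-element cut of $G$ separating the two vertices of $A \cap B = U$; but since $G$ is 2-connected, neither vertex of $U$ is a cutvertex, so each of the $\ge 3$ components of $G \sm U$ has a neighbour of both $u_1$ and $u_2$, yielding three internally disjoint $u_1$--$u_2$ paths that no 2-element cut can simultaneously block. Type (X2) fails because $A \sm B = V(K)$ is a single component of $G \sm U$, not a union of two.

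For the splitting step, I fix a totally-nested 2-separation $(C, D)$ of $G$ and distinguish cases by how $U$ sits inside $(C, D)$. If $u_1$ and $u_2$ lie on opposite sides $C \sm D$ and $D \sm C$, the three-paths argument gives an immediate contradiction. If $U = C \cap D$, external 2-connectivity---guaranteed by \autoref{SufficientForTotallyNested}---forces one of $G[C \sm D]$ and $G[D \sm C]$ to be connected, hence a single component of $G \sm U$; this identifies $(C, D)$ with some $s_K$ up to orientation. In the remaining cases one side, say $D$, contains at most one vertex of $U$, and \autoref{connectedSides} ensures that $G[D]$ minus that vertex is still connected. This connected subgraph of $G \sm U$ therefore lies in a single component $K$ of $G \sm U$, giving $D \se V(K) \cup U$; verifying that every other component lies in $C$ (it is disjoint from $D$, hence lies on $C$'s side) yields $(D, C) \le s_K$.

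The main obstacle is the mixed subcase in the third step, where exactly one vertex of $U$ lies in $C \cap D$. The crucial tool is \autoref{connectedSides}: the separator vertex in $U$ is not a cutvertex of the side it bounds, so removing it preserves connectedness, after which the resulting subgraph sits entirely within a single component of $G \sm U$.
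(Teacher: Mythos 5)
Your proof is correct. It takes a genuinely different route from the paper's: the paper argues contrapositively, showing that any 2-separation $(C,D)$ interlacing $\sigma$ must have $C\cap D = U$ (each $V(K)$ is swallowed whole into one open side because $(C,D)$ is strictly above or below every $s_K$), so both $G[C\sm D]$ and $G[D\sm C]$ are unions of at least two components, whence $(C,D)$ is not externally 2-connected and hence not totally nested by \autoref{SufficientForTotallyNested}, and then invokes \autoref{splitVsInterlace}. You instead verify the splitting definition directly by case analysis on where $U$ sits relative to an arbitrary totally-nested 2-separation $(C,D)$: the separated case dies by Menger, the case $U = C\cap D$ forces $(C,D)$ to equal some $s_K$ up to orientation via external 2-connectivity, and otherwise one side is confined to a single component of $G\sm U$ by \autoref{connectedSides}. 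For the subset claim $\sigma\se N$ you also diverge: the paper simply cites \autoref{SufficientForTotallyNested} (which tacitly requires checking that $G[B_K]$ is 2-connected), whereas you rule out both crossing types of \autoref{2sepCrossingChar} explicitly via the three-internally-disjoint-paths argument, which neatly sidesteps that verification. Both arguments are sound; the paper's is shorter because the interlacing criterion collapses the case analysis, while yours is more self-contained.
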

\begin{proof}
Clearly, $\sigma$ is a star.
Its elements are totally nested by \autoref{SufficientForTotallyNested}.
Let $(C,D)$ be any 2-separation of~$G$ that interlaces~$\sigma$.
Then $(C,D)$ defines a bipartition $(\cC,\cD)$ of the set of components of~$G\sm U$, where $\cC$ and $\cD$ consist of the components contained in $G[C]$ and in~$G[D]$, respectively; also $C\cap D\se U$, and hence $C\cap D=U$.
Since $(C,D)$ is not in~$\sigma$, both $\cC$ and $\cD$ contain at least two components.
Hence $(C,D)$ is not totally-nested by \autoref{SufficientForTotallyNested}.
\end{proof}

\begin{lem}\label{TDCvsNested}
Let $G$ be a 2-connected graph.
Let $N$ be a nested set of half-connected 2-separations of~$G$.
Then there is no $(\omega+1)$-chain in~$N$, and for every $\omega$-chain $(A_0,B_0)<(A_1,B_1)<\ldots$ in~$N$ we have $\bigcap_{n\in\N}\,(B_n\sm A_n)=\emptyset$.
\end{lem}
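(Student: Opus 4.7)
The plan is to deduce the $(\omega+1)$-chain statement from the $\omega$-chain statement, and then prove the $\omega$-chain statement by using Menger's theorem to force the separators into a finite set and analysing the resulting combinatorics. For the reduction, suppose $(A_0,B_0) < (A_1,B_1) < \ldots < (A_\omega,B_\omega)$ is an $(\omega+1)$-chain in $N$. Any vertex $v \in B_\omega \sm A_\omega$ satisfies $v \in B_n$ and $v \notin A_n$ for every finite $n$, because $A_n \se A_\omega$ and $B_\omega \se B_n$; so $B_\omega \sm A_\omega \se \bigcap_{n\in\Nbb}(B_n\sm A_n)$, and once we know the right-hand side is empty we contradict the fact that $(A_\omega,B_\omega)$ is a mixed-separation.

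For the $\omega$-chain claim, I will assume for contradiction that some $v$ lies in $\bigcap_n (B_n\sm A_n)$ and fix any $u \in A_0\sm B_0$. Since $G$ is 2-connected, Menger's theorem supplies two internally disjoint $u$--$v$ paths $P_1,P_2$. Each separator $S_n := A_n\cap B_n$ consists of two vertices (2-separators contain no edges), and each $P_i$, which starts in $A_n\sm B_n$ and ends in $B_n\sm A_n$, must pass through $S_n$. Internal disjointness together with $|S_n|=2$ forces each $P_i$ to meet $S_n$ in exactly one vertex, so $S_n \se V(P_1)\cup V(P_2)$, a finite set. Pigeonhole gives an infinite subsequence on which $S_n$ equals a fixed 2-set~$S$; strictness of the original chain is inherited by any subchain, so I obtain an $\omega$-chain with constant separator~$S$.

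Now each side $A_n\sm B_n$ (respectively $B_n\sm A_n$) is a union of components of $G\sm S$. Indexing these components by $J$ and writing $I_n\se J$ for the indices of the components contained in $A_n\sm B_n$, the strict chain translates into $I_0\subsetneq I_1\subsetneq\ldots$. The vertex $v$ lies in a component $C_{i_0}$ with $i_0\notin I_n$ for all~$n$. Half-connectedness of $(A_n,B_n)$ gives $|I_n|=1$ or $|J\sm I_n|=1$: the second alternative forces $J\sm I_n=\{i_0\}$, freezing $I_n=J\sm\{i_0\}$ and violating strict inclusion at the next step, while the first alternative makes $I_n$ a singleton, so a strictly increasing chain of singletons is impossible. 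This contradiction proves the $\omega$-chain claim. The step I expect to be the main obstacle is the Menger-based localisation argument trapping all $S_n$ in the finite set $V(P_1)\cup V(P_2)$; the rest is bookkeeping, modulo the easy observation that strictness of the original chain is transitive and hence passes to subchains.
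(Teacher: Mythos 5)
Your proof is correct, and it takes a genuinely different route from the paper. The paper first uses half-connectedness to argue that no three consecutive terms in a chain share the same separator, hence an infinite chain yields infinitely many pairwise distinct 2-separators separating a fixed pair $x,y$; it then invokes a lemma of Halin stating that two vertices in a graph are separated by only finitely many inclusionwise minimal separators of size at most~$k$. By contrast, you argue in the opposite direction: Menger's theorem traps all the separators inside the finite vertex set of two internally disjoint $u$--$v$ paths, pigeonhole extracts an infinite subchain with a \emph{constant} separator~$S$, and only then does half-connectedness enter to rule out an infinite strictly increasing chain of side-sets over the components of $G\sm S$. What your approach buys is self-containedness: you re-derive exactly the fragment of Halin's lemma needed, rather than citing it as a black box, and the Menger-plus-pigeonhole localisation is elementary and works unchanged for infinite $G$ since the paths involved are finite. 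The trade-off is that your argument leans on the constant-separator case more heavily and needs the careful component bookkeeping in the last paragraph, whereas the paper's proof delegates all that to Halin. Both proofs use half-connectedness, just at different stages (the paper to guarantee distinct separators, you to kill the constant-separator subchain), and both are sound.

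One small expository suggestion: the two alternatives in your final case split vary with~$n$, so it is worth saying explicitly that if the second alternative $|J\sm I_n|=1$ ever holds for some $n$, then $J\sm I_n=\{i_0\}$ forces $I_{n+1}\subseteq J\sm\{i_0\}=I_n$, contradicting $I_n\subsetneq I_{n+1}$; and otherwise the first alternative holds for every~$n$, which contradicts $|I_0|<|I_1|$. As written this is implicit, but spelling it out would remove the only place a reader might pause.
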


\begin{proof}
Suppose for a contradiction that $(A_0,B_0)<(A_1,B_1)<\ldots<(A_\omega,B_\omega)$ is an $(\omega+1)$-chain in~$N$.
Since the elements of $N$ are half-connected, the separations in any 3-chain in~$N$ do not all have the same separators.
Therefore, we may assume without loss of generality that the $(A_i,B_i)$ have pairwise distinct separators.
Let $x\in A_0\sm B_0$ and $y\in B_\omega\sm A_\omega$.
Then $x$ and $y$ are separated by infinitely many pairwise distinct separators of size two.
Since $G$ is 2-connected, these separators are inclusionwise minimal $x$--$y$ separators in~$G$.
This contradicts a lemma of Halin~\cite[2.4]{HalinMinimal}, which states that any two vertices $u,v$ in a graph are separated by only finitely many inclusionwise minimal $u$--$v$ separators of size at most an arbitrarily prescribed~$k\in\N$.
The same argument also shows that $\bigcap_{n\in\N}\,(B_n\sm A_n)=\emptyset$ for every $\omega$-chain $(A_0,B_0)<(A_1,B_1)<\ldots$ in~$N$.
\end{proof}

\begin{cor}\label{crossing2sepsAreSurrounded}
Let $G$ be a connected graph, and let $N$ be a nested set of half-connected 2-separations of~$G$.
Then every separation $(A,B)$ of~$G$ with $(A,B)\notin N$ that is nested with every separation in~$N$ interlaces a unique splitting star of~$N$.
\end{cor}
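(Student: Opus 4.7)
The plan is: uniqueness is immediate from \autoref{unique_split}, leaving existence as the core task. I would construct the splitting star explicitly by setting
\[
  M := \{(C,D)\in N : (C,D) < (A,B)\text{ or }(C,D) < (B,A)\}
\]
and taking $\sigma$ to be the set of $\le$-maximal elements of~$M$.

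The main obstacle is establishing that every element of $M$ lies below some maximal one, i.e.\ that $M$ admits no infinite properly ascending chain. I expect to handle this in two sub-steps. First, an ascending chain in $M$ cannot switch between the two types in its definition: if $(C_0,D_0) < (A,B)$ while $(C_1,D_1) < (B,A)$ with $(C_0,D_0) < (C_1,D_1)$, then combining the inclusions forces $D_0 \supseteq A\cup B = V(G)$, contradicting $C_0 \sm D_0 \ne \emptyset$. Second, for an ascending chain entirely below $(A,B)$ (say), \autoref{TDCvsNested} gives $\bigcap_n(D_n \sm C_n) = \emptyset$, which is absurd because this intersection contains the nonempty set $B\sm A$ (since $D_n \supseteq B$ and $C_n \se A$ for every~$n$).

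Once the chain condition is in place, the remaining verifications are short case checks. For the star property on two distinct $(C,D), (E,F)\in\sigma$, nestedness offers four inequalities, maximality rules out the two of the form $(C,D)\le(E,F)$ and $(E,F)\le(C,D)$, and a case split on which of $(A,B),(B,A)$ each element sits below rules out $(F,E)\le(C,D)$: each case collapses to $A\se B$ or to $(E,F) = (B,A)$, both impossible (the latter because $N$ is symmetric under flipping, automatic for totally-nested separations, so $(A,B)\notin N$ gives $(B,A)\notin N$). Interlacing is immediate from the definition of~$M$.

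For the splitting property, given $(E,F)\in N$: if $(E,F)\in M$ the chain-condition step directly supplies some $(C,D)\in\sigma$ above it; otherwise, nestedness together with $(A,B),(B,A)\notin N$ forces $(A,B)<(E,F)$ or $(B,A)<(E,F)$, and flipping yields $(F,E)<(B,A)$ or $(F,E)<(A,B)$ respectively, so by symmetry $(F,E)\in M$ and the chain-condition step supplies the required $(C,D)\in\sigma$ above $(F,E)$, witnessing the splitting condition for $(E,F)$ through its flip.
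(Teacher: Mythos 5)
Your proof follows essentially the same route as the paper's: the same set $M$, the same maximal-element star $\sigma$, reliance on \autoref{TDCvsNested} to kill infinite ascending chains, and \autoref{unique_split} for uniqueness, with the star/splitting verifications filled in correctly. You also usefully flag the implicit hypothesis that $N$ be closed under flips (so that $(B,A)\notin N$), which the paper leaves tacit; without it the splitting property can genuinely fail, but it does hold in the only application (where $N$ is a set of totally-nested separations).
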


\begin{proof}
The maximal elements of 
\[
    \{\,(C,D)\in N\colon (C,D)<(A,B)\text{ or }(C,D)<(B,A)\,\}
\]
form a star~$\sigma\se N$ that is interlaced by~$(A,B)$.
To show that $\sigma$ is a splitting star of~$N$, let $(X,Y)\in N$ be arbitrary.
Without loss of generality we have $(X,Y)<(A,B)$.
The set of all $(C,D)\in N$ with $(X,Y)\le (C,D)<(A,B)$ has a maximal element by \autoref{TDCvsNested}.
This element is also contained in~$\sigma$, so $\sigma$ is indeed a splitting star of~$N$.
By \autoref{unique_split}, $(A,B)$ interlaces no other splitting star of~$N$.
\end{proof}

\begin{proof}[Proof of \autoref{2sepThm}]
Let $\sigma$ be a splitting star of~$N$ with torso~$X$.
If $X$ has at most two vertices, then $X=K_2$, so we may assume that $X$ has at least three vertices.

We claim that $X$ is 2-connected, and assume for a contradiction that it is not.
Then $X$ has a separation $(A,B)$ of order at most one.
By the \nameref{separationLifting} (\ref{separationLifting}), $(A,B)$ lifts to a separation $(\hat A,\hat B)$ of~$G$ of order at most one, contradicting that $G$ is 2-connected.
So $X$ is 2-connected.

If $X$ has precisely three vertices, then $X=K_3$ as $X$ is 2-connected, so we may assume that $X$ has at least four vertices.

(i). Suppose that $\sigma$ is not interlaced by a 2-separation of~$G$.
If $X$ is not 3-connected, then $X$ has a 2-separation~$(A,B)$ (since $X$ has at least four vertices).
By the \nameref{separationLifting} (\ref{separationLifting}), $(A,B)$ lifts to a 2-separation $(\hat A,\hat B)$ of~$G$, which interlaces~$\sigma$, a contradiction.

(ii). Suppose that $\sigma$ is interlaced by a 2-separation of $G$.

\begin{sublem}\label{InterlacingInducesCrossed}
Every 2-separation $(A,B)$ of $G$ that interlaces~$\sigma$ induces a 2-separation $(A\cap V(X),B\cap V(X))$ of~$X$ that is crossed by a 2-separation of~$X$.
\end{sublem}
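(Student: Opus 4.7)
The plan is to first verify that $(A',B') := (A\cap V(X),\, B\cap V(X))$ is a genuine 2-separation of the torso $X$, and then exhibit a 2-separation of $X$ that crosses it. The crossing will be produced by restricting a 2-separation of $G$ that crosses $(A,B)$; such a crossing exists in $G$ because interlacing forces $(A,B)$ outside the totally-nested set $N$.

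For the first part, I will begin with the key containment $A\cap B\subseteq V(X)$: a vertex $v\in A\cap B$ not in the bag would lie in some leaf-side $E\setminus F$ of $\sigma$, and interlacing would then place $v$ inside $A\setminus B$ or $B\setminus A$, a contradiction. This immediately yields $|A'\cap B'| = 2$. Verifying that no edge of $X$ crosses the putative separator is then routine: genuine bag-edges respect $(A,B)$, and each torso edge joins two vertices of a common $\sigma$-separator, which by the same argument lies entirely in $A$ or entirely in $B$. The subtlety is the non-emptiness of $A'\setminus B'$ and $B'\setminus A'$. Assuming, for contradiction, that $B'\setminus A' = \emptyset$, i.e.\ $V(X)\subseteq A$, every leaf $(E,F)\in\sigma$ with $(E,F)<(B,A)$ must have separator equal to $U := A\cap B$; if there is only one such leaf one deduces $(B,A)\in\sigma$, violating strict interlacing, while if there are at least two, then $U$ is a 2-separator with at least three components of $G\setminus U$, producing via \autoref{3compsEnsuresTotallyNested} a $U$-principal splitting star $\tau$ of $N$ that $(A,B)$ also interlaces; \autoref{unique_split} then forces $\sigma = \tau$, whence $|V(X)| = 2$, contradicting the standing assumption $|X|\ge 3$.

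Next I will show that $(A,B)\notin N$: otherwise, since $\sigma$ is splitting, some $(E,F)\in\sigma$ satisfies $(A,B)\le (E,F)$ or $(B,A)\le (E,F)$, and combining with strict interlacing $(E,F)<(A,B)$ or $(E,F)<(B,A)$ forces either $(E,F) = (A,B)$, contradicting strictness, or $A\subseteq B$ or $B\subseteq A$, contradicting that $(A,B)$ is a separation. Since $(A,B)\notin N$, some 2-separation $(C,D)$ of $G$ crosses $(A,B)$. By total-nestedness, every $(E,F)\in\sigma$ is nested with $(C,D)$; the cases $(C,D)\le(E,F)$ or $(D,C)\le(E,F)$ would combine with interlacing to render $(C,D)$ comparable with $(A,B)$, contradicting the crossing; thus $(C,D)$ interlaces $\sigma$, and the first-part argument applied to $(C,D)$ gives that $(C',D')$ is a 2-separation of $X$. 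The crossing of $(A,B)$ and $(C,D)$ in $G$ cannot be of four-flip type (outcome X2 of \autoref{2sepCrossingChar}), since that would again produce a $U$-principal splitting star equal to $\sigma$ and hence $|V(X)| = 2$; so the crossing is like in a cycle (outcome X1), and since both $A\cap B$ and $C\cap D$ lie in $V(X)$, the separation of these separator-pairs by the opposite 2-separation persists inside $X$. By \autoref{2sepCrossingChar} once more, $(A',B')$ and $(C',D')$ cross in $X$.

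The main technical hurdle is the repeated use of the $U$-principal star argument (via \autoref{3compsEnsuresTotallyNested} and \autoref{unique_split}) to exclude the degenerate configurations, both in the non-emptiness verification and in ruling out the four-flip crossing. Once this is in hand, the rest is an essentially mechanical translation between $G$ and $X$ via the bag-restriction.
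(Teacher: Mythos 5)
Your argument follows the paper's route: $(A,B)\notin N$ since it interlaces $\sigma$; hence some 2-separation $(C,D)$ crosses it; $(C,D)$ interlaces $\sigma$ too, so both separators lie in $V(X)$; the four-flip case is excluded via a $U$-principal splitting star and \autoref{unique_split}; and a cycle-type crossing restricts to a crossing in $X$. That is exactly the paper's chain of ideas.

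Where you diverge is in the up-front verification that $(A',B')$ is a genuine 2-separation of $X$, and that part is both redundant and slightly inaccurate. Redundant: once the cycle-type crossing of $(A,B)$ and $(C,D)$ in $G$ is known, $(A,B)$ separates the two vertices of $C\cap D$, and $C\cap D\subseteq V(X)$, so $A'\setminus B'$ and $B'\setminus A'$ are automatically nonempty; the paper skips the verification for this reason. Inaccurate: in the case where at least two leaves of $\sigma$ lie below $(B,A)$ with separator $U:=A\cap B$, you assert that $(A,B)$ interlaces the $U$-principal star $\tau$. This can fail: if $A\setminus B$ is a single component $K_0$ of $G\setminus U$, then $s_{K_0}=(A,B)\in\tau$ and the strict inequalities required by interlacing do not hold. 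The desired contradiction still emerges (in that situation $(A,B)\in\tau\subseteq N$, contradicting via \autoref{splitVsInterlace} that $(A,B)$ interlaces the splitting star $\sigma$), but the step as written is not justified and the appeal to \autoref{unique_split} does not go through without this extra case split.
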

\begin{cproof}
Since $(A,B)$ interlaces~$\sigma$, it is not in~$N$.
Hence $(A,B)$ is crossed by a 2-separation~$(C,D)$ of~$G$.
We note that $(C,D)$ interlaces $\sigma$ as well, since otherwise $(C,D)$ would be nested with~$(A,B)$.
So the separators $A\cap B$ and $C\cap D$ are included in~$X$.
If $(A,B)$ and $(C,D)$ cross like in a cycle, then $(A\cap V(X),B\cap V(X))$ and $(C\cap V(X),D\cap V(X))$ are two crossing 2-separations of~$X$ as desired.
So it remains to show that $(A,B)$ and $(C,D)$ cannot cross with a four-flip.
Indeed, otherwise $A\cap B=C\cap D$, and $G\sm (A\cap B)$ has at least four components which define a splitting star of~$N$ as in \autoref{3compsEnsuresTotallyNested}.
As this splitting star is interlaced by~$(A,B)$, it must be equal to~$\sigma$ by \autoref{unique_split}.
But then $V(X)=A\cap B$ contradicts our assumption that $X$ has at least four vertices.
\end{cproof}\medskip

We recall that $X$ is 2-connected. 
By \autoref{InterlacingInducesCrossed} and our assumption, $X$ has a 2-separation.
Every 2-separation $(A,B)$ of~$X$ lifts to a 2-separation of~$G$ by the \nameref{separationLifting} (\ref{separationLifting}), which interlaces~$\sigma$ and through \autoref{InterlacingInducesCrossed} yields a 2-separation of~$X$ that crosses~$(A,B)$.
Hence $X$ is a cycle of length $\ge 4$ by the \nameref{Angry2Sep} (\ref{Angry2Sep}).
\end{proof}

\section[Proof of the 2-Separation Theorem]{Proof of the \nameref{2SeparatorTheorem} (\ref{2SeparatorTheorem})}\label{sec:20}

The \emph{bag} of a star $\sigma=\{\,(A_i,B_i)\colon i\in I\,\}$ of separations of a graph~$G$ is the graph obtained from $G$ by deleting $A_i\sm B_i$ for all~$i\in I$.
For example, if $(T,\Vcal)$ is a tree-decomposition of~$G$ and $t$ is a node of~$T$, then the separations induced by the edges of~$T$ incident with~$t$ and directed to~$t$ form a star~$\sigma_t$ of separations.
The bag of~$\sigma_t$ is equal to the bag~$G[V_t]$ associated with~$t$, where~$V_t\in\Vcal$.
The \emph{torso} of a star $\sigma$ of separations of~$G$ is obtained from the bag of~$\sigma$ by making $A\cap B$ complete for every $(A,B)\in \sigma$.
The torsos of the stars~$\sigma_t$ coincide with the torsos of the bags of~$(T,\Vcal)$.

Let $N$ be a nested set of separations of~$G$.
We define a candidate $\Tcal(N)=(T,\Vcal)$ for a tree-decomposition of~$G$, as follows.
The vertices of~$T$ are the splitting stars of~$N$.
We make two nodes $t_1\neq t_2$ of~$T$ adjacent if $(A,B)\in t_1$ and $(B,A)\in t_2$ for some separation $(A,B)$ of~$G$.
For each splitting star $t\in T$ we let $V_t$ be the vertex set of the bag of the star~$t$, and put $\Vcal=(V_t)_{t\in T}$.

\begin{lem}\label{TDCchar}
Let $G$ be a connected graph and $N$ a symmetric nested set of separations of~$G$.
Then the following two assertions are equivalent:
\begin{enumerate}
    \item $\Tcal(N)$ is a tree-decomposition of~$G$ whose set of induced separations is equal to~$N$;
    \item there is no $(\omega+1)$-chain in~$N$, and for every $\omega$-chain $(A_0,B_0)<(A_1,B_1)<\ldots$ in~$N$ we have $\bigcap_{n\in\N}\,(B_n\sm A_n)=\emptyset$.
\end{enumerate}
\end{lem}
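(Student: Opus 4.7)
The plan is to verify the two implications separately, exploiting the correspondence between strict $\le$-chains in $N$ and monotone sequences of directed edges in $T = \Tcal(N)$. Under this correspondence, an induced separation $(A,B)$ arises from a directed edge $e = (s,t)$ of $T$ with $A$ and $B$ the vertex-unions of the two components of $T-e$; moreover $(A,B) < (A',B')$ holds precisely when the edge inducing $(A,B)$ lies strictly in the $s'$-subtree of $T - e'$, so a strict chain traces out a ray in $T$.

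For (1)$\to$(2), suppose $\Tcal(N) = (T,\Vcal)$ is a tree-decomposition whose induced separations form $N$. An $(\omega+1)$-chain would require an edge $e_\omega$ of $T$ lying beyond an entire ray of edges $(e_n)_{n<\omega}$ advancing in one direction, which is impossible in a tree. For an $\omega$-chain corresponding to a ray $e_n = (t_n,t_{n+1})$, a putative vertex $v \in \bigcap_n (B_n \sm A_n)$ would, by (T2), have its bag-subtree $T(v) := \{\,t\in T : v \in V_t\,\}$ both meet the $t_{n+1}$-component of $T - e_n$ for every $n$ and miss $V_{t_n}$; since these components are nested with empty intersection along the ray, $T(v)$ would have to be empty, contradicting (T1).

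For (2)$\to$(1), assume (2). The main task is to verify that $T$ is a tree and that the candidate $\Tcal(N)$ satisfies (T1) and (T2). Acyclicity of $T$ is immediate: a cycle would force two non-adjacent separations in it to cross, contradicting nestedness of $N$. For connectedness, given two splitting stars $\sigma_1,\sigma_2$, I iterate \autoref{splittingStarsDist} to produce a sequence $\sigma_1 = \tau_0, \tau_1, \tau_2, \ldots$ where consecutive stars share an edge of $T$, each $\tau_i \ne \sigma_2$ giving rise to a strictly larger separation in a chain; the no-$(\omega+1)$-chain hypothesis forces termination at $\sigma_2$. For (T1), given $v \in V(G)$, one seeks a splitting star $\sigma$ with $v$ in every non-leaf side $B$; the natural candidate is the set of $\le$-maximal $(A,B)\in N$ with $v \in B \sm A$ (plus reverses for $v \in A\sm B$), and maximality is guaranteed by no-$(\omega+1)$-chain while the $\omega$-chain intersection condition prevents $v$ from escaping to infinity before being captured. (T2) is then automatic: when traversing an edge of $T$ corresponding to $(A,B)$, a vertex $v$ can only leave a bag by passing from $A\cap B$ into $A\sm B$ (or $B\sm A$), after which nestedness keeps it on one side of all subsequent separations.

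The main obstacle is exactly (T1) in the presence of infinite chains: showing that every vertex is housed in \emph{some} bag requires both parts of condition (2), as the two forbidden configurations correspond precisely to the two ways the construction can fail, namely a vertex appearing in a strictly increasing ray of separations with no bounding separation, or lying in the intersection $\bigcap_n (B_n \sm A_n)$ of the tail of an $\omega$-chain. The converse direction of the argument (that these are the \emph{only} obstructions) is what makes the equivalence tight.
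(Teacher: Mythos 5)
The paper's own proof is terse: (1)$\to$(2) is declared straightforward, and (2)$\to$(1) is deferred to Lemma~2.7 of a cited reference, so your task amounts to reconstructing an argument the paper does not spell out. Your (1)$\to$(2) argument is fine. Your (2)$\to$(1) sketch, however, has a genuine gap at the acyclicity step. A cycle in $\Tcal(N)$ is a cycle of \emph{splitting stars}; the separations labelling its edges all lie in $N$ and are therefore pairwise nested, so there is no crossing to be manufactured. The correct argument uses the star inequalities instead: if $\sigma_0,\dots,\sigma_{k-1},\sigma_0$ were a cycle with $(A_i,B_i)\in\sigma_i$ and $(B_i,A_i)\in\sigma_{i+1}$, then the star property in each $\sigma_{i+1}$ gives $(A_{i+1},B_{i+1})\le(A_i,B_i)$; traversing the cycle forces all the $(A_i,B_i)$ to coincide, so every $\sigma_i$ contains both orientations of one separation $(A,B)$, and a star containing both $(A,B)$ and $(B,A)$ contains no third element (such a $(C,D)$ would satisfy $C\subseteq A\cap B$ and $D\supseteq A\cup B$, forcing $C\sm D=\emptyset$), whence all the $\sigma_i$ coincide --- a contradiction. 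No crossing appears anywhere in this argument.

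The remaining steps --- connectedness of $T$ by iterating \autoref{splittingStarsDist}, (T1) via the $\le$-maximal elements of $N$ with a given vertex in the non-leaf side, and (T2) --- are stated as a plan rather than verified. For (T1) in particular you must check that the proposed maximal collection is actually a \emph{star} (this uses nestedness together with the observation that $v\in B_1\sm A_1$ and $v\in B_2\sm A_2$ rule out the alignment $(B_1,A_1)\le(A_2,B_2)$) and that it is \emph{splitting} for~$N$ (including separations whose separator contains~$v$); your sketch does not supply these checks, which carry most of the weight of the implication and are exactly what the cited lemma establishes. As written, the sketch is a reasonable roadmap but not yet a proof.
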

\begin{proof}
(1)$\to$(2). Every $\omega$-chain in $N$ determines a ray in the decomposition tree $T$ of~$\Tcal(N)$. An $(\omega+1)$-chain in $N$ would determine a ray in $T$ followed by an edge, which is impossible. An $\omega$-chain in $N$ such as $(A_0,B_0)<(A_1,B_1)<\ldots$ determines a ray $R\se T$, and a vertex $v\in\bigcap_{n\in\N}(B_n\sm A_n)$ would have to live in the direction of~$R$ but in no bag in particular, contradicting~(T1).

The proof of \cite[Lemma~2.7]{InfiniteSplinters} shows (2)$\to$(1), even though the statement of \cite[Lemma~2.7]{InfiniteSplinters} says otherwise.
\end{proof}

\begin{lem}\label{2conAroundCycleTorso}
    Let $G$ be a 2-connected graph, and let $N$ denote the set of totally-nested 2-separations of~$G$.
    Let $\sigma$ be a splitting star of~$N$ such that the torso of~$\sigma$ is a cycle~$O$.
    Then, for every $(A,B)\in\sigma$, the side $G[A]$ is 2-connected.
\end{lem}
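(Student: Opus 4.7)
Suppose for contradiction that $(A,B)\in\sigma$ with adhesion $\{x,y\}$ and $G[A]$ is not 2-connected. By Fact A.2.1, $G[A]$ is connected and neither $x$ nor $y$ is a cutvertex of $G[A]$, so there is a cutvertex $v\in A\setminus B$. By 2-connectivity of $G$, $G[A]-v$ has exactly two components $C_x\ni x$ and $C_y\ni y$: any additional component would lie entirely in $A\setminus B$ and produce a 1-separator of $G$, contradicting that $G$ is 2-connected. Since $(A,B)$ is totally nested and $G[A]$ is not 2-connected, Corollary A.2.3 forces $G[B]$ to be 2-connected.

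The heart of the argument is to produce a totally-nested 2-separation $(C,D)$ of $G$ strictly below $(A,B)$. Set $C=V(C_x)\cup\{v\}$ and $D=(V(G)\setminus V(C_x))\cup\{x\}$; the separator is $\{v,x\}$. This is a bona fide mixed 2-separation because $V(C_x)\setminus\{x\}\subseteq A\setminus B$ has no neighbours in $B\setminus A$. Total nestedness is then verified via Corollary A.2.3: criterion (b) is immediate because $G[D\setminus C]=G[V(C_y)\cup(B\setminus\{x\})]$ is the union of two connected subgraphs sharing the vertex $y$ (using Fact A.2.1 applied to the 2-connected $G[B]$); criterion (a) is the non-trivial point, where I plan to show that $G[D]$ is 2-connected by using that $G[B]$ is 2-connected and checking that attaching $V(C_y)\cup\{v\}$ to $G[B]$ via $y$ and via $v$'s neighbours in $C_y$ does not introduce any cutvertex.

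The contradiction is then extracted from the cycle-torso structure. Let $x^{*}$ denote the neighbour of $x$ in $O$ distinct from $y$. Using the Lifting Lemma, one lifts the 2-separation of $O$ with separator $\{x,x^{*}\}$ to a 2-separation $(E,F)$ of $G$ almost interlacing $\sigma$. A standard corner-diagram analysis in the spirit of Lemma A.2.2 shows that $(E,F)$ crosses $(C,D)$—both separators contain $x$ while $v$ and $x^{*}$ sit in opposite corners—and submodularity then delivers a totally-nested 2-separation in $N$ that properly interlaces $\sigma$. Because splitting stars of $N$ admit no interlacing element of $N$ (Lemma 4.2.2), this contradicts $\sigma$ being a splitting star.

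The main obstacle I anticipate is the 2-connectedness verification for $G[D]$, which demands a careful local cutvertex analysis using the 2-connectedness of $G[B]$ and the specific role of $v$ as a cutvertex of $G[A]$. A minor subcase is when $xy\in E(G)$: there, an elementary block argument forces $G[A]$ to be 2-connected directly (a cutvertex $v$ of $G[A]$ would separate a component of $G[A]-v$ entirely in $A\setminus B$ from $\{x,y\}$, contradicting 2-connectivity of $G$), so the refined construction of $(C,D)$ is only required in the subcase $xy\notin E(G)$.
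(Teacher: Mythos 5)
Your strategy differs from the paper's, and as sketched it has several gaps that I do not see how to close. The decisive ones are in the ``contradiction'' paragraph. You define $x^{*}$ as the $O$-neighbour of $x$ other than $y$, so $\{x,x^{*}\}$ is an \emph{edge} of the cycle $O$; then $O-x-x^{*}$ is a path and hence connected, so there is no 2-separation of $O$ with separator $\{x,x^{*}\}$ to feed into the \nameref{separationLifting}. Moreover, even if such an $(E,F)$ of $G$ existed, it could not cross your $(C,D)$: the separators $\{v,x\}$ and $\{x,x^{*}\}$ meet in exactly one vertex, and the case $|X\cap Y|=1$ in the proof of \autoref{2sepCrossingChar} shows that two 2-separations of a 2-connected graph whose separators share exactly one vertex are always \emph{nested}. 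So the crossing from which you hope to manufacture an element of $N$ interlacing $\sigma$ does not exist, and the subsequent ``submodularity delivers\dots'' step has no starting point.

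Independently, the step you flag as the main obstacle in fact fails: $(C,D)$ is usually not totally nested. If the cutvertex $v$ is not adjacent to $x$ in $G$ (which can happen), then every vertex of $(V(C_y)\cup\{v\})\setminus\{y\}$ lies in $A\setminus B$, hence has no neighbour in $B\setminus A$, and there is also no edge from $V(C_y)\setminus\{y\}$ to $x$ (else $C_x,C_y$ were not distinct components of $G[A]-v$) nor from $v$ to $x$ by assumption. So every edge of $G[D]$ leaving $(V(C_y)\cup\{v\})\setminus\{y\}$ lands on $y$, making $y$ a cutvertex of $G[D]$; since $G[C]$ need not be 2-connected either, $(C,D)$ fails external 2-connectedness and is not in $N$ by \autoref{SufficientForTotallyNested}. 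There is also the degeneracy $C_x=\{x\}$, where $C\setminus D=\emptyset$ and $(C,D)$ is not a separation at all; your special case $xy\in E(G)$ does not cover it. The paper's proof avoids all of this and is much shorter: it takes the cutvertex $u$ of $G[A]$ (which lies in $A\setminus B$ by \autoref{connectedSides}) together with any vertex $w$ of $O$ outside $A$ (hence in $B\setminus A$), and shows that $\{u,w\}$ is a 2-separator of $G$ all of whose 2-separations cross $(A,B)$ like in a cycle, contradicting $(A,B)\in N$ directly --- no auxiliary totally-nested separation and no second lift or crossing are needed.
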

\begin{proof}
    If $G[A]$ is not 2-connected, then $G[A]$ has a cutvertex~$u$.
    Since $G$ is 2-connected, $u$ must be contained in $A\sm B$.
    Let $v$ be any vertex on~$O$ that is not in~$A$.
    Then $\{u,v\}$ is a 2-separator of~$G$, and every 2-separation of~$G$ with separator $\{u,v\}$ crosses $(A,B)$ like in a cycle, contradicting $(A,B)\in N$.
\end{proof}

\begin{lem}\label{2conAround3conTorso}
    Let $G$ be a 2-connected graph, and let $N$ denote the set of totally-nested 2-separations of~$G$.
    Let $\sigma$ be a splitting star of~$N$ such that the torso of~$\sigma$ is 3-connected.
    Then, for every $(A,B)\in\sigma$, the side $G[B]$ is 2-connected.
\end{lem}
\begin{proof}
If $G[B]$ has a cutvertex $v$, then $v$ separates the two vertices in $A\cap B$ as $G$ is 2-connected.
Hence $v$ together with the edge in the torso joining the two vertices in $A\cap B$ forms a mixed 2-separator of the torso, contradicting that the torso is 3-connected.
\end{proof}

\begin{fact}\label{DegreeThreeTotallyNested}
    Let $G$ be a 2-connected graph with a vertex $v$ of degree two such that $v$ lies in a 2-separator of~$G$.
    Then, for every totally-nested 2-separation $(A,B)$ of~$G$, we have $v\notin S(A,B)$.
\end{fact}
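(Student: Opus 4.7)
Let $x_1,x_2$ denote the two neighbours of $v$. First I show that any 2-separator of $G$ containing $v$ has the form $\{v,w\}$ with $w\notin\{x_1,x_2\}$: for $\{v,x_1\}$ to be a 2-separator, 2-connectivity of $G$ would force every component of $G-\{v,x_1\}$ to have $v$ as a neighbour, but $v$'s only neighbour outside $\{v,x_1\}$ is $x_2$, which lies in a single component — a contradiction. The same reasoning, applied to the given 2-separator $\{v,w\}$, shows that $G-\{v,w\}$ has exactly two components $K_1\ni x_1$ and $K_2\ni x_2$. This yields a 2-separation $(E,F):=(V(K_1)\cup\{v,w\},\,V(K_2)\cup\{v,w\})$ with separator $\{v,w\}$. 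Since $|V(G)|\geq 4$, the pair $(C,D):=(\{v,x_1,x_2\},\,V(G)\sm\{v\})$ is also a 2-separation, with separator $\{x_1,x_2\}$.

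\textbf{Part 1: $v\notin S(A,B)$.} Suppose for contradiction that $v\in A\cap B=\{v,u\}$. Since the 2-connected graph $G$ has no cutvertex, $v$ must have a neighbour in each of $A\sm B$ and $B\sm A$: otherwise, say $v$ has no neighbour in $B\sm A$, then any $v$-to-$(B\sm A)$ path in $G$ must pass through $A\cap B\sm\{v\}=\{u\}$, making $u$ a cutvertex separating $v$ from the nonempty side $B\sm A$. As $v$ has only the two neighbours $x_1,x_2$, after relabelling $x_1\in A\sm B$ and $x_2\in B\sm A$, and in particular $u\notin\{x_1,x_2\}$. The separators $\{v,u\}$ and $\{x_1,x_2\}$ of $(A,B)$ and $(C,D)$ are thus disjoint, $(A,B)$ separates $\{x_1,x_2\}=C\cap D$, and $(C,D)$ separates $\{v,u\}=A\cap B$ (since $v\in C\sm D$ and $u\in D\sm C$). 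By \autoref{2sepCrossingChar} the two 2-separations cross — contradicting total nestedness of $(A,B)$.

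\textbf{Part 2: $S(A,B)$ contains at most one neighbour of $v$.} Suppose for contradiction that $S(A,B)=\{x_1,x_2\}$. By Part~1 we have $v\notin S(A,B)$, so we may assume $v\in A\sm B$. The crux of the argument, and the main obstacle, is to locate $w$ on the opposite side, i.e.\ to show $w\in B\sm A$. Since $w\neq x_1,x_2$, we have $w\in A\sm B$ or $w\in B\sm A$. If $w\in A\sm B$, then $G[B]$ would be a subgraph of $G-\{v,w\}$ that contains both $x_1$ and $x_2$; but $x_1\in K_1$ and $x_2\in K_2$ lie in distinct components of $G-\{v,w\}$, so no $x_1$--$x_2$ path exists in $G[B]$, contradicting the fact that $G[B]$ is connected (\autoref{connectedSides}). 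Hence $w\in B\sm A$, so $(A,B)$ separates $\{v,w\}=E\cap F$ while $(E,F)$ separates $\{x_1,x_2\}=A\cap B$, with disjoint separators. By \autoref{2sepCrossingChar}, $(A,B)$ and $(E,F)$ cross — once more contradicting total nestedness.
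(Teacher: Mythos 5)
Your proof is correct and follows essentially the same approach as the paper: for Part~1 you cross $(A,B)$ against the 2-separation $(N(v)+v,\,V(G)-v)$, and for Part~2 against a 2-separation whose separator is the given 2-separator $\{v,w\}$ containing $v$ — exactly the two separations invoked, without detail, in the paper's two-line proof. The preparatory work showing $w\notin\{x_1,x_2\}$ and that $G-\{v,w\}$ has precisely two components, each containing one $x_i$, simply makes explicit what the paper leaves to the reader.
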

\begin{proof}
    If $v\in S(A, B)$, then $(N(v)+v,V(G)-v)$ is a 2-separation of~$G$ that crosses~$(A,B)$.
\end{proof}

\begin{proof}[Proof of \autoref{2SeparatorTheorem}]
Let $G$ be a 2-connected graph, and let $N$ denote the set of totally-nested 2-separations of~$G$.
By \autoref{TDCvsNested} and \autoref{TDCchar}, $N$~induces a tree-decomposition $\Tcal(N)=:(T,\Vcal)$ of~$G$.
Since $G$ is 2-connected and $(T,\Vcal)$ has adhesion two, it follows with Menger's theorem that all torsos of $(T,\Vcal)$ are minors of~$G$.
By \autoref{2sepThm}, the torsos of $(T,\Vcal)$ (which coincide with the torsos of~$N$) are 3-connected, cycles or~$K_2$'s.\medskip

\textbf{(1).} Let $(A,B)$ and $(C,D)$ be two mixed 2-separations of~$G$ that cross so that all four links have size one (and the centre is empty).
Let $F$ denote the set of all edges in $S(A,B)$ or $S(C,D)$.
Let $G'$ be the graph obtained from $G$ by subdividing all the edges in~$F$.
For each edge $e\in F$, we denote the subdividing vertex by~$v_e$.

If~$(U,W)$ is a mixed 2-separation of~$G$, then every edge $e\in F$ has an end that is not in~$S(U,W)$.
We obtain $U'$ from $U$ by adding all vertices $v_e$ for which $e$ has an end in~$U\sm W$.
Similarly, we obtain $W'$ from $W$ by adding all vertices $v_e$ for which $e$ has an end in~$W\sm U$.
Then $(U',W')$ is a 2-separation of~$G$: the separator $S(U',W')$ is obtained from $S(U,W)$ by replacing every edge $e$ in it that is in~$F$ with~$v_e$.

Let $N'$ denote the set of all totally-nested 2-separations of~$G'$.
As $(A',B')$ and $(C',D')$ cross like in a cycle, they are not members of~$N'$.
By \autoref{crossing2sepsAreSurrounded}, $(A',B')$ interlaces a unique splitting star~$\sigma'$ of~$N'$, and $(C',D')$ interlaces~$\sigma'$ as well (since otherwise $(C',D')$ would be nested with $(A',B')$).
By the \nameref{2sepThm} (\ref{2sepThm}), the torso of~$\sigma'$ is a cycle; let us denote this cycle by~$O'$.
Since $(A',B')$ and $(C',D')$ cross like in a cycle, $O'$ alternates between the separators $S(A',B')$ and $S(C',D')$.

\begin{sublem}\label{varphiClaim}
    The map $\varphi\colon (U,W)\mapsto (U',W')$ is a bijection between $N$ and~$N'$.
\end{sublem}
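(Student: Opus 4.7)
The plan is to combine the characterisation of total-nestedness via external 2-connectivity (\autoref{SufficientForTotallyNested}) with the structural fact (\autoref{DegreeThreeTotallyNested}) that a degree-two vertex of $G'$ lying in some 2-separator cannot appear in the separator of a totally-nested 2-separation. The former ingredient will show $\varphi$ preserves and reflects total-nestedness, and the latter will force every element of $N'$ to lie in the image of $\varphi$.

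First I would unpack the definition of $\varphi$. Since a 2-separation $(U,W)$ of $G$ has a vertex separator of size two, no edge lies in $S(U,W)$, so each subdivision vertex $v_e$ ends up in exactly one of $U'$ and $W'$; hence $S(U',W') = S(U,W) \subseteq V(G)$, and the inverse assignment $(X',Y') \mapsto (X' \cap V(G),\ Y' \cap V(G))$ shows that $\varphi$ is a bijection between the 2-separations of $G$ and precisely those 2-separations of $G'$ whose separator lies in $V(G)$. Next I would verify that this bijection preserves and reflects external 2-connectivity. Since subdividing an edge preserves and reflects 2-connectivity, $G[U]$ is 2-connected if and only if $G'[U']$ is 2-connected. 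Moreover, $G'[U' \sm W']$ is obtained from $G[U \sm W]$ by subdividing some edges (those of $F$ with both endpoints in $U \sm W$) and attaching some pendants (the subdivision vertices $v_e$ where $e$ has one endpoint in $U \sm W$ and the other in $U \cap W$); both operations preserve and reflect connectedness, so $G[U \sm W]$ is connected if and only if $G'[U' \sm W']$ is, and symmetrically with $W$ in place of $U$. By \autoref{SufficientForTotallyNested}, $\varphi$ therefore restricts to a bijection between $N$ and the subset of $N'$ consisting of those elements whose separator lies in $V(G)$.

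Finally I would show that every element of $N'$ has its separator in $V(G)$. Each subdivision vertex $v_e$ has degree two in $G'$ and belongs to the separator of $(A',B')$ or $(C',D')$ depending on whether $e \in S(A,B)$ or $e \in S(C,D)$, so it is a degree-two vertex of $G'$ lying in some 2-separator of $G'$. By \autoref{DegreeThreeTotallyNested}, no totally-nested 2-separation of $G'$ can contain such a $v_e$ in its separator. Thus every element of $N'$ lies in the image of $\varphi$, completing the bijection. The main obstacle I anticipate is the bookkeeping in the second paragraph -- verifying carefully that both 2-connectivity of the sides and connectedness of the strict-side interiors are preserved and reflected by the subdivision -- after which \autoref{SufficientForTotallyNested} and \autoref{DegreeThreeTotallyNested} deliver the result immediately.
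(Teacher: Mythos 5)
Your proof is correct and takes essentially the same approach as the paper: both directions use \autoref{SufficientForTotallyNested} to transfer total nestedness through preservation of external 2-connectivity under subdivision, and both invoke \autoref{DegreeThreeTotallyNested} to show that every separator of an element of $N'$ lies in $V(G)$, which yields surjectivity. The only difference is cosmetic -- you spell out the bookkeeping (that $G'[U'\sm W']$ is $G[U\sm W]$ with subdivisions and pendants) that the paper summarises as ``preserved by subdivision.''
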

\begin{cproof}
Let $(U,W)\in N$.
By \autoref{SufficientForTotallyNested}, $(U,W)$ is externally 2-connected.
This is preserved by subdivision, so $(U',W')$ is externally 2-connected, and totally-nested by \autoref{SufficientForTotallyNested}.
Hence $(U',W')\in N'$.

Clearly, the map $\varphi$ is injective.
It remains to show that it is surjective, so let $(X,Y)\in N'$ be given.
By \autoref{DegreeThreeTotallyNested}, the separator of $(X,Y)$ contains no subdividing vertices, so $(U,W)$ where $U:=X\cap V(G)$ and $W:=Y\cap V(G)$ is a 2-separation of~$G$ which $\varphi$ sends to~$(X,Y)$.
As above, applying \autoref{SufficientForTotallyNested} twice gives $(U,W)\in N$.
\end{cproof}\medskip

By \autoref{varphiClaim}, $\sigma:=\varphi^{-1}(\sigma')$ is a splitting star of~$N$.
Since the separators of the elements of~$\sigma'$ contain no subdividing vertices, the torso~$O$ of~$\sigma$ is obtained from~$O'$ be replacing every subpath $x v_e y$ where $e=xy\in F\cap E(O')$ with the edge~$e$.
Thus, $O$ is a cycle.
As the cycle $O'$ alternates between $S(A',B')$ and $S(C',D')$, the cycle $O$ alternates between $S(A,B)$ and $S(C,D)$.
\medskip

\textbf{(2).} Assume that the torso $X$ associated with~$t\in T$ is 3-connected or a cycle. 
Let $xy$ be an edge of~$X$, and let $\{\,s_i t : i\in I\,\}=:F$ be the set of all edges of~$T$ incident with~$t$ that induce the adhesion set~$\{x,y\}$.
If $|I|\le 1$ we are done, so let us suppose for a contradiction that $|I|\ge 2$.
For each $i\in I$, let $T_i$ denote the component of $T-s_i t$ that contains~$s_i$.
Let $T_t$ denote the component of $T-F$ that contains~$t$.
Putting
\[
    A:=\bigcup_{i\in I}\bigcup_{r\in T_i}V_r\quad\text{and}\quad B:=\bigcup_{r\in T_t}V_r
\]
defines a separation $(A,B)$ of~$G$ with separator~$\{x,y\}$.

We claim that $(A,B)\in N$.
If $X$ is 3-connected, then $G[B]$ is 2-connected by \autoref{2conAround3conTorso}.
If $X$ is a cycle, then $G[A]$ is 2-connected by \autoref{2conAroundCycleTorso}.
Hence at least one of $G[A]$ or $G[B]$ is 2-connected.
Since $G$ is obtained from $X$ by replacing some edges with connected graphs containing their endvertices, and since the graph $X\sm A$ is connected, also the graph $G[B\sm A]$ is connected.
Hence $(A,B)\in N$ by \autoref{SufficientForTotallyNested}.

But then $(A,B)$ is an element of~$N$ that interlaces~$t$ (viewed as a splitting star of~$N$), which contradicts \autoref{splitVsInterlace}.
\end{proof}


\section*{Acknowledgments} 
We are grateful to Nathan Bowler and Reinhard Diestel for questions that indirectly led us to a streamlining of the definition of tri-separations.
We are deeply grateful to the two reviewers for their outstanding support with this overlong paper.
We thank the first reviewer for finding errors in the proofs of \autoref{bttExtra}, \autoref{is_bold} and \autoref{totally-nice}, and for optimising the proofs of \autoref{tetraXtrisep} and \autoref{3cutsAreNested}.
We thank the second reviewer for finding a gap in the proof of \autoref{nestedviacorners}.
We thank Romain Bourneuf for independently finding the error in the proof of \autoref{totally-nice}.
Raphael W.~Jacobs and Paul Knappe have studied mixed-separations independently from us, and we thank them for interesting discussions on mixed-separations.
We thank Dominik Blankenhagen and Nicolás Pich Preuss for spotting typos in the appendix.

\providecommand{\bysame}{\leavevmode\hbox to3em{\hrulefill}\thinspace}
\bibliographystyle{amsplain}


\begin{aicauthors}
\begin{authorinfo}[jc]
  Johannes Carmesin\\
  University of Birmingham\\
  Birmingham, UK\\
  \emph{Current info:}\\
  Professor\\
  TU Freiberg\\
  Freiberg, Germany\\
  johannes\imagedot{}carmesin\imageat{}math\imagedot{}tu-freiberg\imagedot{}de \\
  \url{https://j-carmesin.github.io}
\end{authorinfo}
\begin{authorinfo}[jk]
  Jan Kurkofka\\
  University of Birmingham\\
  Birmingham, UK\\
  \emph{Current info:}\\
  TU Freiberg\\
  Freiberg, Germany\\
  jan\imagedot{}kurkofka\imageat{}math\imagedot{}tu-freiberg\imagedot{}de \\
  \url{https://www.jan-kurkofka.eu/}
\end{authorinfo}
\end{aicauthors}

\end{document}